\documentclass[11pt,a4paper, reqno]{amsart}
\usepackage[english]{babel}
\usepackage[margin=2.5cm]{geometry}
\usepackage{xcolor}
\usepackage{amsmath}
\usepackage{amsthm}
\usepackage{amssymb}
\usepackage{graphicx}
\usepackage{hyperref}
\usepackage{hyperref}
\usepackage{orcidlink}
\usepackage[title]{appendix}
\usepackage{mathrsfs}
\usepackage{amsfonts}
\usepackage{booktabs} 
\usepackage{caption}  
\usepackage{threeparttable} 
\usepackage{algorithm}
\usepackage{algorithmicx}
\usepackage{algpseudocode}
\usepackage{listings}
\usepackage{enumitem}
\usepackage{chngcntr}
\usepackage{booktabs}
\usepackage{lipsum}
\usepackage{subcaption}
\usepackage[T1]{fontenc}    
\usepackage{csquotes}      
\usepackage{diagbox}
\usepackage{breqn}
\usepackage{bm}
\usepackage{tikz}
\usepackage{float}
\usepackage{bbm}

\usepackage{array}

\newcolumntype{C}[1]{>{\centering\let\newline\\\arraybackslash\hspace{0pt}}m{#1}}

\usepackage{enumitem}

\newcommand{\QQ}{\mathbb{Q}}
\newcommand{\ZZ}{\mathbb{Z}}
\newcommand{\NN}{\mathbb{N}}
\newcommand{\FF}{\mathbb{F}}

\newcommand{\RR}{\mathbb{R}}

\DeclareMathOperator{\inv}{inv}
\DeclareMathOperator{\Gal}{Gal}

\usepackage{cleveref}
\newcommand{\legendre}[2]{\genfrac{(}{)}{}{}{#1}{#2}}

\usepackage{setspace}
\newcommand\numberthis{\addtocounter{equation}{1}\tag{\theequation}}

\usepackage[style=alphabetic, backend=biber, maxnames=10]{biblatex}
\renewbibmacro{in:}{}

\usepackage{float}   

\usepackage{caption} 



\usepackage{aliascnt}
\usepackage{cleveref}

\newtheorem{lemma}{Lemma}
\numberwithin{lemma}{section}

\newaliascnt{conjecture}{lemma}
\newtheorem{conjecture}[conjecture]{Conjecture}
\aliascntresetthe{conjecture}

\newaliascnt{theorem}{lemma}
\newtheorem{theorem}[theorem]{Theorem}
\aliascntresetthe{theorem}

\newaliascnt{heuristic}{lemma}
\newtheorem{heuristic}[heuristic]{Heuristic}
\aliascntresetthe{heuristic}

\newaliascnt{proposition}{lemma}
\newtheorem{proposition}[proposition]{Proposition}
\aliascntresetthe{proposition}

\newaliascnt{corollary}{lemma}
\newtheorem{corollary}[corollary]{Corollary}
\aliascntresetthe{corollary}

\theoremstyle{definition}

\newaliascnt{example}{lemma}

\aliascntresetthe{example}

\newaliascnt{definition}{lemma}
\newtheorem{definition}[definition]{Definition}
\aliascntresetthe{definition}

\newaliascnt{remark}{lemma}
\newtheorem{remark}[remark]{Remark}
\aliascntresetthe{remark}

\numberwithin{equation}{section}

\begin{document}
\title{Counting quaternion extensions by multi-heights}

\author{Willem Hansen}
 \address{Graz University of Technology, Institute of Analysis and Number Theory, Steyrergasse 30, 8010 Graz, Austria}
 \email{hansen@math.tugraz.at} 

\date{}

\begin{abstract}
    We prove a case of a conjecture of Gundlach on the number of Galois extensions over $\QQ$ with bounded multi-invariants for the group $G=Q_8$. We also obtain results when imposing certain local conditions at the wildly ramified place. As an application, we prove asymptotics for the number of $Q_8$-extensions with bounded product of ramified primes.
\end{abstract}
\maketitle

\section{Introduction}

In \cite{malle2002}, \cite{malle2004} Malle put forward a conjecture concerning the distribution of field extensions of a number field with a given Galois group and bounded discriminant. Let $G$ be a finite transitive permutation group on $n$ points and let $K$ be a number field. Fix an algebraic closure $\overline{K}$ of $K$. By abuse of notation, write $\Gal(M/K)=G$ if $M/K$ is a field extension inside $\overline{K}$ such that the Galois group of the Galois closure $\tilde{M}/K$, viewed as a permutation group on the set of embeddings of $M$ into $\tilde{M}$, is permutation isomorphic to $G$.

\begin{conjecture}[\texorpdfstring{\cite[Conjecture 1.1]{malle2004}}{}]
    Let $G\subset S_n$ be a non-trivial transitive permutation group and let $K$ be a number field. Then there exists a
constant $c_{K,G} > 0$ such that
\[
\#\{M/K: \Gal(M/K) = G , [M:K] = n,  |N_{K/\QQ} (\operatorname{Disc}({M/K}))| \leq X\} \sim c_{K,G} X^{a_G} (\log X)^{b_{K,G}-1}
\]
where $a_G>0$ and $b_{K,G}\geq 1$ are explicit constants.
\end{conjecture}
This conjecture is known to hold in various cases. For instance, the asymptotic for all abelian groups is known to hold from work of Wright \cite{wright_abelian} using class field theory. Many cases for other small non-abelian groups are known. In \cite{Koymans-Pagano-nilpotent} Koymans and Pagano show Malle's conjecture for a wide range of nilpotent groups, strengthening a previous result of Klüners and Malle \cite{klueners_malle}. The work of Koymans-Pagano \cite{Koymans-Pagano-nilpotent} includes an asymptotic for counting $Q_8$-extensions by discriminant. Koymans and Pagano also provide a useful parametrization of nilpotent extensions that we will make use of. 

Malle's conjecture has also been studied when counting field extensions by invariants other than $N_{K/\QQ} (\operatorname{Disc}{M/K})$, for example Artin conductors or the product of the norms of ramified primes.

Recently, Gundlach \cite{gundlach2022mallesconjecturemultipleinvariants} revisited Malle's conjecture by introducing a family of multi-invariants $\inv_i$ by which to order \'etale $G$-extensions. Here, a $G$-extension $M/K$ is to be understood as a continuous homomorphism from the absolute Galois group $\operatorname{Gal}(\overline{K}/ K)$ to $G$.  Gundlach introduces the following heuristic.
\begin{heuristic}[ \cite{gundlach2022mallesconjecturemultipleinvariants} Heuristic 1.2] \label{heuristic_all_inv}
    There is a constant $C>0$ such that if the $X_i \to \infty$ for all $i=1,\dots, m$, then 
    \[
        \sum_{\substack{\text{\'etale } G\text{-extensions }  M/K \\ \inv_i( M)\leq X_i \ \forall i}} \frac{1}{\# \operatorname{Aut}(M)} \sim C \cdot\prod_i X_i.
    \]
\end{heuristic}
Over the base field $\QQ$ this question also appears earlier in \cite[Question 4.5]{ellenberg_venkatesh}. This heuristic justifies the following conjecture.
\begin{conjecture}[\cite{gundlach2022mallesconjecturemultipleinvariants} Conjecture 1.5] \label{conj_gund}
     Fix some number $0<\delta<1$. There is a constant $C>0$ such that if the $X_i \to \infty$ for all $i=1, \dots m$, then $$\#\{ M/K: \Gal(M/K)\simeq G, \delta X_i< \inv_i(M)\leq X_i \ \forall i  \}\sim C\cdot \prod_i X_i.$$
\end{conjecture}
Here the lower bounds $\delta X_i$ on the invariants are introduced in order to avoid some known counterexamples that disprove Malle's conjecture and which would disprove the conjecture in this setting as well (see \cite{klueners2004counterexamplemallesconjecture}). The known counterexamples all have some bounded invariant, so that the proposed conjecture with lower bounds may still hold. In the above one may restrict from \'etale algebras to field extensions, as non-field extensions have some invariant bounded.

Note that for any finite group $G$, \Cref{heuristic_all_inv} implies \Cref{conj_gund}. Gundlach proves \Cref{heuristic_all_inv} for all abelian groups. In \cite{shankar2024asymptoticscubicfieldsordered}, Shankar and Thorne prove a version of \Cref{heuristic_all_inv} for $G=S_3$. Here, there are two invariants, and Shankar and Thorne prove asymptotics even when keeping one of the invariants fixed. In \cite{hansen-zanoli}, Zanoli and the author confirm, under the additional assumption that the  invariants have comparable rates of growth, a version of \Cref{heuristic_all_inv} for the group $D_4$. 

In this paper, we prove \Cref{conj_gund} for $Q_8$-extensions in full. We develop a more careful analysis of the resulting character sums, which allows us to obtain an additional power saving without imposing any additional condition on the sizes of the parameters.   

We fix some notation. Let  $\overline{\QQ}$  be an algebraic closure of $\QQ$. Let $\Gamma_\QQ := \operatorname{Gal}(\overline{\QQ}/\QQ)$. Let $\operatorname{Epi}_{\operatorname{cts}}(\Gamma_\QQ, G),$ $\operatorname{Hom}_{\operatorname{cts}}(\Gamma_\QQ, G)$ denote the continuous epimorphisms,  homomorphism, respectively.   \\
We denote by a $Q_8$-extension $\varphi$ a continuous, epimorphisms  $\varphi \in \operatorname{Epi}_{\operatorname{cts}}(\Gamma_\QQ, Q_8)$. 
Here 
\[
Q_8 = \langle i,j,k,-1 \ |\ (-1)^2 = 1,  i^2=j^2=k^2=ijk = -1\rangle
\]
is the quaternion group of $8$ elements. Such an epimorphisms gives rise to a $Q_8$-Galois field $M/\QQ$ inside $\overline{\QQ}$ together with an isomorphism $\sigma:\operatorname{\Gal}(M/\QQ) \overset{\simeq}{\to} Q_8$.

We give a precise definition of Gundlach's multiple invariants in \Cref{def:multi}. For now, we note that we obtain a height function for  each non-trivial cyclic subgroup (up to conjugation). For $Q_8$ we have the non-trivial cyclic subgroups $\langle i\rangle, \langle j\rangle, \langle k\rangle,\langle -1\rangle$. Thus, we have four height functions that we denote 
\[
h_{i}, h_{j}, h_k, h_{-1} : \operatorname{Epi}_{\operatorname{cts}}(\Gamma_\QQ, Q_8) \to \RR_{>0}.
\]

\begin{theorem} \label{thm1}
     Let $h_{-1}, h_i, h_j, h_k:\operatorname{Epi}_{\operatorname{cts}}(\Gamma_\QQ, Q_8) \to \mathbb{R}_{>0}$ be the four associated height functions given by \Cref{def:multi}. Let $X_{-1}, X_i, X_j, X_k \in \mathbb{R}_{>0}$. Then 
    \begin{align*}
    &\#\{ \varphi \in \operatorname{Epi}_{\operatorname{cts}}(\Gamma_\QQ, Q_8) \ | \ h_x(\varphi)\leq X_x \text{ for }x\in \{-1,i,j,k\}\}= \\
    &\hspace{1cm}= \frac{7}{8} \prod_{p>2} \left(1-\frac{1}{p}\right)^4 \left(1+\frac{4}{p}\right) X_iX_jX_kX_{-1}+ E\,,
    \end{align*}
    with an error term 
    \[
    E \ll X_{-1}X_iX_jX_k \min\{ X_{-1},X_i,X_j,X_k\}^{-\Delta}
    \]
    for some $\Delta>0$.
\end{theorem}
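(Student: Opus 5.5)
We parametrize $Q_8$-extensions following Koymans--Pagano. A $Q_8$-extension $\varphi$ is a lift of an epimorphism $\bar\varphi:\Gamma_\QQ\to Q_8/\langle -1\rangle\simeq (\ZZ/2)^2$, i.e.\ a biquadratic field $\QQ(\sqrt{a},\sqrt{b})$. Such a lift exists iff the embedding obstruction vanishes; for $Q_8$ this is the condition $(a,b)(a,ab)(b,ab)=(a,b)(a,a)(b,b)=0$ in $\operatorname{Br}(\QQ)[2]$, i.e.\ a product of Hilbert symbols is trivial at every place. Given one lift, all others are obtained by twisting by quadratic characters $\chi$, which corresponds to the central $\langle -1\rangle$.

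For $p$ odd, Gundlach's invariant $h_x$ records the primes whose inertia is generated by $x$. Inertia at tame $p$ is cyclic, so each odd ramified prime falls into exactly one of four classes: inertia $\langle -1\rangle$, or inertia conjugate to $\langle i\rangle,\langle j\rangle,\langle k\rangle$. This gives four squarefree coprime odd integers $d_{-1},d_i,d_j,d_k$, with $h_x(\varphi)=d_x$ up to a bounded factor at $2$. The condition that $\varphi$ exists translates into quadratic-residue conditions linking the $d$'s. Roughly, for $p\mid d_i$ one needs $\bigl(\tfrac{d_j d_k \cdots}{p}\bigr)$-type conditions coming from local solvability of the embedding problem (for example, $-1$ must be a square locally in the appropriate field, giving $p\equiv 1\bmod 4$ or a symbol condition). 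So the count becomes
\[
\sum_{d_x\le X_x}\mu^2(\cdots)\prod_{x}\prod_{p\mid d_x}\frac{1+(\text{character})}{2}\cdot(\text{local factor at }2),
\]
where the multiplicity from twists by $d_{-1}$ is absorbed into the variable $d_{-1}$.

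Next we expand the products into sums of products of Jacobi symbols over factorizations $d_x=\prod_y D_{xy}$, in the standard Fouvry--Klüners/Heath-Brown fashion. The term with all characters trivial gives the main term. Counting squarefree coprime $4$-tuples weighted by $2^{-\omega}$ factors yields the Euler product $\prod_{p>2}(1-1/p)^4(1+4/p)$ times $\prod X_x$. The factor $\tfrac78$ comes from summing the $2$-adic local conditions over the finitely many $2$-adic classes, together with the $1/\#\operatorname{Aut}$ normalization relating lifts to fields. The remaining terms involve genuinely oscillating characters $\bigl(\tfrac{D_{xy}}{D_{x'y'}}\bigr)$ between linked variables. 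These we bound by two estimates. When two linked variables are both large, we apply the bilinear large-sieve bound of Heath-Brown for double oscillation. When one variable is small and its linked partner is large, we use Siegel--Walfisz (a single character with small modulus), which gives a saving of a power of $\log$ at least.

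\textbf{Main obstacle.} To get a power saving $\min\{X_x\}^{-\Delta}$ with no comparability assumption on the $X_x$, log savings from Siegel--Walfisz are insufficient. Instead we would, for the small-modulus regime, sum the large variable over all integers (not primes) with the Pólya--Vinogradov bound, or use the Burgess bound on the character sum in a large variable with small-conductor character. This gives savings $X^{-\delta}$ unless the variable carrying the oscillation is itself tiny, i.e.\ $\le \min X_x^{\eta}$. In that last case, the contribution is bounded trivially, since forcing some $D_{xy}$ with $x\ne y$ to be small loses a power of $\min X_x$ relative to the main term. We expect the careful case analysis of which pairs of variables are linked by characters to be the delicate part. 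It must ensure every non-diagonal term has at least one linked pair where both members can be summed over a long range.
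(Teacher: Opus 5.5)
Your overall architecture (Koymans--Pagano parametrization, expanding the local-solvability indicator into Jacobi symbols, main term from the trivial characters, bilinear large sieve plus P\'olya--Vinogradov-type bounds for the rest) matches the paper's in the balanced range. However, it does not deliver the uniform saving $\min\{X_x\}^{-\Delta}$ in skewed boxes, which is the real content of the theorem, and the step you use to close the argument is false.

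Restricting one factor of a splitting of a height to be $\le \min\{X_x\}^{\eta}$ does \emph{not} lose a power. In the correctly normalized sum (see below), each $b_x=d_xf_x$ is split with unit weight. For instance, the tuples with $f_k\le V$ alone carry mass $\asymp X_iX_jX_k\log X_i\log X_j\log V$, which exceeds the main term. Only forcing a whole height $b_x$ to be small costs a power; this is the paper's region C.

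Moreover, consider $X_i\le X_j<X_k^{1/5}$ with $X_i,X_j$ possibly as small as a power of $\log X_k$. There the large-sieve step is unusable: its $\log X_k$-losses, and the divisor-function losses from M\"obius-lifting coprimality with long variables, cannot be beaten by $\min\{X_x\}^{-\delta}$. The off-diagonal terms are also not individually small in this regime. For fixed $(d_i,d_j)\neq(1,1)$ and $f_i=f_j=1$, the sum over $d_kf_k\le X_k$ equals $cX_k\sum_{f}\mu^2(f)\legendre{f}{d_id_j}f^{-1}(\cdots)$ up to a power saving. This is comparable to the diagonal term, and the part with $f_k\neq 1$ cancels only on average over the short variables $d_i,d_j$.

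The missing idea is the paper's treatment of this regime:
\begin{itemize}
\item Split according to $d_k\le\sqrt{X_k}$ or $f_k\le\sqrt{X_k}$. The long variable then carries a character of conductor $\le X_iX_j\le X_k^{2/5}$, giving a saving that is a power of $X_k$ and absorbs every $X_k^{\varepsilon}$.
\item Sum the long variable exactly when that character is principal.
\item Complete the resulting $L(1,\chi_{d_id_j})$-type series, truncate it at $(X_iX_j)^{1/2+\delta}$, and average it over $(d_i,d_j)$. The squares give the main term, and the non-squares are bounded by \Cref{lemma:non_square_contribution}.
\end{itemize}
You also need that \eqref{cond:S1} makes $u_{\boldsymbol{\mu}}(1,1,\cdot)$ non-principal, so that the term with $d_id_jf_if_j=1$ is $O(X_k)$ rather than of size $X_k\log X_k$.

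Your main-term bookkeeping is also wrong. Twisting by the character of a prime $p\mid ab$ moves $p$ between $v_x$ and $v_{-x}$ and changes no $h_x$. So every admissible $(b_i,b_j,b_k)$ carries $2^{\omega(b_ib_jb_k)}$ lifts that are not absorbed into $d_{-1}$. This multiplicity exactly cancels the $2^{-\omega}$ coming from $\prod_p\frac12(1+\chi(p))$; in the paper, the $\tau(b_ib_jb_k)$ in $M^{\chi_0}$ cancels the $1/\tau$ of \Cref{lemma:ell_rewrite}. With your $2^{-\omega}$-weights the count would have order $X_{-1}\prod_{x\in\{i,j,k\}}X_x(\log X_x)^{-1/2}$. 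By contrast, $\prod_{p>2}(1-1/p)^4(1+4/p)$ is, up to the factor at $2$, the density of pairwise coprime odd squarefree $4$-tuples counted with unit weight.

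Finally, $7/8$ involves no $\#\operatorname{Aut}$ normalization, since epimorphisms are counted. It equals $224\cdot\frac{1}{256}$: each of the $224$ homomorphisms $\Phi:\Gamma_{\QQ_2}\to Q_8$ contributes $\frac12\cdot 4\cdot\frac{2}{1024}$. This uses $u_{\boldsymbol{\mu}}(\varepsilon_i,\varepsilon_j,\varepsilon_k)=u_{\boldsymbol{\mu}}(1,1,1)=1$, a consequence of local solvability at $2$.
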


The multi-heights of an extension is a product of tamely ramified primes with specific ramification type. For $G=Q_8$, the only prime that can wildly ramify is $p=2$. We also obtain a counting result where we impose $2$-adic data. Fix an algebraic closure $\overline{\QQ_2}$ of $\QQ_2$, and fix an embedding 
\[
\overline{\QQ} \hookrightarrow \overline{\QQ_2}.
\]
This induces an inclusion $\Gamma_{\QQ_2} = \operatorname{Gal}(\overline{\QQ_2}/\QQ_2) \hookrightarrow \Gamma_\QQ $, as well as a restriction map $\operatorname{Epi}_{\text{cts}}(\Gamma_\QQ , Q_8) \to \operatorname{Hom}_{\text{cts}}(\Gamma_{\QQ_2}, Q_8)$ of continuous homomorphisms. The restriction of an epimorphism need not be surjective in general.

\begin{theorem}\label{thm2_local}
Let $\Phi : \Gamma_{\QQ_2} \to Q_8$ be a continuous homomorphism. Let $X_i, X_j, X_k, X_{-1}\in \RR_{>0}$. Then 
\begin{align*}
   & \# \{\varphi \in \operatorname{Epi}_{\operatorname{cts}}(\Gamma_\QQ ,  Q_8) : h_{x}(\varphi) \leq X_x \text{ for }x\in\{i,j,k,-1\} , \varphi_{|_{\Gamma_{\QQ_2}}} = \Phi \} \\
   &\hspace{2cm }= \frac{1}{256} \prod_{p>2}\left(1-\frac{1}{p}\right)^4 \left(1+\frac{4}{p} \right) X_iX_jX_kX_{-1}+ E\,,
    \end{align*}
    with an error term 
    \[
    E \ll X_{-1}X_iX_jX_k \min\{ X_{-1},X_i,X_j,X_k\}^{-\Delta}
    \]
    for some $\Delta>0$.
\end{theorem}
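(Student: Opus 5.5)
We will prove this along the same lines as \Cref{thm1}, using the Koymans--Pagano parametrization of $Q_8$-extensions as central extensions of biquadratic fields. A $Q_8$-extension $\varphi$ is determined by two quadratic characters $\chi_a,\chi_b$ (with $a,b$ squarefree, generating the $V_4$-quotient), together with a lift through $1\to\langle -1\rangle\to Q_8\to V_4\to 1$. A lift exists iff the local obstruction classes vanish everywhere, i.e.\ the Hilbert symbols $(a,b)_v(a,ab)_v(b,ab)_v$ are trivial. Given one lift, all others arise by twisting by quadratic characters $\chi_c$. The tamely ramified primes then split according to which of $\langle i\rangle,\langle j\rangle,\langle k\rangle,\langle -1\rangle$ is the image of inertia. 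This lets us write
\[
a=d_1d_2\cdot(\text{2-part}),\quad b=d_1d_3\cdot(\text{2-part}),\quad c\leftrightarrow d_4,
\]
with $h_i,h_j,h_k,h_{-1}$ essentially equal to $d_1,d_2,d_3,d_4$ up to $2$-adic bookkeeping (the assignment of $d_1,d_2,d_3$ to $i,j,k$ depends on the normalization in \Cref{def:multi}). The solvability condition at odd primes becomes a product of Legendre symbols. Summed over squarefree coprime tuples, it is expanded as
\[
\prod_p \tfrac{1}{2}\bigl(1+\text{symbol}\bigr),
\]
which produces a main term from the trivial characters plus oscillating character sums.

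The new ingredient is the $2$-adic condition. Once the odd data are fixed, the restriction $\varphi|_{\Gamma_{\QQ_2}}$ is determined by two things: the classes of $a,b$ in $\QQ_2^\times/\QQ_2^{\times 2}$, which are determined by the $2$-parts and the residues of $d_1,d_2,d_3 \bmod 8$, and the choice of twist at $2$. We therefore decompose the count according to the residue classes of $d_1,\dots,d_4 \bmod 8$ and the $2$-adic valuations of $a$, $b$, $c$. For each fixed $\Phi$, the condition $\varphi|_{\Gamma_{\QQ_2}}=\Phi$ becomes a congruence condition on the tuple together with a constraint on the $2$-part of the twist. We detect these congruences with Dirichlet characters mod $8$.

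The counting then proceeds as in the proof of \Cref{thm1}, with additional characters mod $8$ inserted into the sums. The main term comes only from the principal characters; all other terms are bilinear or character sums over large variables and are bounded with the same large-sieve and Siegel--Walfisz type estimates. These estimates are insensitive to twisting by characters of modulus $8$, so they give the power saving $\min\{X_x\}^{-\Delta}$.

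It remains to compute the main-term constant. Each prescribed $\Phi$ should contribute the same density, namely $1/256$ times the Euler product. The plan is to show that in the main term the $2$-adic restriction is equidistributed over $\operatorname{Hom}_{\operatorname{cts}}(\Gamma_{\QQ_2},Q_8)$, weighted according to Gundlach's local mass $\frac{1}{|G|}\#\operatorname{Hom}$, and that each $\Phi$ has the same weight.

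The main obstacle is this verification of uniformity across all $\Phi$, including the unramified and non-surjective ones. Since $\Gamma_{\QQ_2}$ has $\#\operatorname{Hom}(\Gamma_{\QQ_2},Q_8)$ equal to $|Q_8|$ times a factor fitting with the normalization, we expect the $2$-adic count of admissible residue data to be independent of $\Phi$. We will check this by direct enumeration over square classes in $\QQ_2^\times/\QQ_2^{\times2}$, which has $8$ classes, and the lift choices. We will also check consistency with \Cref{thm1}: summing $1/256$ over all $\Phi$ must recover the factor $7/8$, so the argument will need to pin down exactly how many $\Phi$ there are.
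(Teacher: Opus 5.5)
Your proposal is a plan rather than a proof, and it has a genuine gap at the step you yourself flag as the main obstacle. The premise it relies on is also false: it is not true that ``the main term comes only from the principal characters'' while ``all other terms are bilinear or character sums''.

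Expand $\prod_p\frac12(1+\text{symbol})$ over the primes of $d_1d_2d_3$ (the paper's $b_ib_jb_k$). Consider the term in which the non-trivial symbol is chosen at \emph{every} prime. It is a product of Jacobi symbols in which each pair of variables appears as $\legendre{b_x}{b_y}\legendre{b_y}{b_x}$. By quadratic reciprocity it collapses to a function of $b_i,b_j,b_k \bmod 8$; this is the term $f_i=f_j=f_k=1$ of \Cref{lemma:ell_rewrite}, with weight $u_{\boldsymbol{\mu}}(b_i,b_j,b_k)$. This term does not oscillate. It is not of the shape $\alpha_m\beta_n\legendre{n}{m}$ required by the Friedlander--Iwaniec bound, which is why the paper must carve out the regions B1, B2, C. And it is exactly as large as the trivial term.

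So the main term of each $N_{\boldsymbol{\mu},\boldsymbol{\varepsilon}}$ is proportional to $u_{\boldsymbol{\mu}}(1,1,1)+u_{\boldsymbol{\mu}}(\varepsilon_i,\varepsilon_j,\varepsilon_k)$. The uniformity in $\Phi$ that you hope to get from ``direct enumeration of residue data'' actually comes from the identity $u_{\boldsymbol{\mu}}(\varepsilon_i,\varepsilon_j,\varepsilon_k)=(\alpha,\beta)_2(\alpha\beta,-1)_2$ in \Cref{lemma:properties_u}(ii). This sign is the $2$-adic embedding obstruction, so it equals $1$ because $\Phi$ exists; and $u_{\boldsymbol{\mu}}(1,1,1)=1$ trivially. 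With four triples $\boldsymbol{\varepsilon}$ per $(\alpha',\beta')$ this gives $\frac12\cdot 4\cdot\frac{2}{1024}=\frac{1}{256}$. Omitting the reciprocity term would give $\frac{1}{512}$, and no count of residue classes alone detects these signs.

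Two further points need repair.

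\textbf{The twist.} Your parametrization ``$c\leftrightarrow d_4$'' takes the twist coprime to $ab$. But $t$ may contain any subset of the primes of $b_ib_jb_k$ (the paper's $v_{-i},v_{-j},v_{-k}$) without changing any height, so your family misses most extensions. The resulting divisor sum over $d\mid \pm b_ib_jb_k$ supplies the factor $\tau(b_ib_jb_k)$ that cancels the $2^{-\omega(b_ib_jb_k)}$ from expanding the solvability indicator. Without it the count has the wrong order of magnitude. The same divisor sum, including the sign $\pm$, is what makes the $2$-adic condition on the twist harmless, even though its target class $\gamma'$ depends on $b_i,b_j,b_k$ through the minimal lift. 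The odd characters mod $8$ vanish identically, $\chi_1$ is an error term by cancellation in the $v_{-1}$-sum, and $\chi_0$ contributes the factor $\frac12$ independently of $\gamma'$.

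\textbf{The analysis.} You defer it to ``the proof of \Cref{thm1}'', but in the paper \Cref{thm1} is deduced from this theorem by summing over the $224$ homomorphisms $\Phi$. Nothing independent exists to appeal to. Moreover, large sieve plus Siegel--Walfisz does not give a saving in $\min\{X_x\}$ for skewed boxes. The large-sieve decomposition only works when $X_j$ is at least a fixed power of $X_k$. Otherwise one must sum the long $k$-variables first, against characters of modulus $d_id_j$, which can be a power of $X_k$ and so lies far outside the Siegel--Walfisz range. This uses P\'olya--Vinogradov/Munsch-type bounds, and the main term is then extracted from averages over $d_id_j$ of $\sum_f \legendre{f}{d_id_j}/f$, as in \Cref{lemma:non_square_contribution}.
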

\begin{remark}
    In total, there are 224 (continuous) homomorphisms $\Phi:\Gamma_{\QQ_2}\to Q_8$, which recovers \Cref{thm1}.
\end{remark}

To show the strength of these theorems, we provide an application to counting $Q_8$-extensions by the product of ramified primes.  \Cref{thm1} allows us to count in arbitrary skew boxes and obtain a power saving in the smallest side length. A version of a generalized hyperbola method, developed by Pieropan--Schindler \cite{pieropan-schindler}, allows us to establish a result for counting $Q_8$-extensions by the product of tamely ramified primes.
 For a $Q_8$-extension $\varphi \in \operatorname{Epi}_{\operatorname{cts}}(\Gamma_\QQ, Q_8)$ denote its fixed field by $M$, we define 
 \[
 H (M) :=  \prod_{p \text{ ramified in }M}p,
 \]
 the product of ramified primes.  
\begin{corollary} \label{corollary1}
    Let $X\in \RR_{>0}$.  
    \begin{align*}
    &\#\{(M, \sigma) \ | \ \sigma:  \operatorname{Gal}(M/\QQ)\overset{\simeq}{\to} Q_8: H(M)\leq X\} \\
    &\qquad \qquad =  \frac{29}{384}  \prod_{p>2} \left(1-\frac{1}{p}\right)^4 \left(1+\frac{4}{p}\right) X (\log X)^3 + O(X(\log X)^2 (\log\log X)^4)\,.
    \end{align*}
\end{corollary}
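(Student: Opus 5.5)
We deduce \Cref{corollary1} from \Cref{thm2_local} by a generalized hyperbola method in the sense of Pieropan--Schindler, treating the $2$-adic data separately.

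\textbf{Reduction.} For a $Q_8$-extension $\varphi$, every odd ramified prime $p$ is tamely ramified, so its inertia group is cyclic. It is therefore conjugate to exactly one of $\langle i\rangle,\langle j\rangle,\langle k\rangle,\langle -1\rangle$, and by \Cref{def:multi} the multi-heights are $h_x(\varphi)=\prod_{p \text{ of type } x}p$, possibly up to a fixed power of $2$ depending only on $\varphi|_{\Gamma_{\QQ_2}}$. Hence
\[
H(M)=2^{e(\Phi)}\,h_i(\varphi)h_j(\varphi)h_k(\varphi)h_{-1}(\varphi)\cdot c(\Phi),
\]
where $e(\Phi)\in\{0,1\}$ records whether $2$ ramifies and $c(\Phi)$ corrects for any $2$-part included in the $h_x$. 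We sum over the $224$ local maps $\Phi$. For fixed $\Phi$ we must count tuples of odd squarefree, pairwise coprime integers $(h_i,h_j,h_k,h_{-1})$ that arise from $\varphi$ with $\varphi|_{\Gamma_{\QQ_2}}=\Phi$, subject to $\prod h_x\le X/2^{e(\Phi)}$. Write $N_\Phi(X_i,X_j,X_k,X_{-1})$ for the box count. \Cref{thm2_local} gives $N_\Phi=cX_iX_jX_kX_{-1}+O(\prod X_x\cdot\min X_x^{-\Delta})$ with $c=\tfrac1{256}\prod_{p>2}(1-\tfrac1p)^4(1+\tfrac4p)$, uniformly in arbitrary boxes.

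\textbf{Hyperbola step.} We apply the Pieropan--Schindler criterion. A counting function on $\NN^4$ whose box counts are $c\prod X_x$ with error saving a power of $\min X_x$ has a hyperbolic count $\#\{\prod h_x\le Y\}$ equal to $c\cdot\mathrm{vol}\{\mathbf t\in[1,\infty)^4:\prod t_x\le Y\}=c\,Y(\log Y)^3/6$ plus an error. Concretely, we cover the hyperbolic region by dyadic boxes $[2^{a_x},2^{a_x+1}]$ and difference the box asymptotics. Boxes with $\min X_x\ge(\log Y)^{A}$ contribute the main term plus $O(Y(\log Y)^{3}\cdot(\log Y)^{-A\Delta})$ after summing over $O((\log Y)^3)$ boxes. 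Boxes with some side below $(\log Y)^A$, i.e.\ $h_x\le(\log Y)^A$, are handled by the trivial upper bound: freezing that coordinate leaves a three-variable hyperbola count $\ll Y(\log Y)^2$ times $\sum_{h\le(\log Y)^A}1/h\ll \log\log Y$. Iterating this over which coordinates are small produces the $(\log\log X)^4$ in the error term. Making this uniform is the main obstacle: we need an upper bound $N_\Phi\ll\prod X_x$ valid even for very skewed boxes. This should follow from \Cref{thm2_local} itself, taking $\Delta$-dependence into account, or from a crude bound by counting the $V_4$-quotient data (pairs of quadratic characters) times the bounded number of $Q_8$-lifts, which is at most a constant times $2^{\omega}$ and absorbed after summation.

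\textbf{Constant.} Summing over $\Phi$ gives
\[
\sum_\Phi \frac{c}{6}\,\frac{X}{2^{e(\Phi)}c(\Phi)}(\log X)^3 .
\]
So the leading constant is $\tfrac1{256}\cdot\tfrac16\sum_\Phi 2^{-e(\Phi)}c(\Phi)^{-1}$ times the Euler product. The final step is to enumerate the $224$ homomorphisms $\Gamma_{\QQ_2}\to Q_8$ and sort them into unramified ones (those factoring through the unramified quotient $\hat{\ZZ}$, i.e.\ one for each element of $Q_8$, so $8$) and ramified ones ($216$). If the $h_x$ carry no $2$-part, the sum is $8+216/2=116$, giving $\tfrac{116}{1536}=\tfrac{29}{384}$, which matches the stated constant. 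This consistency check confirms the normalization, namely that $h_x$ only counts odd primes, and completes the plan.
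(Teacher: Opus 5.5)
Your top-level route is the paper's. \Cref{thm2_local} supplies the box asymptotic (Property I of Pieropan--Schindler) for each $f_\Phi$. Their Theorem 1.1 with $s=4$ and all weights $1$ (so $c_P=\tfrac16$) then gives $\frac{1}{1536}\prod_{p>2}(1-\frac1p)^4(1+\frac4p)\,Y(\log Y)^3+O(Y(\log Y)^2(\log\log Y)^4)$ for $h_ih_jh_kh_{-1}\le Y$. Summing over the $224$ maps $\Phi$ at $Y=X/H_2(\Phi)$ produces the factor $116$. Counting the unramified $\Phi$ as the $8$ homomorphisms $\hat{\ZZ}\to Q_8$ is a neater way to get $116=8+216/2$ than the paper's table, though the total $224$ still needs the enumeration in \Cref{lemma:local2_masses}.

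However, the concrete mechanism you give for the hyperbola step fails. In coordinates $u_x=\log t_x$, the region $\{\mathbf t\in[1,\infty)^4:\prod t_x\le Y\}$ is the simplex $\sum u_x\le \log Y$ with density $e^{\sum u_x}$, so its volume concentrates near the face $\sum u_x=\log Y$. Consequently the dyadic boxes that straddle the hypersurface $\prod t_x=Y$ contain a positive proportion (in fact most) of the volume $\tfrac16 Y(\log Y)^3$. Box asymptotics cannot evaluate these boxes, and an upper bound for them is $O(Y(\log Y)^3)$, the size of the main term, so a ratio-$2$ dyadic cover cannot produce the constant $\tfrac16$. There are two fixes. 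You can cite Pieropan--Schindler's Theorem 1.1 as a black box, as the paper does, with $\mathcal K=\{1\}$. Alternatively, use boxes of ratio $1+\eta$ with $\eta\asymp(\log Y)^{-1}$: the boundary layer then has volume $O(\eta\,Y(\log Y)^3)=O(Y(\log Y)^2)$, and the $(\log Y)^{O(1)}$ boxes are paid for by taking $A$ large in your cutoff $\min X_x\ge(\log Y)^A$.

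Conversely, what you call the main obstacle is immediate. A box with some $X_x<1$ is empty, and otherwise $\min X_x^{-\Delta}\le 1$, so \Cref{thm2_local} already gives $N_\Phi\ll\prod_x X_x$ uniformly. Your fallback via $V_4$-data and lifts would not work: ignoring \eqref{cond:S4} overcounts by a factor of order $\tau(b_ib_jb_k)$ on average, i.e.\ powers of $\log$, which would swamp your small-side bound.

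Three smaller points:
\begin{itemize}
\item The normalization should come from \Cref{def:multi}, not from matching the target constant. $h_c$ only records tamely ramified primes, and in a $2$-group extension $2$ can only ramify wildly, so $H(\varphi)=H_2(\varphi|_{\Gamma_{\QQ_2}})\,h_ih_jh_kh_{-1}$ exactly.
\item You are counting epimorphisms $\varphi$, i.e.\ height tuples weighted by $f_\Phi(\mathbf y)$, not the tuples themselves.
\item The $(\log\log X)^4$ is simply the error term of Pieropan--Schindler's theorem for $s=4$.
\end{itemize}
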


In \cite{ellenberg_satriano_zureick-brown}, \cite{DardaYasuda2024} a notion of the Manin conjecture for stacks is introduced and the authors show that Malle's conjecture can be read as a special version of Manin's conjecture for stacks. In this setting, Loughran and Santens \cite{loughran_santens2025malle} make a precise prediction for the leading constant in Malle's conjecture, when ordering the field extensions by the product of ramified primes. The conjecture on the constant resembles the constant predicted by Peyre \cite{peyre95} for counting rational points in the setting of Manin's conjecture. Peyre developed a multi-height approach to counting rational points (\cite{peyre-multi-heights}). Gundlach's multi-height approach can be interpreted as an analogue to Peyre's multi-height setting. 

 \begin{remark}
     The leading constant in \Cref{thm1}, \Cref{thm2_local} agree with the predicted constant from \cite{loughran_santens2025malle, loughran2026leadingconstantmallesconjecture} adapted to a multi-height setting. Simirlarly, the leading constant in \Cref{corollary1} agrees  with the predicted constant from \cite{loughran_santens2025malle, loughran2026leadingconstantmallesconjecture}.
 \end{remark}

\subsection*{Acknowledgments} I would like to greatly thank my advisor Christopher Frei for his guidance on this project and many helpful discussions. I am very grateful to Nick Rome for encouraging this project and his help throughout. I would like to thank Fabian Gundlach, Jürgen Klüners, Carlo Pagano, Anna Zanoli for helpful discussions. This research was funded in whole or in part by the Austrian Science Fund (FWF), project 10.55776/DOC183. 
\section{Preliminaries and notation} \label{sec:notation}
We fix a separable closure $\overline{\QQ}$ of
$\QQ$, as well as a separable closure $\overline{\QQ}_p$ of
$\QQ_p$ for every prime $p$. We fix embeddings
\[
    \overline{\QQ}\hookrightarrow \overline{\QQ}_p
\]
for every prime $p$, which induce inclusions of the absolute Galois groups
in the opposite direction
\[
    \operatorname{Gal}(\overline{\QQ}_p/\QQ_p) \hookrightarrow     \operatorname{Gal}(\overline{\QQ}/\QQ).
\]
For a field $k=\QQ $ or $k= \QQ_p$ we write $\Gamma_k=\operatorname{Gal}(\overline{k}/k)$ for the absolute Galois group of $k$.
By a $Q_8$-extension of $\QQ$ we mean a continuous surjective homomorphism $\Gamma_\QQ \to Q_8$. Equivalently, such a homomorphism corresponds to a pair $(M, \sigma)$ where $M/\QQ$ is a Galois extension with $\sigma : \operatorname{Gal}(M/\QQ)\overset{\simeq}{\to} Q_8$. 

Commonly, one counts isomorphism classes of $Q_8$-extensions, where the isomorphisms are given by conjugation of elements of $Q_8$. Let $BQ_8(\QQ)$ be the groupoid whose objects are continuous homomorphisms from $\Gamma_\QQ \to Q_8$, and whose morphisms are given by conjugation by elements in $Q_8$. Let $BQ_8[\QQ]$ be the set of isomorphism classes of objects of $BQ_8(\QQ)$. In particular the isomorphism classes of the groupoid are the orbits of the conjugacy action.

In this paper, we count surjective objects in $BQ_8(\QQ)$. In the context of Malle's conjecture, one counts surjective elements in $BQ_8[\QQ]$. For a surjective class in $\varphi \in BQ_8[\QQ]$ its automorphism group is isomorphic to the stabilizer of the conjugacy action, which is $Z(Q_8) = \{\pm 1\}$. Hence there are $\frac{|Q_8|}{|Z(Q_8)|} = 4$ objects in $BQ_{8}$ corresponding to the isomorphism class given by $\varphi$, by the orbit-stabilizer theorem.

For integers $a,b$ we use the notation $(a,b) = \gcd(a,b), [a,b] =\operatorname{lcm}(a,b)$. For a place $\nu$ of $\QQ$, $(a,b)_\nu$ is the Hilbert symbol at $\nu$. We denote by $\tau(a)$ the number of positive divisors of $a$. 
For an integer $a\neq 0$ we define the notation\[
a' := \text{ odd positive part of }a.
\]
We say that a tuple of integers is pairwise coprime if a prime divides at most one entry and at most one of them is negative. 
\section{Parametrization} \label{sec:para}
Recall the notation of the quaternion group $Q_8 = \langle i,j,k,-1 \ |\ (-1)^2 = 1,  i^2=j^2=k^2=ijk = -1\rangle$. We follow the parametrization of Koymans-Pagano \cite{Koymans-Pagano-nilpotent}; see also \cite{koymans-pagano-archive}, which contains a section of the general parametrization for the case of $2$-group extensions over the base field $\QQ$. 
We start by defining a sequence of groups with 
\[
\{0\} = G_0 , G_1 , G_2, G_3\simeq   Q_8,
\]
and $G_i = (\FF_2\times G_{i-1}, \theta_i)$ obtained as a central extension 
\[
1\to \FF_2\to G_i \to G_{i-1}\to 1
\]
represented by some cocycle $\theta_i :G_{i-1}^2\to  \FF_2$, normalized so that $\theta_i(\operatorname{id}, \operatorname{id})=0$. For $Q_8$ we have $G_1 = \FF_2$, $G_2 = \FF_2 \times \FF_2$. We fix the labels for $G_2 $
\begin{table}[H]
    \centering
    \begin{tabular}{c |c}
      & {$G_2  $}     \\
         \hline
      1&    \,$(0,0)$  \\
       $\overline{i}$ &  \, $(0,1)$  \\
         $\overline{j}$ & \, $(1,1)$  \\
        $\overline{k}$ &  \, $(1,0)$  \\
    \end{tabular}
\end{table}
\noindent
The group $G_3$ is obtained by a central extension 
\[
1 \to \FF_2 \to G_3 \to G_2 \to 1
\]
represented by a cocycle $\theta : G_2^2 \to \FF_2$. Explicitly, let 
\[
\theta( (x_1, x_2), (y_1, y_2)) = x_1y_1+ x_2y_2+x_2y_1. 
\]
With this choice, $G_3\simeq Q_8$. Indeed,
\[
(a, x)^2 = (\theta(x,x), (0,0)) = \begin{cases}
(0,0,0) \quad \text{if } x=(0,0), \\
(1,0,0) \quad \text{else}.
\end{cases}
\]
We identify  $G_3 = (\FF_2 \times \FF_2^2, \ast_\theta)$  with $Q_8$ via
\begin{table}[H]
    \centering
 \begin{tabular}{c|c}
      $Q_8$ & $G_3$  \\
         \hline
          $1$   & $(0,\,0,\, 0)$ \\
          $i$   & $(0,\,0,\,1)$ \\
          $j$   & $(0,\,1,\,1)$ \\
          $k$   & $(0,\,1,\,0)$ \\
          $-1$  & $(1,\,0,\,0)$ \\
          $-i$  & $(1,\,0,\,1)$ \\
          $-j$  & $(1,\,1,\,1)$ \\
          $-k$  & $(1,\,1,\,0)$
    \end{tabular}
\end{table}
We now specify the general parametrization of nilpotent extensions from  \cite{Koymans-Pagano-nilpotent} to the case $G=Q_8$.  Let
\[
    \chi_a:\Gamma _{\QQ}\longrightarrow \FF_2
\]
denote the quadratic character associated to a
square-free integer $a$. As
\[
    G_2 =\FF_2^2,
\]
every continuous homomorphism $\Gamma_\QQ \to G_2$ is of the form
\[
    (\chi_b,\chi_a):\Gamma_{\QQ}\longrightarrow G_2 
\]
for quadratic characters $\chi_b, \chi_a : \Gamma_\QQ\to \FF_2$ for $a,b$ square-free. This map is surjective if and only if $a, b\neq 1$ and $a\neq b$. In other words, the corresponding field $\QQ(\sqrt{a}, \sqrt{b})$ is a biquadratic field. 

Such a map can be lifted to an epimorphism $\Gamma_\QQ\to Q_8$ if and only if the corresponding embedding problem is solvable. The embedding problem is solvable if and only if the Hilbert symbol $(a,b)_\nu(ab,-1)_\nu = +1$ for every place $\nu$ of $\QQ$, see \cite[Theorem 4]{kiming}. 

Whenever this holds, the map $(\chi_b, \chi_a) : \Gamma_\QQ  \to G_2$ admits a lift to an epimorphism 
\[
(\theta_{a,b}, \chi_b, \chi_a): \Gamma_\QQ\to G_3 = Q_8.
\]
The coordinate map $\theta_{a,b}$ is constructed explicitly in \cite[Prop. 2.6]{Koymans-Pagano-nilpotent} (see also \cite[Prop. 2.2]{koymans-pagano-archive} for the case of 2-groups and $k=\QQ$) such that the resulting extension is minimally ramified. In particular, the extension is unramified outside all ramified places of the biquadratic extension $\QQ(\sqrt{a}, \sqrt{b})$. 

Every other lift is obtained by twisting $\theta_{a,b}$ with a quadratic character $\chi_t$ for some square-free $t$. So every lift is of the form $  (\theta_{a,b} + \chi_t, \chi_b, \chi_a) : \Gamma_\QQ \to Q_8$.

Thus every $Q_8$-extension is parametrized by a triple $(t,b,a)$ of square-free integers satisfying 
\begin{equation}\label{list:conditions}
\begin{aligned}
&\text{(1)}\qquad a,b\neq 1,\quad a\neq b,\\
&\text{(2)}\qquad (a,b)_\nu(ab,-1)_\nu=+1
    \quad\text{for every place }\nu\text{ of }\QQ. \hspace{4.5cm}
\end{aligned}
\end{equation}

To make the parametrization more accessible to analytic arguments, we keep track of the various common divisors between $a,b$ and $t$. 
Following the notation of Koymans-Pagano, we write the square-free $a,b,t$ as products of $|Q_8|-1 = 7$ new variables $v_{x}$, which are pairwise coprime and indexed by the group elements $x\in Q_8\setminus\{1\}$. Given a tuple $(t,b,a)$ of square-free integers, we associate the $7$-tuple $\boldsymbol{v}=(v_i, v_{-i}, v_j, v_{-j}, v_{k}, v_{-k}, v_{-1})$ of square-free, \textit{pairwise coprime} integers via
\begin{align*}
    &v_i = \prod_{p \mid a , \  p\nmid b, \  p\nmid t} p, \quad  
    v_k = \prod_{p \nmid a , \  p\mid b, \  p\nmid t} p, \quad  
    v_j = \prod_{p \mid a , \  p\mid b, \  p\nmid t} p, \\  
    &v_{-i} = \prod_{p \mid a , \  p\nmid b, \  p\mid t} p, \quad  
    v_{-k} = \prod_{p \nmid a , \  p\mid b, \  p\mid t} p, \quad  
    v_{-j} =\prod_{p \mid a , \  p\mid b, \  p\mid t} p, \\
   & v_{-1} = \prod_{p \nmid a , \  p\nmid b, \  p\mid t} p.
\end{align*}
In the above we include $p=-1$ in the product, defining $-1\mid a$ if and only if $a$ is negative. 
 In these new variables   
\begin{align*}
    a &= v_jv_{-j}v_iv_{-i},\\
    b &= v_jv_{-j}v_kv_{-k}, \numberthis \label{eq:def_vs}\\
    t &= v_{-1}v_{-i}v_{-j}v_{-k}.
\end{align*}

 Let $\mu_x \in \{0,1\}$ be the 2-adic valuation of $v_xv_{-x}$ for $x\in \{i,j,k\}$ and write 
    \begin{align}
    v_xv_{-x} = 2^{\mu_x} b_x \qquad \text{ for }x\in \{i,j,k\},
    \label{eq:bx_vx}    
    \end{align}
    where $b_x$ is the odd part of $v_xv_{-x}$. We write $\legendre{n}{m}$ for the Jacobi symbol of $n$ and $m$. We define 
       \begin{align} \label{eq:def_ell}
              \ell_{\boldsymbol{\mu}, b_i, b_j, b_k}\! := \!\prod_{\substack{p\mid b_ib_jb_k  }}\! \frac{1}{2}\left(1+ \legendre{ -2^{\mu_i+\mu_j} b_ib_j}{p}\right)   \left(1+ \legendre{  -2^{\mu_i+\mu_k} b_ib_k }{p}\right) \left(1+ \legendre{ -2^{\mu_j+\mu_k} b_jb_k}{p}\! \right)
    \end{align}
    which, since $b_i, b_j, b_k$ are pairwise coprime,  takes values $0$ or $1$.

\begin{lemma} \label{lemma:solvable}
    Every epimorphism $\varphi \in \operatorname{Epi}_{\operatorname{cts}}(\Gamma_\QQ, Q_8)$ is uniquely parametrized by a 7-tuple of square-free, pairwise coprime integers $\boldsymbol{v}  = (v_i, v_{-i}, v_j, v_{-j}, v_k, v_{-k}, v_{-1}) \in (\ZZ\setminus\{0\})^7$  which, in the notation \eqref{eq:bx_vx} and \eqref{eq:def_ell} satisfies the following properties:
\begin{alignat}{2}
&\hbox{\text{\tiny $\bullet$}} \hspace{1cm} && 2^{\mu_x+\mu_y} b_xb_y \neq 1
    \text{, for all }x\neq y\in \{i,j,k\} \hspace{5cm}
    \tag{S1}\label{cond:S1}\\
&\hbox{\text{\tiny $\bullet$}} \ && (2^{\mu_i+\mu_j}b_ib_j, 2^{\mu_j+\mu_k} b_jb_k)_2 ( -1, 2^{\mu_i+\mu_k} b_ib_k)_2 = 1
    \tag{S2}\label{cond:S2}\\
&\hbox{\text{\tiny $\bullet$}} \ && v_i, v_{-i}, v_j, v_{-j}, v_k, v_{-k}>0
    \tag{S3}\label{cond:S3}\\
&\hbox{\text{\tiny $\bullet$}}  \  && \ell_{\boldsymbol{\mu}, b_i, b_j, b_k} = 1
    \tag{S4}\label{cond:S4}.
\end{alignat}
\end{lemma}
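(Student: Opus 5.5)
The plan is to combine the parametrization by triples $(t,b,a)$ described just before the statement with a place-by-place translation of the conditions \eqref{list:conditions} into the variables $\boldsymbol{v}$.

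\emph{Step 1: the triples $(t,b,a)$.} Every $\varphi\in\operatorname{Epi}_{\operatorname{cts}}(\Gamma_\QQ,Q_8)$ composed with $Q_8\to Q_8/\{\pm1\}=G_2$ gives $(\chi_b,\chi_a)$ for unique square-free $a,b$. This map is surjective, so (1) holds. The embedding problem is solvable, so (2) holds by \cite[Theorem 4]{kiming}. Conversely, take any lift $(\theta_{a,b}+\chi_t,\chi_b,\chi_a)$ of a surjective $(\chi_b,\chi_a)$. It is automatically surjective, because $\{\pm1\}$ is the Frattini subgroup of $Q_8$. Since the lifts form a torsor under $\operatorname{Hom}(\Gamma_\QQ,\FF_2)$, distinct square-free $t$ give distinct lifts. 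Hence $\varphi\mapsto(t,b,a)$ is a bijection onto the set of triples of square-free integers satisfying (1) and (2).

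\emph{Step 2: the triples $(t,b,a)$ and the tuples $\boldsymbol{v}$.} The triple corresponds bijectively to a $7$-tuple $\boldsymbol{v}$ of square-free, pairwise coprime integers via \eqref{eq:def_vs}. Each prime, including $p=-1$, lies in exactly one of the seven classes determined by which of $a,b,t$ it divides. Up to squares we have
\[
a=2^{\mu_i+\mu_j}b_ib_j,\qquad b=2^{\mu_j+\mu_k}b_jb_k,\qquad ab\equiv 2^{\mu_i+\mu_k}b_ib_k \pmod{\QQ^{\times2}},
\]
once the signs are settled in Step 3.

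\emph{Step 3: translating (1) and (2) place by place.}

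At $\nu=\infty$: if $a<0<b$ or $b<0<a$, then $(a,b)_\infty=1$ and $(ab,-1)_\infty=-1$. If $a,b<0$, then $(a,b)_\infty=-1$ and $(ab,-1)_\infty=1$. So (2) at $\infty$ forces $a,b>0$. This means $-1$ divides none of $v_{\pm i},v_{\pm j},v_{\pm k}$, which is \eqref{cond:S3}. No condition is placed on $v_{-1}$, so $t$ may be negative.

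Condition (1): given positivity, (1) says $a\ne1$, $b\ne1$ and $ab/(a,b)^2\ne1$. This is exactly \eqref{cond:S1}.

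At $\nu=2$: condition (2) is literally \eqref{cond:S2}, after replacing $ab$ by its square-free class.

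At odd $p\nmid ab$: both symbols are trivial.

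At odd $p\mid b_i$ (so $p\mid a$, $p\nmid b$): $(a,b)_p(ab,-1)_p=\legendre{-b}{p}$. In \eqref{eq:def_ell}, the two factors whose numerators are divisible by $p$ contribute $1$. The local factor is therefore $\tfrac12\bigl(1+\legendre{-2^{\mu_j+\mu_k}b_jb_k}{p}\bigr)$, which equals $1$ exactly when (2) holds at $p$. The case $p\mid b_k$ is symmetric, with $a$ and $b$ exchanged.

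At odd $p\mid b_j$ (so $p$ divides both $a$ and $b$): $(ab,-1)_p=1$, since $p^2\mid ab$. Also $(a,b)_p=\legendre{-(a/p)(b/p)}{p}$, which is the symbol $\legendre{-2^{\mu_i+\mu_k}b_ib_k}{p}$ appearing in the surviving factor of \eqref{eq:def_ell}.

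Taking the product over all odd $p\mid b_ib_jb_k$, (2) at all odd places is equivalent to \eqref{cond:S4}, since $\ell\in\{0,1\}$.

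\emph{Main obstacle.} The only delicate point is the odd-prime bookkeeping in Step 3. One must check, for each of the three types $p\mid b_i$, $p\mid b_j$, $p\mid b_k$, that exactly one factor of \eqref{eq:def_ell} is non-degenerate, and that this factor matches the Hilbert-symbol computation, including the powers of $2$ coming from $\mu_x$. Uniqueness is immediate from Steps 1 and 2.
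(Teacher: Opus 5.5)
Your proposal is correct and follows the same route as the paper: pass from $\varphi$ to the triple $(t,b,a)$, re-index it bijectively by the $7$-tuple $\boldsymbol{v}$, and translate condition (1) and the Hilbert-symbol condition (2) place by place ($\infty$, $2$, and odd $p$ dividing $b_i$, $b_j$ or $b_k$) into (S1)--(S4). The differences are cosmetic. You evaluate $(a,b)_\nu(ab,-1)_\nu$ directly, whereas the paper first rewrites it as $(-a,-b)_\nu(-1,-1)_\nu$. You also spell out the Frattini and torsor argument for surjectivity and uniqueness of lifts, which the paper takes from the preceding discussion.
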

\begin{proof}
We have established the parametrization of $Q_8$-extensions via triples $(t,b,a)$ satisfying \eqref{list:conditions}. We now rewrite these in the variables $\boldsymbol{v}$. Since every prime divisor of $a,b,t$ is assigned to precisely one
of the variables $v_x$ according to the subset of $\{a,b,t\}$ in which it occurs, the integers
\[
v_i,v_{-i},v_j,v_{-j},v_k,v_{-k},v_{-1}
\]
are square-free and pairwise coprime. Conversely, every such $7$-tuple uniquely
determines the triple $(t,b,a)$ through \eqref{eq:def_vs}.

In the notation \eqref{eq:bx_vx}, we have 
\[
a = 2^{\mu_i+\mu_j}b_ib_j , \qquad b = 2^{\mu_j+\mu_k}b_jb_k , \qquad ab \equiv 2^{\mu_i+\mu_k}b_ib_k  \text{ in } \QQ^\times /\QQ^{\times2}.
\]
The condition 
\[
a,b\neq 1, \qquad a\neq b,
\]
is hence equivalent to \eqref{cond:S1}. The condition on the local Hilbert symbols 
\[
(a,b)_{\nu} (-1, ab)_{\nu} = +1
\]
In terms of the variables $2^{\mu_x}b_x$, this reads
\[
(2^{\mu_i+\mu_j}b_ib_j , 2^{\mu_j+\mu_k}b_jb_k)_\nu(-1, 2^{\mu_j+\mu_k}b_jb_k)_\nu =+1. 
\]
We verify this condition for all places $\nu$ of $\QQ$. For the place $\nu =2$ we directly arrive at the condition \eqref{cond:S2}. The remaining local computations are essentially identical to those in \cite[Lemma 5]{fouvry}. By the properties of the Hilbert symbol, using $(a,b)_\nu (-1, ab)_\nu = (-a,-b)_\nu(-1, -1)_\nu$, the above is equivalent to
\[
(-2^{\mu_i+\mu_j}b_ib_j , -2^{\mu_j+\mu_k}b_jb_k)_\nu(-1, -1)_\nu =+1. 
\]
Recall 
\begin{align*}
    (-1, -1)_\nu = \begin{cases}
        -1 &\text{ if }\;\nu=2,\\
        +1 &\text{ if }\;\nu=p \neq 2, \\
        -1 &\text{ if }\;\nu=\infty.
    \end{cases}
\end{align*}
For the place $\nu= \infty$ the condition is fulfilled if and only if $b_ib_j , b_ib_k>0$ which by coprimality forces $v_i, v_{-i}, v_j, v_{-j}, v_k, v_{-k}>0$, which gives condition \eqref{cond:S3}. For a prime $p \mid b_i, p\nmid b_jb_k$, we have 
\[
(-2^{\mu_i+\mu_j}b_ib_j , -2^{\mu_j+\mu_k}b_jb_k)_\nu = 1 \iff \legendre{-2^{\mu_j+\mu_k }b_jb_k}{p} = 1.
\]
 For a prime $p \mid b_j, p\nmid b_ib_k$, we have 
\[
(-2^{\mu_i+\mu_j}b_ib_j , -2^{\mu_j+\mu_k}b_jb_k)_\nu = 1 \iff \legendre{-2^{\mu_i+\mu_k} b_ib_k}{p} = 1.
\] 
For a prime $p \mid b_k, p\nmid b_ib_j$, we have 
\[
(-2^{\mu_i+\mu_j}b_ib_j , -2^{\mu_j+\mu_k}b_jb_k)_\nu = 1 \iff \legendre{-2^{\mu_i+\mu_j} b_ib_j}{p} = 1.
\]
These conditions are exactly picked up by the indicator function $\ell_{\boldsymbol{\mu} ,b_i,b_j,b_k} = +1$, giving \eqref{cond:S4}.
Finally, for any odd prime $p\nmid b_ib_jb_k$, we always have 
\[
(-2^{\mu_i+\mu_j}b_ib_j , -2^{\mu_j+\mu_k}b_jb_k)_\nu = 1.
\]
\end{proof}

\section{Multi-heights for \texorpdfstring{$Q_8$}{}-extensions}
We recall the definition of the family of multi-heights (multi-invariants) introduced by Gundlach \cite{gundlach2022mallesconjecturemultipleinvariants}. We give the definition in the special case of extensions over $\QQ$. 
\begin{definition} \label{def:multi}
Let $G$ be a finite group. Let $\mathcal{C}$ be the set of conjugacy classes of non-trivial cyclic subgroups of $G$. Let $\varphi \in \operatorname{Epi}_{\operatorname{cts}}(\Gamma_\QQ, G)$ be a $G$-extension. Let $(M, \sigma)$ be the associated number field with isomorphism $\operatorname{Gal}(M/\QQ) \overset{\sigma}{\simeq} G$. For a prime $\mathfrak p \mid p$, let $I_{\mathfrak{p}} \subseteq G$ be the image of the inertia subgroup at $\mathfrak p$ under $\sigma$. For each conjugacy class of non-trivial cyclic subgroup $c \in \mathcal{C}$, define a height function $h_c :  \operatorname{Epi}_{\operatorname{cts}}(\Gamma_\QQ, G) \to \RR_{>0}$ by 
\[
h_c(\varphi)=h_c(M, \sigma)  := \prod_{\substack{p \text{ tamely ramified in $M$}\\I_{\mathfrak p} \text{ conjugate to } c }} p
\]
\end{definition}
For $G=Q_8$ the non-trivial cyclic subgroups are
\[
\langle -1\rangle , \quad 
\langle i\rangle , \quad
\langle j\rangle , \quad
\langle k\rangle, 
\]
all of which are normal. Hence we obtain 4 height functions denoted by
\[
h_{-1},h_i, h_j, h_k : \{Q_8\text{-extensions}\} \to \RR_{>0}. 
\]
Let $(M, \sigma)$ be a $Q_8$-extension parametrized by the tuple of square-free numbers $(t,b,a)$. We read off the values of the height functions in terms of $(t,b,a)$. Proposition 2.9 of  \cite{Koymans-Pagano-nilpotent} describes the images of generators of tame inertia in terms of the variables $v_{x}$ introduced in \eqref{eq:def_vs}. In particular for an extension $(M, \sigma)$ parametrized by $(t,b,a)$, using the notation \eqref{eq:def_vs}, we have
\[
h_i (M , \sigma) = (v_iv_{-i})' , \quad 
h_j (M , \sigma) = (v_jv_{-j})' , \quad 
h_k (M , \sigma) = (v_kv_{-k})' , \quad 
h_{-1} (M , \sigma) = (v_{-1})' ,
\]
where $a'$ denotes the odd positive part of an integer $a$. Since the multi-heights only record tame ramification, the prime $2$ does not appear.

\subsection{Local conditions at the wildly ramified place} \label{sec:local2}
The wildly ramified places do not appear in the definition of the multi-heights. For $Q_8$ the only prime with wild ramification is $2$. In this section, we study when a global epimorphism $\varphi \in \operatorname{Epi}_{\operatorname{cts}}(\Gamma_\QQ, Q_8)$ agrees with a local specification  given by a continuous homomorphism $\Gamma_{\QQ_2} \to Q_8$ as in \Cref{thm2_local}.

Our fixed inclusions  $\Gamma_{\QQ_p} \hookrightarrow \Gamma_\QQ$ give rise to restriction maps 
\[
\operatorname{Epi}_{\text{cts}}(\Gamma_\QQ, Q_8) \to \operatorname{Hom}_{\text{cts}}(\Gamma_{\QQ_p}, Q_8).
\]

Given a continuous homomorphism $\Phi : \Gamma_{\QQ_2} \to Q_8$, not necessarily surjective, we study those global extensions $\varphi : \Gamma_\QQ \to Q_8 $, such that the restriction $\varphi_{|{\Gamma_{\QQ_2}}}$ equals $ \Phi$. 

Let
\[
\Phi=(\Phi_3,\Phi_2,\Phi_1):\Gamma_{\QQ_2}\longrightarrow Q_8
\]
be a continuous homomorphism, here we identify $Q_8$ with the group $G_3$ as in \Cref{sec:para} and write $\Phi_{1,2,3}$ for the coordinate maps. Composing with the quotient maps
\[
Q_8\twoheadrightarrow Q_8/\langle k\rangle
\qquad\text{and}\qquad
Q_8\twoheadrightarrow Q_8/\langle i\rangle,
\]
directly gives the maps $\Phi_1$ and $\Phi_2$ respectively. Hence $\Phi_2$ and $\Phi_1$ are themselves homomorphisms $\Gamma_{\QQ_2} \to \FF_2$, and we write
\[
\Phi_1=\chi_\alpha,\qquad
\Phi_2=\chi_\beta,
\]
for some square-classes
\[
\alpha,\beta\in \QQ_2^\times/\QQ_2^{\times 2}.
\]
\begin{remark} \label{rmk:local_hilbert2}
   Let $\Phi=(\Phi_3,\Phi_2,\Phi_1) \in \operatorname{Hom}_{\operatorname{cts}}(\Gamma_{\QQ_2}, Q_8)$ with $\Phi_1=\chi_\alpha$, $\Phi_2=\chi_\beta$ for some $\alpha,\beta\in \QQ_2^\times/\QQ_2^{\times 2}$. Then $\alpha, \beta$ always satisfy the local embedding problem 
\[
(\alpha, \beta)_2(-1, \alpha \beta)_2 = +1.
\]
On the other hand, any $\alpha, \beta$ that solve the above local embedding problem arise as the restriction of a global epimorphism by the Grunwald-Wang theorem. 
\end{remark}

Now let
\[
\varphi=(\varphi_3,\varphi_2,\varphi_1):\Gamma_{\QQ}\longrightarrow Q_8
\]
be a global epimorphism parametrized by a 7-tuple $\boldsymbol{v}$ of square-free, pairwise coprime integers . Recall that the parametrization gives
\[
\varphi_1=\chi_a,\qquad
\varphi_2=\chi_b,\qquad
\varphi_3=\theta_{a,b}+\chi_t.
\]
where $t,b,a$ are obtained from $\boldsymbol{v}$ via \eqref{eq:def_vs} and $\theta_{a,b}$ constructed as a \textit{minimal} lift which depends on $a,b$.

For the remainder of this section we use the notation introduced in \eqref{eq:bx_vx}. Thus, for a tuple $\boldsymbol{v}$ we write 
\[
v_x v_{-x} = 2^{\mu_x} b_x \qquad \text{ for }x\in \{i,j,k\}
\]
and moreover 
\[
v_{-i}v_{-j}v_{-k}v_{-1 } = 2^{\mu_t} \delta_t v_{-i}'v_{-j}'v_{-k}'v_{-1 }' 
\]
with $\delta_t\in \{\pm 1\}$, $\mu_t\in \{0,1\}$ and $v_x'$ the odd positive part of $v_x$.

We check the condition
\[
\varphi_{|\Gamma_{\QQ_2}}=\Phi
\]
coordinate-wise.  

Comparing the $\varphi_1$ and $\varphi_2$ coordinates yields 
\begin{align}
    \label{eq:local_b1}
2^{\mu_i +\mu_j} b_ib_j\equiv\alpha,\qquad
2^{\mu_k +\mu_j} b_kb_j\equiv\beta
\quad\text{in}\quad
\QQ_2^\times/\QQ_2^{\times2}.
\end{align}

Restricting the first coordinate of $\varphi$ to $\Gamma_{\QQ_2}$ gives
\[
\varphi_3|_{\Gamma_{\QQ_2}}
=
{\theta}_{2^{\mu_i+\mu_j}b_ib_j, 2^{\mu_j+\mu_k}b_jb_k}|_{\Gamma_{\QQ_2}}
+\chi_t|_{\Gamma_{\QQ_2}}.
\]
Hence the remaining compatibility condition is
\[
\chi_t|_{\Gamma_{\QQ_2}}
=
\Phi_3+{\theta}_{2^{\mu_i+\mu_j}b_ib_j, 2^{\mu_j+\mu_k}b_jb_k}|_{\Gamma_{\QQ_2}}.
\]
Now
\[
\Phi_3+{\theta}_{2^{\mu_i+\mu_j}b_ib_j, 2^{\mu_j+\mu_k}b_jb_k}|_{\Gamma_{\QQ_2}}
\]
is a homomorphism $\Gamma_{\QQ_2}\to \FF_2$, and we write 
\begin{align} \label{eq:def_sc_gamma}
\Phi_3+{\theta}_{2^{\mu_i+\mu_j}b_ib_j, 2^{\mu_j+\mu_k}b_jb_k}|_{\Gamma_{\QQ_2}} =: \chi_{\gamma_{\mu_i, \mu_j, \mu_k, b_i, b_j, b_k}}
\end{align}
with some square class
\[
\gamma_{\mu_i, \mu_j, \mu_k, b_i, b_j, b_k}\in
\QQ_2^\times/\QQ_2^{\times2}.
\]

We collect this information in the following lemma which characterizes when a global epimorphism restricts to a given 2-adic homomorphism. 
\begin{lemma} \label{lemma:local_comp}
    Let $\Phi \in \operatorname{Hom}_{\operatorname{cts}}(\Gamma_{\QQ_2}, Q_8)$ with coordinate maps $\Phi = (\Phi_3, \chi_\beta, \chi_\alpha)$ for $\alpha, \beta \in \QQ_2^\times /\QQ_2^{\times 2}$.  Let $\varphi \in \operatorname{Epi}_{\operatorname{cts}}(\Gamma_\QQ, Q_8)$ be an epimorphism parametrized by tuple $\boldsymbol{v} \in (\ZZ\setminus \{0\})^7$ as in \Cref{lemma:solvable} and let $\gamma_{\mu_i, \mu_j, \mu_k, b_i, b_j, b_k}\in
\QQ_2^\times/\QQ_2^{\times2}$ be given by \eqref{eq:def_sc_gamma}.  Write 
\[
\alpha \equiv 2^{\mu_\alpha} \alpha' , \qquad 
\beta \equiv 2^{\mu_\beta} \beta' , \qquad 
\gamma_{\mu_i, \mu_j, \mu_k, b_i, b_j, b_k} \equiv 2^{\mu_\gamma} \gamma_{\mu_i, \mu_j, \mu_k, b_i, b_j, b_k}'  \quad \text{in }\QQ_2^\times /\QQ_2^{\times 2} 
\]
with $\mu_\alpha, \mu_\beta, \mu_\gamma \in \{0,1\}$ and $\alpha', \beta', \gamma' \in (\ZZ/8\ZZ)^\times$. Then the exponents $\mu_\alpha ,\mu_\beta, \mu_\gamma$ depend only on $\Phi$ and  $\varphi_{|_{\Gamma_{\QQ_2}}} = \Phi$ if and only if 
\begin{alignat}{2}
&\hbox{\text{\tiny $\bullet$}} \hspace{1cm} && b_ib_j \equiv \alpha' \bmod 8,\qquad
b_jb_k \equiv \beta' \bmod 8
    \tag{L1} \hspace{6.5cm}\label{cond:L1}\\
&\hbox{\text{\tiny $\bullet$}} \ && \mu_j = \mu_\alpha\mu_\beta,\quad
\mu_i = \mu_\alpha- \mu_j,\quad
\mu_k = \mu_\beta- \mu_j,\quad
\mu_t = \mu_\gamma
    \tag{L2}\label{cond:L2}\\
&\hbox{\text{\tiny $\bullet$}} \ && \delta_t v_{-i}'v_{-j}'v_{-k}'v_{-1}'
\equiv \gamma_{\mu_i,\mu_j,\mu_k,b_i,b_j,b_k}'
\pmod{8}
    \tag{L3}\label{cond:L3}.
\end{alignat}
\end{lemma}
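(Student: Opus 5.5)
The plan is to compare $\varphi|_{\Gamma_{\QQ_2}}$ and $\Phi$ coordinate by coordinate in the identification $Q_8=G_3=(\FF_2\times\FF_2^2,\ast_\theta)$. Since a homomorphism into $G_3$ is determined by its three coordinate maps, $\varphi|_{\Gamma_{\QQ_2}}=\Phi$ holds if and only if the three equalities $\chi_a|=\chi_\alpha$, $\chi_b|=\chi_\beta$ and $(\theta_{a,b}+\chi_t)|=\Phi_3$ hold as maps $\Gamma_{\QQ_2}\to\FF_2$. A quadratic character of $\Gamma_{\QQ_2}$ is determined by its square class by local Kummer theory. The first two equalities are therefore exactly \eqref{eq:local_b1}. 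By the definition \eqref{eq:def_sc_gamma}, the third is equivalent to $t\equiv\gamma_{\mu_i,\mu_j,\mu_k,b_i,b_j,b_k}$ in $\QQ_2^\times/\QQ_2^{\times2}$.

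Next I translate square classes into congruences. Every class in $\QQ_2^\times/\QQ_2^{\times 2}$ has a unique representative $2^{\mu}u$ with $\mu\in\{0,1\}$ and $u\in(\ZZ/8\ZZ)^\times$. The key fact is that two elements $2^{m}u$ and $2^{m'}u'$ (with $u,u'$ units) are equal in $\QQ_2^\times/\QQ_2^{\times2}$ if and only if $m\equiv m'\pmod 2$ and $u\equiv u'\pmod 8$.

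Apply this to \eqref{eq:local_b1}, using that $b_ib_j$ and $b_jb_k$ are odd and positive by \eqref{cond:S3}. This yields $b_ib_j\equiv\alpha'$ and $b_jb_k\equiv\beta'\pmod 8$, which is \eqref{cond:L1}. It also yields $\mu_i+\mu_j\equiv\mu_\alpha$ and $\mu_j+\mu_k\equiv\mu_\beta\pmod 2$.

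Now use pairwise coprimality: at most one of $\mu_i,\mu_j,\mu_k$ equals $1$. Checking the four possibilities (all zero, or exactly one equal to $1$) shows that the parity conditions are equivalent to $\mu_j=\mu_\alpha\mu_\beta$, $\mu_i=\mu_\alpha-\mu_j$ and $\mu_k=\mu_\beta-\mu_j$. In particular the triple $\boldsymbol\mu$ is forced by $\Phi$.

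For the third coordinate, write $t=2^{\mu_t}\delta_t v_{-i}'v_{-j}'v_{-k}'v_{-1}'$. Here the unit part is the product $\delta_t v'_{-i}\cdots v'_{-1}$, with the sign $\delta_t$ coming from $v_{-1}$. By the same key fact, $t\equiv\gamma$ is equivalent to $\mu_t=\mu_\gamma$ together with \eqref{cond:L3}. Together with the previous paragraph this also gives \eqref{cond:L2}.

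The main subtlety is the claim that $\mu_\gamma$ depends only on $\Phi$, and not on the odd parts $b_x$. My plan is to argue as follows. Once \eqref{cond:L1} and \eqref{cond:L2} hold, the classes of $a$ and $b$ in $\QQ_2^\times/\QQ_2^{\times2}$ are fixed and equal to $\alpha$ and $\beta$. I would then use the construction of the minimal lift in \cite[Prop. 2.6]{Koymans-Pagano-nilpotent} to see that $\theta_{a,b}|_{\Gamma_{\QQ_2}}$ depends only on these local classes. With that, $\gamma$, and hence $\mu_\gamma$, is a function of $\Phi$ alone.

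If that dependence is not transparent from the construction, I would fall back to a weaker statement. The local lifts of $(\chi_\beta,\chi_\alpha)$ form a torsor under $\operatorname{Hom}(\Gamma_{\QQ_2},\FF_2)$. So $\theta_{a,b}|$ is determined up to a quadratic character that is controlled by the explicit formula, and I would read off the $2$-adic valuation parity of the resulting class directly. Apart from this point, the proof is bookkeeping with the standard description of $\QQ_2^\times/\QQ_2^{\times 2}$.
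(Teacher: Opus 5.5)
Your reduction is the paper's and is correct. Comparing the three coordinates, using Kummer theory and the normal form $2^{\mu}u$ in $\QQ_2^\times/\QQ_2^{\times2}$, and checking the four admissible $\boldsymbol{\mu}$ gives exactly \eqref{cond:L1}--\eqref{cond:L3}, in more detail than the paper. The gap is in the claim that $\mu_\gamma$ depends only on $\Phi$.

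Your main plan is to show that $\theta_{a,b}|_{\Gamma_{\QQ_2}}$ depends only on the classes $\alpha,\beta$, so that all of $\gamma$ is a function of $\Phi$. This is false in general. Take $\boldsymbol{\mu}=(0,0,0)$, $b_j=1$, and $b_i=p$, $b_k=q$ with primes $p,q\equiv 1\bmod 8$ and $\legendre{q}{p}=1$. Then $a=p$, $b=q$ and $\alpha=\beta=1$. The field $\QQ(\sqrt p,\sqrt q)$ is unramified at $2$, hence so is the minimal lift. So $\theta_{a,b}|_{\Gamma_{\QQ_2}}$ is an unramified homomorphism into the centre: it is $0$ or $\chi_5$ according as the Frobenius at $2$ in the $Q_8$-field is $1$ or $-1$. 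That Frobenius is a global invariant of quartic-residue (R\'edei-symbol) type, and it is not a function of $p,q \bmod 8$. Compare the cyclic quartic subfield of $\QQ(\zeta_p)$: $2$ splits completely there for $p=73$ but not for $p=17$. So $\gamma'$ genuinely depends on $b_i,b_j,b_k$. This is why the paper keeps that subscript and asserts independence only for $\mu_\gamma$. It is also why, in \Cref{lemma:M_second}, $\gamma'$ enters only through the bounded factor $\overline{\chi_1(\gamma')}$ of the negligible $\chi_1$-term.

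Your fallback does not close the gap either. The torsor statement holds for arbitrary lifts, and the torsor is under all of $\QQ_2^\times/\QQ_2^{\times2}$. It therefore does not exclude a discrepancy by $\chi_{\pm 2}$ or $\chi_{\pm 10}$, which would change $\mu_\gamma$. What is needed, and what the paper takes from the construction in \cite[Prop.~2.6]{Koymans-Pagano-nilpotent}, is a weaker fact: for fixed $(\alpha,\beta)$, the restrictions $\theta_{a,b}|_{\Gamma_{\QQ_2}}$ of the minimal lifts differ from one another only by characters $\chi_u$ with $u\in\ZZ_2^\times$. This is the right shape. The only nontrivial quadratic characters of $\Gamma_\QQ$ unramified at all odd primes are $\chi_{-1},\chi_2,\chi_{-2}$. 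So a normalization of the lift at $2$ can fix the valuation parity of its restriction, but can never absorb the unramified character $\chi_5$, which is left to global arithmetic. With this input in place of your stronger claim, the rest of your argument goes through as in the paper.
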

\begin{proof}
By the above discussion we have $\varphi_{|_{\Gamma_{\QQ_2}}} = \Phi$ if and only if \eqref{eq:local_b1} and 
\[
\gamma_{\mu_i, \mu_j, \mu_k, b_i, b_j, b_k} \equiv 2^{\mu_t} \delta_t v_{-i}'v_{-j}'v_{-k}'v_{-1 }'  \quad \text{ in }\QQ_2^\times /\QQ_2^{\times 2}.
\]
This is equivalent to the above three conditions \eqref{cond:L1}, \eqref{cond:L2}, \eqref{cond:L3}. Further, clearly $\mu_\alpha , \mu_\beta$ depend only on $\Phi$. Finally $\mu_\gamma$ is independent of $\mu_i, \mu_j, \mu_k, b_i, b_j, b_k$, by the construction of the minimal lift ${\theta}_{2^{\mu_i+\mu_j}b_ib_j, 2^{\mu_j+\mu_k}b_jb_k}$ in \cite[Prop. 2.6]{Koymans-Pagano-nilpotent} . 
\end{proof}

\section{Setting up a counting function} 
Fix a continuous homomorphism
\[
\Phi \in \operatorname{Hom}_{\mathrm{cts}}(\Gamma_{\QQ_2},Q_8).
\]
We study the number of global $Q_8$-extensions having prescribed local behaviour at $2$ and bounded multi-heights. For positive real numbers $\boldsymbol{X} = (X_i,X_j,X_k,X_{-1})\in \RR_{>0}^4$ define
\begin{align}
M_\Phi(\boldsymbol{X})
=
\#\left\{
\varphi\in\operatorname{Epi}_{\mathrm{cts}}(\Gamma_\QQ,Q_8):
\begin{array}{l}
h_x(\varphi)\le X_x,\ x\in\{i,j,k,-1\},\\
\varphi|_{\Gamma_{\QQ_2}}=\Phi
\end{array}
\right\}.
\label{eq:defMPhi}    
\end{align}

We establish an intermediate result to express $M_\Phi(\boldsymbol{X})$ analytically.
\begin{lemma} \label{lemma:M_first}
    Let $\boldsymbol{\mu} = (\mu_i, \mu_j, \mu_k) \in \{0,1\}^3$ and $\alpha', \beta' \in (\ZZ/8\ZZ)^\times$ be specified by $\Phi$ as in \Cref{lemma:local_comp}. Then 
    \begin{align*}
    &M_{\Phi}(\boldsymbol{X})  \\
    &= \!\!\!\sum_{ \substack{(\varepsilon_i, \varepsilon_j, \varepsilon_k) \in ((\ZZ/8\ZZ)^\times)^3 \\ \varepsilon_i\varepsilon_j \equiv \alpha' \bmod 8 \\ \varepsilon_j\varepsilon_k\equiv \beta'\bmod 8}} \;\sum_{\substack{b_x\leq  X_x \text{ for }x\in \{i,j,k\} \\
    b_x \equiv \varepsilon_x \bmod 8 \\
    \eqref{cond:S1} \text{ holds } } }\mu^2(2b_ib_jb_k) \, \ell_{\boldsymbol{\mu}, b_i, b_j, b_k}   \sum_{d \mid  \pm b_ib_jb_k} \sum_{\substack{b_{-1} \leq X_{-1} \\ 
d b_{-1} \equiv \gamma' \bmod 8}} \hspace{-.1cm}\mu^2(2b_{-1}b_ib_jb_k).
\end{align*}
with $\ell_{\boldsymbol{\mu}, b_i, b_j, b_k}  $ given by \eqref{eq:def_ell}.
\end{lemma}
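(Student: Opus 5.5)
The plan is to rewrite $M_\Phi(\boldsymbol{X})$ as a sum over the 7-tuples $\boldsymbol{v}$ of \Cref{lemma:solvable}, impose the local conditions of \Cref{lemma:local_comp}, and then regroup the variables. By \Cref{lemma:solvable}, epimorphisms correspond bijectively to square-free, pairwise coprime tuples $\boldsymbol{v}$ satisfying \eqref{cond:S1}--\eqref{cond:S4}. By \Cref{lemma:local_comp}, the condition $\varphi|_{\Gamma_{\QQ_2}}=\Phi$ is equivalent to \eqref{cond:L1}--\eqref{cond:L3}. The heights are $h_x=(v_xv_{-x})'=b_x$ for $x\in\{i,j,k\}$ and $h_{-1}=v_{-1}'$. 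Condition \eqref{cond:L2} pins down $\boldsymbol{\mu}=(\mu_i,\mu_j,\mu_k)$ and $\mu_t$ in terms of $\Phi$ alone. Hence the power of $2$ in each of $v_xv_{-x}$ and in $t$ is determined, and the location of the prime $2$ among the $v$'s is forced. Note that $2$ divides at most one $v_x$, since $\mu_\alpha\mu_\beta$ and the formulas in \eqref{cond:L2} give at most one of $\mu_i,\mu_j,\mu_k$ equal to $1$. One still needs a small check here: if $\mu_t=1$, then $2$ must lie in $v_{-x}$ for the $x$ with $\mu_x=1$, or in $v_{-1}$ if all $\mu_x=0$. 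Either way the placement of $2$ is determined, so it contributes no multiplicity. Note also that \eqref{cond:S2} depends only on $\boldsymbol{\mu}$ and the classes of $b_x$ mod $8$, and is automatically satisfied by \Cref{rmk:local_hilbert2}, since $\alpha,\beta$ solve the local embedding problem.

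Next, I reparametrize by odd parts. Set $b_x$ as in \eqref{eq:bx_vx} and $b_{-1}=v_{-1}'$. Given odd positive squarefree $b_x$, the splitting $b_x=v_x'v_{-x}'$ amounts to choosing a divisor of $b_x$, with $v_{-x}'$ the divisor. Sign conditions: \eqref{cond:S3} forces $v_{\pm i},v_{\pm j},v_{\pm k}>0$, so only $v_{-1}$ may be negative, and $\delta_t$ is the sign of $v_{-1}$. Combining the choices, the data $(v_{-i}',v_{-j}',v_{-k}',\delta_t)$ is exactly a choice of a (signed) divisor $d\mid \pm b_ib_jb_k$, namely $d=\delta_tv_{-i}'v_{-j}'v_{-k}'$. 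Here I use coprimality of $b_i,b_j,b_k$: divisors of the product correspond uniquely to triples of divisors. Condition \eqref{cond:L3} then reads $db_{-1}\equiv\gamma'\bmod 8$. Condition \eqref{cond:L1} is encoded by summing over residues $\varepsilon_x$ of $b_x$ mod $8$ with $\varepsilon_i\varepsilon_j\equiv\alpha'$ and $\varepsilon_j\varepsilon_k\equiv\beta'$. Pairwise coprimality, squarefreeness and oddness of the parts give the factors $\mu^2(2b_ib_jb_k)$ and $\mu^2(2b_{-1}b_ib_jb_k)$. The factor $\ell_{\boldsymbol{\mu},b_i,b_j,b_k}$ encodes \eqref{cond:S4}, and \eqref{cond:S1} is kept explicitly.

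The main subtlety I expect is the dependence of $\gamma'=\gamma'_{\mu_i,\mu_j,\mu_k,b_i,b_j,b_k}$ on the $b_x$. The minimal lift $\theta_{a,b}$ restricted to $\Gamma_{\QQ_2}$ depends on $a,b$, but plausibly only through their classes in $\QQ_2^\times/\QQ_2^{\times2}$, i.e.\ through $\boldsymbol{\mu}$ and $b_x\bmod 8$. I would check this from \cite[Prop.~2.6]{Koymans-Pagano-nilpotent}. Then $\gamma'$ is constant on each residue class $\boldsymbol{\varepsilon}$, which justifies writing the inner condition as stated.

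Finally, I verify that the map $\boldsymbol{v}\mapsto(\boldsymbol{\varepsilon},b_i,b_j,b_k,d,b_{-1})$ is a bijection onto the index set of the displayed sum. The inverse is recovered by reinserting the forced powers of $2$ and the sign $\delta_t$. This bookkeeping step, particularly the placement of $2$ and of the sign, is where I would be most careful.
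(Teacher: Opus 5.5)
Your proposal is correct and is essentially the paper's argument: fix the heights $b_x$, let \eqref{cond:L2} pin down where the prime $2$ sits, note that \eqref{cond:S2} is automatic by \Cref{rmk:local_hilbert2} and that \eqref{cond:S3} forces $\delta_t=\operatorname{sign}(v_{-1})$, and then collect $(\delta_t,v'_{-i},v'_{-j},v'_{-k})$ into a signed divisor $d\mid \pm b_ib_jb_k$, so that \eqref{cond:L3} becomes $db_{-1}\equiv\gamma'\bmod 8$. The ``main subtlety'' you flag is unnecessary and should be dropped: in the lemma, $\gamma'$ stands for $\gamma'_{\mu_i,\mu_j,\mu_k,b_i,b_j,b_k}$ and may vary with $(b_i,b_j,b_k)$ because it sits inside the $b$-sums (the paper only asserts, and only needs, that $\mu_\gamma$ is independent of the $b_x$), so you should not make the identity rest on the unverified claim that $\gamma'$ is determined by $\boldsymbol{\mu}$ and $b_x \bmod 8$.
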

\begin{proof}
We fix the first three heights and vary only
$h_{-1}$.
For fixed integers $b_i,b_j,b_k$ define
\begin{align*}
    M_{\Phi, b_i, b_j, b_k}( X_{-1}) :=  \# \left\{\varphi \in \operatorname{Epi}_{\operatorname{cts}}(\Gamma_\QQ, Q_8) : \begin{array}{l} h_{x}(\varphi) = b_x \text{ for all }x\in \{i,j,k\}, h_{-1}(\varphi) \leq X_{-1} ,  \\
    \varphi_{|\Gamma_{\QQ_2}} = \Phi \end{array} \right\}.
\end{align*}
The multi-heights take odd, square-free and pairwise coprime values, hence
\begin{align*}
     M_{\Phi}(\boldsymbol{X}) = \sum_{b_x\leq X_x \text{ for }x\in \{i,j,k\}} \mu^2(2b_ib_jb_k) M_{\Phi, b_i, b_j, b_k}(X_{-1}).
\end{align*}

By \Cref{sec:para}, an epimorphism counted by $M_{\Phi, b_i, b_j, b_k}( X_{-1})$ is parametrized by a tuple $\boldsymbol{v}$ of square-free pairwise coprime integers satisfying the conditions in \Cref{lemma:solvable} and \Cref{lemma:local_comp}. We check all conditions \eqref{cond:S1}-\eqref{cond:S4}, \eqref{cond:L1}-\eqref{cond:L3}.

The local homomorphism $\Phi$ specifies uniquely a parity vector $(\mu_i, \mu_j, \mu_k, \mu_t) \in \{0,1\}^4$ via \eqref{cond:L2}.  
Thus, if  \eqref{cond:L1} and \eqref{cond:L2} hold, then
\[
\alpha \equiv 2^{\mu_i+\mu_j} b_ib_j , \qquad \beta \equiv 2^{\mu_j+\mu_k} b_jb_k \qquad \text{ in } \QQ_2^\times /\QQ_2^{\times 2},
\]
and hence the condition \eqref{cond:S2} holds automatically by \Cref{rmk:local_hilbert2}. Writing out \eqref{cond:L1} and \eqref{cond:S1} we have 
\begin{align} \label{eq:admissableM}
   M_{\Phi}(\boldsymbol{X}) =   \sum_{ \substack{(\varepsilon_i, \varepsilon_j, \varepsilon_k) \in ((\ZZ/8\ZZ)^\times)^3 \\ \varepsilon_i\varepsilon_j \equiv \alpha' \bmod 8 \\ \varepsilon_j\varepsilon_k\equiv \beta'\bmod 8}} \sum_{\substack{b_x\leq  X_x \text{ for }x\in \{i,j,k\} \\
    b_x \equiv \varepsilon_x \bmod 8 \\ \eqref{cond:S1} \text{ holds}} }\mu^2(2b_ib_jb_k)   M_{\Phi, b_i, b_j, b_k}(X_{-1}).
\end{align}

The remaining conditions to check are 
\[
\eqref{cond:S3}, \eqref{cond:S4} \quad \text{ and } \eqref{cond:L3}.
\]
Checking these amounts to observing that every $M_{\Phi, b_i, b_j, b_k}( X_{-1})$ in \eqref{eq:admissableM} equals the number of tuples  $\boldsymbol{v}=(v_i, v_{-i}, v_j, v_{-j}, v_k, v_{-k}, v_{-1}) \in \ZZ^7$ such that 
\begin{itemize}
    \item $\mu^2(v_i \dots v_{-k}v_{-1})=1$,
    \item  $v_{x}>0$ for $x\in Q_8\setminus \{\pm 1\}$,
    \item $v_xv_{-x} = 2^{\mu_x}b_x$ for all $x\in \{i,j,k\}$,
    \item $\ell_{\boldsymbol{\mu}, b_i, b_j, b_k} = 1$,
    \item $v_{-1}'\leq X_{-1}$,
    \item $v_{-1}v_{-j}v_{-k}v_{-i} =\delta_t 2^{\mu_t} v_{-1}'v_{-j}'v_{-k}'v_{-i}' $,
    \item $v_{-1}'v_{-j}'v_{-k}'v_{-i}' \equiv \delta_t \gamma'$ for $\gamma' \equiv  \gamma'_{\mu_i, \mu_j, \mu_k, b_i, b_j, b_k} \in (\ZZ/8\ZZ)^\times$.
\end{itemize}
The above conditions uniquely determine the parities of the individual $v_x$, $x\in Q_8\setminus\{1\}$, hence we restrict to counting odd integers. Since the function $\ell_{\boldsymbol{\mu}, b_i, b_j, b_k}$ takes values $0$ or $1$, we obtain for any $ M_{\Phi, b_i, b_j, b_k}( X_{-1})$ counted in \eqref{eq:admissableM} 
\begin{align*}
    M_{\Phi, b_i, b_j, b_k}( X_{-1}) &= \ell_{\boldsymbol{\mu}, b_i, b_j, b_k} \sum_{\delta_t \in \{\pm1\}} \;\sum_{v_{-i}'v_{-j}'v_{-k}' \mid   b_ib_jb_k}  \sum_{\substack{v_{-1}'\leq X_{-1} \\ 
\delta_t v_{-i}'v_{-j}'v_{-k}' v_{-1}' \equiv \gamma' \bmod 8}}  \hspace{-.6cm}\mu^2(2v_{-1}'b_ib_jb_k)  \\
&=   \ell_{\boldsymbol{\mu}, b_i, b_j, b_k} \sum_{d \mid  \pm b_ib_jb_k} \sum_{\substack{v_{-1}' \leq X_{-1} \\ 
d v_{-1}' \equiv \gamma' \bmod 8}} \hspace{-.1cm}\mu^2(2v_{-1}'b_ib_jb_k).
\end{align*}
Substituting this into \eqref{eq:admissableM} shows the result.
\end{proof}

We have the following result which allows us to rewrite the term $\ell_{\boldsymbol{\mu}, b_i, b_j, b_k}$. 
\begin{lemma} \label{lemma:ell_rewrite}
    Let $b_i, b_j, b_k$ be odd, square-free, positive and pairwise coprime integers. Let $(\mu_i, \mu_j, \mu_k)\in \{0,1\}^3$ with $\mu_i+\mu_j+\mu_k\leq 1$. Let $\ell_{\boldsymbol{\mu}, b_i, b_j, b_k}$ be given by \eqref{eq:def_ell}. Then 
    \begin{align*}
        \ell_{\boldsymbol{\mu}, b_i, b_j, b_k} = \frac{1}{\tau(b_ib_jb_k)} \sum_{d_xf_x = b_x \text{ for }x\in \{i,j,k\}} u_{\boldsymbol{\mu}}(d_i, d_j, d_k) \legendre{f_i}{d_jd_k}\legendre{f_j}{d_id_k} \legendre{f_k}{d_id_j},
    \end{align*}
    where $u_{\boldsymbol{\mu}}(d_i, d_j, d_k)$ is a function supported on odd integers given by 
    \[ 
        u_{\boldsymbol{\mu}}(d_i, d_j, d_k) := (-2^{\mu_i+ \mu_j}, d_k)_2 (-2^{\mu_i+\mu_k}, d_j)_2 (-2^{\mu_j+\mu_k}, d_i)_2 (d_i, d_j)_2 (d_i, d_k)_2 (d_j, d_k)_2.
    \]
\end{lemma}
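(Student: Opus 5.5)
Expand each local factor of $\ell_{\boldsymbol{\mu}, b_i, b_j, b_k}$ multiplicatively and regroup the resulting sum over subsets of primes as a sum over divisors. Since $b_i,b_j,b_k$ are pairwise coprime, a prime $p\mid b_ib_jb_k$ divides exactly one of them. If $p\mid b_i$, then $\legendre{-2^{\mu_i+\mu_j}b_ib_j}{p}=\legendre{-2^{\mu_i+\mu_k}b_ib_k}{p}=0$, so the factor at $p$ in \eqref{eq:def_ell} reduces to
\[
\tfrac12\Bigl(1+\legendre{-2^{\mu_j+\mu_k}b_jb_k}{p}\Bigr),
\]
consistent with the proof of \Cref{lemma:solvable}; the cases $p\mid b_j$ and $p\mid b_k$ are symmetric. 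Hence
\[
\ell_{\boldsymbol{\mu},b_i,b_j,b_k}=\frac{1}{\tau(b_ib_jb_k)}\prod_{p\mid b_i}\Bigl(1+\legendre{-2^{\mu_j+\mu_k}b_jb_k}{p}\Bigr)\prod_{p\mid b_j}\Bigl(1+\legendre{-2^{\mu_i+\mu_k}b_ib_k}{p}\Bigr)\prod_{p\mid b_k}\Bigl(1+\legendre{-2^{\mu_i+\mu_j}b_ib_j}{p}\Bigr),
\]
using $\tau(b_ib_jb_k)=2^{\omega(b_ib_jb_k)}$ for square-free $b_ib_jb_k$. Expanding the products, choosing the Legendre term at a set of primes $d_x\mid b_x$, and using multiplicativity of the Jacobi symbol in the bottom argument gives
\[
\ell=\frac{1}{\tau(b_ib_jb_k)}\sum_{d_x\mid b_x}\legendre{-2^{\mu_j+\mu_k}b_jb_k}{d_i}\legendre{-2^{\mu_i+\mu_k}b_ib_k}{d_j}\legendre{-2^{\mu_i+\mu_j}b_ib_j}{d_k}.
\]

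The main step is to rewrite each summand in the claimed form. With $b_x=d_xf_x$, split for instance
\[
\legendre{-2^{\mu_j+\mu_k}b_jb_k}{d_i}=\legendre{-2^{\mu_j+\mu_k}}{d_i}\legendre{f_jf_k}{d_i}\legendre{d_jd_k}{d_i}.
\]
The three factors of the shape $\legendre{f_jf_k}{d_i}$ regroup directly: collecting them by numerator gives $\legendre{f_i}{d_jd_k}\legendre{f_j}{d_id_k}\legendre{f_k}{d_id_j}$, which is exactly the character part of the statement. The remaining factors are $\legendre{d_j}{d_i}\legendre{d_k}{d_i}\legendre{d_i}{d_j}\legendre{d_k}{d_j}\legendre{d_i}{d_k}\legendre{d_j}{d_k}$. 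Pairing these, quadratic reciprocity for odd positive coprime $m,n$ gives
\[
\legendre{m}{n}\legendre{n}{m}=(-1)^{\frac{m-1}{2}\frac{n-1}{2}}=(m,n)_2,
\]
which yields $(d_i,d_j)_2(d_i,d_k)_2(d_j,d_k)_2$.

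Finally, for odd positive $d$ and $c\in\{-1,\pm2\}$, the supplementary laws give $\legendre{c}{d}=(c,d)_2$; multiplicativity extends this to $c=-2^{e}$. So $\legendre{-2^{\mu_j+\mu_k}}{d_i}=(-2^{\mu_j+\mu_k},d_i)_2$, and similarly for the other two. Collecting all six $2$-adic Hilbert symbols gives exactly $u_{\boldsymbol{\mu}}(d_i,d_j,d_k)$. These factors depend only on $d_x\bmod 8$, so $u_{\boldsymbol{\mu}}$ is a function on odd integers as stated.

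The only real obstacle is sign and orientation bookkeeping: checking that every Jacobi symbol ends up with the orientation in the statement, and that no factor is double counted when pairing reciprocal symbols. Positivity of $b_x$ (hence of $d_x$ and $f_x$) is needed so that reciprocity takes the simple form $(m,n)_2$ with no extra sign, and coprimality guarantees that all the symbols are nonzero. The hypothesis $\mu_i+\mu_j+\mu_k\le 1$ only ensures that the exponents $2^{\mu_x+\mu_y}$ are at most $2^1$; the computation itself does not otherwise use it.
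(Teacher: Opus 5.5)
Your proposal is correct and follows essentially the same route as the paper. Both arguments expand the product defining $\ell_{\boldsymbol{\mu},b_i,b_j,b_k}$ into a divisor sum, replace $\legendre{-2^{\mu_x+\mu_y}}{d}$ by $(-2^{\mu_x+\mu_y},d)_2$ via the supplementary laws, and turn the paired symbols $\legendre{d_x}{d_y}\legendre{d_y}{d_x}$ into $(d_x,d_y)_2$ by reciprocity. You are slightly more explicit than the paper in noting that at a prime $p\mid b_x$ two of the three factors in \eqref{eq:def_ell} equal $1$, which is what justifies the $1/\tau(b_ib_jb_k)$ normalization.
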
 
\begin{proof}
    By multiplying out the product in the definition of $\ell_{\boldsymbol{\mu}, b_i, b_j, b_k}$ we have
     
\begin{align*}
    \ell_{\boldsymbol{\mu}, b_i, b_j, b_k} =  \frac{1}{\tau(b_ib_jb_k)} \sum_{d_xf_x = b_x \text{ for }x \in \{i,j,k\}} \legendre{-2^{\mu_i+\mu_j} b_ib_j}{d_k} \legendre{-2^{\mu_j+\mu_k} b_jb_k}{d_i} \legendre{-2^{\mu_i+\mu_k} b_ib_k}{d_j} .  
\end{align*}

We simplify the above quadratic symbols. Using $\legendre{n}{d} = (n, d)_2$ for $d$ odd and $n\in \{\pm1, \pm2\}$, we have
\begin{align*}
    &\legendre{ - 2^{\mu_i+\mu_j}b_ib_j}{d_k} \legendre{ -2^{\mu_j+\mu_k}b_jb_k}{d_i}\legendre{ -2^{\mu_i+\mu_k}b_ib_k}{d_j}  \\
    &= (-2^{\mu_i+ \mu_j}, d_k)_2 (-2^{\mu_i+\mu_k}, d_j)_2 (-2^{\mu_j+\mu_k}, d_i)_2 \legendre{d_if_id_jf_j}{d_k} \legendre{d_jf_jd_kf_k}{d_i} \legendre{d_if_id_kf_k}{d_j} .
\end{align*}
Applying quadratic reciprocity, $\legendre{d}{d'}\legendre{d'}{d} = (d,d')_2$ for odd $d,d'$,  the above equals 
\begin{align*}
      u_{\boldsymbol{\mu}}(d_i, d_j, d_k) \cdot \legendre{f_i }{d_jd_k} \legendre{f_j}{d_i d_k} \legendre{f_k}{d_i d_j},
\end{align*}
with $u_{\boldsymbol{\mu}}(d_i, d_j, d_k)$ supported on odd integers as in the statement of the lemma.
\end{proof}

 We collect some useful properties of the function $u_{\boldsymbol{\mu}}$ above. 
 \begin{lemma}\label{lemma:properties_u}
     Let $\boldsymbol{\mu} = (\mu_i, \mu_j, \mu_k) \in \{0,1\}^3$ with $\mu_i+\mu_j+\mu_k \leq 1$. Let $u_{\boldsymbol{\mu}}(d_i, d_j, d_k)$ be the function supported on odd integers introduced above. Then the following holds.
     \begin{enumerate}
         \item[(i)]  Let $\{x,y,z\} = \{i,j,k\}$. For fixed variables $d_x, d_y$, the function $u_{\boldsymbol{\mu}}$ is, up to a fixed sign, a character modulo 8 in the remaining variable $d_z$.  This character is the principal if and only if $\mu_x+\mu_y = 0$ and $d_x d_y \equiv -1 \bmod 4$.
        \item[(ii)] We have
         \[
         u_{\boldsymbol{\mu}}(d_i, d_j, d_k) = (A,B)_2(AB, -1)_2,
         \]
        with $A = 2^{\mu_j+\mu_i} d_id_j$ and $B= 2^{\mu_j+\mu_k}d_jd_k$   .
     \end{enumerate}
 \end{lemma}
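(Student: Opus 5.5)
Both parts come from bilinearity of the Hilbert symbol $(\cdot,\cdot)_2$, together with the fact that for odd $d_i,d_j,d_k$ every symbol involved depends only on residues modulo $8$. I will prove (i) first, since it only needs the shape of $u_{\boldsymbol{\mu}}$, and then check (ii) by expanding the right-hand side.

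For (i), by the definition in \Cref{lemma:ell_rewrite}, $u_{\boldsymbol{\mu}}$ is symmetric under permuting $(i,j,k)$. More precisely, the factor attached to $d_z$ is $(-2^{\mu_x+\mu_y},d_z)_2$ with $\{x,y,z\}=\{i,j,k\}$, and the mixed factors $(d_i,d_j)_2(d_i,d_k)_2(d_j,d_k)_2$ are symmetric. So it suffices to treat $z=k$. Collecting every factor that involves $d_k$ and using bilinearity in the second slot gives
\[
u_{\boldsymbol{\mu}}(d_i,d_j,d_k)=\varepsilon\cdot\bigl(-2^{\mu_i+\mu_j}d_id_j,\;d_k\bigr)_2 ,
\]
where the sign $\varepsilon=(-2^{\mu_i+\mu_k},d_j)_2(-2^{\mu_j+\mu_k},d_i)_2(d_i,d_j)_2$ depends only on $d_i,d_j$. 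For fixed $c\in\QQ_2^\times$, the map $d\mapsto (c,d)_2$ on odd integers depends only on $d$ modulo $8$ and is multiplicative, so it is a Dirichlet character modulo $8$.

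It remains to decide when this character is principal, that is, when $(c,u)_2=1$ for all $u\in\ZZ_2^\times$. The square classes of $\ZZ_2^\times$ are represented by $\pm1,\pm5$, and one checks that $(c,-1)_2=(c,5)_2=1$ holds exactly when $c$ is a unit with $c\equiv 1\bmod 4$. Recall that for a unit $c$ we have $(c,-1)_2=1$ if and only if $c\equiv1\bmod 4$, and $(c,5)_2=1$; for $c=2c'$ we have $(2c',5)_2=-1$. Applying this to $c=-2^{\mu_i+\mu_j}d_id_j$, the character is principal if and only if $\mu_i+\mu_j=0$ and $-d_id_j\equiv 1\bmod 4$, i.e.\ $d_id_j\equiv -1\bmod 4$, which is the claim.

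For (ii), put $a=\mu_i+\mu_j$ and $b=\mu_j+\mu_k$, and expand $(2^a d_id_j,\,2^b d_jd_k)_2\,(2^{a+b}d_id_j^2d_k,\,-1)_2$ bilinearly. We have $d_j^2$ trivial in the square class, $(d_j,d_j)_2=(d_j,-1)_2$ and $(2,2)_2=(2,-1)_2=1$. Moreover, since $\mu_i+\mu_j+\mu_k\le1$, the exponents satisfy $a+b\equiv \mu_i+\mu_k \pmod 2$. After these reductions, the factors $(2^{\cdot},d_\cdot)_2$ and $(-1,d_\cdot)_2$ regroup exactly into $(-2^{\mu_i+\mu_j},d_k)_2(-2^{\mu_i+\mu_k},d_j)_2(-2^{\mu_j+\mu_k},d_i)_2$, and the mixed factors give $(d_i,d_j)_2(d_i,d_k)_2(d_j,d_k)_2$.

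The only real obstacle is the bookkeeping in (ii): matching each power of $2$ and each $-1$ against the right variable. I would organise this by comparing, for each of $d_i,d_j,d_k$, the total exponent of $2$ and of $-1$ it is paired with on both sides; the symbol $(d_j,d_j)_2=(d_j,-1)_2$ supplies the one extra $-1$ needed for $d_j$.
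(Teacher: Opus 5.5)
Your proof is correct and is essentially the paper's argument. For (i) you isolate the $d_z$-dependence as $\pm(-2^{\mu_x+\mu_y}d_xd_y,d_z)_2$ and decide principality by testing against $-1$ and $5$; for (ii) you expand $(A,B)_2(AB,-1)_2$ bilinearly, just as the paper does.

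One small correction of attribution. The congruence $a+b\equiv\mu_i+\mu_k \pmod 2$ holds for every $\boldsymbol{\mu}\in\{0,1\}^3$, since $a+b=\mu_i+2\mu_j+\mu_k$, and $(2,2)_2=(2,-1)_2=1$; so (ii) needs no hypothesis on $\boldsymbol{\mu}$ at all. The condition $\mu_i+\mu_j+\mu_k\le 1$ is really used only in (i), where it guarantees $\mu_x+\mu_y\le 1$, so that $-2^{\mu_x+\mu_y}d_xd_y$ is a $2$-adic unit exactly when $\mu_x+\mu_y=0$.
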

 \begin{proof}
     $(i)$ follows immediately from the definition of $u_{\boldsymbol{\mu}}$. Indeed if $d_x, d_y$ are fixed, the resulting function from $u_{\boldsymbol{\mu}}$ in $d_z$ takes the value 
    \[
     \pm (-2^{\mu_x+\mu_y} d_xd_y, d_z)_2  
     \]
     for $d_z$ odd. This is a character modulo $8$ and it is principal if and only if $\mu_x+\mu_y = 0$ and $d_x d_y \equiv -1 \bmod 4$. \\
     For $(ii)$, we set $A = 2^{\mu_j+\mu_i} d_id_j$ and $B= 2^{\mu_j+\mu_k}d_jd_k$, and expand the expression 
     \[
     (A,B)_2(AB, -1)_2.
     \]
     Using the properties of the Hilbert symbol, we have
 \begin{align*}
     &(A,B)_2(AB,-1)_2 = \\
     &= \Big[ (2^{\mu_j+\mu_i}, 2^{\mu_j+\mu_k})_2(2^{\mu_i+\mu_j}, d_j)_2(2^{\mu_i+\mu_j}, d_k)_2(d_id_j, 2^{\mu_j+\mu_k})_2(d_id_j, d_jd_k)_2 \Big] (AB, -1)_2 \\
     &= \Big[ (2^{\mu_j+\mu_i}, 2^{\mu_j+\mu_k})_2 (2^{\mu_i+\mu_k}, d_j)_2 (2^{\mu_i+\mu_j}, d_k)_2 (d_i, 2^{\mu_j+\mu_k})_2(-1, d_j)_2(d_i, d_j)_2(d_i, d_k)_2 (d_j, d_k)_2 \Big] \\
     &\qquad \cdot \Big[ (2^{\mu_i+\mu_k}, -1)_2(d_i, -1)_2 (d_k, -1)_2\Big] \\
     &= (-2^{\mu_i+\mu_k}, d_j)_2 (-2^{\mu_i+\mu_j},d_k)_2(-2^{\mu_j+ \mu_k}, d_i)_2 (d_i, d_j)_2(d_i, d_k)_2 (d_j, d_k)_2 \\
     &\qquad \cdot (2^{\mu_j+\mu_i}, 2^{\mu_j+\mu_k})_2 (2^{\mu_i+\mu_k}, -1)_2\\
     &= u_{\boldsymbol{\mu}}(d_i, d_j, d_k)   (2^{\mu_j+\mu_i}, 2^{\mu_j+\mu_k})_2 (2^{\mu_i+\mu_k}, -1)_2.
 \end{align*}
 Finally, notice that  $ (2^{\mu_j+\mu_i}, 2^{\mu_j+\mu_k})_2\cdot (2^{\mu_i+\mu_k}, -1)_2 =1$ for any of the admissible choices for $\boldsymbol{\mu}$. 
 \end{proof}

Continuing the analysis of $M_{\Phi}(\boldsymbol{X})$ from \Cref{lemma:M_first}, the non-degeneracy condition \eqref{cond:S1} in the new variables $d_xf_x$  reads
\begin{align}
      2^{\mu_x+\mu_y}d_xf_x d_yf_y \neq 1\text{, for all }x\neq y\in \{i,j,k\} \label{eq:condition_G}. 
\end{align}
\begin{lemma} \label{lemma:M_second}
   Let $\boldsymbol{\mu} = (\mu_i, \mu_j, \mu_k) \in \{0,1\}^3$, $\alpha', \beta' \in (\ZZ/8\ZZ)^\times$ be specified by $\Phi \in \operatorname{Hom}_{\operatorname{cts}}(\Gamma_{\QQ_2}, Q_8)$ as in \Cref{lemma:local_comp}. For $\boldsymbol{X} = (X_i, X_j, X_k, X_{-1}) \in \RR_{>0}^4$, we have
\begin{align*}
  M_{\Phi} (\boldsymbol{X})  &= \frac{1}{2} \sum_{ \substack{(\varepsilon_i, \varepsilon_j, \varepsilon_k )\in ((\ZZ/8\ZZ)^\times)^3\\ (\varepsilon_i\varepsilon_j, \varepsilon_j\varepsilon_k)\equiv (\alpha', \beta') \bmod 8}} N_{\boldsymbol{\mu}, \boldsymbol{\varepsilon}}(\boldsymbol{X})   + O\Bigg(\sum_{\substack{b_x \leq X_x \text{ for }x \in \{i,j,k\} }} \mu^2(b_ib_jb_k)   M^{\chi_1}_{\Phi, b_i, b_j, b_k}( X_{-1}) \Bigg)  
\end{align*}
with 
\begin{align}
    N_{\boldsymbol{\mu}, \boldsymbol{\varepsilon}}(\boldsymbol{X})  &:= \hspace{-.6cm} \sum_{\substack{(d_i, f_i, d_j, f_j, d_k, f_k, v_{-1}) \in \ZZ_{>0}^7 \\ d_x f_x \leq X_x \\ v_{-1}\leq X_{-1}\\ d_xf_x \equiv \varepsilon_x \bmod 8 \\\eqref{eq:condition_G} \text{ holds}}} \hspace{-.7cm}{\mu^2(2d_if_id_jf_jd_kf_kv_{-1})} u_{\boldsymbol{\mu}}(d_i,d_j, d_k)    \legendre{f_i }{d_jd_k} \legendre{f_j}{d_i d_k} \legendre{f_k}{d_i d_j} \label{eq:def_N_main}.
\end{align}
and 
\begin{align} \label{eq:M_chi1}
   M^{\chi_1}_{\Phi, b_i, b_j, b_k} (X_{-1}) :=  \Bigg| \prod_{p\mid b_ib_jb_k} (1+\chi_1(p) ) \sum_{\substack{v_{-1} \leq X_{-1}  \\ (v_{-1}, 2b_ib_jb_k)=1}} \chi_1(v_{-1}) \mu^2(v_{-1}) \Bigg|
\end{align}
for the non-trivial character $\chi_1 \bmod 8$ with $\chi_{-1}(-1)=1$.
\end{lemma}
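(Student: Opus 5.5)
We start from \Cref{lemma:M_first}. The plan is to evaluate the inner sum over $d$ and $b_{-1}$ by detecting the congruence $d b_{-1}\equiv\gamma' \bmod 8$ with characters. Then we substitute \Cref{lemma:ell_rewrite} for $\ell_{\boldsymbol{\mu},b_i,b_j,b_k}$.

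Fix odd, square-free, pairwise coprime $b_i,b_j,b_k$. Orthogonality of characters modulo $8$ gives
\[
\sum_{d\mid \pm b_ib_jb_k}\ \sum_{\substack{b_{-1}\le X_{-1}\\ db_{-1}\equiv\gamma' \bmod 8}}\mu^2(2b_{-1}b_ib_jb_k)
=\frac14\sum_{\chi \bmod 8}\overline{\chi}(\gamma')\sum_{d\mid \pm b_ib_jb_k}\chi(d)\sum_{\substack{b_{-1}\le X_{-1}\\ (b_{-1},2b_ib_jb_k)=1}}\chi(b_{-1})\mu^2(b_{-1}).
\]
Because $d$ runs over both signs, the $d$-sum factors as $(1+\chi(-1))\prod_{p\mid b_ib_jb_k}(1+\chi(p))$. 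This vanishes for the two odd characters. The surviving characters are the principal character $\chi_0$ and the even non-principal character $\chi_1$ (the one attached to $\QQ(\sqrt2)$). The $\chi_1$ term is bounded in absolute value by $\frac12 M^{\chi_1}_{\Phi,b_i,b_j,b_k}(X_{-1})$. We also use $\ell\le 1$ and drop the congruence conditions on $b_x$ to get an upper bound. Summing over $\boldsymbol{\varepsilon}$ contributes only a bounded multiplicity, since $\varepsilon_j$ determines $\varepsilon_i$ and $\varepsilon_k$. Hence the $\chi_1$ terms give exactly the stated $O$-term.

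The principal character contributes
\[
\tfrac14\cdot 2\cdot 2^{\omega(b_ib_jb_k)}\sum_{\substack{v_{-1}\le X_{-1}\\ (v_{-1},2b_ib_jb_k)=1}}\mu^2(v_{-1})
=\tfrac12\,\tau(b_ib_jb_k)\sum_{v_{-1}\le X_{-1}}\mu^2(2v_{-1}b_ib_jb_k),
\]
using $\overline{\chi_0}(\gamma')=1$. We multiply by $\ell_{\boldsymbol{\mu},b_i,b_j,b_k}$ and insert \Cref{lemma:ell_rewrite}, noting that $\boldsymbol\mu$ from (L2) satisfies $\mu_i+\mu_j+\mu_k\le 1$. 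The factor $\tau(b_ib_jb_k)$ then cancels exactly. What remains is $\frac12$ times a sum over factorizations $d_xf_x=b_x$ of $u_{\boldsymbol\mu}(d_i,d_j,d_k)$ times the three Jacobi symbols.

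Next we reparametrize the sum over $(b_i,b_j,b_k)$ and these factorizations as a sum over $(d_i,f_i,d_j,f_j,d_k,f_k)\in\ZZ_{>0}^6$, which is a bijection. Under it:
\begin{itemize}
\item $\mu^2(2b_ib_jb_k)\mu^2(2v_{-1}b_ib_jb_k)$ becomes $\mu^2(2d_if_id_jf_jd_kf_kv_{-1})$;
\item $b_x\le X_x$ becomes $d_xf_x\le X_x$;
\item $b_x\equiv\varepsilon_x \bmod 8$ becomes $d_xf_x\equiv\varepsilon_x \bmod 8$;
\item (S1) becomes \eqref{eq:condition_G}.
\end{itemize}
This yields $\frac12\sum_{\boldsymbol\varepsilon}N_{\boldsymbol\mu,\boldsymbol\varepsilon}(\boldsymbol X)$.

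The only delicate points are bookkeeping. First, we must check that the odd characters really vanish, which rests on the $\pm$ in $d\mid\pm b_ib_jb_k$. Second, the normalizing constants $\frac14\cdot 2$ must combine with $2^{\omega}=\tau$ to give the exact factor $\frac12$. I expect no genuine analytic obstacle in this lemma; the difficulty is postponed to bounding $N$ and $M^{\chi_1}$ later.
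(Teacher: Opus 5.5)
Your proposal is correct and follows the paper's proof essentially step by step. You detect $db_{-1}\equiv\gamma' \bmod 8$ with characters mod $8$ and factor the divisor sum as $(1+\chi(-1))\prod_{p}(1+\chi(p))$, so the odd characters vanish. You then bound the $\chi_1$-term and cancel $\tau(b_ib_jb_k)$ against \Cref{lemma:ell_rewrite} to obtain $\frac12 N_{\boldsymbol\mu,\boldsymbol\varepsilon}$; your checks that $\mu_i+\mu_j+\mu_k\le 1$ and that the reparametrization to $(d_x,f_x)$ is a bijection are details the paper leaves implicit.
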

\begin{proof}
 We detect the congruence condition $d v_{-1}' \equiv \gamma' \bmod 8$ appearing in \Cref{lemma:M_first}  via a sum over the character modulo 8. 
We have
    \begin{align*}
 & \sum_{d \mid  \pm b_ib_jb_k} \sum_{\substack{b_{-1} \leq X_{-1} \\ 
d v_{-1}' \equiv \gamma' \bmod 8}} \hspace{-.3cm}\mu^2(2b_{-1}b_ib_jb_k) \\
&\hspace{2cm}= \frac{\mu^2(2b_ib_jb_k)}{4}\sum_{\chi \bmod 8} \overline{\chi}(\gamma')\Bigg(     \sum_{d\mid \pm b_ib_jb_k} \chi(d)  \!\! \sum_{\substack{v_{-1}' \leq X_{-1}  \\ (v_{-1}', 2b_ib_jb_k)=1}} \!\!\!\chi(v_{-1} ) \mu^2(v_{-1}) \Bigg) .
    \end{align*}
For square-free, pairwise coprime $b_i, b_j, b_k$ we can rewrite 
\[
 \sum_{d\mid \pm b_ib_jb_k} \chi(d)   = (1+\chi(-1)) \prod_{p\mid b_ib_jb_k} \left(1+\chi(p)\right).
\]
In particular the two characters modulo 8 with $\chi(-1)=-1$ identically vanish. Denote by $\chi_1$ the unique non-principal character modulo 8 with $\chi_1(-1)=1$ and let $\chi_0$ be the principal character modulo 8. We separate the contribution coming from these two characters by setting 
\begin{align*}
    \sum_{d \mid  \pm b_ib_jb_k} \sum_{\substack{v_{-1}' \leq X_{-1} \\ 
d v_{-1}' \equiv \gamma' \bmod 8}} \hspace{-.3cm}\mu^2(2b_{-1}b_ib_jb_k) = M^{\chi_1}_{ b_i, b_j, b_k} (X_{-1})  + M^{\chi_0}_{ b_i, b_j, b_k} (X_{-1})
\end{align*}
with 
\begin{align} 
   M^{\chi_1}_{ b_i, b_j, b_k} (X_{-1}) &=  \frac{\overline{\chi_1(\gamma')}}{2}\prod_{p\mid b_ib_jb_k} (1+\chi_1(p) ) \sum_{\substack{v_{-1} \leq X_{-1}  \\ (v_{-1}, 2b_ib_jb_k)=1}} \chi_1(v_{-1}) \mu^2(v_{-1}) \\
   & \ll \Bigg| \prod_{p\mid b_ib_jb_k} (1+\chi_1(p) ) \sum_{\substack{v_{-1} \leq X_{-1}  \\ (v_{-1}, 2b_ib_jb_k)=1}} \chi_1(v_{-1}) \mu^2(v_{-1}) \Bigg|
\end{align}
and 
\begin{align} 
    M^{\chi_0}_{b_i, b_j, b_k} (X_{-1})  :=   \frac{ \tau(b_ib_jb_k)}{2} \sum_{\substack{v_{-1} \leq X_{-1}  \\ (v_{-1}, 2b_ib_jb_k)=1}} \mu^2(v_{-1}). 
\end{align}

Substituting this back into \Cref{lemma:M_first},  we have 

\begin{align*}
	 M_{\Phi}(X_i, X_j, X_k , X_{-1}) &= \sum_{ \substack{(\varepsilon_i, \varepsilon_j, \varepsilon_k) \in ((\ZZ/8\ZZ)^\times)^3 \\ \varepsilon_i\varepsilon_j \equiv \alpha' \bmod 8 \\ \varepsilon_j\varepsilon_k\equiv \beta'\bmod 8}} \sum_{\substack{b_x\leq  X_x \text{ for }x\in \{i,j,k\} \\
    b_x \equiv \varepsilon_x \bmod 8 \\ \eqref{cond:S1} \text{ holds}} } \mu^2(2b_ib_jb_k) \ell_{\boldsymbol{\mu}, b_i, b_j, b_k}  M^{\chi_0}_{ b_i, b_j, b_k}( X_{-1}) \\ 
      &\qquad +O\Bigg(\sum_{\substack{b_x \leq X_x \text{ for }x \in \{i,j,k\}   }} \mu^2(b_ib_jb_k)  \left| M^{\chi_1}_{ b_i, b_j, b_k}( X_{-1}) \right| \Bigg) .
\end{align*}

Inserting the expression obtained for $\ell_{\boldsymbol{\mu}, b_i, b_j, b_k}$ from \Cref{lemma:ell_rewrite} yields
\[
\sum_{\substack{b_x\leq  X_x \text{ for }x\in \{i,j,k\} \\
    b_x \equiv \varepsilon_x \bmod 8 \\ \eqref{cond:S1} \text{ holds}} } \mu^2(2b_ib_jb_k) \ell_{\boldsymbol{\mu}, b_i, b_j, b_k}   M^{\chi_0}_{ b_i, b_j, b_k}( X_{-1})  = \frac{1}{2}  N_{\boldsymbol{\mu}, \boldsymbol{\varepsilon}}(\boldsymbol{X}) 
\]
with $ N_{\boldsymbol{\mu}, \boldsymbol{\varepsilon}}(\boldsymbol{X}) $ defined in the Lemma, and the result follows. 
\end{proof}

We postpone until a later section the proof that the contribution from $ M^{\chi_1}_{\Phi, b_i, b_j, b_k} (X_{-1}) $ is negligible.
 Before continuing the analysis of the counting function, we collect some general Lemmas to deal with the character sums that appear in $N_{\boldsymbol{\mu}, \boldsymbol{\varepsilon}}$.

\section{Analytic lemmas}
A main analytic tool is an estimate on bilinear sums of Friedlander--Iwaniec \cite{friedlander-iwaniec}.
 \begin{lemma}[\texorpdfstring{\cite[Lemma 2]{friedlander-iwaniec}}{}] \label{lemma:large_sieve}
     Let $\alpha_m, \beta_n$ be a sequence of complex numbers with $|\alpha_m|, |\beta_n|\leq 1$ supported on odd integers, then 
     \begin{align*}
         \sum_{\substack{n\leq N \\ m\leq M}} \alpha_m \beta_n \legendre{n}{m} \ll NM \left( N^{-1/6}+M^{-1/6}\right) \log(NM)^{7/6}\,.
     \end{align*}
 \end{lemma}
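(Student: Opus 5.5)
This is \cite[Lemma 2]{friedlander-iwaniec}, and in the paper it is simply quoted. If I were to reprove it, I would follow the elementary Friedlander--Iwaniec route: Cauchy--Schwarz (or Hölder) in one variable, Pólya--Vinogradov for the resulting character sums, and quadratic reciprocity to exchange the roles of $m$ and $n$. Write $S=\sum_{n\le N,\,m\le M}\alpha_m\beta_n\legendre{n}{m}$.

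\textbf{Symmetry.} First I would note that the statement is essentially symmetric in $(m,N)$ and $(n,M)$. For odd $m,n$, reciprocity gives $\legendre{n}{m}=\legendre{m}{n}(-1)^{\frac{m-1}{2}\frac{n-1}{2}}$. Splitting $m$ and $n$ into residue classes mod $4$ writes $S$ as at most four sums of the same shape with $\legendre{m}{n}$, with the signs absorbed into the coefficients. So I may assume $N\le M$ and aim for the bound $MN\,N^{-1/6}(\log MN)^{7/6}$.

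\textbf{Cauchy--Schwarz.} Next I apply Cauchy--Schwarz in $m$:
\[
|S|^2\le M\sum_{n_1,n_2\le N}\beta_{n_1}\overline{\beta_{n_2}}\sum_{m\le M}\legendre{n_1n_2}{m}.
\]
I split this according to whether $n_1n_2$ is a square.
\begin{itemize}
\item The diagonal, where $n_1n_2=\square$, has $\ll N\log N$ pairs (for fixed $n_1$, $n_2$ is determined up to a square factor, which gives a divisor-type count). Its contribution is $\ll M^2N\log N$.
\item Off the diagonal, $m\mapsto\legendre{n_1n_2}{m}$ is a non-principal character of conductor $\le 4n_1n_2$. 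Pólya--Vinogradov bounds the inner sum by $\ll N\log N$, so this part contributes $\ll MN^3\log N$.
\end{itemize}
Together these give $S\ll MN\big(N^{-1/2}+NM^{-1/2}\big)(\log MN)^{1/2}$. This already implies the claim when $N\le M^{1/3}$, because then $NM^{-1/2}\le M^{-1/6}$.

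\textbf{Intermediate range.} The main obstacle is the range $M^{1/3}<N\le M$. Here one Cauchy--Schwarz step loses too much, since the completed character sums are long compared with $M$. There I would apply Hölder with exponent $2r$ in $m$, for a suitable $r$, and expand the $2r$-th power. The diagonal, where $n_1\cdots n_{2r}$ is a square, is counted by $\ll N^r(\log N)^{c_r}$. The off-diagonal terms are again handled by Pólya--Vinogradov. I would also use a second Cauchy--Schwarz in the $n$-variables, swapping via reciprocity, so that the modulus of the character stays shorter than the summation length.

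The delicate part is balancing these choices so that the exponent $1/6$ holds uniformly, with the resulting power of $\log$ being $7/6$. If the bookkeeping turns out badly, my fallback is Heath-Brown's large sieve for quadratic characters. It gives the estimate in every range at once, but with an $(MN)^\varepsilon$ loss instead of $\log^{7/6}$. That loss would be harmless for the applications later in the paper, which only need some power saving $\Delta>0$.
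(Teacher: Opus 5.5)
The paper gives no proof of this lemma; it is simply quoted from Friedlander--Iwaniec, so your opening sentence is all that is needed. Your reproof sketch, however, does not close.

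\textbf{A slip in the first step.} The off-diagonal contribution $MN^3\log N$ to $|S|^2$ gives
\[
|S|\ll M^{1/2}N^{3/2}(\log N)^{1/2}=MN\cdot N^{1/2}M^{-1/2}(\log N)^{1/2},
\]
not $MN\cdot NM^{-1/2}$. Also, for $N\le M$ the target is $MN\cdot N^{-1/6}$, not $MN\cdot M^{-1/6}$. So this step already proves the lemma for $N\le M^{3/4}$, not only for $N\le M^{1/3}$.

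\textbf{The gap: the range $M^{3/4}<N\le M$.} Here H\"older with exponent $2r$ in $m$ goes the wrong way. Expanding the $2r$-th power produces characters $m\mapsto\legendre{n_1\cdots n_{2r}}{m}$ of conductor up to $4N^{2r}$. P\'olya--Vinogradov then yields only $|S|\ll MN^{1/2}+M^{1-1/(2r)}N^{3/2}$ (up to logs). This is admissible only for $N\le M^{3/(4r)}$, a smaller range for every $r\ge 2$. The ``balancing'' you postpone is exactly the missing argument.

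\textbf{The missing step.} Your remark about a second Cauchy--Schwarz that swaps the roles is the right idea, but it must replace the H\"older step. After the first Cauchy--Schwarz, group $q=n_1n_2\le N^2$ (multiplicity $\le\tau(q)$) and apply Cauchy--Schwarz in $q$:
\[
\sum_{n_1,n_2\le N}\Bigl|\sum_{m\le M}\legendre{n_1n_2}{m}\Bigr|
\le\Bigl(\sum_{q\le N^2}\tau(q)^2\Bigr)^{1/2}
\Bigl(\sum_{m_1,m_2\le M}\Bigl|\sum_{q\le N^2}\legendre{q}{m_1m_2}\Bigr|\Bigr)^{1/2},
\]
with $q,m_1,m_2$ odd. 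Now $q\mapsto\legendre{q}{m_1m_2}$ is a character modulo $m_1m_2\le M^2$ summed over a range of length $N^2$; no reciprocity is needed. Pairs with $m_1m_2=\square$ contribute $\ll N^2M\log M$, and P\'olya--Vinogradov bounds the rest by $\ll M^3\log M$. Using $\sum_{q\le N^2}\tau(q)^2\ll N^2(\log N)^3$, this gives
\[
|S|\ll\bigl(NM^{3/4}+N^{1/2}M^{5/4}\bigr)\log(MN).
\]
This is $\ll MN(N^{-1/6}+M^{-1/6})\log(MN)$ precisely when $N\ge M^{3/4}$. The two ranges meet at $N=M^{3/4}$, which is where the exponent $1/6$ comes from.

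\textbf{Your fallback is stronger than you think.} Use Heath-Brown's bilinear bound $(MN)^{\varepsilon}(MN^{1/2}+M^{1/2}N)$, extended from square-free to all odd $m,n$ by writing $m=m_1m_2^2$, $n=n_1n_2^2$. Apply it only for $M^{1/3}<N\le M$. There $(MN)^{\varepsilon}\le N^{4\varepsilon}$, which the spare factor $N^{1/3}$ absorbs. Combined with your first step, this gives the lemma exactly as stated, with no $\varepsilon$-loss.
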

Next, we prove a result on linear character sums over square-free integers with multiplicative coefficients. We study sums of the form 
\[
\sum_{n\leq X} \mu^2(n) \chi(n) f(n),
\]
where $f$ is multiplicative and ``close to 1''. A naive application of P\'olya--Vinogradov would give a bound of roughly size $(qX)^{1/2+\varepsilon}$, where $q$ is the size of the modulus of $\chi$. The goal is to follow the approach of Munsch \cite[Lemma 2.3]{munsch_character_sums} to obtain a non-trivial upper bound of size roughly $q^{1/4+\varepsilon} X^{1/2+\varepsilon}$.
\begin{lemma}\label{lem:square_free_character_sum_g}
    Let $g$ be a function prescribed on primes, with $|g(p)  -1| \leq C/p$ for some constant $C$. Let $G(n)=\prod_{p\mid n} g(p)$.  Let $\chi$ be a non-principal character of modulus $q$, let $m \in \ZZ$ with $(q, m)=1$. Then for any $\varepsilon>0$
    \[
    \sum_{\substack{n\leq X\\(n,m)=1}} \mu^2(n)\chi(n)G(n) \ll_{C, \varepsilon}  \tau(m)^{1/2} q^{1/4} (\log q)^{1/2} X^{1/2+\varepsilon} .
    \]
\end{lemma}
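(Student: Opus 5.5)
We follow the approach of Munsch: write $\mu^2(n)G(n)$ as a convolution of a function supported on squares with a function that is ``essentially a divisor-bounded perturbation of $1$''. Then we apply Cauchy--Schwarz together with the fourth-moment-type bound for character sums that yields the exponent $q^{1/4}$.

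\textbf{Step 1: removing the square-free condition.} We use $\mu^2(n)=\sum_{d^2\mid n}\mu(d)$ and write the sum as
\[
\sum_{\substack{d\le X^{1/2}\\(d,mq)=1}}\mu(d)\chi(d^2)\sum_{\substack{r\le X/d^2\\(r,m)=1}}\chi(r)\,G(d^2 r)\,[\text{$d^2r$ square-free-compatible}] .
\]
It is cleaner to treat $\mu^2(n)G(n)\mathbf 1_{(n,m)=1}$ as a multiplicative function $h$ with $h(p)=g(p)\mathbf 1_{p\nmid m}$ and $h(p^k)=0$ for $k\ge2$. We factor $h=\mathbf 1_{(\cdot,m)=1}*\lambda$, where $\lambda$ is multiplicative with $\lambda(p)=g(p)-1$ for $p\nmid m$ (so $|\lambda(p)|\le C/p$) and $\lambda(p^2)=-g(p)$, with $\lambda(p^k)$ bounded by $O_C(1)$ for $k\ge 2$. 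Consequently $\sum_{n\le Y}|\lambda(n)|\ll_{C,\varepsilon}Y^{1/2+\varepsilon}$ and $\sum_n|\lambda(n)|n^{-1/2-\varepsilon}\ll 1$, since the Euler factors are $1+O(p^{-3/2})+O(p^{-1-2\varepsilon})$. This gives
\[
\sum_{\substack{n\le X\\(n,m)=1}}\mu^2(n)\chi(n)G(n)=\sum_{a}\lambda(a)\chi(a)\sum_{\substack{b\le X/a\\(b,m)=1}}\chi(b).
\]

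\textbf{Step 2: the inner sum.} By Möbius inversion over divisors of $m$,
\[
\sum_{\substack{b\le Y\\(b,m)=1}}\chi(b)=\sum_{e\mid m}\mu(e)\chi(e)\sum_{b\le Y/e}\chi(b).
\]
We would bound this trivially by $\min(Y,\,\tau(m)\sqrt q\log q)$ via Pólya--Vinogradov, but that loses $\tau(m)$ and gives only $q^{1/2}$. To reach $q^{1/4}\tau(m)^{1/2}$, split the $a$-sum at a parameter $A$. For $a>A$ use the trivial bound $X/a$ in the inner sum together with the tail bound $\sum_{a>A}|\lambda(a)|/a\ll A^{-1/2+\varepsilon}$. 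For $a\le A$ use the Pólya--Vinogradov bound $\tau(m)\sqrt q\log q$ times $\sum_{a\le A}|\lambda(a)|\ll A^{1/2+\varepsilon}$. Balancing $XA^{-1/2}=\tau(m)\sqrt q\log q\,A^{1/2}$ gives $A=X/(\tau(m)\sqrt q\log q)$, and the total is
\[
\ll X^{1/2+\varepsilon}\,\tau(m)^{1/2}q^{1/4}(\log q)^{1/2},
\]
which is exactly the claimed bound. The $\varepsilon$-losses come from $\sum_{a\le A}|\lambda(a)|\ll A^{1/2+\varepsilon}\le X^{\varepsilon}A^{1/2}$. If $A<1$ the trivial bound $X\le X^{1/2}\tau(m)^{1/2}q^{1/4}$ already suffices.

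\textbf{Main obstacle.} The main difficulty is establishing the two moment bounds for $\lambda$, namely the partial sum $\ll Y^{1/2+\varepsilon}$ and the tail $\sum_{a>A}|\lambda(a)|/a\ll A^{-1/2+\varepsilon}$, uniformly in $m$ and with constants depending only on $C,\varepsilon$. We would prove both by Rankin's trick. The Dirichlet series $\sum|\lambda(a)|a^{-s}$ converges absolutely for $\Re s>1/2$, because $|\lambda(p)|p^{-s}\le Cp^{-3/2}$ there and the prime-square terms give $\sum_p p^{-2s}$. Bounding the Euler product at $s=1/2+\varepsilon$ by a constant depending on $C,\varepsilon$, and noting that the condition $p\nmid m$ only removes factors, uniformity in $m$ follows. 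Care is needed for the small primes $p\le C$, where $g(p)$ may vanish or be as large as $1+C/2$. These finitely many factors contribute only an $O_C(1)$ multiplicative constant.
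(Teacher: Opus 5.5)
Your proof is correct and is essentially the paper's argument. Both use a convolution identity, apply P\'olya--Vinogradov (after M\"obius inversion over $e\mid m$) when the convolution variable is small and the trivial bound when it is large, and balance the two. Your single cube-free $\lambda$, split at $A$, does the work of the paper's two-layer decomposition $\mu^2(n)=\sum_{d^2\mid n}\mu(d)$, $G=F\ast 1$, split at $d\le H$, so $A\approx H^2$.

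One remark: your opening mention of Cauchy--Schwarz and a fourth-moment bound plays no role in what you actually do. The exponent $q^{1/4}$ comes purely from the balancing.
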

\begin{proof}
We have
    \begin{align}
         \sum_{\substack{n\leq X\\(n,m)=1}} \mu^2(n)\chi(n)G(n) &=   \sum_{d\leq X^{1/2}} \mu(d) \sum_{\substack{n\leq X\\ {d^2\mid n} \\ (n,m) = 1}} \chi(n) G(n) \nonumber \\
         &\ll   \Big| \sum_{d\leq H} \mu(d) \sum_{\substack{n\leq X\\ {d^2\mid n} \\ (n,m) = 1}} \chi(n) G(n) \Big| + \sum_{H<d\leq X^{1/2}} \frac{X^{1+\varepsilon}}{d^2} \label{eq:split_H}
    \end{align}
    for an auxiliary parameter $H$ to be chosen later. The last step uses $G(n) \ll n^\varepsilon$.\\
    Define a multiplicative function $F = G\ast \mu$. Then $G(n) =  (F\ast 1)(n)= \sum_{e\mid n} F(e)$ and we have 
    \[
    F(p)= G(p)-G(1) = g(p)-1, \qquad F(p^k)=G(p^k)-G(p^{k-1})= 0 \text{ for $k\geq 2$}.
    \]
    In particular $|F(p)| \leq C/p$. With this, the first sum in \eqref{eq:split_H} can be manipulated further 
    \begin{align*}
 \sum_{\substack{d\leq H  }} \mu(d) \sum_{\substack{n\leq X \\ d^2 \mid n \\ (n,m)=1}} \chi(n) G(n)     &= \sum_{\substack{d\leq H \\ (d, m)=1 }} \mu(d) \sum_{\substack{n\leq X \\ d^2 \mid n \\ (n,m)=1}}  \chi(n) \sum_{e\mid n} F(e) \\       
        &= \sum_{\substack{d\leq H \\(d,m)=1 }} \mu(d) \sum_{\substack{e\leq X \\ (e , m)=1}} F(e) \chi([d^2,e])  \sum_{\substack{n\leq X/[d^2,e] \\ (n,m)=1}}  \chi(n) \,. \numberthis \label{eq:character_sum_manipulated}
\end{align*}
Now using a Möbius inversion to pick up the coprimality condition in the last sum and applying P\'olya--Vinogradov gives that 
\eqref{eq:character_sum_manipulated} is bounded by
\begin{align*}\ll  \sum_{\substack{d\leq H \\(d,m)=1 }} \sum_{\substack{e\leq X \\ (e , m)=1}} |F(e)| \tau(m) q^{1/2} \log q.
\end{align*}
Using $|F(p)|\leq C/p$ and $F(p^k)=0$ for $k\geq 2$, we have 
\[
\sum_{\substack{e\leq X \\ (e , m)=1}} |F(e)|  \ll \prod_{p\leq X} \left(1+|F(p)|\right)\ll_{C, \varepsilon}  X^\varepsilon .
\]
Hence \eqref{eq:character_sum_manipulated} is bounded by $O(\tau(m) q^{1/2} \log q \cdot H \cdot X^{\varepsilon})$.  Plugging this into \eqref{eq:split_H}, we have 
\[
  \sum_{\substack{n\leq X\\(n,m)=1}}  \mu^2(n)\chi(n)G(n) \ll_{C, \varepsilon} \tau(m) H X^{\varepsilon} q^{1/2}  (\log q) + \frac{X^{1+\varepsilon}}{H}\,.
\]
Choosing $H= q^{-1/4} (\tau(m) \log q)^{-1/2} X^{1/2-\varepsilon/2} $ establishes the bound.
\end{proof}
The next lemma is crucial for the bounds of our character sum. For a similar result in the context of counting $D_4$-extension by conductor, see \cite[Lemma 5.1]{d4_conductor}.
\begin{lemma} \label{lemma:non_square_contribution}
    Let $X_1,X_2 \in \mathbb{R}_{>0}$, $m\in \ZZ\setminus \{0\}$, $\delta>0$. Let $g$ be a function prescribed for primes $p$ with $|g(p)-1|\leq C/p$ for some $C\geq  0$ and let $G(n)=\prod_{p\mid n} g(p)$. Then  for any $\varepsilon>0$ 
        \begin{align*}
   & \sum_{\substack{ d_1\leq X_1 \\ (d_1, m) = 1}} \mu^2(d_1)G(d_1) \sum_{\substack{d_2 \leq X_2 \\  (d_2, md_1)=1}} \mu^2(d_2) G(d_2)  \sum_{\substack{f \leq (X_1X_2)^{1/2+\delta}\\ f \neq \square}}  \frac{G(mf)}{f}\legendre{d_1d_2}{f} \\
    & \hspace{5cm}\ll_{C, \varepsilon}  \tau(m) (X_1X_2)^{3/4 +\delta/2 +\varepsilon}  . 
\end{align*}

\end{lemma}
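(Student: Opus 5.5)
The plan is to swap the order of summation: bring the sum over $f$ to the outside and bound, for each fixed non-square $f$, the double sum over $d_1,d_2$, which is a character sum. Write $f = f_0 f_1^2$ with $f_0>1$ square-free. Then $\legendre{d_1d_2}{f}=\chi_{f}(d_1)\chi_f(d_2)$, where $\chi_f$ is a character modulo $f$ (Jacobi symbol in the top entry). It agrees with the character induced by $\legendre{\cdot}{f_0}$ on integers coprime to $f$, so it is non-principal. Since $|G(mf)|\ll (mf)^{\varepsilon}$, the left-hand side is bounded by
\[
\ll \tau(m)^{O(1)}\sum_{\substack{f\le F\\ f\neq\square}} \frac{f^{\varepsilon}}{f}\Bigg|\sum_{\substack{d_1\le X_1\\(d_1,m)=1}}\mu^2(d_1)G(d_1)\chi_f(d_1)\sum_{\substack{d_2\le X_2\\(d_2,md_1)=1}}\mu^2(d_2)G(d_2)\chi_f(d_2)\Bigg|,
\]
with $F=(X_1X_2)^{1/2+\delta}$. (The factor $G(m)$ is harmless. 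More carefully, $G(mf)\le G(m)G(f)$ up to $p\mid(m,f)$, and we only need the crude bound $m^{\varepsilon}$-type, which I would phrase via $\tau(m)$.)

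For fixed $f$ and $d_1$, the inner sum over $d_2$ falls under \Cref{lem:square_free_character_sum_g}, applied with modulus $q=f$ (or $f_0$ after removing the coprimality to $f$, which can be absorbed into the condition $(n, mf)=1$). It gives $\ll \tau(md_1)^{1/2} f^{1/4}(\log f)^{1/2}X_2^{1/2+\varepsilon}$. Summing trivially over $d_1\le X_1$, using $\tau(d_1)^{1/2}\ll X_1^{\varepsilon}$, gives $\ll \tau(m)^{1/2}X_1^{1+\varepsilon}X_2^{1/2+\varepsilon}f^{1/4+\varepsilon}$. By symmetry we may instead apply the lemma in the $d_1$ variable (first interchanging the two sums, so that the coprimality $(d_1,d_2)=1$ is handled through $m d_2$), giving $X_1^{1/2}X_2$ instead. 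Hence we can take the minimum, which is at most $(X_1X_2)^{3/4}$ up to $\varepsilon$-powers, i.e. the geometric mean.

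Summing over $f\le F$ with weight $f^{-1+1/4+\varepsilon}$ then gives $F^{1/4+\varepsilon}\ll (X_1X_2)^{1/8+\delta/4+\varepsilon}$. This yields a total of $(X_1X_2)^{7/8+\cdots}$, which is too weak. So the trivial treatment is not enough, and the key step is to use a better split. The idea is to apply \Cref{lem:square_free_character_sum_g} to \emph{both} variables, or to use \Cref{lemma:large_sieve} when both are large. Concretely, I would split according to the size of $f$ relative to $X_1,X_2$:

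\textbf{Small $f$.} For $f\le \min(X_1,X_2)^{2}$ or so, use the one-variable bound as above.

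\textbf{Large $f$.} Here, the bilinear form $\sum_{d,f}\alpha_d\beta_f\legendre{d}{f}$, with $d=d_1d_2$ grouped dyadically and $f$ in dyadic ranges $f\sim Y$ carrying weight $1/Y$, is estimated by \Cref{lemma:large_sieve}. This gives, per dyadic block, $Y^{-1}\cdot X_1X_2Y\big((X_1X_2)^{-1/6}+Y^{-1/6}\big)$. Since the coprimality $(d_2,d_1)=1$ prevents writing the coefficient as a function of $d=d_1d_2$ alone, I would instead keep $d_1$ fixed outside and apply the large sieve to $(d_2,f)$. Balancing the ranges then gives the exponent $3/4+\delta/2$, matching the target.

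I expect this balancing to be the main obstacle: the endpoint $f\approx (X_1X_2)^{1/2+\delta}$ must produce exactly $(X_1X_2)^{3/4+\delta/2}$. The one-variable lemma at $f=F$ gives $X_1^{1/2}X_2\cdot F^{1/4}/F^{0}$ per $f$ weighted by $1/f$, and the summation over $f\sim F$ costs $F^{1/4}$. Matching this against the trivial bound in the other variable is where the careful choice of which variable to sum nontrivially, made according to whether $X_1\le X_2$, has to be done. If $X_1\le X_2$, I would sum $d_2$ nontrivially: $\sum_f f^{-3/4}\cdot X_1X_2^{1/2}\ll X_1X_2^{1/2}F^{1/4}$. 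If this exceeds the target, i.e. when $X_1$ is much larger than $X_2^{1/2}$, I would switch to the large sieve estimate, and I would check that in every regime one of the two bounds is at most $(X_1X_2)^{3/4+\delta/2+\varepsilon}$.
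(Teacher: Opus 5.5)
Your plan has a genuine gap: the large-sieve step cannot produce the exponent $3/4+\delta/2$, and the balancing you defer to the end does not close. \Cref{lemma:large_sieve} saves at most a $1/6$ power. With $d_1$ fixed and the bilinear bound applied to $(d_2,f)$, $f\sim Y$, the weight $1/f$ leaves $X_1X_2(X_2^{-1/6}+Y^{-1/6})$ per dyadic block, which is never better than $X_1X_2^{5/6}$. Grouping $n=d_1d_2$ is in fact possible, since the coefficient is a divisor-type function of $n$, but it still only gives $X_1X_2\big((X_1X_2)^{-1/6}+Y^{-1/6}\big)$.

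Take $X_1=X_2=X$, so the target is $X^{3/2+\delta}$ and $f$ runs up to $X^{1+2\delta}$. Your one-variable bound $X^{3/2}Y^{1/4}$ per block is admissible only for $Y\le X^{4\delta}$, so your ``small $f$'' range $f\le\min(X_1,X_2)^2$ is already far too generous. The bilinear bound $X^2Y^{-1/6}$ is admissible only for $Y\ge X^{3-6\delta}$. For $\delta<3/10$ these two ranges leave $f$-ranges uncovered. For example, at $\delta=1/6$, the value used in \Cref{lemma:asymp_D_F}, nothing in $X^{2/3}<Y\le X^{4/3}$ is handled. So no choice of split rescues the argument for small $\delta$.

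The missing idea is the one you mention and then drop: apply \Cref{lem:square_free_character_sum_g} to \emph{both} $d_1$ and $d_2$ for each fixed $f$. The coupling $(d_1,d_2)=1$ is removed by inserting $\sum_{e\mid(d_1,d_2)}\mu(e)$ and writing $d_\nu=ed_\nu'$. Then $\legendre{d_1d_2}{f}=\legendre{d_1'd_2'}{f}$ when $(e,f)=1$ (and the term vanishes otherwise), and $G(d_\nu)=G(e)G(d_\nu')$. After taking absolute values, the double sum becomes $\sum_e |G(e)|^2$ times a product of two independent sums over $d_\nu'\le X_\nu/e$ with $(d_\nu',em)=1$. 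Each factor is $\ll\tau(em)^{1/2}f^{1/4}(\log f)^{1/2}(X_\nu/e)^{1/2+\varepsilon}$, so for fixed $f$ the contribution is $\ll \tau(em)\,f^{1/2}\log f\,(X_1X_2)^{1/2+\varepsilon}e^{-1}$. The $e$-sum costs only $(X_1X_2)^{\varepsilon}$, and $\sum_{f\le F}f^{-1}f^{1/2+\varepsilon}\ll F^{1/2+\varepsilon}=(X_1X_2)^{1/4+\delta/2+\varepsilon}$. This yields the claimed bound with no splitting in $f$ and no bilinear estimate. Square-root cancellation in both variables saves $(X_1X_2)^{1/2}$ at the price $f^{1/2}$, which is exactly what the cutoff $f\le (X_1X_2)^{1/2+\delta}$ is calibrated for.
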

\begin{proof}
    We start by interchanging the summation 
    \begin{align*}
     &\sum_{\substack{ d_1\leq X_1 \\ (d_1, m) = 1}} \mu^2(d_1)G(d_1) \sum_{\substack{d_2 \leq X_2 \\  (d_2, md_1)=1}} \mu^2(d_2) G(d_2)  \sum_{\substack{f \leq (X_1X_2)^{1/2+\delta}\\ f \neq \square}} \frac{G(mf)}{f}\legendre{d_1d_2}{f}  \\
     & \qquad\qquad = \sum_{\substack{f \leq (X_1X_2)^{1/2+\delta}\\ f \neq \square}}  \frac{G(mf)}{f}  \sum_{\substack{ d_1\leq X_1 \\ (d_1, m) = 1}} \mu^2(d_1)G(d_1)\legendre{d_1}{f}\sum_{\substack{d_2 \leq X_2 \\  (d_2, md_1)=1}} \mu^2(d_2) G(d_2)  \legendre{d_2}{f}.
    \end{align*}
  Lifting the coprimality condition $(d_1, d_2)=1$ with a Möbius sum over $e\mid (d_1, d_2)$ and taking absolute values gives
    \begin{align*}
        \ll \!\!\!\!\! \sum_{\substack{f \leq (X_1X_2)^{1/2+\delta}\\ f \neq \square}}  \frac{|G(mf)|}{f} \sum_{\substack{e\leq X_1 \\ (e, fm)=1}} |G(e)|^2 \Bigg|\sum_{\substack{ d_1\leq X_1/e \\ (d_1, em) = 1}} \mu^2(d_1)G(d_1)\legendre{d_1}{f} \Bigg| \Bigg|\sum_{\substack{d_2 \leq X_2/e \\  (d_2, em)=1}} \mu^2(d_2) G(d_2)  \legendre{d_2}{f} \Bigg|.
    \end{align*}
    We are now in a position to apply \Cref{lem:square_free_character_sum_g} to the sums over $d_1$ and $d_2$. This allows us to bound the above by 
    \begin{align*}
        &\ll \sum_{f\leq (X_1X_2)^{1/2+\delta}} \frac{|G(mf)| }{f}\cdot \left( f^{1/2}(\log f) \tau(m) (X_1X_2)^{1/2+\varepsilon} \right)  \sum_{e\leq X_1} \frac{\tau(e)  |G(e) |^2}{e} . \numberthis \label{eq:lemma_lastbound}
    \end{align*}
    Using the crude bound
    \begin{align*}
        G(n) = \prod_{p\mid n} g(p) \ll \prod_{p\mid n} \left( 1+\frac{C}{p} \right) \ll (\log n)^{C} ,
    \end{align*}
    we have that \eqref{eq:lemma_lastbound} is $O_{C, \varepsilon}( \tau(m) (X_1X_2)^{3/4 +\delta/2+\varepsilon})$ as desired.
\end{proof}

\begin{lemma} \label{lemma:L_func_bound}
   Let $g$ be a function prescribed on primes, with $|g(p)  -1| \leq C/p$ for a constant $C>0$. Let $G(n)=\prod_{p\mid n} g(p)$. Let $\chi$ be a non-trivial character modulo $q$, then 
   \[
   \sum_{n\leq X} \frac{\mu^2(n)G(n)\chi(n)}{n} \ll_C \sqrt{q}\log q.
   \]
\end{lemma}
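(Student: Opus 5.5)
The idea is to reduce to the standard bound $\sum_{n\le y}\chi(n)\ll \sqrt{q}\log q$ (P\'olya--Vinogradov), combined with partial summation. As in the proof of \Cref{lem:square_free_character_sum_g}, the arithmetic weight $\mu^2(n)G(n)$ is first unfolded into simpler pieces. The key point is that the weight $1/n$ makes every tail sum absolutely convergent, so no splitting parameter $H$ depending on $X$ is needed.

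\textbf{Step 1: unfold the weight.} Define $F=G\ast\mu$. As before, $F$ is supported on square-free $e$, with $|F(p)|\le C/p$. Then write
\[
\mu^2(n)G(n)=\sum_{d^2\mid n}\mu(d)\sum_{e\mid n}F(e).
\]
Substituting $n=[d^2,e]\,r$ transforms the sum into
\[
\sum_{d}\mu(d)\sum_{e}F(e)\,\frac{\chi([d^2,e])}{[d^2,e]}\sum_{r\le X/[d^2,e]}\frac{\chi(r)}{r}.
\]
Unlike in \Cref{lem:square_free_character_sum_g}, there is no coprimality condition in $n$ to remove, so the innermost sum is a clean sum of $\chi(r)/r$ over an interval.

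\textbf{Step 2: bound the inner sum uniformly.} By partial summation and P\'olya--Vinogradov, for every $y\ge1$,
\[
\sum_{r\le y}\chi(r)/r \ll \sqrt q\log q,
\]
with the bound uniform in $y$. Indeed, the partial sums of $\chi$ are $O(\sqrt q\log q)$, and $\int_1^\infty t^{-2}\,dt$ converges.

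\textbf{Step 3: sum the outer terms.} It remains to control
\[
\sum_{d,e}\frac{|F(e)|}{[d^2,e]}.
\]
Using $[d^2,e]\ge d^2e/(d,e)$, this is at most the Euler product
\[
\prod_p\Bigl(1+\frac{1}{p^2}+\frac{|F(p)|}{p}+\frac{|F(p)|}{p}\Bigr)\ll\prod_p\Bigl(1+\frac{1+2C}{p^2}\Bigr)\ll_C 1.
\]
Here the term $d=e=p$ contributes $|F(p)|/p^2\cdot p=|F(p)|/p$. The product converges because $|F(p)|\le C/p$.

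\textbf{Main obstacle.} The only delicate point is the bookkeeping in Step 3: one has to check that the overlap between $d^2$ and $e$ does not destroy convergence. Since both $d$ and $e$ are square-free, it suffices to go through the local factors at each prime $p$, namely $1$, $p^{-2}$ from $d$, $|F(p)|/p$ from $e$, and the joint case, all of which are summable. Combining Steps 1--3 gives the bound $\ll_C\sqrt q\log q$.
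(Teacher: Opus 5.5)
Your proof is correct and is essentially the paper's argument. If you group your pairs $(d,e)$ by $m=[d^2,e]$ with weight $\mu(d)F(e)$, you get exactly the paper's single function $\mu^2G\ast\mu$, whose values are $g(p)-1$ at $p$ and $-g(p)$ at $p^2$. Both proofs then combine the uniform bound $\sum_{r\le y}\chi(r)/r\ll\sqrt q\log q$ with absolute convergence of the outer sum.

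One small slip in Step 3: the joint term $d=e=p$ contributes $|F(p)|/p^2$, since $[p^2,p]=p^2$, not $|F(p)|/p$. Because you overestimated, your Euler product is still a valid upper bound and the conclusion is unaffected.
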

\begin{proof}
   We express $\mu^2(n)G(n)$ in terms of a  Dirichlet convolution. Define a multiplicative function by $F := \mu^2G\ast \mu$. Then $\mu^2(n)G(n) = \sum_{d\mid n} F(d)$ and we evaluate  
   \[
   F(p) = g(p)-1   , \qquad F(p^2) = - g(p), \qquad F(p^k)=0 \text{ for } k\geq 3.
\]
By hypothesis, it follows $|F(p)| \leq C/p$ and $|F(p^2)| \leq 1+ \frac{C}{p}$.

Returning to our main sum, we obtain
\[
\sum_{n \leq X} \frac{\mu^2(n)G(n)\chi(n)}{n} = \sum_{n \leq X} \frac{\chi(n)}{n} \sum_{d \mid n} F(d).
\]
Interchanging the summation using the multiplicativity of the character yields
\[
\sum_{d \leq X} \frac{F(d)\chi(d)}{d} \sum_{m \leq X/d} \frac{\chi(m)}{m}.
\]
By partial summation and  Pólya--Vinogradov, we have 
\[
\left| \sum_{m \leq Y} \frac{\chi(m)}{m} \right|\ll \sqrt{q}\log q,
\]
uniformly for any $Y \geq 1$.

Taking absolute values of our rearranged sum and applying this bound, we obtain
\[
\left| \sum_{n \leq X} \frac{\mu^2(n)G(n)\chi(n)}{n} \right| \leq \sum_{d \leq X} \frac{|F(d)|}{d} \left| \sum_{m \leq X/d} \frac{\chi(m)}{m} \right| \ll \sqrt{q}\log q \sum_{d=1}^{\infty} \frac{|F(d)|}{d}.
\]
We finish the proof by showing that the sum over $d$ converges. 
 Because $F$ is multiplicative, we can expand it as an Euler product
\[
\sum_{d=1}^{\infty} \frac{|F(d)|}{d} = \prod_{p} \left( 1 + \frac{|F(p)|}{p} + \frac{|F(p^2)|}{p^2} \right) \ll \prod_{p} \left( 1 + \frac{2C+1}{p^2}\right)   \ll_C 1 ,
\]
 which shows convergence.
\end{proof}
Finally, we state a lemma that allows us to give an asymptotic expression for sums over square-free numbers. 

\begin{lemma} \label{lemma:square-free-asymp}
   Let $g$ be a function prescribed on primes, with $|g(p)  -1| \leq C/p$, for a constant $C>0$. Let $G(n)=\prod_{p\mid n} g(p)$. Let $m,a,q\in \ZZ$ with $(a,q) = (m,q)=1$. Then for any $\varepsilon>0$ we have
   \begin{align*}
   \sum_{\substack{n\leq X\\ (n,m)=1 \\ n\equiv a \bmod q}} {\mu^2(n)G(n)} & = \frac{1}{\varphi(q)} \prod_{p} \left(1-\frac{1}{p}\right) \left(1+\frac{g(p)}{p}\right)\prod_{p\mid mq} \left(1+\frac{g(p)}{p}\right)^{-1} X\\
   &\qquad + O\left(\tau(m) X^{1/2+\varepsilon}q^{1/4+\varepsilon}\right)  .  
   \end{align*}
   
\end{lemma}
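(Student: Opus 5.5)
Detect the congruence $n\equiv a \bmod q$ with Dirichlet characters modulo $q$:
\[
\sum_{\substack{n\leq X\\ (n,m)=1 \\ n\equiv a \bmod q}} \mu^2(n)G(n) = \frac{1}{\varphi(q)}\sum_{\chi \bmod q}\overline{\chi}(a)\sum_{\substack{n\leq X\\(n,m)=1}}\mu^2(n)\chi(n)G(n).
\]
For each non-principal $\chi$ we apply \Cref{lem:square_free_character_sum_g}, which is legitimate because $(q,m)=1$. It gives a bound $\ll \tau(m)^{1/2}q^{1/4}(\log q)^{1/2}X^{1/2+\varepsilon}$. Averaging over the $\varphi(q)$ characters with weight $1/\varphi(q)$ keeps this size, and this is within the claimed error $O(\tau(m)X^{1/2+\varepsilon}q^{1/4+\varepsilon})$. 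So the problem reduces to the principal character, that is, to evaluating $\sum_{n\leq X,\,(n,mq)=1}\mu^2(n)G(n)$ asymptotically.

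For the principal term we use the same convolution trick as in \Cref{lemma:L_func_bound}. Define the multiplicative function $F$ by $\mu^2 G\,\mathbbm{1}_{(\cdot,mq)=1} = F\ast 1$. At primes $p\nmid mq$ this gives $F(p)=g(p)-1$, $F(p^2)=-g(p)$ and $F(p^k)=0$ for $k\geq 3$. At primes $p\mid mq$ it gives $F(p)=-1$, $F(p^k)=0$ for $k\geq 2$. Then
\[
\sum_{n\leq X}(F\ast 1)(n)=\sum_{d\leq X}F(d)\Big(\frac{X}{d}+O(1)\Big) = X\sum_{d\geq 1}\frac{F(d)}{d} - X\sum_{d>X}\frac{F(d)}{d} + O\Big(\sum_{d\leq X}|F(d)|\Big).
\]
Both error sums are controlled by a Rankin-type bound. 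One multiplies by $(d/X)^{1/2+\varepsilon}$, respectively $(X/d)^{1/2+\varepsilon}$, and uses the Euler product $\sum_d |F(d)|d^{-1/2-\varepsilon}$. At primes $p\nmid mq$ its local factors are $1+O(p^{-3/2}+p^{-1-2\varepsilon})$. At primes $p\mid mq$ the factor $(1+p^{-1/2-\varepsilon})$ contributes $\ll \tau(mq)$, hence $\ll \tau(m)q^{\varepsilon}$. This yields an error $O(\tau(m)q^{\varepsilon}X^{1/2+\varepsilon})$.

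The main term is the Euler product $\sum_d F(d)/d$. At primes $p\nmid mq$ the local factor is $1+\frac{g(p)-1}{p}-\frac{g(p)}{p^2}=(1-\frac1p)(1+\frac{g(p)}{p})$. At primes $p\mid mq$ it is $1-\frac1p$. This equals $\prod_p(1-\frac1p)(1+\frac{g(p)}{p})\prod_{p\mid mq}(1+\frac{g(p)}{p})^{-1}$, as stated. Dividing by $\varphi(q)$ gives the claimed main term.

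The only delicate point is bookkeeping. First, the error from the non-principal characters must keep the $\tau(m)$ dependence no worse than $\tau(m)$; \Cref{lem:square_free_character_sum_g} actually gives the better $\tau(m)^{1/2}$. Second, the Rankin estimate must produce only $q^{\varepsilon}$ from the primes dividing $q$. Neither step poses real difficulty.
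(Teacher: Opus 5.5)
Your proposal is correct and is essentially the paper's proof: characters modulo $q$ with \Cref{lem:square_free_character_sum_g} for the non-principal ones, then the convolution $F$ and an Euler product for the principal part. Folding the coprimality to $mq$ into $F$ and using Rankin's trick, instead of the paper's $d=a^2b$ decomposition with partial summation, are cosmetic differences. One small slip: for the tail $X\sum_{d>X}|F(d)|/d$ the Rankin weight must be $(d/X)^{1/2-\varepsilon}$, not $(d/X)^{1/2+\varepsilon}$, because the latter leads to $\sum_d |F(d)|d^{-1/2+\varepsilon}$, which diverges since $|F(p^2)|=|g(p)|\approx 1$.
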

\begin{proof}
    Using orthogonality of the Dirichlet characters modulo $q$, we lift the condition $n\equiv a\bmod q$ via 
    \[
    1_{n\equiv a \bmod q} = \frac{1}{\varphi(q)} \sum_{\chi \bmod q} \overline{\chi(a)} \chi(n).
    \]
    Every non-principal character gives an error term of $O(\tau(m)X^{1/2+\varepsilon}q^{1/4+\varepsilon})$ using \Cref{lem:square_free_character_sum_g}, hence 
    \[
      \sum_{\substack{n\leq X\\ (n,m)=1 \\ n\equiv a \bmod q}} {\mu^2(n)G(n)}  =   \frac{1}{\varphi(q)}\sum_{\substack{n\leq X\\ (n,mq)=1 }} {\mu^2(n)G(n)}  +O(\tau(m)X^{1/2+\varepsilon}q^{1/4+\varepsilon}).
    \]
    Define the auxiliary function $F = \mu^2G \ast \mu$, so that $\mu^2(n)G(n) = \sum_{d\mid n} F(d)$. We calculate
    \[
    F(p)=G(p)-1 = g(p)-1 , \qquad F(p^2)= -G(p)=-g(p) , \qquad F(p^k) = 0 \text{ for }k>2.
    \]
    In particular $|F(p)|\leq C/p$ and $|F(p^2)|\leq C+1$. Interchanging summation gives
    \[
     \sum_{\substack{n\leq X\\ (n,qm)=1  }} {\mu^2(n)G(n)} = \sum_{\substack{d\leq X \\ (d,qm)=1}} F(d) \sum_{\substack{n\leq X/d\\ (n,qm)=1}} 1 = \sum_{\substack{d\leq X \\ (d,qm)=1}} F(d) \left( \frac{\varphi(mq)}{mq}  \frac{X}{d} +O(\tau(qm))\right).
    \]
    To estimate the error term, we use that $F$ is supported on cube-free numbers. Write $d=a^2b$ with $a$ and $b$ square-free and coprime. Then $F(d)=F(a^2) F(b)$ and 
    \[
     |F(a^2)| = \prod_{p\mid a} |g(p)| \leq \tau(a)^{O_C(1)}  , \qquad |F(b) | = \prod_{p\mid b} |g(p)-1| \leq  \frac{{\tau(b)^{O_C(1)}}}{b}.
    \]
    Hence 
    \begin{align}\label{eq:F_estimate}
     \tau(qm)   \sum_{d\leq X} |F(d)| &\ll \tau(qm) \sum_{a\leq X^{1/2}} {\tau(a)}^{O_C(1)} \sum_{b\leq X/a^2} \frac{\tau(b)^{O_C(1)}}{b}\ll \tau(qm) X^{1/2+\varepsilon} .    
    \end{align}
    
    Coming to the main term, we complete the sum 
    \[
    \frac{\varphi(qm)}{qm} \sum_{\substack{d\leq X \\ (d,qm)=1}}  \frac{F(d)}{d}  = \frac{\varphi(qm)}{qm}  \sum_{\substack{d = 1 \\ (d,qm)=1}}^\infty  \frac{F(d)}{d} + O\left( \sum_{\substack{d >X }}  \frac{F(d)}{d} \right).
    \]
    Using partial summation and \eqref{eq:F_estimate}, the tail is bounded by 
    \[
    \sum_{\substack{d >X }}  \frac{|F(d)|}{d} \ll  X^{-1/2+\varepsilon}.
    \]
    Finally, we write the complete sum as an Euler product, to get
    \begin{align*}
   \frac{\varphi(qm)}{mq}  \sum_{\substack{d= 1 \\ (d,qm)=1}}^\infty  \frac{F(d)}{d}  &= \prod_{p\mid qm} \frac{p-1}{p} \prod_{p\nmid qm} \left(1+\frac{F(p)}{p} +\frac{F(p^2)}{p^2}\right) \\
    &= \prod_{p\mid qm} \left(1+\frac{g(p)}{p}\right)^{-1} \prod_{p}\left(1-\frac{1}{p}\right)\left(1+\frac{g(p)}{p}\right).  
    \end{align*}
    Multiplying this with $\frac{1}{\varphi(q)} X$ gives the result. 
\end{proof}

\section{Evaluating the counting function}

We now return to the study of $M_{\Phi} (\boldsymbol{X})$ given by \Cref{lemma:M_second}.  We show that the contribution coming from $M_{\Phi}^{\chi_1} $ is negligible. 
\begin{lemma} \label{lemma:M1small}
Let $X_i, X_j, X_k, X_{-1} \in \RR_{>0}$.  Let $M_{\Phi, b_i, b_j, b_k}^{\chi_1}(X_{-1})$ be given by \eqref{eq:M_chi1}. For any $\varepsilon>0$, we have 
\begin{align*}
    \sum_{\substack{b_x \leq X_x \text{ for }x \in \{i,j,k\} }}  \mu^2(b_ib_jb_k)\big |  M^{\chi_1}_{ \Phi, b_i, b_j, b_k}( X_{-1}) \big| \ll X_iX_jX_kX_{-1}^{1/2+\varepsilon}.
\end{align*}
\end{lemma}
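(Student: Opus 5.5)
The plan is to avoid the divisor-function loss $\tau(m)^{1/2}$ coming from \Cref{lem:square_free_character_sum_g}. Instead, I remove the coprimality condition in the inner sum over $v_{-1}$ through a Dirichlet series identity whose cost is controlled by a convergent Euler product in $b=b_ib_jb_k$. The outer sum over the $b_x$ is then a mean value of a nonnegative multiplicative function with average $1$ on primes, which is $\ll X_iX_jX_k$ with no logarithmic loss.

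\emph{Step 1 (inner sum).} Put $S(Y):=\sum_{v\le Y,\,(v,2)=1}\mu^2(v)\chi_1(v)$. By \Cref{lem:square_free_character_sum_g} with $m=2$, $q=8$ and $G\equiv 1$, we have $S(Y)\ll Y^{1/2+\varepsilon}$. For odd square-free $b$, compare Euler products:
\[
\sum_{(v,2b)=1}\frac{\mu^2(v)\chi_1(v)}{v^s}=\prod_{p\mid b}\Bigl(1+\frac{\chi_1(p)}{p^s}\Bigr)^{-1}\sum_{(v,2)=1}\frac{\mu^2(v)\chi_1(v)}{v^s}.
\]
Expanding the finite product as $\sum_{e\mid b^\infty}\lambda(e)\chi_1(e)e^{-s}$, where $\lambda$ is the Liouville function and $e$ runs over integers supported on primes of $b$, gives
\[
\sum_{\substack{v\le X_{-1}\\ (v,2b)=1}}\mu^2(v)\chi_1(v)=\sum_{e\mid b^\infty}\lambda(e)\chi_1(e)\,S(X_{-1}/e).
\]
Hence this sum is $\ll X_{-1}^{1/2+\varepsilon}\sum_{e\mid b^\infty}e^{-1/2}=X_{-1}^{1/2+\varepsilon}\prod_{p\mid b}(1-p^{-1/2})^{-1}$.

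\emph{Step 2 (reduce to a mean value).} Since $\prod_{p\mid b}(1+\chi_1(p))=\prod_{p\mid b}2\cdot\mathbf 1_{p\equiv\pm1 \bmod 8}$, the left side of \Cref{lemma:M1small} is
\[
\ll X_{-1}^{1/2+\varepsilon}\sum_{b_x\le X_x}\mu^2(b_ib_jb_k)\,h(b_i)h(b_j)h(b_k).
\]
Here $h$ is the multiplicative function supported on odd square-free integers with $h(p)=(1+\chi_1(p))(1-p^{-1/2})^{-1}\ge0$. Because $h\ge 0$, I drop the coprimality between the $b_x$, and the sum factors as $\prod_{x}\sum_{n\le X_x}h(n)$. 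It remains to show $\sum_{n\le X}h(n)\ll X$.

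\emph{Step 3 (main obstacle: no log loss).} Since $h(p)-1=\chi_1(p)+O(p^{-1/2})$ is not small, the naive convolution with $1$ does not converge. Instead I write $h=\tau_{\chi_1}\ast F$, where $\tau_{\chi_1}=1\ast\chi_1\ge0$. The multiplicative function $F$ then satisfies $F(p)=h(p)-1-\chi_1(p)=O(p^{-1/2})$ and $|F(p^k)|\ll k+1$ for $k\ge2$, so $\sum_d|F(d)|/d<\infty$, since the Euler factors are $1+O(p^{-3/2})+O(p^{-2})$. The Dirichlet hyperbola method with P\'olya--Vinogradov gives $\sum_{n\le Y}\tau_{\chi_1}(n)=L(1,\chi_1)Y+O(Y^{1/2})\ll Y$ for $Y\ge1$. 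Therefore
\[
\sum_{n\le X}h(n)\le\sum_{d\le X}|F(d)|\sum_{n\le X/d}\tau_{\chi_1}(n)\ll X\sum_d\frac{|F(d)|}{d}\ll X.
\]
Multiplying the three one-variable bounds $\ll X_x$ by the factor $X_{-1}^{1/2+\varepsilon}$ from Step 2 yields the stated bound $X_iX_jX_kX_{-1}^{1/2+\varepsilon}$.
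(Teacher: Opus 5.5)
Your argument is correct, but it takes a different route from the paper at the key step of removing the coprimality condition $(v_{-1},2b_ib_jb_k)=1$.

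\emph{The paper's route.} It applies \Cref{lem:square_free_character_sum_g} directly with $m=2b_ib_jb_k$ and accepts the divisor-function loss. To keep that loss in terms of $X_{-1}$ rather than $X_i,X_j,X_k$, it first replaces $b_ib_jb_k$ by its part supported on primes $\le X_{-1}$, which is legitimate since $v_{-1}\le X_{-1}$. It then proves $\sum_{n\le Y}\tau(n_{|\le T})\prod_{p\mid n}(1+\chi_1(p))\ll Y\log T$ via a decomposition $1\ast\chi_1\ast g$ and Mertens. The resulting $(\log X_{-1})^3$ is absorbed into $X_{-1}^{\varepsilon}$.

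\emph{Your route.} You write the coprime sum exactly as $\sum_{e\mid b^\infty}\lambda(e)\chi_1(e)S(X_{-1}/e)$. The cost per prime $p\mid b$ is then only $(1-p^{-1/2})^{-1}=1+O(p^{-1/2})$, which is summable against $1/p$. So no truncation is needed, and the mean value of $h$ is $\ll X$ with no logarithm, via the same $\tau_{\chi_1}\ast F$ device the paper uses.

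\emph{Comparison.} Your argument is self-contained and log-free. It even gives $X_iX_jX_kX_{-1}^{1/2}$ if you use the elementary bound $S(Y)\ll Y^{1/2}$, which holds because $\chi_1$ has bounded partial sums. The paper's argument has the advantage of recycling a lemma it needs elsewhere.

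\emph{Two cosmetic points.}
\begin{enumerate}
\item \Cref{lem:square_free_character_sum_g} assumes $(q,m)=1$, so take $m=1$ rather than $m=2$. This is harmless since $\chi_1$ already vanishes on even integers.
\item The $b_x$ in the statement are not assumed odd. Because $\chi_1(2)=0$, an even $b$ contributes exactly as its odd part, so setting $h(2)=1$ covers this case.
\end{enumerate}
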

\begin{proof}
For an integer $n$ and some $T\in \RR_{>0}$, let 
\[
n_{|\leq T} := \prod_{\substack{p\mid n \\ p\leq T}} p
\]
Then 
\[
(v_{-1}', 2b_ib_jb_k) =1 \iff (v_{-1}', (2b_ib_jb_k)_{|\leq X_{-1}} ) =1 
\]
for $v_{-1}'\leq X_{-1}$.  By \Cref{lem:square_free_character_sum_g}, we have 
 \begin{align*}
     \mu^2(b_ib_jb_k) \big |  M^{\chi_1}_{ \Phi, b_i, b_j, b_k}( X_{-1}) \big|  &\ll  \mu^2(b_ib_jb_k) X_{-1}^{1/2+\varepsilon} \tau\big((b_ib_jb_k)_{|\leq X_{-1}}\big)  \prod_{p\mid b_ib_jb_k} \left(1+\chi_1(p)\right)\\
     &\ll X_{-1}^{1/2+\varepsilon} \prod_{x\in \{i,j,k\}} \Bigg(\tau({b_x}_{|\leq X_{-1}}) \prod_{p\mid b_x} (1+\chi_1(p))\Bigg)
 \end{align*}
 We show
 \[
\sum_{n\leq Y} \tau(n_{|\leq T})\prod_{p\mid n} \left(1+\chi_1(p)\right)\ll Y \log T, 
 \]
 from which the result of the lemma easily follows by setting $Y=X_i,X_j,X_k$, $T=X_{-1}$. Let 
 \[
 f(n):= \tau(n_{|\leq T}) \prod_{p\mid n} \left(1+\chi_1(p)\right).
 \]
 We define an auxiliary multiplicative function $g$ via 
 \[
 g(p^a) = \begin{cases}
 0 &\text{ if }a=1, p>T, \\
 -\chi_1(p)^2 &\text{ if }a=2, p>T, \\
 0 &\text{ if }a\geq 3, p>T, 
 \end{cases} \quad \text{ and } \quad 
g(p^a) = \begin{cases}
     1+\chi_1(p) & \text{ if }a=1, p\leq T, \\
- \chi_1(p) -2\chi_1(p)^2&\text{ if }a=2, p\leq T, \\
     0& \text{ if }a\geq 3, p\leq T. \\
     \end{cases}
 \]
This function satisfies $f = 1\ast \chi_1\ast g$. Indeed, for $p>T$
\begin{align*}
 (1\ast \chi_1 \ast g)(p^a) &= (1\ast \chi_1)(p^{a}) -  ((1\ast \chi_1)(p^{a-2}) \chi_1(p^2)  = \sum_{j=0}^a \chi_1(p^j) - \chi_1(p^2) \sum_{j=0}^{a-2} \chi_1(p^j)\\
 &= 1+\chi_1(p) = f(p^a),
  \end{align*}
 and for $p\leq T$
  \begin{align*}
 (1\ast \chi_1 \ast g)(p^a) &= \sum_{j=0}^a \chi_1(p^j) + (1+\chi_1(p))\sum_{j=0}^{a-1} \chi_1(p^j)  - (\chi_1(p) +2\chi(p^2) )\sum_{j=0}^{a-2} \chi_1(p^j)\\
 &= 2(1+\chi_1(p)) = f(p^a),
  \end{align*}
and thus $f = 1\ast \chi_1\ast g$. Substituting gives 
\begin{align*}
    \sum_{n\leq Y} f(n) = \sum_{n\leq Y} \sum_{d\mid n} g(d) (1\ast \chi_1)(n/d) = \sum_{d\leq Y} g(d) \sum_{n\leq Y/d} (1\ast \chi_1(d))(n)
\end{align*}
The inner sum is 
\[
\sum_{n\leq Y/d} (1\ast \chi_1(d))(n) = \sum_{n\leq Y/d}    \sum_{a\mid n} \chi_1(a) = \frac{Y}{d}\sum_{a\leq Y/d} \frac{\chi_1(a)}{a} + O(Y/d) \ll Y/d ,
\]
and hence 
\[
 \sum_{n\leq Y} f(n)  \ll Y \sum_{d =1 }^\infty \frac{|g(d)|}{d}.
\]
Expressing the infinite series as an Euler-product and using our definition of $g$, we have
\begin{align*}
    \sum_{d =1 }^\infty \frac{|g(d)|}{d}  = \prod_{p\leq T} \left( 1+\frac{1+\chi_1(p)}{p} + O(p^{-2})\right)\cdot \prod_{p\geq T} \left( 1 + O(p^{-2})\right) \ll \log T,
\end{align*}
by Mertens' theorem.
\end{proof}

Combining the previous \Cref{lemma:M1small} and \Cref{lemma:M_second},  we have 
\begin{align*}
  M_{\Phi} ( X_i, X_j, X_k , X_{-1})  &= \frac{1}{2} \sum_{ \substack{(\varepsilon_i, \varepsilon_j, \varepsilon_k )\in ((\ZZ/8\ZZ)^\times)^3\\ (\varepsilon_i\varepsilon_j, \varepsilon_j\varepsilon_k)\equiv (\alpha', \beta') \bmod 8}}
N_{\boldsymbol{\mu}, \boldsymbol{\varepsilon}}(X_i, X_j, X_k, X_{-1})   + O\big(X_iX_jX_kX_{-1}^{1/2+\varepsilon}\big)  \numberthis \label{eq:analytic_MPhi_new}
\end{align*}
with $N_{\boldsymbol{\mu}, \boldsymbol{\varepsilon}}(\boldsymbol{X})$ given by \eqref{eq:def_N_main}. The following proposition is the main counting result.

\begin{proposition}\label{prop_main_N}
    Let $ N_{\boldsymbol{\mu}, \boldsymbol{\varepsilon}}(\boldsymbol{X}) = N_{\boldsymbol{\mu}, \boldsymbol{\varepsilon}}(X_i, X_j, X_k, X_{-1})$ be given by \eqref{eq:def_N_main}, then 
    \[
     N_{\boldsymbol{\mu}, \boldsymbol{\varepsilon}}(\boldsymbol{X})  =  \frac{u_{\boldsymbol{\mu}} (\varepsilon_i, \varepsilon_j, \varepsilon_k)+ u_{\boldsymbol{\mu}}(1,1,1)}{1024} \prod_{p>2} \left(1-\frac{1}{p}\right)^4 \left(1+\frac{4}{p}\right)X_{-1}X_iX_jX_k  + E,
    \]
    for an error term
    \[
    E = O\left(X_iX_jX_kX_{-1} \cdot \min \{X_i, X_j, X_k, X_{-1}\}^{-\Delta} \right).
    \]
    for some $\Delta>0$.
\end{proposition}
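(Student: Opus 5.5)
The plan is the standard decomposition of $N_{\boldsymbol{\mu},\boldsymbol{\varepsilon}}(\boldsymbol{X})$ according to the sizes of the six variables $d_i,f_i,d_j,f_j,d_k,f_k$. First, the sum over $v_{-1}$ is handled trivially. For fixed $d_x,f_x$ it is $\sum_{v_{-1}\le X_{-1},\,(v_{-1},2\prod d_xf_x)=1}\mu^2(v_{-1})$, which by \Cref{lemma:square-free-asymp} (with $g\equiv 1$) equals
\[
\frac{1}{2}\prod_{p>2}\Big(1-\frac1{p^2}\Big)\prod_{p\mid \prod d_xf_x}\Big(1+\frac1p\Big)^{-1}X_{-1}+O\big(\tau(\cdot)X_{-1}^{1/2+\varepsilon}\big).
\]
The error here contributes $O(X_iX_jX_kX_{-1}^{1/2+\varepsilon})$. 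What remains is a weighted sum over the six $d,f$ variables with multiplicative weight $G(n)=\prod_{p\mid n}(1+1/p)^{-1}$, which is of the shape covered by the analytic lemmas. The non-degeneracy condition \eqref{eq:condition_G} removes only $O(X^{3}\cdot\max)$-sized sets where two $b$'s are trivial, so it is negligible and can be dropped.

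Next, the variables are split dyadically, $d_x\sim D_x$ and $f_x\sim F_x$, and the boxes are classified. \emph{Case A} is where some pair of variables linked by a Jacobi symbol, say $f_i$ and $d_j$, are both larger than $\min(X)^{\eta}$ for a small $\eta$. Here the pairwise coprimality is removed by Möbius inversion (or by the usual trick of fixing common factors), all other variables are fixed, and \Cref{lemma:large_sieve} is applied to the bilinear form in $(f_i,d_j)$. The remaining factors $u_{\boldsymbol{\mu}}$ and the congruence conditions mod $8$ are split into residue classes and absorbed into the coefficients. This saves $\min(X)^{-\eta/6}$ up to logarithms. \emph{Case B} is where every linked pair has at least one small member. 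Since $f_x$ is linked to $d_y,d_z$ for $y,z\ne x$, the only configurations left are (a) all $d_x$ small, (b) all $f_x$ small, or (c) mixed configurations where, for example, $d_i$ and $f_i$ are both large but $d_j,d_k,f_j,f_k$ are small. Configuration (c) is impossible to keep because $d_jf_j\le X_j$ must be of size $X_j$ in the main range; otherwise the box is small and trivially negligible. The same applies to boxes where $D_xF_x\le X_x\min(X)^{-\eta}$.

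In configuration (a), with $d_x\le \min(X)^{\eta}$ fixed, the sums over $f_x$ carry characters $\legendre{\cdot}{d_yd_z}$ of modulus at most $\min(X)^{2\eta}$. These are handled by \Cref{lemma:square-free-asymp} if the character is principal, which forces $d_yd_z=1$ for all pairs, hence $d_i=d_j=d_k=1$. Otherwise \Cref{lem:square_free_character_sum_g} gives a power saving. Configuration (b) is symmetric with the roles reversed: with $f_x$ small, the map $d\mapsto\legendre{f_i}{d}$ is a character, and additionally $u_{\boldsymbol{\mu}}$ is a character mod $8$ in each $d_z$ by \Cref{lemma:properties_u}(i). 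The main term therefore comes from $f_i=f_j=f_k=1$ with the $d$'s subject to $u_{\boldsymbol{\mu}}$. Using \Cref{lemma:properties_u}(ii), $u_{\boldsymbol{\mu}}$ depends only on the $d$'s modulo $8$, so on the class $d_x\equiv\varepsilon_x$ it equals the constant $u_{\boldsymbol{\mu}}(\varepsilon_i,\varepsilon_j,\varepsilon_k)$. Similarly, on $f_x\equiv\varepsilon_x$ with $d=1$ one gets $u_{\boldsymbol{\mu}}(1,1,1)$. Each of these two pieces is a product of three sums over square-free coprime integers in fixed classes mod $8$ with weight $G$. Evaluating them with \Cref{lemma:square-free-asymp} (handling coprimality through the Euler product) yields $\frac{1}{4^3}\cdot\frac12\cdot\frac18$-type factors. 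Combining the local densities $(1-1/p)^4(1+4/p)$ should then produce $1/1024$. The overlap where all of $d,f$ are $1$ is trivial.

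I expect the main obstacle to be the borderline boxes in which one member of a linked pair is just above $\min(X)^{\eta}$, for example $d_j$ moderately large with $f_i$ of full size. Here \Cref{lemma:large_sieve} gives only $D^{-1/6}$, and the Pólya--Vinogradov-type bound of \Cref{lem:square_free_character_sum_g} loses $q^{1/4}$. The plan is to cover the whole range by combining the two: if the smaller variable $d$ satisfies $d\le X^{2-\eta'}$ relative to the long variable of length $X$, then \Cref{lem:square_free_character_sum_g} saves $X^{-1/2+\varepsilon}d^{1/2}$; otherwise the large sieve applies. If the averaging over the non-square moduli needs more care, \Cref{lemma:non_square_contribution} is the tool I would use for the non-principal part after summing over $d$. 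Optimizing $\eta$ then gives some $\Delta>0$ in the error term $E$.
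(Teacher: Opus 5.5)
There is a genuine gap, and it concerns uniformity. The Proposition claims $E\ll X_iX_jX_kX_{-1}\min(X)^{-\Delta}$ with an absolute implied constant in arbitrarily skewed boxes, which is exactly what the Pieropan--Schindler application needs. In your scheme, however, every saving is a power of $\min(X)$, while several losses are powers of $\log\max(X)$. The trivial size of $N_{\boldsymbol{\mu},\boldsymbol{\varepsilon}}(\boldsymbol{X})$ is $X_{-1}\prod_x X_x\log X_x$, because $\#\{(d,f):df\le Y\}\asymp Y\log Y$. Concretely:
\begin{itemize}
\item dropping \eqref{eq:condition_G} costs $X_{-1}X_z\log X_z$;
\item configuration (c) is only bounded by $T^4X_{-1}X_k\log X_k$;
\item the Friedlander--Iwaniec bound carries $\log(NM)^{7/6}$ with $N$ as large as $X_k$, and there are $\asymp\log X_k$ dyadic ranges for the long variable;
\item the error from your initial $v_{-1}$-summation is $X_iX_jX_kX_{-1}^{1/2+\varepsilon}$ times powers of $\log X_x$.
\end{itemize}
So you obtain $E\ll X_iX_jX_kX_{-1}\min(X)^{-c}(\log\max X)^{C}$. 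This gives the claim only when $\min X\ge(\log\max X)^{A}$. In that range your single threshold $T=\min(X)^{\eta}$ is essentially fine. The borderline boxes you worry about are harmless there, since a linked pair with both members $>T$ already saves $T^{-1/6}$.

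Outside that range the argument fails. Take $X_i,X_j$ bounded, or of size $\log\log X_k$. Then $T$ is bounded and Case A saves nothing. Configuration (c) is of the same order as the main term, not negligible. Yet the Proposition asserts $N_{\boldsymbol{\mu},\boldsymbol{\varepsilon}}(\boldsymbol{X})\ll X_iX_jX_kX_{-1}$ with no factor $\log X_k$.

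The missing idea is the paper's second regime $X_j<X_k^{1/5}$ (with $X_i\le X_j\le X_k$). There the short variables $(d_i,f_i,d_j,f_j)$ are fixed, and the long $(d_k,f_k)$-sum is evaluated \emph{asymptotically} by a hyperbola split at $X_k^{1/2}$ (\Cref{lemma:boundsS_ddff}).
\begin{itemize}
\item If $(d_i,d_j)\neq(1,1)$ and $(f_i,f_j)\neq(1,1)$, both halves carry non-principal characters of modulus $O(X_iX_j)$ and save a power of $X_k$.
\item If $f_i=f_j=1$ (resp.\ $d_i=d_j=1$), one gets $X_k$ times a complete $L(1,\chi)$-type series, e.g.\ for $\chi=u_{\boldsymbol{\mu}}(\varepsilon_i,\varepsilon_j,\varepsilon_k\,\cdot\,)\legendre{\cdot}{d_id_j}$. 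This is of full size for each individual modulus and only averages out over $d_i,d_j$. The paper truncates it at $(X_iX_j)^{1/2+\delta}$ by P\'olya--Vinogradov and partial summation. Square $f_k$ then give the main term, and non-squares are disposed of by \Cref{lemma:non_square_contribution}, so all remaining logarithms are in $X_iX_j$ only.
\item The fully degenerate term is $S_{1,1,1,1}=O(X_k)$ rather than $O(X_k\log X_k)$, because $u_{\boldsymbol{\mu}}(1,1,\cdot)$ is a non-principal character mod $8$.
\end{itemize}
In the complementary regime $X_j\ge X_k^{1/5}$, the paper's threshold is $V=X_k^{\delta}$, a power of the \emph{largest} parameter, so all logarithms are absorbed. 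The pointwise character sums are taken in $d_j$ or $d_k$ (resp.\ $f_j,f_k$), never in the possibly tiny $i$-variables. The place where $v_{-1}$ is summed is also chosen according to the size of $X_{-1}$, to avoid divisor-function losses.

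Your closing appeal to \Cref{lemma:non_square_contribution} as a fallback for large-sieve boxes does not supply this. What is needed is not a better bilinear bound, but the asymptotic evaluation of the long sum for fixed short variables, followed by averaging the resulting $L(1,\chi)$-values over the moduli.
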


The definition of the function $N_{\boldsymbol{\mu}, \boldsymbol{\varepsilon}}$ is symmetric in $i,j,k$. Hence without loss of generality, we always assume
    \begin{align}
      X_i\leq X_j\leq X_k. \label{eq:size_comp}    
    \end{align}

    The proof of \Cref{prop_main_N} is split into two main cases with two, respectively three, sub-cases and carried out in the following sections. The cases we consider are 
   \begin{alignat*}{2}
       &\bullet \;  \; &&\text{Case $X_j\geq X_k^{1/5}$:} \hspace{11cm} \\
       & && \quad - \, \text{ Sub-case: $X_{-1}< X_k$  in \Cref{sec:largeXj_smallX_minus};} \\
       & && \quad - \, \text{ Sub-case: $X_{-1}\geq X_k$ in \Cref{ssec:X1Xjlarge};} \\
       & \bullet \; \; &&\text{Case $X_j< X_k^{1/5}$: } \numberthis \label{list:cases}\\ 
       & && \quad - \, \text{ Sub-case: $X_{-1} \geq  X_k$ in \Cref{seq:Xjsmall_Xminus_large};} \\
       & && \quad - \, \text{ Sub-case: $X_j< X_{-1} \leq X_k$ in  \Cref{ssec:middle};} \\
       & && \quad - \, \text{ Sub-case: $X_{-1} \leq X_j $ in \Cref{ssec:last}.} 
   \end{alignat*}
Instead of the exponent $1/5$, any fixed exponent less than $1/4$ could be taken.

    \subsection{The case $X_j\geq  X_k^{1/5}$}
Assume 
\begin{align}
    X_j\geq X_k^{1/5} \label{eq:X_j>X_k large}.
\end{align}
Let $V$ be an auxiliary parameter given by
\begin{align}
\label{eq:def_V}
V=X_k^{\delta}    
\end{align}
for a fixed small $\delta\in (0,1)$ to be chosen later. 
We split the summation of odd square-free $(d_i, d_j, d_k, f_i, f_j, f_k)$ into the following three regions. \newline \ \vspace{0.3cm} \ 
\noindent
\begin{minipage}{\linewidth} \label{eq:1}
\begin{enumerate}
\vspace{0.3cm}
    \item[\fbox{\parbox{0.6cm}{\centering A}}:] \label{eq:regions}
    There exist $x,y \in \{i,j,k\}$, $x\neq y$, such that $d_x, f_y > V$ ;
     \item[\fbox{\parbox{0.6cm}{\centering B1}}:]
    $f_i, f_j, f_k \leq V$; \fbox{\parbox{0.6cm}{\centering B2}}:
    $d_i, d_j, d_k \leq V$;
    \item[\fbox{\parbox{0.6cm}{\centering C}}:]
    $f_x, d_x \leq V$ for some $x\in \{i,j,k\}$, but $(d_i, d_j, d_k, f_i, f_j, f_k)$ does not lie \\ \textcolor{white}{..} in region \fbox{A}, \fbox{B1}, \fbox{B2}.
\end{enumerate}
\end{minipage}
\hfill
 We estimate the contributions coming from region \fbox{A} using the result on bilinear sums from \Cref{lemma:large_sieve};
we show that the contribution coming from the region \fbox{C} is negligible; the regions \fbox{B1} and \fbox{B2} are estimated pointwise using  \Cref{lem:square_free_character_sum_g}, whenever $(f_i, f_j, f_k)\neq (1,1,1)$, $(d_i, d_j, d_k)\neq (1,1,1)$, we obtain a non-trivial saving.\\
To obtain the sufficient error terms, we consider the two sub-cases $X_{-1}>X_k$ and $X_{-1}\leq X_k$. Accordingly, the order of summation over $v_{-1}$ in \eqref{eq:def_N_main} is performed first or respectively last. We address the latter case first.

\subsubsection{Sub-case $X_{-1} \leq X_k$} \label{sec:largeXj_smallX_minus}

We perform the summation over $v_{-1}$ last, at the cost of picking up divisor functions depending on $v_{-1}$ in the inner sums, which causes no difficulty as any logarithms in $X_{-1}$ will be absorbed by sufficient saving in $X_k$. We have 
\begin{align} \label{eq:N_as_T}
    N_{\boldsymbol{\mu}, \boldsymbol{\varepsilon}}(\boldsymbol{X}) = \sum_{v_{-1}\leq X_{-1}} \mu^2(2v_{-1}) T_{v_{-1}, \boldsymbol{\varepsilon}, \boldsymbol{\mu}} (X_i, X_j,X_k) +O(X_{-1}X_k^{1+\varepsilon}),
\end{align}
where 
\begin{align}
T_{v_{-1}, \boldsymbol{\varepsilon}, \boldsymbol{\mu}} (X_i, X_j, X_k) := \hspace{-.5cm}\sum_{\substack{\textcolor{white}{.}\ \\ \ d_i f_i \leq X_i , d_jf_j\leq X_j, d_k f_k \leq X_k  \\ d_xf_x =\varepsilon_x \bmod 8 , x\in \{i,j,k\} \\ (d_if_id_jf_jd_kf_k, v_{-1}) = 1}} \hspace{-.5cm}{\mu^2(d_if_id_jf_jd_kf_k)} u_{\boldsymbol{\mu}}(d_i, d_j, d_k)    \legendre{f_i }{d_jd_k} \legendre{f_j}{d_i d_k} \legendre{f_k}{d_i d_j}\,. \label{eq:def_tv}
\end{align}
Here, we dropped the condition \eqref{eq:condition_G} that is present in $N_{\boldsymbol{\mu}, \boldsymbol{\varepsilon}}(\boldsymbol{X})$. This is justified, since tuples, where $\mathcal{G}$ fails are trivially bounded by
\[
O( X_k^{1+\varepsilon}) 
\]
which is the error term in \eqref{eq:N_as_T}. As $X_j>X_k^{1/5}$ this error is neglibale. We define
\begin{align*}
&T_{v_{-1}, \boldsymbol{\mu}, \boldsymbol{\varepsilon}}^{\text{A}}(X_i, X_j, X_k) := \\
&\qquad  \sum_{\substack{d_i f_i \leq X_i , d_jf_j\leq X_j, d_k f_k \leq X_k  \\ d_xf_x =\varepsilon_x  \bmod 8, x\in \{i,j,k\} \\ (d_if_id_jf_jd_kf_k, 2v_{-1}) = 1 \\ (d_i, d_j, d_k, f_i, f_j, f_k) \text{ lies in region A}}} {\mu^2(d_if_id_jf_jd_kf_k) } u_{\boldsymbol{\mu}}(d_i, d_j, d_k)    \legendre{f_i }{d_jd_k} \legendre{f_j}{d_i d_k} \legendre{f_k}{d_i d_j}\,.   
\end{align*}
and similarly $T_{v_{-1}, \boldsymbol{\mu}, \boldsymbol{\varepsilon}}^{\text{B1}},T_{v_{-1}, \boldsymbol{\mu}, \boldsymbol{\varepsilon}}^{\text{B2}}$ and  $T_{v_{-1}, \boldsymbol{\mu}, \boldsymbol{\varepsilon}}^{\text{C}}$. 

We proceed to establish an error estimate for the region  \fbox{A}.
\begin{lemma}\label{lemma:regionA}
    Let $X_i, X_j, X_k\in \RR_{>0}$ such that \eqref{eq:size_comp} and \eqref{eq:X_j>X_k large} hold. Let $V\in \RR_{>0}$ be given by \eqref{eq:def_V}. In the notation above, for any $\varepsilon>0$ we have
    \[
    T_{v_{-1}, \boldsymbol{\mu}, \boldsymbol{\varepsilon}}^{\text{A}}(X_i, X_j, X_k) \ll X_kX_jX_i V^{-1/6+\varepsilon} \,. 
    \]
\end{lemma}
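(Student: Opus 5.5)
We bound $T^{\text{A}}_{v_{-1},\boldsymbol{\mu},\boldsymbol{\varepsilon}}$ by decomposing region \fbox{A} into the finitely many pieces where a fixed ordered pair $(x,y)$, $x\neq y$, has $d_x>V$ and $f_y>V$. By inclusion--exclusion over these events, $T^{\text{A}}$ is a bounded signed combination of sums over such sub-regions. Each sub-region can be further cut into $O((\log X_k)^6)$ dyadic boxes $d_z\sim D_z$, $f_z\sim F_z$; the constraints $d_zf_z\le X_z$ are handled either by keeping them as boundary conditions or by Perron-free smoothing. It suffices to show that each dyadic box with $D_x,F_y\ge V$ contributes $\ll D_iF_iD_jF_jD_kF_k\, V^{-1/6}(\log X_k)^{O(1)}$. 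Summing over boxes then gives $X_iX_jX_kV^{-1/6+\varepsilon}$, since the $\log$ losses are absorbed into $V^{\varepsilon}=X_k^{\delta\varepsilon}$.

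Within a box, isolate the two variables $d_x$ and $f_y$ and treat all other variables as fixed. The symbol $\legendre{f_y}{d_x}$ (or its reciprocal form) appears exactly once in the product $\legendre{f_i}{d_jd_k}\legendre{f_j}{d_id_k}\legendre{f_k}{d_id_j}$, because $x\neq y$. All remaining factors depend on at most one of $d_x,f_y$ together with the fixed variables. The factor $u_{\boldsymbol{\mu}}$ depends on $d_x$ only through characters mod $8$. The congruences $d_zf_z\equiv\varepsilon_z\bmod 8$ and the coprimality with $v_{-1}$ likewise factor into functions of $d_x$ alone or of $f_y$ alone. The square-free condition $\mu^2(d_if_id_jf_jd_kf_k)$ couples $d_x$ and $f_y$ only through $\gcd(d_x,f_y)=1$. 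Since $\legendre{f_y}{d_x}=0$ when the gcd exceeds $1$, this coupling is automatic, and what remains is $\mu^2(d_x)\mu^2(f_y)$ times coprimality with the fixed variables.

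To make the coefficients depend on one variable only, expand the mod-$8$ conditions by orthogonality of characters mod $8$. The sum then becomes, for each choice of the other four variables, a bilinear form
\[
\sum_{d_x\sim D_x}\sum_{f_y\sim F_y}\alpha_{d_x}\beta_{f_y}\legendre{f_y}{d_x}
\]
with $|\alpha|,|\beta|\le1$ supported on odd integers. The coupled boundary conditions $d_xf_x\le X_x$ and $d_yf_y\le X_y$ each involve only one of $d_x,f_y$ once the other variables are fixed, so they simply truncate the supports of $\alpha$ and $\beta$.

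Apply \Cref{lemma:large_sieve} to this bilinear form. It gives $\ll D_xF_y(D_x^{-1/6}+F_y^{-1/6})\log(D_xF_y)^{7/6}\ll D_xF_yV^{-1/6}(\log X_k)^{7/6}$. Then sum trivially over the other four variables, and finally over dyadic boxes, to obtain the claim.

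The main obstacle is the bookkeeping that ensures no constraint couples $d_x$ and $f_y$ other than through the Jacobi symbol. The reciprocity sign between $\legendre{f_y}{d_x}$ and $\legendre{d_x}{f_y}$ is harmless because \Cref{lemma:large_sieve} is symmetric up to such mod-$4$ factors, which can be split off by characters. The sharp cutoffs $d_xf_x\le X_x$ are not genuinely a problem, because when $x\ne y$ they separate once the other variables are fixed.
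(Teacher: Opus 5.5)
Your proposal is correct and is essentially the paper's argument: fix every variable except $d_x$ and $f_y$, note that they interact only through $\legendre{f_y}{d_x}$ (already in the form required by \Cref{lemma:large_sieve}, so no reciprocity step is needed), apply the Friedlander--Iwaniec bound, and sum the remaining four variables trivially. The dyadic decomposition and the mod-$8$ character expansion are unnecessary, since each congruence already involves only one of $d_x,f_y$ once the others are fixed and the paper applies the bilinear bound directly with $M=X_x/f_x>V$, $N=X_y/d_y>V$; on the other hand, your inclusion--exclusion over the six overlapping events $\{d_x>V,\ f_y>V\}$ makes rigorous the step the paper dismisses as ``treated analogously''.
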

\begin{proof}
Region \fbox{A} consists of those tuples with $d_x, f_y>V$ for some $x,y \in \{i,j,k\}$, $x\neq y$. We first consider the case $d_i, f_j>V$: 
We move the sum over the $d_i, f_j>V$ to the inside. Set
\[ 
\alpha_{d_i } = \boldsymbol{1}_{\left\{\substack{d_i \equiv f_i\varepsilon_i \bmod 8\\ (d_i, 2f_i d_j f_k d_k v_{-1})=1}\right\} } \mu^2(d_i) \legendre{f_jf_k}{d_i} \; \; , \qquad \beta_{f_j } = \boldsymbol{1}_{\left\{\substack{f_j \equiv d_j\varepsilon_j \bmod 8\\ (f_j, 2f_i d_j f_kd _k v_{-1})=1}\right\} } \mu^2(f_j) \legendre{f_j}{d_id_k}
\]
Using \Cref{lemma:large_sieve}, we have
\begin{align*}
&\Bigg|\sum_{\substack{d_i f_i \leq X_i , d_jf_j\leq X_j, d_k f_k \leq X_k  \\ d_xf_x =\varepsilon_x  \bmod 8, x\in \{i,j,k\} \\ (d_if_id_jf_jd_kf_k, 2v_{-1}) = 1 \\ d_i, f_j>V}} {\mu^2(d_if_id_jf_jd_kf_k) } u_{\boldsymbol{\mu}}(d_i, d_j, d_k)     \legendre{f_i }{d_jd_k} \legendre{f_j}{d_i d_k} \legendre{f_k}{d_i d_j} \Bigg|\\
&\ll \sum_{d_kf_k \leq X_k} \sum_{\substack{f_i\leq X_i/V\\ d_j\leq X_j/V}}  \Bigg|  \sum_{\substack{V<d_i<X_i/f_i\\ V<f_j<X_j/d_j}} \alpha_{d_i} \beta_{f_j} \legendre{f_j}{d_i} \Bigg| \\
&\ll X_k \log X_k  \sum_{\substack{f_i\leq X_i/V\\ d_j\leq X_j/V}} \frac{X_i X_j}{f_i d_j}\left(\frac{f_i^{1/6}}{X_i^{1/6}}+\frac{d_j^{1/6}}{X_j^{1/6}}\right) \log(X_i X_j)^{7/6}\\
&\ll X_k (\log X_k)  X_i X_j \log(X_i X_j)^{13/6} V^{-1/6} \ll X_k X_i X_j V^{-1/6+\varepsilon},
\end{align*}
where we used that $\log X_k \log(X_i X_j)^{13/6} = O(V^\varepsilon)$ by \eqref{eq:def_V}. The remaining choices $x,y\in \{i,j,k\}$ used in the definition of region \fbox{A} are treated analogously, yielding the same bound.
\end{proof}
We now turn to the region \fbox{C}. 
\begin{lemma}\label{lemma:regionB}
     Let $X_i, X_j, X_k\in \RR_{>0}$ such that \eqref{eq:size_comp} holds. Let $V\in \RR_{>0}$ be given by \eqref{eq:def_V}. For any $\varepsilon>0$, we have
    \[
       T_{v_{-1}, \boldsymbol{\mu}, \boldsymbol{\varepsilon}}^{\text{C}}(X_i, X_j, X_k) \ll {X_k}V^{4+\varepsilon} .
    \]
\end{lemma}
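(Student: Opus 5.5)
The bound follows from a trivial estimate once we identify the shape of region \fbox{C}. By the definition of the regions, a tuple $(d_i,d_j,d_k,f_i,f_j,f_k)$ in region \fbox{C} has only a very restricted pattern of large variables. The plan is to show that in region \fbox{C} exactly one index $x$ carries large variables, namely both $d_x>V$ and $f_x>V$. We then count trivially.

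\emph{Step 1: the shape of region \fbox{C}.} Put $S_d=\{x: d_x>V\}$ and $S_f=\{x: f_x>V\}$.
\begin{itemize}
\item Since the tuple is not in \fbox{B2}, the set $S_d$ is nonempty.
\item Since the tuple is not in \fbox{B1}, the set $S_f$ is nonempty.
\item Since the tuple is not in \fbox{A}, there is no pair $x\neq y$ with $x\in S_d$ and $y\in S_f$.
\end{itemize}
Take any $x\in S_d$ and $y\in S_f$. The third point forces $y=x$. As $y\in S_f$ was arbitrary, this gives $S_f=\{x\}$. Symmetrically, fixing $y\in S_f$ and letting $x$ range over $S_d$ gives $S_d=\{y\}$. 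Hence $S_d=S_f=\{x\}$ for a single index $x\in\{i,j,k\}$.

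So for the two other indices $y,z$, all four variables $d_y,f_y,d_z,f_z$ are at most $V$. This is consistent with the defining condition of \fbox{C} that $f_y,d_y\le V$ for some $y$.

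\emph{Step 2: trivial count.} Bound every summand in $T^{\text{C}}_{v_{-1},\boldsymbol{\mu},\boldsymbol{\varepsilon}}$ by $1$ in absolute value. This is valid because $\mu^2$, $u_{\boldsymbol{\mu}}$ and the Jacobi symbols all have modulus at most $1$. Summing over the three choices of $x$, the contribution is at most
\[
3\, V^4 \max_{x\in\{i,j,k\}} \#\{(d_x,f_x): d_xf_x\le X_x\} \ll V^4 X_k\log X_k .
\]
Here we use $\sum_{n\le X_x}\tau(n)\ll X_x\log X_x$ together with $X_x\le X_k$ from \eqref{eq:size_comp}.

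\emph{Step 3: absorbing the logarithm.} Since $V=X_k^{\delta}$ by \eqref{eq:def_V}, we have $\log X_k\ll_{\varepsilon,\delta} V^{\varepsilon}$. This yields $T^{\text{C}}_{v_{-1},\boldsymbol{\mu},\boldsymbol{\varepsilon}}\ll X_kV^{4+\varepsilon}$.

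The only real content is the combinatorial Step 1. No cancellation in the character sums is needed. The bound is useful later because $V$ is a small power of $X_k$, while $X_iX_jX_k\ge X_k\cdot X_k^{2/5}$ under \eqref{eq:X_j>X_k large} when the heights are comparable in the way assumed. Thus $X_kV^{4+\varepsilon}$ is a power saving once $\delta$ is chosen small.
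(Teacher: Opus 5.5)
Your proof is correct and is essentially the paper's argument. It uses the same combinatorial observation: a tuple outside \fbox{A}, \fbox{B1} and \fbox{B2} has exactly one index $x$ with $d_x,f_x>V$, and the other four variables are at most $V$. This is followed by a trivial count, with the divisor-sum logarithm absorbed into $V^{\varepsilon}$ via $V=X_k^{\delta}$. Your formulation with $S_d,S_f$ is slightly cleaner, since it shows the defining condition of \fbox{C} is automatic.

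There is one small slip in your closing aside, which is not part of the proof. The assumption \eqref{eq:X_j>X_k large} only gives $X_iX_jX_k\ge X_k^{6/5}$ (using $X_i\ge 1$), not $X_k^{7/5}$. That is still enough for $X_kV^{4+\varepsilon}$ to be a power saving once $\delta$ is small, which is exactly how the bound is used in \Cref{lemma:main_1}.
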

\begin{proof}
Suppose $d_x, f_x \leq V$ for some $x\in \{i,j,k\}$. Since we are not in region \fbox{B1} or \fbox{B2}, there exist indices $y_1,y_2 \in \{i,j,k\}\setminus \{x\}$ such that $d_{y_1}$ and $ f_{y_2} >V$. If $y_1\neq y_2$, we are in region \fbox{A}, hence we must have $y_1 = y_2:=y$ with $d_{y} , f_{y}>V$. This forces $d_z, f_z \leq V$ for the remaining $z \in \{i,j,k\} \setminus \{x,y\}$, for otherwise we would again be in region \fbox{A}. Hence the contribution is trivially bounded by 
\[
O(V^4 X_{y}^{1+\varepsilon})
\]
Since $X_k$ is the largest parameter among $X_i, X_j, X_k$ and $V = X_k^{\delta}$, the result follows.
\end{proof}

 We now turn to the regions \fbox{B1}, \fbox{B2}, that is tuples where $f_i, f_j, f_k\leq V$, or $d_i, d_j, d_k\leq V$ respectively. We use pointwise estimates to show that the contribution gives an error term whenever there is a non-principal character.  Consider first the region \fbox{B1}. We fix $f_i, f_j, f_k\leq V$ and define the sum 
 \begin{align*}
     &U_{f_i, f_j, f_k, v_{-1}} := \sum_{\substack{ d_i \leq X_i/f_i \\ (d_i, f_if_jf_kv_{-1})=1 \\ d_i \equiv f_i\varepsilon_i \bmod 8} }\sum_{\substack{ d_j \leq X_j/f_j \\ (d_j, f_if_jf_kv_{-1})=1 \\ d_j \equiv f_j\varepsilon_j \bmod 8} } \sum_{\substack{ d_k \leq X_k/f_k \\ (d_k, f_if_jf_kv_{-1})=1 \\ d_k \equiv f_k\varepsilon_k \bmod 8} } \mu^2(2d_id_jd_k)  \legendre{f_if_j} {d_k}\legendre{f_if_k}{d_j} \legendre{f_jf_k}{d_i} \,.
 \end{align*}
Similarly, for \fbox{B2}, we fix $d_i, d_j, d_k\leq V$ odd.  We define 
  \begin{align*}
     &U'_{d_i, d_j, d_k, v_{-1}} := \sum_{\substack{ f_i \leq X_i/d_i \\ (f_i, d_id_jd_kv_{-1})=1 \\ f_i \equiv d_i\varepsilon_i \bmod 8} }\sum_{\substack{ f_j \leq X_j/d_j \\ (f_j, f_id_jd_kv_{-1})=1 \\ f_j \equiv d_j\varepsilon_j \bmod 8} } \sum_{\substack{ f_k \leq X_k/d_k \\ (f_k, f_id_jd_kv_{-1})=1 \\ f_k \equiv d_k\varepsilon_k \bmod 8} } \mu^2(2f_if_jf_k) \legendre{f_i}{d_jd_k} \legendre{f_j}{d_id_k} \legendre{f_k} {d_id_j} 
 \end{align*}
 Notice that $u_{\boldsymbol{\mu}}(d_i, d_j, d_k) = u_{\boldsymbol{\mu}}(\varepsilon_i f_i, \varepsilon_j f_j, \varepsilon_k f_k) $ for $d_xf_x\equiv \varepsilon_i \bmod 8$ and $d_x, f_x$ odd. 
 With this notation we have 
 \begin{align}
       T_{v_{-1}, \boldsymbol{\mu}, \boldsymbol{\varepsilon}}^{B_1}(X_i, X_j, X_k) = \sum_{\substack{f_i, f_j, f_k\leq V\\ (f_if_jf_k , v_{-1})=1}}  \mu^2(2f_if_jf_k)  u_{\boldsymbol{\mu}}(\varepsilon_i f_i, \varepsilon_j f_j, \varepsilon_k f_k)U_{f_i, f_j, f_k, v_{-1}}\label{eq:dec_TB1}
      \end{align}
      and
      \begin{align}
          T_{v_{-1}, \boldsymbol{\mu}, \boldsymbol{\varepsilon}}^{B_2}(X_i, X_j, X_k) = \sum_{\substack{d_i, d_j, d_k\leq V\\ (d_id_jd_k , v_{-1})=1 }}    \mu^2(2d_id_jd_k) u_{\boldsymbol{\mu}}(d_i, d_j, d_k) U'_{d_i, d_j, d_k, v_{-1}}.\label{eq:dec_TB2}
 \end{align}
We establish a pointwise bound for $U$ and $U'$.
 \begin{lemma} \label{lemma:bound_U}
     In the notation above, whenever $(f_i, f_j, f_k) \neq (1,1,1)$ we have
     \[
     U_{f_i, f_j, f_k, v_{-1}}  \ll  \tau(v_{-1})V^{1+\varepsilon} \frac{X_iX_j X_k }{f_if_jf_k} \left(X_k^{-1/2} f_k^{1/2} + X_j^{-1/2} f_j^{1/2}\right)  .
     \]
     Similarly, whenever $(d_i, d_j, d_k)\neq (1,1,1)$, we have
         \[
     U'_{d_i, d_j, d_k, v_{-1}}  \ll \tau(v_{-1}) V^{1+\varepsilon} \frac{X_iX_j X_k }{d_id_jd_k} \left( X_k^{-1/2} d_k^{1/2} + X_j^{-1/2} d_j^{1/2}\right)  
     \]
 \end{lemma}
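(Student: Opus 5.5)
The plan is to obtain a saving from one of the inner sums over $d_k$ or $d_j$ via \Cref{lem:square_free_character_sum_g}, and to bound the remaining two sums trivially. I treat $U_{f_i,f_j,f_k,v_{-1}}$; the case of $U'$ is identical with the roles of $d_x$ and $f_x$ swapped.

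Fix $f_i,f_j,f_k$ with $(f_i,f_j,f_k)\neq(1,1,1)$. Since they are square-free and pairwise coprime, at least one of $f_if_j$ or $f_if_k$ is $\neq 1$.
\begin{itemize}
\item If $f_i\neq 1$, or if $f_j\neq 1$, then $f_if_j\neq 1$. In this case the character $d_k\mapsto \legendre{f_if_j}{d_k}$ is non-principal, with modulus dividing $4f_if_j$.
\item If $f_i=f_j=1$, then $f_k\neq1$, so $f_if_k=f_k\neq 1$. In this case the character $d_j\mapsto\legendre{f_k}{d_j}$ is non-principal, with modulus dividing $4f_k$.
\end{itemize}
By symmetry of the bound it suffices to handle the first case with the $d_k$-sum innermost.

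For fixed $d_i,d_j$, the inner sum runs over $d_k\le X_k/f_k$ with $\mu^2(d_k)$ and the coprimality condition $(d_k,2d_id_jf_if_jf_kv_{-1})=1$. I detect the congruence $d_k\equiv f_k\varepsilon_k \bmod 8$ with characters $\psi \bmod 8$. This produces
\[
\frac14\sum_{\psi \bmod 8}\overline{\psi}(f_k\varepsilon_k)\sum_{d_k}\mu^2(d_k)\,\psi(d_k)\legendre{f_if_j}{d_k}.
\]
Each twist $\psi\legendre{f_if_j}{\cdot}$ is a non-principal character modulo $q\mid 8f_if_j$. Non-principality holds because $f_if_j>1$ is odd and square-free, so the Jacobi symbol is not induced from a character mod $8$.

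Apply \Cref{lem:square_free_character_sum_g} with $G\equiv1$ and $m=2d_id_jf_if_jf_kv_{-1}$. As in \Cref{lemma:M1small}, truncate $m$ to its prime factors up to $X_k$ so that $\tau(m)\ll X_k^{\varepsilon}\tau(v_{-1})\tau(d_id_j)$ up to harmless factors. The inner sum is then
\[
\ll \tau(m)^{1/2}(f_if_j)^{1/4+\varepsilon}(X_k/f_k)^{1/2+\varepsilon}.
\]

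It remains to sum trivially over $d_i\le X_i/f_i$ and $d_j\le X_j/f_j$. Averaging $\tau(d_id_j)^{1/2}$ costs only $X^\varepsilon$-type factors, giving
\[
\ll \tau(v_{-1})\frac{X_iX_j}{f_if_j}(f_if_j)^{1/4+\varepsilon}\Big(\frac{X_k}{f_k}\Big)^{1/2+\varepsilon}
= \tau(v_{-1})\frac{X_iX_jX_k}{f_if_jf_k}\,(f_if_j)^{1/4+\varepsilon}f_k^{1/2}X_k^{-1/2+\varepsilon}.
\]
Since $f_if_j\le V^2$, we have $(f_if_j)^{1/4}\le V^{1/2}\le V$, so this is at most the claimed term $V^{1+\varepsilon}\frac{X_iX_jX_k}{f_if_jf_k}X_k^{-1/2}f_k^{1/2}$.

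The $X_k^{\varepsilon}$ losses need care, since $V^{\varepsilon}$ alone does not absorb them; this is the main bookkeeping obstacle. I would handle it by absorbing the loss into $V^{\varepsilon}=X_k^{\delta\varepsilon}$ after renaming $\varepsilon$. Alternatively, in \Cref{lem:square_free_character_sum_g} the $X^{\varepsilon}$ only arises from $G(n)\ll n^{\varepsilon}$, which is void when $G\equiv 1$, so it could be replaced by a logarithm.

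In the second case the same argument applies with the $d_j$-sum innermost and modulus dividing $8f_k$. This yields $(X_j/f_j)^{1/2}$ and the factor $f_k^{1/4}\le V$, giving the term $X_j^{-1/2}f_j^{1/2}$ after the same bookkeeping.

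The analogous argument for $U'$ uses the characters $f_k\mapsto\legendre{f_k}{d_id_j}$ or $f_j\mapsto\legendre{f_j}{d_id_k}$, which are non-principal whenever $(d_i,d_j,d_k)\neq(1,1,1)$.
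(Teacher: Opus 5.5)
Your proposal is correct and follows the paper's proof. You apply \Cref{lem:square_free_character_sum_g} with $g\equiv 1$ to the $d_k$-sum (character $\legendre{f_if_j}{\cdot}$, modulus $\ll V^2$) when $f_if_j\neq 1$, or to the $d_j$-sum when $f_i=f_j=1$. You then sum the other two variables trivially and absorb the $X_k^{\varepsilon}$-losses into $V^{\varepsilon}$ by taking $\varepsilon$ small in terms of $\delta$.

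One cosmetic fix: $m=2d_id_jf_if_jf_kv_{-1}$ violates the hypothesis $(q,m)=1$ of that lemma, so take $m=d_id_jf_kv_{-1}$ as the paper does. Coprimality to $2f_if_j$ is already enforced by the character and the congruence modulo $8$, and the truncation of $m$ is unnecessary.
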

 \begin{proof}
 Let $(f_i, f_j, f_k)\neq (1,1,1)$. We begin with the case $f_i \neq 1$ or $f_j\neq 1$. In the $d_k$ sum in the definition of $ U_{f_i, f_j, f_k, v_{-1}}$, we have 
 \begin{align*}
    \sum_{\substack{ d_k \leq X_k/f_k \\ (d_k,  f_k d_id_jv_{-1} )=1 \\ d_k \equiv f_k\varepsilon_k \bmod 8} } \mu^2(d_k)  \legendre{f_if_j}{d_k}  .
 \end{align*}
 Using \Cref{lemma:properties_u} $(i)$ and \Cref{lem:square_free_character_sum_g} with $g(p)=1$, as well as that the bound on the modulus of the character $1\neq f_if_j \leq (8V)^2$, this is bounded by 
 \[
 O( \tau(d_id_jf_kv_{-1}) \left(X_k/f_k\right)^{1/2+\varepsilon'} \cdot V^{1+\varepsilon'}),
 \]
for any $\varepsilon'>0$. Summing this trivially over the $d_j$- and $d_i$-sums in $U_{f_i, f_j, f_k, v_{-1}}$ yields 
 \[
 U_{f_i, f_j, f_k, v_{-1}} \ll \tau(v_{-1})\frac{({X_iX_j})^{1+\varepsilon'}   X_k^{1/2+\varepsilon'}}{f_if_j f_k^{1/2}}  V^{1+\varepsilon'} .
 \]
 On choosing $\varepsilon'$ small enough in terms of $\delta$ from \eqref{eq:def_V}, we obtain
 \[
 U_{f_i, f_j, f_k, v_{-1}} \ll \tau(v_{-1}) \frac{({X_iX_j})   X_k^{1/2}}{f_if_j f_k^{1/2}}  V^{1+\varepsilon} ,
 \]
 for any $\varepsilon>0$.  In the case where $f_i=f_j=1$ and $f_k\neq 1$, we apply the above argument to the $d_j$ sum and obtain 
 \[
 U_{f_i, f_j, f_k, v_{-1}} \ll \tau(v_{-1})\frac{X_j^{1/2}  (X_i X_k)}{f_if_kf_j^{1/2} }  V^{1+\varepsilon}  .
 \]
 The proof for $U'$ is the same by symmetry.
 \end{proof}
The preceding lemma allows us to evaluate $T_{v_{-1}}^{B_1}$ and $T_{v_{-1}}^{B_2}$.
 \begin{lemma}\label{lemma:regionC} Let $X_i, X_j, X_k\in \RR_{>0}$ satisfying \eqref{eq:size_comp} and \eqref{eq:X_j>X_k large}. Then 
     \[
     T_{v_{-1}, \boldsymbol{\mu}, \boldsymbol{\varepsilon}}^{B_1 }(X_i, X_j, X_k) = u_{\boldsymbol{\mu}}(\varepsilon_i, \varepsilon_j, \varepsilon_k) U_{1,1,1, v_{-1}} + O(\tau(v_{-1})V^{5+\varepsilon} X_iX_kX_j^{1/2});
    \]
    and 
     \[
      T_{v_{-1}, \boldsymbol{\mu}, \boldsymbol{\varepsilon}}^{B_2}(X_i, X_j, X_k) = u_{\boldsymbol{\mu}}(1,1,1)U'_{1,1,1, v_{-1}} + O(\tau(v_{-1})V^{5+\varepsilon} X_iX_kX_j^{1/2}).
     \]
 \end{lemma}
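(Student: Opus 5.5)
We start from the decompositions \eqref{eq:dec_TB1} and \eqref{eq:dec_TB2}. In \eqref{eq:dec_TB1} the term $(f_i,f_j,f_k)=(1,1,1)$ contributes exactly $\mu^2(2)\cdot u_{\boldsymbol{\mu}}(\varepsilon_i,\varepsilon_j,\varepsilon_k)\,U_{1,1,1,v_{-1}}$, since $u_{\boldsymbol{\mu}}(\varepsilon_i f_i,\varepsilon_j f_j,\varepsilon_k f_k)$ specialises to $u_{\boldsymbol{\mu}}(\varepsilon_i,\varepsilon_j,\varepsilon_k)$ there; here we read $\mu^2(2\,\cdot)$ as the odd square-free indicator, so this factor is $1$. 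In \eqref{eq:dec_TB2} the term $(d_i,d_j,d_k)=(1,1,1)$ likewise contributes $u_{\boldsymbol{\mu}}(1,1,1)\,U'_{1,1,1,v_{-1}}$. It therefore remains to show that the sum of all other terms is within the stated error.

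For the remaining terms we bound $|u_{\boldsymbol{\mu}}|\le 1$ and insert the pointwise bound of \Cref{lemma:bound_U}. In region \fbox{B1} this gives
\[
\ll \tau(v_{-1})V^{1+\varepsilon}X_iX_jX_k\sum_{f_i,f_j,f_k\le V}\frac{1}{f_if_jf_k}\Bigl(X_k^{-1/2}f_k^{1/2}+X_j^{-1/2}f_j^{1/2}\Bigr).
\]
The $f$-sums are each at most $\log V$ or $V^{1/2}$, so the whole expression is $\ll \tau(v_{-1})V^{3/2+2\varepsilon}X_iX_jX_k\bigl(X_k^{-1/2}+X_j^{-1/2}\bigr)$. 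By \eqref{eq:size_comp} we have $X_j\le X_k$, so $X_jX_k^{1/2}\le X_kX_j^{1/2}$. Hence both pieces are $\ll \tau(v_{-1})V^{3/2+\varepsilon}X_iX_kX_j^{1/2}$. Even a cruder trivial summation, which loses up to $V^3$ from the three $f$-variables, stays within the claimed $V^{5+\varepsilon}$. Region \fbox{B2} is handled identically using the bound for $U'$.

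The only points needing care are bookkeeping. First, the $\varepsilon$-losses from \Cref{lemma:bound_U} (which are really powers $X^{\varepsilon'}$ absorbed into $V^{\varepsilon}$) must remain absorbable; this holds because $V=X_k^\delta$, and $\varepsilon'$ is chosen small in terms of $\delta$ exactly as in that lemma's proof. Second, the excluded tuples with $(f_i,f_j,f_k)\neq(1,1,1)$ satisfy the lemma's hypothesis, so every non-identity term uses a non-principal character. Third, the coprimality conditions in $U$ and $U'$ match those in \eqref{eq:def_tv}. I expect no genuine obstacle here: the work was done in \Cref{lemma:bound_U}, and \eqref{eq:X_j>X_k large} is only needed later to see that $X_j^{-1/2}V^{5}$ is a power saving.
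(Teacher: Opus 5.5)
Your proposal is correct and follows the paper's proof. You isolate the $(1,1,1)$ term in \eqref{eq:dec_TB1} and \eqref{eq:dec_TB2}, bound every other term pointwise by \Cref{lemma:bound_U}, and sum over the variables $f_x$ (resp.\ $d_x$) up to $V$. Your summation against the weights $1/(f_if_jf_k)$ is more careful than the paper's trivial summation, giving $V^{3/2+\varepsilon}$ instead of $V^{5+\varepsilon}$, which implies the stated bound.
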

\begin{proof}
    Starting from \eqref{eq:dec_TB1} and \eqref{eq:dec_TB2}, we apply \Cref{lemma:bound_U} to the individual terms $U_{f_i, f_j, f_k, v_{-1}}$ and $U'_{d_i, d_j, d_k, v_{-1}}$ whenever $(f_i, f_j, f_k)\neq (1,1,1)$,  $(d_i, d_j, d_k)\neq (1,1,1)$, respectively. On taking $\varepsilon$ in \Cref{lemma:bound_U} small enough, using \eqref{eq:def_V} and \eqref{eq:X_j>X_k large}, the result follows by summing trivially over all $f_i, f_j, f_k<V$, $d_i, d_j, d_k<V$ respectively. 
\end{proof}
We end this section by giving a first proof of \Cref{prop_main_N} under the size assumptions \eqref{eq:size_comp}, \eqref{eq:X_j>X_k large} and $X_{-1}\leq X_k$.
\begin{lemma} \label{lemma:main_1}
    Let $X_i, X_j, X_k, X_{-1}\in \RR_{>0}$ such that $X_i \leq X_j \leq X_k$, $X_{-1}\leq X_k$ and $X_j\geq X_k^{1/5}$. Then \Cref{prop_main_N} holds with an error term 
    \[
    E \ll X_{-1}X_iX_jX_k \left( X_k^{-1/510 +\varepsilon}+ X_i^{-1/2+\varepsilon} + X_{-1}^{-1/2+\varepsilon}\right) ,
    \]
    for any $\varepsilon>0$.
\end{lemma}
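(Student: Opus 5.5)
Starting from \eqref{eq:N_as_T}, we write $N_{\boldsymbol{\mu},\boldsymbol{\varepsilon}}(\boldsymbol{X})$ as a sum over odd square-free $v_{-1}\le X_{-1}$ of $T_{v_{-1},\boldsymbol{\mu},\boldsymbol{\varepsilon}}$, plus $O(X_{-1}X_k^{1+\varepsilon})$. The regions \fbox{A}, \fbox{B1}, \fbox{B2}, \fbox{C} cover all tuples, but \fbox{B1} and \fbox{B2} overlap in the box where all six variables are $\le V$. We therefore decompose
\[
T = T^{A}+T^{B_1}+T^{B_2}-T^{B_1\cap B_2}+T^{C},
\]
and bound the overlap trivially by $O(V^6)$. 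We then insert \Cref{lemma:regionA}, \Cref{lemma:regionB} and \Cref{lemma:regionC}. Up to errors, this gives
\[
T_{v_{-1}} = u_{\boldsymbol{\mu}}(\varepsilon_i,\varepsilon_j,\varepsilon_k)U_{1,1,1,v_{-1}} + u_{\boldsymbol{\mu}}(1,1,1)U'_{1,1,1,v_{-1}} + O\big(\tau(v_{-1})(X_iX_jX_kV^{-1/6+\varepsilon} + V^{5+\varepsilon}X_iX_kX_j^{1/2} + X_kV^{4+\varepsilon})\big).
\]
Summing over $v_{-1}\le X_{-1}$ costs only a factor $X_{-1}\log X_{-1}\ll X_{-1}X_k^{\varepsilon}$, since $X_{-1}\le X_k$. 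Because $X_j\ge X_k^{1/5}$, the middle error is $\le X_iX_jX_k\,V^{5+\varepsilon}X_k^{-1/10}$. Balancing $V^{-1/6}$ against $V^{5}X_k^{-1/10}$ with $V=X_k^{\delta}$ gives $\delta/6=1/10-5\delta$, so $\delta = 3/155$ and a saving of $X_k^{-1/310}$. With room for the $\varepsilon$ losses and the trivial overlap, a choice like $\delta=1/85$ gives the claimed exponent $X_k^{-1/510+\varepsilon}$. I would fix $\delta$ this way; the exact constant only needs to match the stated exponent.

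For the main terms, $U_{1,1,1,v_{-1}}$ counts triples of pairwise coprime odd square-free $d_x\le X_x$ with $d_x\equiv \varepsilon_x \bmod 8$ and $(d_x,v_{-1})=1$. I evaluate it by applying \Cref{lemma:square-free-asymp} three times in succession: to $d_i$ with $m=d_jd_kv_{-1}$, then to $d_j$, then to $d_k$, each time with $q=8$ and $g=1$ initially. The coprimality factors $\prod_{p\mid m}(1+1/p)^{-1}$ then become the weights $g(p)$ for the next variable. These satisfy $|g(p)-1|\le C/p$, so the lemma applies each time. Since the smallest variable is $d_i$, the errors are $O(X_i^{1/2+\varepsilon}X_jX_k)$. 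Doing the $d_i$ sum last gives the analogous error $X_i^{-1/2+\varepsilon}$ relative to the main term, which matches the stated error.

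The factors $1/\varphi(8)=1/4$ per variable and the local factors combine into an Euler product of the shape $\prod_{p>2}(1-1/p)^3(1+3/p)\cdot(\text{correction at }p\mid v_{-1})$, times $X_iX_jX_k/64$ up to the $p=2$ factors. The same computation applies to $U'_{1,1,1,v_{-1}}$. Finally, I sum over square-free odd $v_{-1}\le X_{-1}$ with weight $G(v_{-1})=\prod_{p\mid v_{-1}}g(p)$, using \Cref{lemma:square-free-asymp} with error $X_{-1}^{1/2+\varepsilon}$. Combining this with the three-variable product should turn it into $\prod_{p>2}(1-1/p)^4(1+4/p)$, and the overall power of $2$ into $1/1024$.

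The main obstacle I expect is this last bookkeeping: checking that the nested local factors really telescope to $(1-1/p)^4(1+4/p)$, and getting the constant $1/1024$ from the $p=2$ factors and the restriction to odd integers. Controlling the uniformity in $m$ is a lesser concern, since $\tau(m)$ only costs $X^{\varepsilon}$. For the constant I would first check the identity $\prod_{p}$ of the four-variable pairwise-coprime square-free density directly. Then I would verify that the iterative evaluation reproduces it, by comparing Euler factors prime by prime.
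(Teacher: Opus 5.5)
Your handling of the regions matches the paper, and your choice of $\delta$ is fine. With $\delta=1/85$, or the balanced $\delta=3/155$, region \fbox{A} gives the binding saving, while the region \fbox{C} term, the overlap and the $O(X_{-1}X_k^{1+\varepsilon})$ term are harmless because $X_iX_j\ge X_k^{1/5}$. The paper takes $\delta=1/51$, where the $V^{5}X_j^{1/2}$ term is the one that produces the $1/510$. Your Euler-factor bookkeeping also closes: $(1-\frac1p)^3(1+\frac3p)\cdot(1-\frac1p)(1+\frac1{p+3})=(1-\frac1p)^4(1+\frac4p)$, and $\frac1{512}\cdot\frac12=\frac1{1024}$.

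The gap is in the error term of the main-term evaluation, exactly at your remark that $\tau(m)$ only costs $X^{\varepsilon}$. \Cref{lemma:square-free-asymp} saves only in the range of the variable being summed. It loses $\tau(m)$, however, where $m$ is built from the variables still to be summed.

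\textbf{Order of the $d$-sums.} If you sum the smallest variable $d_i$ first with $m=d_jd_kv_{-1}$, the error you get is $\ll \tau(v_{-1})X_i^{1/2+\varepsilon}X_jX_k\log X_j\log X_k$, not $O(X_i^{1/2+\varepsilon}X_jX_k)$.

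\textbf{Order of the $v_{-1}$-sum.} Even if you put $d_i$ last among the $d$'s, keeping $v_{-1}$ as the outermost sum multiplies the $X_i^{1/2+\varepsilon}X_jX_k$ error by $\sum_{v_{-1}\le X_{-1}}\tau(v_{-1})\asymp X_{-1}\log X_{-1}$.

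\textbf{Consequence.} The hypotheses allow $X_i$ to be arbitrarily small compared with $X_k$ and $X_{-1}$, so these logarithms cannot be absorbed into $X_i^{\varepsilon}$. You only get a relative error $X_i^{-1/2+\varepsilon}(\log X_k)^{O(1)}$, which is not $\ll X_k^{-1/510+\varepsilon}+X_i^{-1/2+\varepsilon}+X_{-1}^{-1/2+\varepsilon}$. For example, with $X_i=(\log X_{-1})^2$ and $X_{-1}=X_k=X_j^5$, your bound does not even tend to $0$, so \Cref{prop_main_N} would not follow.

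\textbf{The fix.} The paper first merges $\sum_{v_{-1}}\mu^2(2v_{-1})\bigl(U_{1,1,1,v_{-1}}+U'_{1,1,1,v_{-1}}\bigr)$ into the single four-variable sum $\sum\mu^2(2v_{-1}f_if_jf_k)$ with $f_x\equiv\varepsilon_x \bmod 8$. It then applies \Cref{lemma:square-free-asymp} in decreasing order of the four ranges, whatever that order actually is. Each $\tau(m)$ then involves only variables with smaller ranges and is absorbed by the power saving of the variable currently being summed. This leaves exactly the relative error $\min\{X_i,X_{-1}\}^{-1/2+\varepsilon}$ claimed in the lemma.
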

\begin{proof}
    Recall \eqref{eq:N_as_T}. Combining the estimates from \Cref{lemma:regionA}, \Cref{lemma:regionB} and \Cref{lemma:regionC}, as well as the fact that the regions \fbox{B1} and \fbox{B2} intersect in at most $O(V^6)$ indices, we find 
    \begin{align*}
   &N_{\boldsymbol{\mu}, \boldsymbol{\varepsilon}}(\boldsymbol{X}) = \sum_{v_{-1}\leq X_{-1}} \mu^2(2v_{-1}) \Big(u_{\boldsymbol{\mu}}(\varepsilon_i, \varepsilon_j, \varepsilon_k) U_{1,1,1, v_{-1}}+ u_{\boldsymbol{\mu}}(1,1,1) U'_{1,1,1, v_{-1}} +\\
    &\hspace{2.5cm} O\big(X_{k} X_jX_i V^{-1/6+\varepsilon}+ X_kV^{4+\varepsilon}+\tau(v_{-1})V^{5+\varepsilon} X_iX_kX_j^{1/2} +V^6\big)\Big)  + O(X_k^{1+\varepsilon}X_{-1})  \\
    &=(u_{\boldsymbol{\mu}}(\varepsilon_i, \varepsilon_j, \varepsilon_k) + u_{\boldsymbol{\mu}}(1,1,1) ) \sum_{\substack{v_{-1}\leq X_{-1} \\ f_{x} \leq X_x \text{ for }x=i,j,k \\
    f_x\equiv \varepsilon_x \bmod 8}} \mu^2(2v_{-1}f_if_jf_k) \\
    &\qquad  + O\left(X_{-1}X_kX_jX_i V^{-1/6+\varepsilon} + X_{-1}X_kV^{4+\varepsilon}+ X_{-1} X_kX_j^{1/2} X_iV^{5+\varepsilon}+X_{-1}V^6\right) . \numberthis \label{eq:njagjnagf}
    \end{align*}
    Recalling \eqref{eq:def_V} and using \eqref{eq:X_j>X_k large} as well as $X_{-1}\leq X_k$, the error term above is bounded by 
    \[
   O\left( X_{-1}X_jX_i \left( X_k^{1-\delta/6+\varepsilon} + X_k^{4/5+4\delta+\varepsilon }+X_k^{9/10+5\delta +\varepsilon}     +X_k^{6\delta}\right) \right).
    \]
    Choosing $\delta = 1/51$, this is 
    \[
    O\left(X_{-1}X_iX_jX_k^{1-1/510+\varepsilon}\right), \numberthis \label{eq:error_term1}
    \]
    for any $\varepsilon>0$. Lastly, we evaluate the main term 
    \[
    \sum_{\substack{v_{-1}\leq X_{-1} \\ f_{x} \leq X_x \text{ for }x=i,j,k \\
    f_x\equiv \varepsilon_x \bmod 8}} \mu^2(2v_{-1}f_if_jf_k) .
    \]
    We do so by applying \Cref{lemma:square-free-asymp} successively in decreasing order of parameters $X_{k}, X_{-1}, X_j, X_i$. Beginning with the largest, in our case $X_k$, an application of \Cref{lemma:square-free-asymp} to the sum over $f_k$ with $q=8$, $m=2v_{-1}f_if_j$ and $g(p)=1$ gives 
    \begin{align*}
     \sum_{\substack{v_{-1}\leq X_{-1} \\ f_{x} \leq X_x \text{ for }x=i,j,k \\
    f_x\equiv \varepsilon_x \bmod 8}} \mu^2(2v_{-1}f_if_jf_k) &= \frac{X_k}{8} \prod_{p>2} \left(1-\frac{1}{p^2}\right)  \sum_{\substack{ f_{i} \leq X_i\\ f_j\leq X_j \\ v_{-1}\leq X_{-1} \\ (f_i, f_j)\equiv (\varepsilon_i, \varepsilon_j)\bmod 8}} \mu^2(2f_if_jv_{-1}) \prod_{p\mid f_if_jv_{-1}} \frac{p}{p+1} \\
    &\quad +O\left( X_{k}^{1/2+\varepsilon}({X_iX_jX_{-1}})^{1+\varepsilon}\right). 
    \end{align*}
     Successively applying \Cref{lemma:square-free-asymp} three more times in decreasing order of $X_{-1}, X_j, X_i$, we obtain
    \begin{align*}
      \sum_{\substack{v_{-1}\leq X_{-1} \\ f_{x} \leq X_x \text{ for }x=i,j,k \\
    f_x\equiv \varepsilon_x \bmod 8}} \mu^2(2v_{-1}f_if_jf_k) &=  \frac{X_{i}X_jX_kX_{-1}}{1024} \prod_{p>2} \left(1-\frac{1}{p}\right)^{4} \left(1+\frac{4}{p}\right) \\
    &\quad+ O\left(X_{-1}X_iX_jX_k \left( X_i^{-1/2+\varepsilon}+ X_{-1}^{-1/2+\varepsilon}\right)\right).
    \end{align*}
    Combining this with \eqref{eq:njagjnagf} and the error term obtained above in \eqref{eq:error_term1} gives the result.
\end{proof}

\subsubsection{Sub-case $X_{-1} \geq X_k $} \label{ssec:X1Xjlarge}
We illustrate how the above argument is adapted to the case $X_{-1}> X_{k}$, still under the assumption \eqref{eq:size_comp} and \eqref{eq:X_j>X_k large}.
\begin{lemma}  \label{lemma:main_2}
    Let $X_i, X_j, X_k, X_{-1}\in \RR_{>0}$ such that $X_i\leq X_j \leq X_k\leq X_{-1}$ and $X_j\geq X_k^{1/5}$. Then \Cref{prop_main_N} holds with an error term 
    \[
    E \ll X_{-1}X_iX_jX_k \left( X_k^{-1/510 +\varepsilon}+ X_i^{-1/2+\varepsilon} \right) ,
    \]
    for any $\varepsilon>0$.
\end{lemma}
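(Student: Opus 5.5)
The plan is to rerun the argument of \Cref{lemma:main_1}, but with the roles of $v_{-1}$ and the six variables $(d_i,f_i,d_j,f_j,d_k,f_k)$ reversed in the order of summation. Now $X_{-1}$ is the largest parameter, so the $v_{-1}$-sum is evaluated \emph{first}, as the innermost sum. For fixed odd, square-free, pairwise coprime $(d_i,f_i,d_j,f_j,d_k,f_k)$, apply \Cref{lemma:square-free-asymp} to
\[
\sum_{\substack{v_{-1}\le X_{-1}\\ (v_{-1},2d_if_id_jf_jd_kf_k)=1}}\mu^2(v_{-1}),
\]
with $q=1$ (or $q=8$ with the coprimality to $2$ built in), $m=2d_if_id_jf_jd_kf_k$ and $g\equiv 1$. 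This gives the main term
\[
c\,X_{-1}\prod_{p\mid b_ib_jb_k}\Big(1+\tfrac1p\Big)^{-1}
\]
plus an error $O(\tau(b_ib_jb_k)X_{-1}^{1/2+\varepsilon})$. Summed trivially over the six variables, this error contributes $O(X_iX_jX_k^{1+\varepsilon}X_{-1}^{1/2+\varepsilon})$, which is acceptable because $X_{-1}\ge X_k$. Dropping condition \eqref{eq:condition_G} costs $O(X_k^{1+\varepsilon}X_{-1})$, which is negligible since $X_iX_j\ge X_j\ge X_k^{1/5}$.

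We are left with $X_{-1}$ times a weighted version of $T$: the sum in \eqref{eq:def_tv} without the $v_{-1}$-coprimality condition, but with the multiplicative weight $G(d_if_id_jf_jd_kf_k)$, where $g(p)=p/(p+1)$ satisfies $|g(p)-1|\le 1/p$. We then repeat the region decomposition A, B1, B2, C with $V=X_k^{\delta}$.

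\begin{itemize}
\item \textbf{Region A.} Since $G$ is multiplicative and bounded by $1$, the weights split across variables. Absorbing them into $\alpha_{d_i}$ and $\beta_{f_j}$ keeps $|\alpha|,|\beta|\le 1$, so \Cref{lemma:large_sieve} gives the same bound $X_iX_jX_kV^{-1/6+\varepsilon}$ as in \Cref{lemma:regionA}.
\item \textbf{Region C.} The trivial bound $X_kV^{4+\varepsilon}$ of \Cref{lemma:regionB} is unchanged.
\item \textbf{Regions B1 and B2.} In the pointwise bound of \Cref{lemma:bound_U}, apply \Cref{lem:square_free_character_sum_g} with the present $g$ instead of $g=1$. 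That lemma is stated for general $g$ with $|g(p)-1|\le C/p$, so the bound is the same, now without the factor $\tau(v_{-1})$. This yields the analogue of \Cref{lemma:regionC} with error $O(V^{5+\varepsilon}X_iX_kX_j^{1/2})$.
\end{itemize}

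Choosing $\delta=1/51$ as before, the total error is $X_{-1}X_iX_jX_k^{1-1/510+\varepsilon}$, using $X_j\ge X_k^{1/5}$ exactly as in \eqref{eq:error_term1}.

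The main term is then
\[
\bigl(u_{\boldsymbol\mu}(\varepsilon_i,\varepsilon_j,\varepsilon_k)+u_{\boldsymbol\mu}(1,1,1)\bigr)\,c\,X_{-1}\sum_{\substack{f_x\le X_x\\ f_x\equiv\varepsilon_x \bmod 8}}\mu^2(2f_if_jf_k)\,G(f_if_jf_k).
\]
We evaluate it by applying \Cref{lemma:square-free-asymp} successively in the order $X_k,X_j,X_i$, updating $g$ at each step to absorb the factors $(1+g(p)/p)^{-1}$ from the coprimality conditions. Combining the Euler factors should give the constant $\frac1{1024}\prod_{p>2}(1-\frac1p)^4(1+\frac4p)$, since this Euler product does not depend on the order of summation and therefore matches \Cref{lemma:main_1}. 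The errors from this step are $O(X_{-1}X_iX_jX_kX_i^{-1/2+\varepsilon})$.

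The main obstacle I expect is bookkeeping rather than any new idea. One must check that the multiplicative weight $G$ introduced by the inner $v_{-1}$-sum does not spoil the bilinear estimate, which works because $G$ factors and is bounded. One must also check that the Euler factors recombine to exactly the constant in \Cref{prop_main_N}. The key point to confirm is that every error term coming from the $v_{-1}$-sum carries $X_{-1}^{1/2}$ and at most $X_k^{\varepsilon}$ growth in the other variables, so it is dominated because $X_{-1}\ge X_k$.
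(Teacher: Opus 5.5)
Your proposal is correct and follows essentially the same route as the paper: you evaluate the $v_{-1}$-sum first via \Cref{lemma:square-free-asymp}, producing the multiplicative weight $\prod_{p\mid d_if_id_jf_jd_kf_k}\frac{p}{p+1}$, rerun the decomposition into regions A, B1, B2, C with this weight absorbed into the bilinear coefficients and into $g(p)=\frac{p}{p+1}$ in the pointwise bounds, take $V=X_k^{1/51}$, and evaluate the main term as in \Cref{lemma:main_1}. Your version is slightly more careful than the paper's sketch, since you keep the Jacobi-symbol factors in $\alpha_{d_i},\beta_{f_j}$ and explicitly justify dropping condition \eqref{eq:condition_G} via $X_iX_j\ge X_k^{1/5}$.
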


\begin{proof}
We perform the summation over $v_{-1}$ first. We have, using for example \Cref{lemma:square-free-asymp},
\begin{align*}
\sum_{ v_{-1} \leq X_{-1}} \mu^2(2v_{-1} d_id_jd_k f_if_jf_k)   &= \mu^2(2 d_id_jd_k f_if_jf_k) \frac{X_{-1}}{2}\prod_{p>2} \left(1-\frac{1}{p^2}\right) \prod_{p\mid d_id_jd_kf_if_jf_k} \frac{p}{p+1} \\
&\qquad + O\left(X_{-1}^{1/2} \tau(d_id_jd_kf_if_jf_k)\right).    
\end{align*}
Inserting this into \eqref{eq:def_N_main} and summing the error term gives
\begin{align*}
    N_{\boldsymbol{\mu}, \boldsymbol{\varepsilon}}(\boldsymbol{X}) &= \frac{X_{-1}}{2}  \prod_{p>2} \left(1-\frac{1}{p^2}\right) T_{\boldsymbol{\varepsilon}, \boldsymbol{\mu}}(X_i, X_j, X_k) +O\left(X_{-1}^{1/2} (X_iX_jX_k)^{1+\varepsilon}\right) \\
    &= \frac{X_{-1}}{2}  \prod_{p>2} \left(1-\frac{1}{p^2}\right) T_{\boldsymbol{\varepsilon}, \boldsymbol{\mu}}(X_i, X_j, X_k) +O\left(X_{-1}^{1/2+\varepsilon} X_iX_jX_k \right)  \numberthis \label{eq:N_v-1_sum_performed} 
\end{align*}
for any $\varepsilon>0$ and with
\begin{align}
    T_{\boldsymbol{\varepsilon}, \boldsymbol{\mu}}(X_i, X_j, X_k) &=\sum_{\substack{d_i f_i \leq X_i , d_jf_j\leq X_j, d_k f_k \leq X_k  \\  d_xf_x=\varepsilon_x \bmod 8 \text{ for } x\in\{i,j,k\} }} \Bigg( {\mu^2( d_if_id_jf_jd_kf_k)} u_{ \boldsymbol{\mu}}(d_i, d_j, d_k)  \nonumber \\
 &\hspace{2cm}   \legendre{f_i }{d_jd_k} \legendre{f_j}{d_i d_k} \legendre{f_k}{d_i d_j}   \prod_{p\mid d_id_jd_kf_if_jf_k} \frac{p}{p+1}\Bigg)\,. \label{eq:def_T}
\end{align}
We now split this into the regions $ T_{\boldsymbol{\varepsilon}, \boldsymbol{\mu}}^A(X_i, X_j, X_k)$,  $T^{B_{1,2}}_{\boldsymbol{\varepsilon}, \boldsymbol{\mu}}(X_i, X_j, X_k)$,  $T^{C}_{\boldsymbol{\varepsilon}, \boldsymbol{\mu}}(X_i, X_j, X_k)$ as before in \Cref{sec:largeXj_smallX_minus}. On replacing $\alpha_{d_i}, \beta_{f_j}$ in the proof of \Cref{lemma:regionB} by 
\[ 
\alpha_{d_i } = \boldsymbol{1}_{\left\{\substack{d_i \equiv f_i\varepsilon_i \bmod 8\\ (d_i, 2f_id_jf_kd_k)=1}\right\} } \mu^2(d_i) \prod_{p\mid d_i} \frac{p}{p+1}\; \; , \quad \beta_{f_j } = \boldsymbol{1}_{\left\{\substack{f_j \equiv d_j\varepsilon_j \bmod 8\\ (f_j, 2f_id_if_kd_k)=1}\right\} } \mu^2(f_j)\prod_{p\mid f_j} \frac{p}{p+1},
\]
we obtain 
\[
 T_{\boldsymbol{\varepsilon}, \boldsymbol{\mu}}^A(X_i, X_j, X_k) \ll X_kX_jX_iV^{-1/6+\varepsilon}
\]
for any $\varepsilon>0$. The proof of \Cref{lemma:regionB} goes through without any changes, showing 
\[
 T_{\boldsymbol{\varepsilon}, \boldsymbol{\mu}}^A(X_i, X_j, X_k) \ll X_kV^{4+\varepsilon}.
\]
We define 
 \begin{align*}
     &U_{f_i, f_j, f_k} =  \ \sum_{\substack{ d_i \leq X_i/f_i \\ (d_i, f_if_jf_k)=1 \\ d_i \equiv f_i\varepsilon_i \bmod 8} }\sum_{\substack{ d_j \leq X_j/f_j \\ (d_j, f_if_jf_k)=1 \\ d_j \equiv f_j\varepsilon_j \bmod 8} } \sum_{\substack{ d_k \leq X_k/f_k \\ (d_k, f_if_jf_k)=1 \\ d_k \equiv f_k\varepsilon_k \bmod 8} } \mu^2(2d_id_jd_k) \\
     &\hspace{5cm}\cdot \legendre{f_if_j}{d_k} \legendre{f_if_k} {d_j}\legendre{f_jf_k} {d_i}\prod_{p\mid {d_id_jd_kf_if_jf_k}} \frac{p}{p+1},
 \end{align*}
and
  \begin{align*}
     &U'_{d_i, d_j, d_k} =\sum_{\substack{ f_i \leq X_i/d_i \\ (f_i, d_id_jd_k)=1 \\ f_i \equiv d_i\varepsilon_i \bmod 8} }\sum_{\substack{ f_j \leq X_j/d_j \\ (f_j, f_id_jd_k)=1 \\ f_j \equiv d_j\varepsilon_j \bmod 8} } \sum_{\substack{ f_k \leq X_k/d_k \\ (f_k, f_id_jd_k)=1 \\ f_k \equiv d_k\varepsilon_k \bmod 8} } \mu^2(2f_if_jf_k) \\
     &\hspace{5cm}\cdot \legendre{f_k}{d_id_j} \legendre{f_j}{d_id_k} \legendre{f_i}{d_jd_k}\prod_{p\mid {d_id_jd_kf_if_jf_k}} \frac{p}{p+1} .
 \end{align*}
 With this, we have 
\begin{align*}
      T_{\boldsymbol{\varepsilon}, \boldsymbol{\mu}}^{B_1}(X_i, X_j, X_k) = \sum_{\substack{f_i, f_j, f_k\leq V\\ (f_if_jf_k , 2)=1 }}   \mu^2(f_if_jf_k)u_{\boldsymbol{\mu}}(\varepsilon_i f_i, \varepsilon_j f_j, \varepsilon_kf_k)   U_{f_i, f_j, f_k} 
      \end{align*}
      and
      \begin{align*}
          T_{\boldsymbol{\varepsilon}, \boldsymbol{\mu}}^{B_2}(X_i, X_j, X_k) = \sum_{\substack{d_i, d_j, d_k\leq V\\ (d_id_jd_k ,2)=1}}   \mu^2(d_id_jd_k) u_{\boldsymbol{\mu}}(d_i,d_j, d_k)  U'_{d_i, d_j, d_k}.
 \end{align*}
Setting $g(p) = \frac{p}{p+1}$ and $v_{-1}=1$ in the proof of \Cref{lemma:bound_U}, we obtain the same bounds for $U_{f_i, f_j, f_k} $ and $U'_{d_id_jd_k}$ as in \Cref{lemma:bound_U} with $v_{-1}=1$. Then the proof of \Cref{lemma:regionC} shows
\[
T_{\boldsymbol{\varepsilon}, \boldsymbol{\mu}}^{B_1}(X_i, X_j, X_k) = u_{\boldsymbol{\mu}}(\varepsilon_i, \varepsilon_j, \varepsilon_k)U_{1,1,1} + O( V^{5+\varepsilon} X_kX_j^{1/2}X_i) .
\]
and 
\[
T_{\boldsymbol{\varepsilon}, \boldsymbol{\mu}}^{B_2}(X_i, X_j, X_k) = u_{\boldsymbol{\mu}}(1,1,1)U'_{1,1,1} + O( V^{5+\varepsilon} X_kX_j^{1/2}X_i).
\]
Setting $V= X_k^{1/51} $ as in the proof of \Cref{lemma:main_1} shows 
\[
T_{\boldsymbol{\varepsilon}, \boldsymbol{\mu}}(X_i, X_j, X_k) = U_{1,1,1} + U'_{1,1,1} + O\left(X_jX_iX_k^{1-\frac{1}{510} + \varepsilon}\right).
\]
Finally, the main term $$\frac{1}{2} \prod_{p>2} \left(1-\frac{1}{p^2}\right) X_{-1} (U_{1,1,1}+U'_{1,1,1})$$ 
is treated in the same way as before in the proof of \Cref{lemma:main_1}, giving relative power saving of $X_{i}^{-1/2+\varepsilon}$ in the smallest parameter $X_i$.
\end{proof}
\subsection{The case $X_j < X_k^{1/5}$} \label{sec:smallX_j}
    
In this case we will not use any large sieve estimates, but treat the character sum directly. 
As before we have to consider the different size ranges of $X_{-1}$ with regard to $X_k$ and $X_j$. We start with the following sub-case.
\subsubsection{Sub-case $X_{-1} \geq  X_k$} \label{seq:Xjsmall_Xminus_large}
We start by summing over $v_{-1}$ square-free and coprime to $d_if_id_jf_jd_kf_k$ first. As in  \eqref{eq:N_v-1_sum_performed}, performing the summation over $v_{-1}\leq X_{-1}$, we have 
\begin{align*}
    N_{\boldsymbol{\mu}, \boldsymbol{\varepsilon}}(\boldsymbol{X})      &= \frac{1}{2} X_{-1} \prod_{p>2} \left(1-\frac{1}{p^2}\right) T_{\boldsymbol{\varepsilon}, \boldsymbol{\mu}}(X_i, X_j, X_k) + O(X_{-1}^{1/2+\varepsilon} X_iX_jX_k) , 
\end{align*}
with $T_{\boldsymbol{\varepsilon}, \boldsymbol{\mu}}(X_i, X_j, X_k)$ given by
\begin{align*}
    T_{\boldsymbol{\varepsilon}, \boldsymbol{\mu}}(X_i, X_j, X_k) &=\sum_{\substack{d_i f_i \leq X_i , d_jf_j\leq X_j, d_k f_k \leq X_k  \\  d_xf_x=\varepsilon_x \bmod 8 \text{ for } x\in\{i,j,k\} \\ \eqref{eq:condition_G} \text{ holds} }} \Bigg( {\mu^2( d_if_id_jf_jd_kf_k)} u_{ \boldsymbol{\mu}}(d_i, d_j, d_k)  \nonumber \\
 &\hspace{2cm}   \legendre{f_i }{d_jd_k} \legendre{f_j}{d_i d_k} \legendre{f_k}{d_i d_j}   \prod_{p\mid d_id_jd_kf_if_jf_k} \frac{p}{p+1}\Bigg)\,. 
\end{align*}
We rewrite $T_{\boldsymbol{\varepsilon}, \boldsymbol{\mu}}(X_i, X_j, X_k)$ as
\begin{align}
   T_{\boldsymbol{\varepsilon}, \boldsymbol{\mu}}(X_i, X_j, X_k) = \sum_{\substack{d_if_i\leq X_i \\ d_jf_j\leq X_j  \\ d_xf_x = \varepsilon_x \bmod 8 \text{ for }x=i,j } } \mu^2(d_if_id_jf_j) \legendre{f_i}{d_j}\legendre{f_j}{d_i}S_{d_i, d_j, f_i, f_j}(X_i, X_j, X_k)  ,\label{eq:T_and_S}
\end{align}
where 
\begin{align*}
    S_{d_i, d_j, f_i, f_j}(X_i, X_j, X_k) :=   
    \hspace{-.2cm}\sum_{\substack{d_kf_k \leq X_k  \\  (d_kf_k, d_id_jf_if_j)=1\\d_kf_k  \equiv \varepsilon_k \bmod 8 \\ \eqref{eq:condition_G} \text{ holds}}} \mu^2(d_kf_k) u_{\boldsymbol{\mu}}(d_i, d_j, d_k) \legendre{f_k}{d_id_j} \legendre{f_if_j}{d_k} \prod_{p\mid d_id_jf_if_j d_kf_k} \frac{p}{p+1}.
\end{align*}
We have the following result.
\begin{lemma} \label{lemma:boundsS_ddff}
   Let $X_i, X_j, X_k\in \RR_{>0}$ such that \eqref{eq:size_comp} holds. Let $d_i, f_i\leq X_i$, $d_j, f_j\leq X_j$ be four square-free pairwise coprime integers such that $(d_i, d_j)\neq (1,1)$ and $(f_i, f_j)\neq (1,1)$. Then for any $\varepsilon>0$ the following holds
   \begingroup
\allowdisplaybreaks
        \begin{alignat*}{2}  
       &\quad (i) \qquad  &&S_{d_i,d_j,f_i,f_j} = O \left(X_k^{3/4+\varepsilon} (X_{i}X_j)^{1/4}\right);   \\ 
        &\quad(ii) && S_{1,1,f_i,f_j} = \frac{X_k}{8}  \prod_{p>2} \left(1-\frac{2}{p(p+1)}\right)\hspace{-.2cm} \sum_{\substack{d_k = 1 \\ (d_k,   2f_if_j)=1}}^\infty \hspace{-.2cm}\frac{ \mu^2(d_k) u_{\boldsymbol{\mu}}(1, 1, d_k)}{d_k} \legendre{f_if_j}{d_k}\prod_{p\mid f_if_jd_k} \frac{p}{p+2}\\
        & &&  \hspace{2cm }  + O\left( X_k ^{3/4+\varepsilon}(X_iX_j)^{1/4}  \right)   ;  \\
        &\quad(iii)  &&S_{d_i,d_j,1,1} = \frac{X_k}{8}  \prod_{p>2} \left(1-\frac{2}{p(p+1)}\right) \hspace{-.4cm} \sum_{\substack{f_k=1 \\ (f_k, 2d_id_j)=1}}^\infty \hspace{-.4cm}\frac{\mu^2(f_k)u_{\boldsymbol{\mu}}(\varepsilon_i , \varepsilon_j , \varepsilon_k f_k)}{f_k} \legendre{f_k}{d_id_j}\prod_{p\mid d_id_jf_k} \frac{p}{p+2} \\
        & &&  \hspace{2cm }+ O\left(  X_k ^{3/4+\varepsilon}(X_iX_j)^{1/4}\right)   ;  \\
         &\quad(iv)  &&S_{1,1,1,1} = O\left( X_k\right) . 
    \end{alignat*}
    \endgroup
\end{lemma}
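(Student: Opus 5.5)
The plan is to apply the Dirichlet hyperbola method to the two variables $d_k,f_k$ inside $S_{d_i,d_j,f_i,f_j}$, with a splitting parameter $Y$ to be chosen as $Y=X_k^{1/2}$. Write $G(n)=\prod_{p\mid n}\frac{p}{p+1}$, so that $g(p)=\frac{p}{p+1}$ satisfies $|g(p)-1|\le 1/p$. The factor $\prod_{p\mid d_id_jf_if_j}\frac{p}{p+1}$ is a constant for fixed outer variables. The condition \eqref{eq:condition_G} only removes $O(1)$ pairs $(d_k,f_k)$ for fixed $d_i,d_j,f_i,f_j$, so we drop it at the cost of $O(1)$.

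We split the pairs $(d_k,f_k)$ with $d_kf_k\le X_k$ into two regions:
\begin{itemize}
\item[(a)] $d_k\le Y$, where we sum over $f_k\le X_k/d_k$ innermost;
\item[(b)] $d_k>Y$ and $f_k\le X_k/Y$, where we sum over $Y<d_k\le X_k/f_k$ innermost.
\end{itemize}
In (a), with $d_k$ fixed, the summand in $f_k$ is $\mu^2(f_k)G(f_k)\legendre{f_k}{d_id_j}$, restricted by $f_k\equiv\varepsilon_kd_k \bmod 8$ and by coprimality to $m=d_id_jf_if_jd_k$. In (b), with $f_k$ fixed, the summand in $d_k$ is $\mu^2(d_k)G(d_k)u_{\boldsymbol{\mu}}(d_i,d_j,d_k)\legendre{f_if_j}{d_k}$. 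By \Cref{lemma:properties_u}(i), $u_{\boldsymbol{\mu}}$ is, up to sign, a character mod $8$ in $d_k$, so this is a character of modulus $\ll X_iX_j$. We detect the congruence mod $8$ with characters mod $8$.

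For (i), both characters are non-principal: $d_id_j\ne1$ in (a), and $f_if_j\ne1$ in (b), since $f_if_j$ is not a square and multiplying by a character mod $8$ cannot make it principal. \Cref{lem:square_free_character_sum_g}, with $q\ll X_iX_j$ and $\tau(m)^{1/2}\ll X^{\varepsilon}$, then gives the following bounds.
\begin{itemize}
\item Region (a) contributes $\ll\sum_{d_k\le Y}(X_k/d_k)^{1/2+\varepsilon}(X_iX_j)^{1/4+\varepsilon}\ll X_k^{1/2+\varepsilon}Y^{1/2}(X_iX_j)^{1/4+\varepsilon}$.
\item Region (b) is the difference of two partial sums up to $X_k/f_k$ and $Y$, hence is $\ll (X_k/f_k)^{1/2+\varepsilon}(X_iX_j)^{1/4+\varepsilon}$. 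Summing over $f_k\le X_k/Y$ gives $X_k^{1+\varepsilon}Y^{-1/2}(X_iX_j)^{1/4+\varepsilon}$.
\end{itemize}
Choosing $Y=X_k^{1/2}$ yields $X_k^{3/4+\varepsilon}(X_iX_j)^{1/4}$, after absorbing $(X_iX_j)^{\varepsilon}\le X_k^{\varepsilon}$ using \eqref{eq:size_comp}. Part (iv) is trivial.

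For (ii), where $d_i=d_j=1$, region (b) is still non-principal because $f_if_j\ne1$, and is bounded as above. In region (a) the $f_k$-sum now has the principal character. We apply \Cref{lemma:square-free-asymp} with $q=8$, $a\equiv\varepsilon_kd_k$ and $m=2f_if_jd_k$. This gives
\[
\frac{X_k}{4d_k}\prod_{p>2}\Bigl(1-\frac1p\Bigr)\Bigl(1+\frac{1}{p+1}\Bigr)\prod_{p\mid f_if_jd_k}\Bigl(1+\frac{1}{p+1}\Bigr)^{-1}
\]
with error $O(X_k^{1/2+\varepsilon}d_k^{-1/2})$, which sums to an acceptable amount. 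The Euler factors combine with the remaining $G(d_kf_if_j)$ into $\prod_{p\mid f_if_jd_k}\frac{p}{p+2}$ and the constant $\frac18\prod_{p>2}\bigl(1-\frac{2}{p(p+1)}\bigr)$. We then complete the $d_k$-sum to infinity. The tail $\sum_{d_k>Y}$ of $\mu^2(d_k)u_{\boldsymbol{\mu}}(1,1,d_k)\legendre{f_if_j}{d_k}\prod\frac{p}{p+2}/d_k$ involves a non-principal character of modulus $8f_if_j$. By partial summation from \Cref{lem:square_free_character_sum_g}, applied with $g(p)=\frac{p}{p+2}$, it is $\ll Y^{-1/2+\varepsilon}(X_iX_j)^{1/4}$. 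Multiplied by $X_k$ this is again $X_k^{3/4+\varepsilon}(X_iX_j)^{1/4}$.

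Part (iii) is symmetric, with the roles of the variables swapped. Since $f_i=f_j=1$, the $d_k$-sum in region (b) carries only $u_{\boldsymbol{\mu}}$ together with the congruence $d_k\equiv\varepsilon_kf_k\bmod 8$. This fixes $d_k$ mod $8$, and on that class $u_{\boldsymbol{\mu}}(d_i,d_j,d_k)=u_{\boldsymbol{\mu}}(\varepsilon_i,\varepsilon_j,\varepsilon_kf_k)$, so the main term comes from \Cref{lemma:square-free-asymp} applied to $d_k$. We complete the resulting sum over $f_k$ to infinity; its tail involves the non-principal character $\legendre{f_k}{d_id_j}$ and is bounded as before. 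The hyperbola region (a), where $d_k\le Y$ and the character $\legendre{f_k}{d_id_j}$ is non-principal, goes into the error term. I expect the main obstacle to be the bookkeeping of the Euler products, namely checking that the local factors collapse exactly to $\frac{p}{p+2}$ and to $1-\frac{2}{p(p+1)}$. The other point needing care is the uniformity in the moduli $d_id_j$ and $f_if_j$, which is what makes the saving $(X_iX_j)^{1/4}$ rather than $(X_iX_j)^{1/2}$ essential.
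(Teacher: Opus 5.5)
Your treatment of (i)--(iii) follows the paper's argument: the same hyperbola split at $X_k^{1/2}$, \Cref{lem:square_free_character_sum_g} for the non-principal inner sums, \Cref{lemma:square-free-asymp} for the principal ones, and completion by partial summation. However, part (iv) is not trivial, and this is a genuine gap.

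Bounding $S_{1,1,1,1}$ in absolute value only gives $O(X_k\log X_k)$, because the number of pairs $(d_k,f_k)$ with $d_kf_k\le X_k$ square-free is $\asymp X_k\log X_k$. The logarithm matters. $S_{1,1,1,1}$ enters \eqref{eq:T_e_asymp}, and hence $N_{\boldsymbol{\mu},\boldsymbol{\varepsilon}}$, with weight $X_{-1}$ (e.g.\ in \Cref{lemma:main_3}). An error $X_{-1}X_k\log X_k$ is not $\ll X_iX_jX_kX_{-1}\min\{X_i,X_j,X_k,X_{-1}\}^{-\Delta}$ once $X_iX_j$ is small compared with $\log X_k$, and \Cref{prop_main_N} must cover that range.

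The missing idea is cancellation coming from $u_{\boldsymbol{\mu}}$. When $d_i=d_j=f_i=f_j=1$, all Jacobi symbols disappear and $u_{\boldsymbol{\mu}}(1,1,d_k)=(-2^{\mu_i+\mu_j},d_k)_2$ is a non-principal character modulo $8$. The paper reaches this via \eqref{eq:condition_G}, which here forces $\mu_i+\mu_j=1$, the sum being empty otherwise. Evaluate the principal inner sums in both hyperbola pieces with \Cref{lemma:square-free-asymp}. What remains is $X_k\sum_{n\le X_k^{1/2}}\mu^2(n)\chi(n)n^{-1}\prod_{p\mid n}\frac{p}{p+2}$ plus admissible errors, and \Cref{lemma:L_func_bound} bounds this sum by $O(1)$.

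Relatedly, your claim that dropping \eqref{eq:condition_G} costs only $O(1)$ is false exactly in this case. If $d_if_id_jf_j=1$ and $\mu_i+\mu_j=0$, the condition removes every pair $(d_k,f_k)$.

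There are two smaller points.
\begin{itemize}
\item In (ii), the main term from \Cref{lemma:square-free-asymp} with $q=8$ is $\frac{X_k}{8d_k}\prod_{p>2}(\cdots)$, not $\frac{X_k}{4d_k}\prod_{p>2}(\cdots)$. You dropped the Euler factor $\frac12$ at $p=2$, although your final constant $\frac18$ is correct.
\item Part (iii) is not literally symmetric to (ii). The inner $d_k$-sum runs over $X_k^{1/2}<d_k\le X_k/f_k$, so its main term is proportional to $X_k/f_k-X_k^{1/2}$. The lower-endpoint contribution $X_k^{1/2}\sum_{f_k\le X_k^{1/2}}\mu^2(f_k)u_{\boldsymbol{\mu}}(\varepsilon_i,\varepsilon_j,\varepsilon_kf_k)\legendre{f_k}{d_id_j}\prod_{p\mid d_id_jf_k}\frac{p}{p+2}$ is of size $X_k$ if bounded trivially. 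It has to be bounded with \Cref{lem:square_free_character_sum_g}, as the paper does.
\end{itemize}
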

\begin{proof}
Let $d_i, f_i\leq X_i$, $d_j, f_j\leq X_j$ be pairwise coprime and square-free, possibly $(d_i, d_j)=(1,1)$ or $(f_i, f_j) = (1,1)$. We split the sum in $S_{d_i, d_j, f_i, f_j}$ according to whether $d_k\leq X_k^{1/2}$ or $f_k\leq X_k^{1/2}$. We have 
 \begin{align*}
     S_{d_i,d_j, f_i, f_j}(X_i, X_j, X_k) = S^{d} (X_i, X_j, X_k)  + S^f(X_i, X_j, X_k),
 \end{align*}
 with 
 \begin{align*}
     S^{d} (X_i, X_j, X_k)& = \sum_{\substack{d_k \leq X_k^{1/2}  \\  (d_k, 2d_id_jf_if_j) = 1\\ \eqref{eq:condition_G} }} \mu^2(d_k) u_{\boldsymbol{\mu}}(d_i, d_j, d_k) \legendre{f_if_j}{d_k}  \\
     &\hspace{2cm}\sum_{\substack{f_k \leq X_k/d_k \\ (f_k, d_if_id_jf_jd_k) =1\\ f_k \equiv d_k \varepsilon_k \bmod 8  \\ \eqref{eq:condition_G}}} \mu^2(f_k)\legendre{f_k}{d_i d_j}\prod_{p\mid d_if_i d_j f_j d_k f_k} \frac{p}{p+1} 
 \end{align*}
 and its complementary range 
\begin{align*}
    S^f(X_i, X_j, X_k) &= \sum_{\substack{f_k \leq X_k^{1/2}  \\  (f_k, 2d_id_jf_if_j) = 1 \\ \eqref{eq:condition_G}  }} \mu^2(f_k) u_{\boldsymbol{\mu}}(\varepsilon_i f_i, \varepsilon_jf_j,\varepsilon_k f_k)\legendre{f_k}{d_id_j}\\
     &\hspace{2cm}  \sum_{\substack{ X_k^{1/2}<  d_k \leq X_k/f_k \\ (d_k, d_if_id_jf_jf_k) =1\\ f_k \equiv d_k \varepsilon_k \bmod 8 \\ \eqref{eq:condition_G} }} \mu^2(d_k)\legendre{f_if_j}{d_k}\prod_{p\mid d_if_i d_j f_j d_k f_k} \frac{p}{p+1} .
\end{align*}
Assume first $(d_i, d_j) \neq (1,1)$. Then we obtain a saving in $ S^{d} (X_i, X_j, X_k)$. In fact, the character $\legendre{\cdot}{d_id_j}$ is non-principal of modulus $O(X_iX_j)$, after lifting the congruence condition via a sum of the character modulo 8, we can apply \Cref{lem:square_free_character_sum_g} to the inner sum in $ S^{d} (X_i, X_j, X_k)$. This gives 
\begin{align*}
    \sum_{\substack{f_k \leq X_k/d_k \\ (f_k, d_if_id_jf_jd_k) =1\\ f_k \equiv d_k \varepsilon_k \bmod 8 }} \mu^2(f_k)\legendre{f_k}{d_i d_j}
\prod_{p\mid d_if_i d_j f_j d_k f_k} \frac{p}{p+1}  \ll (X_k/d_k)^{1/2+\varepsilon} (X_iX_j)^{1/4+\varepsilon}. \numberthis \label{eq:bound_in_dk}
\end{align*}
Summing over $d_k\leq X_k^{1/2}$ yields 
\begin{align*}
   S^{d}(X_i, X_j, X_k) = O\left(X_k^{3/4+\varepsilon} (X_iX_j)^{1/4+\varepsilon}\right) \qquad \text{whenever $(d_i, d_j)\neq (1,1)$}.\numberthis \label{eq:saving_dk}
\end{align*}
The same argument applies to $S^{f}(X_i, X_j, X_k)$ when $(f_i, f_j)\neq (1,1)$. Here the non-principal character is $\legendre{f_if_j}{\cdot}$, and an identical argument gives 
\begin{align*}
   S^{f}(X_i, X_j, X_k) = O\left(X_k^{3/4+\varepsilon} (X_iX_j)^{1/4+\varepsilon}\right) \qquad \text{whenever $(f_i, f_j)\neq (1,1)$}.\numberthis \label{eq:saving_fk}
\end{align*}
Using the assumption \eqref{eq:size_comp}, $(i)$ follows from \eqref{eq:saving_dk} and \eqref{eq:saving_fk}.

In the case where $(d_i, d_j) = (1,1)$, the character appearing in the inner sum of  $S^d(X_i, X_j, X_k) $ is principal. We obtain an asymptotic for the inner sum  using \Cref{lemma:square-free-asymp}
\begin{align*}
 &   \sum_{\substack{f_k \leq X_k/d_k \\ (f_k, d_if_id_jf_jd_k) =1\\ f_k \equiv d_k \varepsilon_k \bmod 8 }} \mu^2(f_k)
\prod_{p\mid f_i  f_j d_k f_k} \frac{p}{p+1}  \\
&\hspace{2cm}= \frac{X_k}{8d_k}  \prod_{p>2} \left(1-\frac{2}{p(p+1)}\right) \prod_{p\mid f_i f_j d_k } \frac{p}{p+2} + O\left(\tau(f_if_j)(X_k/d_k)^{1/2+\varepsilon} \right). \numberthis  \label{eq:asymp1}
\end{align*}
This shows 
\begin{align*}
    S^d (X_i, X_j, X_k)& = \frac{X_k}{8}  \prod_{p>2} \left(1-\frac{2}{p(p+1)}\right) \sum_{\substack{d_k\leq X_k^{1/2} \\ (d_k, 2f_if_j)=1  }} \frac{\mu^2(d_k) u_{\boldsymbol{\mu }}(1,1,d_k)}{d_k} \legendre{f_if_j}{d_k}\prod_{p\mid f_i f_j d_k } \frac{p}{p+2} \\
    &\qquad + O\left( X_k^{3/4+\varepsilon} \right),\numberthis\label{eq:Sd_asympt}
\end{align*}
where we used that $\tau(f_if_j) = O(X_k^{\varepsilon})$ by \eqref{eq:size_comp}.
 If $(f_i, f_j) \neq (1,1)$ the character $\legendre{f_if_j}{\cdot}$ is non-principal and of modulus $O(X_iX_j)$. Applying partial summation and \Cref{lem:square_free_character_sum_g}, we obtain 
 \begin{align*}
      \sum_{\substack{d_k\leq X_k^{1/2} \\ (d_k, 2f_if_j)=1 }}\hspace{-.3cm} \frac{\mu^2(d_k) u_{\boldsymbol{\mu }}(1,1,d_k)}{d_k} \legendre{f_if_j}{d_k}\prod_{p\mid f_i f_j d_k } \frac{p}{p+2}  &= \hspace{-.4cm} \sum_{\substack{d_k = 1 \\ (d_k, 2f_if_j)=1 }}^\infty \frac{\mu^2(d_k) u_{\boldsymbol{\mu }}(1,1,d_k)}{d_k} \legendre{f_if_j}{d_k}\hspace{-.1cm}\prod_{p\mid f_i f_j d_k } \frac{p}{p+2}  \\
      & \qquad + O(X_k^{-1/4+\varepsilon} (X_iX_j)^{1/4}). 
 \end{align*}
Inserting this back into \eqref{eq:Sd_asympt} and combining this with \eqref{eq:saving_fk}  whenever $(f_i, f_j) \neq (1,1)$, we obtain $(ii)$.

Analogously, in the case where $(f_i, f_j) = (1,1)$, the inner sum in $S^f(X_i, X_j, X_k) $ is evaluated using \Cref{lemma:square-free-asymp}. We have
\begin{align*}
     \sum_{\substack{ X_k^{1/2}\leq  d_k \leq X_k/f_k \\ (d_k, d_id_jf_k) =1\\ d_k \equiv f_k \varepsilon_k \bmod 8 }} \mu^2(d_k)\prod_{p\mid d_i d_j d_k f_k} \frac{p}{p+1} &=   \prod_{p>2} \left(1-\frac{2}{p(p+1)}\right) \prod_{p\mid d_i d_j f_k } \frac{p}{p+2} \left(\frac{X_k}{8f_k}- \frac{X_k^{1/2}}{8}\right) \\
     & \qquad + O\left(\tau(d_id_j)(X_k/f_k)^{1/2+\varepsilon} + \tau(d_id_j) X_k^{1/4+\varepsilon} \right). 
\end{align*}
and we obtain 
\begin{align*}
    &S^f (X_i, X_j, X_k) = \frac{X_k}{8}  \prod_{p>2} \left(1-\frac{2}{p(p+1)}\right) \sum_{\substack{f_k\leq X_k^{1/2} \\ (f_k, 2d_id_j)=1 \\ f_k}} \frac{\mu^2(f_k) u_{\boldsymbol{\mu }}(\varepsilon_i,\varepsilon_j,\varepsilon_kf_k)}{f_k} \legendre{f_k}{d_id_j}\prod_{p\mid d_i d_j f_k } \frac{p}{p+2} \\
    &\quad + O\left(  X_k^{3/4+\varepsilon} \right) + O\Bigg(X_k^{1/2} \sum_{\substack{f_k\leq X_k^{1/2} \\ (f_k, 2d_id_j)=1 }} \mu^2(f_k) u_{\boldsymbol{\mu}}(\varepsilon_i,\varepsilon_j,\varepsilon_kf_k) \legendre{f_k}{d_id_j}\prod_{p\mid d_i d_j f_k } \frac{p}{p+2}\Bigg).
\end{align*}
The second error term is $O((d_id_j)^{1/4}X_k^{3/4+\varepsilon})$ by an application of \Cref{lem:square_free_character_sum_g} as $(d_i, d_j)\neq (1,1)$.
As in the previous case, whenever  $(d_i, d_j) \neq (1,1)$ we can apply partial summation to complete the sum. Together with \eqref{eq:saving_dk}, this gives $(iii)$.

Finally, suppose that $(d_i,d_j)=(1,1)$ and $(f_i, f_j)=(1,1)$. Using \eqref{eq:asymp1}, we have 
\begin{align*}
    S^d(X_i, X_j, X_k) = O \Bigg( X_k \sum_{\substack{d_k \leq X_k^{1/2} \\ d_k \text{ odd}}} \frac{\mu^2(d_k) u_{\boldsymbol{\mu}}(1, 1, d_k) }{d_k}\prod_{p\mid d_k} \frac{p}{p+2} \Bigg) + O(X_k^{3/4+\varepsilon}).
\end{align*}
Now crucially, since $d_id_jf_if_j = 1$, we must have $\mu_i=1$ or $\mu_j=1$, for otherwise we would have $2^{\mu_i+\mu_j}d_id_jf_if_j = 1 $ which is excluded by \eqref{eq:condition_G}. With $\mu_i=1$ or $\mu_j =1$, $u_{\boldsymbol{\mu}}(1,1,\cdot)$ is a non-principal character modulo 8 in the last variable by \Cref{lemma:properties_u} $(i)$. An application of \Cref{lemma:L_func_bound} shows 
\[
S^d(X_i, X_j, X_k) = O( X_k).
\]
The same argument shows that 
\begin{align*}
     S^f(X_i, X_j, X_k) &= O\Bigg( X_k \sum_{\substack{f_k \leq X_k^{1/2}   \\ f_k \text{ odd}}} \frac{\mu^2(f_k) u_{\boldsymbol{\mu}}(1, 1, f_k)}{f_k}\prod_{p\mid f_k} \frac{p}{p+2}\Bigg) = O(X_k).
\end{align*}
This establishes $(iv)$.
\end{proof}

Substituting the above result back into \eqref{eq:T_and_S} shows 
\begin{align*}
   T_{\boldsymbol{\varepsilon}, \boldsymbol{\mu}}(X_i, X_j, X_k) &=  D_{\boldsymbol{\varepsilon}, \boldsymbol{\mu}}(X_i, X_j, X_k) +F_{\boldsymbol{\varepsilon}, \boldsymbol{\mu}}(X_i, X_j, X_k) \\
   &\qquad +O\left(X_k^{3/4+\varepsilon} (X_iX_j)^{5/4+\varepsilon} +X_k \right) , \numberthis \label{eq:T_e_asymp}
   \end{align*} 
   with
   \begin{align}
    & D_{\boldsymbol{\varepsilon}, \boldsymbol{\mu}}(X_i, X_j, X_k)   := \sum_{\substack{d_i \leq X_i \\ d_i \equiv \varepsilon_i \bmod 8} } \sum_{\substack{d_j \leq X_j \\ d_j \equiv \varepsilon_j \bmod 8 \\ d_id_j\neq 1} } \mu^2(d_id_j)S_{d_i,d_j,1,1} (X_i, X_j, X_k) \label{eq:def_D}
     \end{align}
     and
     \begin{align}
      &  F_{\boldsymbol{\varepsilon}, \boldsymbol{\mu}}(X_i, X_j, X_k)  := \sum_{\substack{f_i \leq X_i\\  f_i \equiv \varepsilon_i \bmod 8}}\sum_{\substack{f_j \leq X_j\\  f_j \equiv \varepsilon_j \bmod 8 \\ f_if_j\neq 1}}  \mu^2(f_if_j) S_{1,1, f_i, f_j} (X_i, X_j, X_k) .\label{eq:def_F}
\end{align}
The following result establishes the asymptotic for $  D_{\boldsymbol{\mu}, \boldsymbol{\varepsilon}} (X_i,X_j, X_k)$ and $F_{\boldsymbol{\mu}, \boldsymbol{\varepsilon}} (X_i,X_j, X_k)$
\begin{lemma}\label{lemma:asymp_D_F}
Let $X_i, X_j, X_k \in \RR_{>0}$ with $X_i\leq X_j<X_k^{1/5}$. Then in the notation above, we have 
\begin{align*}
     D_{\boldsymbol{\mu}, \boldsymbol{\varepsilon}} (X_i,X_j, X_k) &= \frac{X_iX_jX_k u_{\boldsymbol{\mu}}(\varepsilon_i, \varepsilon_j, \varepsilon_k)}{512} \prod_{p>2} \left(1-\frac{1}{p}\right)^3 \left(1+ \frac{3}{p+1}\right)\\
     &\quad +O\left(X_iX_jX_k \left(X_i^{-1/2 + \varepsilon} + (X_iX_j)^{-1/6+ \varepsilon}\right)\right);
\end{align*}
and
\begin{align*}
    F_{\boldsymbol{\mu}, \boldsymbol{\varepsilon}} (X_i,X_j, X_k) 
    &= \frac{X_iX_jX_k u_{\boldsymbol{\mu}}(1,1,1)}{512} \prod_{p>2} \left(1-\frac{1}{p}\right)^3 \left(1 +\frac{3}{p+1}\right)\\
     &\quad +O\left(X_iX_jX_k \left(X_i^{-1/2 + \varepsilon} + (X_iX_j)^{-1/6+ \varepsilon}\right)\right).
\end{align*}
\end{lemma}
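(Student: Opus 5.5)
We treat $D$; the argument for $F$ is symmetric, with $S_{1,1,f_i,f_j}$ from \Cref{lemma:boundsS_ddff}(ii) replacing (iii). We substitute the asymptotic of \Cref{lemma:boundsS_ddff}(iii) into \eqref{eq:def_D}. The error terms contribute $O(X_k^{3/4+\varepsilon}(X_iX_j)^{5/4})$. Since $X_iX_j\le X_j^2<X_k^{2/5}$, this is $O(X_iX_jX_k\cdot X_k^{-1/4+\varepsilon}(X_iX_j)^{1/4})$, which is $O(X_iX_jX_k X_k^{-3/20+\varepsilon})$ and hence negligible, because $X_k^{-3/20}\le (X_iX_j)^{-3/8}$. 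The excluded term $d_id_j=1$ is trivially $O(X_k)$. We are left with
\[
\frac{X_k}{8}\prod_{p>2}\Bigl(1-\tfrac{2}{p(p+1)}\Bigr)\sum_{\substack{d_i\le X_i,\ d_j\le X_j\\ d_x\equiv\varepsilon_x \bmod 8}}\mu^2(d_id_j)\sum_{(f_k,2d_id_j)=1}\frac{\mu^2(f_k)u_{\boldsymbol\mu}(\varepsilon_i,\varepsilon_j,\varepsilon_kf_k)}{f_k}\legendre{f_k}{d_id_j}\prod_{p\mid d_id_jf_k}\frac{p}{p+2}.
\]
We interchange the order so that $f_k$ is outside. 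We separate $f_k=1$, and also more generally $f_k$ a square, which for square-free $f_k$ means only $f_k=1$. The term $f_k=1$ gives
\[
u_{\boldsymbol\mu}(\varepsilon_i,\varepsilon_j,\varepsilon_k)\sum \mu^2(d_id_j)\prod_{p\mid d_id_j}\tfrac{p}{p+2}.
\]
We evaluate this by applying \Cref{lemma:square-free-asymp} twice, first in $d_j$ and then in $d_i$, with $q=8$ and $g(p)=p/(p+2)$, adjusted by the coprimality factor. This produces the main term with error $O(X_iX_j(X_i^{-1/2+\varepsilon}))$. Multiplying the Euler factors $(1-\frac{2}{p(p+1)})$, $(1-\frac1p)(1+\frac{1}{p+2})$ and the corresponding coprime-adjusted factor at each odd $p$ should collapse to $(1-\frac1p)^3(1+\frac{3}{p+1})$. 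The constant $1/512=\frac18\cdot\frac1{8}\cdot\frac18$ comes from the three $1/\varphi(8)$ factors together with the factors $1/2$ at the prime $2$. Checking this constant is routine bookkeeping.

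For $f_k>1$ we need a saving. We truncate the $f_k$-sum at $(X_iX_j)^{1/2+\delta}$. The tail beyond this point is controlled by partial summation together with \Cref{lem:square_free_character_sum_g}, applied to the character $\legendre{\cdot}{d_id_j}$ of modulus at most $X_iX_j$. This gives a tail of size $(X_iX_j)^{1/4}T^{-1/2+\varepsilon}$ per pair $(d_i,d_j)$. For $T=(X_iX_j)^{1/2+\delta}$ this is $(X_iX_j)^{-\delta/2+\varepsilon}$, which is acceptable. We also need to handle the weight $u_{\boldsymbol\mu}(\varepsilon_i,\varepsilon_j,\varepsilon_kf_k)$. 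By \Cref{lemma:properties_u}(i) it is a character mod $8$ in $f_k$, so we split $f_k$ into residue classes mod $8$, or equivalently absorb it via characters mod $8$. Under that splitting, $\legendre{f_k}{d_id_j}$ becomes $\legendre{d_id_j}{f_k}$ times a character mod $8$ in $d_id_j$, by reciprocity. We lift the congruences $d_x\equiv\varepsilon_x$ with characters mod $8$ as well.

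The remaining range $1<f_k\le (X_iX_j)^{1/2+\delta}$ is the main obstacle. Here we would invoke \Cref{lemma:non_square_contribution}. We take $d_1=d_i$, $d_2=d_j$, $f=f_k$, $m$ a bounded modulus, and $g(p)=p/(p+2)$, with the mod-$8$ twists absorbed into the coefficients. That lemma is stated without twists, but its proof only uses \Cref{lem:square_free_character_sum_g}, so twisting by characters mod $8$ is harmless. We apply it with $f$ restricted to non-squares, i.e.\ $f_k\neq1$. It gives $O((X_iX_j)^{3/4+\delta/2+\varepsilon})$, hence a relative saving of $(X_iX_j)^{-1/4+\delta/2+\varepsilon}$. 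Balancing this against the tail loss $(X_iX_j)^{-\delta/2}$ leads to a choice such as $\delta=1/3$. That gives a saving of $(X_iX_j)^{-1/6+\varepsilon}$ in both ranges, which is precisely the claimed error term.

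Finally, the residual error terms (the $O(X_k)$ term and the $X_k^{3/4}$ terms) are absorbed into $X_iX_jX_k(X_iX_j)^{-1/6}$, using $X_j<X_k^{1/5}$.
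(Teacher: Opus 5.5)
Your main term is correct, and the route is a legitimate simplification of the paper's. Keeping $\mu^2(f_k)$, the only square is $f_k=1$. That term, evaluated with \Cref{lemma:square-free-asymp} in $d_j$ and then in $d_i$, gives exactly the stated constant. The paper instead M\"obius-inverts $\mu^2(f_k)$, takes the main term from all $f=a^2$, and checks that the resulting sum over $e,a$ equals $1$, which amounts to the same thing.

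\textbf{The gap is in the final balancing.} By your own estimates:
\begin{itemize}
\item truncating the square-free $f_k$-sum at $T=(X_iX_j)^{1/2+\delta}$ costs a relative factor $(X_iX_j)^{-\delta/2+\varepsilon}$;
\item \Cref{lemma:non_square_contribution} saves only $(X_iX_j)^{-1/4+\delta/2+\varepsilon}$.
\end{itemize}
With $\delta=1/3$ the second factor is $(X_iX_j)^{-1/12}$, not $(X_iX_j)^{-1/6}$. The best choice, $\delta=1/4$, gives only $(X_iX_j)^{-1/8+\varepsilon}$. So the proposal does not prove the lemma as stated, although it would still give some power saving, which is all \Cref{prop_main_N} needs.

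This is not just loose bookkeeping. While the square-free condition on $f_k$ is kept, the available bound for partial sums of $\mu^2(f)\chi(f)$ is $q^{1/4}Y^{1/2}$ (\Cref{lem:square_free_character_sum_g}). A tail $\sum_{f_k>T}$ therefore costs about $q^{1/4}T^{-1/2}$ with the tools at hand.

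\textbf{The missing step is to remove the square-free condition before truncating,} as the paper does.
\begin{itemize}
\item Write $\mu^2(f_k)=\sum_{e^2\mid f_k}\mu(e)$ and $f_k=e^2f$. For each odd $e$ coprime to $d_id_j$, the inner sum then runs over all odd $f$ with weight $\frac1f\legendre{f}{d_id_j}u_{\boldsymbol{\mu}}(\varepsilon_i,\varepsilon_j,\varepsilon_kf)\prod_{p\mid ef}\frac{p}{p+2}$.
\item This weight is a non-principal character times $1\ast H$, where $H$ is supported on square-free integers and $|H(p)|\ll 1/p$.
\item P\'olya--Vinogradov and partial summation now bound the tail $f>T$ by $\sqrt{q}\,T^{-1+\varepsilon}$. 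This is a relative loss of only $(X_iX_j)^{-\delta+\varepsilon}$, uniformly in $e$, and the $e$-sum converges absolutely thanks to the factor $e^{-2}$.
\item The non-squares $f\le T$ are handled by \Cref{lemma:non_square_contribution} for each $e$, taking $m=e$ and $g(p)=p/(p+2)$.
\item The squares $f=a^2$ give the main term.
\item Choosing $\delta=1/6$ balances $-\delta$ against $-1/4+\delta/2$, which yields the claimed $(X_iX_j)^{-1/6+\varepsilon}$.
\end{itemize}
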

\begin{proof}
    We note that $D_{\boldsymbol{\varepsilon}, \boldsymbol{\mu}}(X_i, X_j, X_k)$ and $F_{\boldsymbol{\varepsilon}, \boldsymbol{\mu}}(X_i, X_j, X_k)$ are completely symmetric up to the term $u_{\boldsymbol{\mu}}(\cdot)$. We will prove the result for $D_{\boldsymbol{\varepsilon}, \boldsymbol{\mu}}(X_i, X_j, X_k)$. The proof for $F_{\boldsymbol{\varepsilon}, \boldsymbol{\mu}}(X_i, X_j, X_k)$ is analogous. \\
    Starting from \eqref{eq:def_D}, we use $(iii)$ of \Cref{lemma:boundsS_ddff} on the individual terms $S_{d_i,d_j, 1,1}$. This shows that
\begin{align*}
  & D_{\boldsymbol{\varepsilon}, \boldsymbol{\mu}}(X_i, X_j, X_k) = \frac{X_k}{8} \prod_{p>2} \left(1-\frac{2}{p(p+1)} \right)\sum_{\substack{d_i \leq X_i \\ d_i \equiv \varepsilon_i \bmod 8} } \sum_{\substack{d_j \leq X_j \\ d_j \equiv \varepsilon_j \bmod 8 \\ d_id_j\neq 1} } \sum_{\substack{f_k =1  \\ (f_k, 2d_id_j)=1 }}^\infty \Bigg(\mu^2(d_id_jf_k) \\
   &\quad\frac{   u_{\boldsymbol{\mu}} (\varepsilon_i, \varepsilon_j, \varepsilon_kf_k )} {f_k}\legendre{f_k}{d_id_j} \prod_{p\mid d_id_jf_k} \frac{p}{p+2} \Bigg)  + O\left( X_k^{3/4+\varepsilon} (X_iX_j)^{5/4+\varepsilon} \right).  \numberthis\label{eq:D_midway}
\end{align*}
The sum over $f_k$ closely resembles $\frac{L(1, \chi)}{L(2,\chi^2)}$ for the non-trivial quadratic character (up to a fixed sign) 
\[
\chi(f) = u_{\boldsymbol{\mu}}(\varepsilon_i, \varepsilon_j, \varepsilon_k  f) \legendre{f}{d_id_j},
\]
which has modulus $\asymp d_id_j$.  Using Möbius inversion to lift $\mu^2(f_k)$, we have
\begin{align*}
    &\sum_{\substack{f_k = 1 \\ f_k\text{ odd} }}^\infty \frac{\mu^2(f_k)}{f_k}  \legendre{f_k}{d_id_j} u_{\boldsymbol{\mu}}(\varepsilon_i, \varepsilon_j, \varepsilon_k f_k) \prod_{p\mid f_k} \frac{p}{p+2} \\
    &= \sum_{\substack{e=1 \\ e \text{ odd} \\ (e, d_id_j)=1}}^\infty \frac{\mu(e)}{e^2} \sum_{\substack{f_k=1 \\ f_k \text{ odd}}}^\infty \frac{1}{f_k}\legendre{f_k}{ d_id_j} u_{\boldsymbol{\mu}}(\varepsilon_i, \varepsilon_j, \varepsilon_k f_k) \prod_{p\mid ef_k} \frac{p}{p+2} .\numberthis \label{eq:sum_fk2}
\end{align*}
Notice, that we may restrict the summation over the $f_k$ to the range $f_k \leq (X_iX_j)^{1/2+\delta}$ for some small $\delta>0$. In fact
\begin{align*}
   & \sum_{\substack{f_k=1 \\ (f_k, 2d_id_j)=1}}^\infty \frac{1}{f_k}\legendre{f_k}{ d_id_j} u_{\boldsymbol{\mu}}(\varepsilon_i, \varepsilon_j, \varepsilon_kf_k) \prod_{p\mid ef_k} \frac{p}{p+2} \\
     &\qquad \qquad = \sum_{\substack{f_k\leq (X_iX_j)^{1/2+\delta} \\  (f_k, 2d_id_j)=1}} \frac{1}{f_k}\legendre{f_k}{ d_id_j} u_{\boldsymbol{\mu}}(\varepsilon_i, \varepsilon_j, \varepsilon_k f_k) \prod_{p\mid ef_k} \frac{p}{p+2} +O( (X_iX_j)^{-\delta+\varepsilon}) ,
\end{align*}
which again follows from partial summation and P\'olya--Vinogradov to a character of modulus $O(d_id_j) = O(X_iX_j) $.
We substitute this back into \eqref{eq:sum_fk2}, to obtain 
\begin{align*}
    &\sum_{\substack{f_k  =1 \\ (f_k, 2d_id_j) =1 }}^\infty\frac{\mu^2(f_k)}{f_k}  \legendre{f_k}{d_id_j} u_{\boldsymbol{\mu}}(\varepsilon_i, \varepsilon_j, \varepsilon_k f_k) \prod_{p\mid d_id_jf_k} \frac{p}{p+2}  \\
   & =  \prod_{p\mid d_id_j} \frac{p}{p+2} \!\sum_{\substack{e=1 \\ e \text{ odd} \\ (e, d_id_j)=1}}^\infty \frac{\mu(e)}{e^2}  \!\!\sum_{\substack{f_k\leq (X_iX_j)^{1/2+\delta} \\ (f_k, 2d_id_j)=1}} \frac{1}{f_k}\legendre{f_k}{ d_id_j} u_{\boldsymbol{\mu}}(\varepsilon_i, \varepsilon_j, \varepsilon_k f_k) \prod_{p\mid ef_k} \frac{p}{p+2} +O\left( (X_iX_j)^{-\delta+\varepsilon}\right) ,
\end{align*}
where we used that the summation over $\frac{|\mu(e)|\tau(e)}{e^2}$ is absolutely convergent. 
Substituting this back into \eqref{eq:D_midway}, we obtain
\begin{align*}
    D_{\boldsymbol{\mu}, \boldsymbol{\varepsilon}}(X_i, X_j, X_k) &= \frac{X_k}{8} \prod_{p>2} \left(1-\frac{2}{p(p+1)}\right)\sum_{\substack{e=1 \\ e \text{ odd} }} \frac{\mu(e)}{e^2} \sum_{\substack{d_i \leq X_i \\ d_i \equiv \varepsilon_i \bmod 8 \\ (d_i, e)=1}} \sum_{\substack{d_j \leq X_j \\ d_j \equiv \varepsilon_j \bmod 8 \\ (d_j, d_i e)=1 \\ d_id_j\neq 1}} \mu^2(d_id_j) \prod_{p\mid d_id_j} \frac{p}{p+2} \\
    &\qquad  \sum_{\substack{f_k\leq (X_iX_j)^{1/2+\delta} \\ f_k \text{ odd}}} \frac{1}{f_k}\legendre{f_k}{ d_id_j} u_{\boldsymbol{\mu}}(\varepsilon_i, \varepsilon_j, \varepsilon_k f_k) \prod_{p\mid ef_k} \frac{p}{p+2} \\
    & + O\left(X_k^{3/4+\varepsilon}(X_iX_j)^{5/4+\varepsilon} + X_k (X_iX_j)^{1-\delta +\varepsilon}\right). \numberthis \label{eq:comp_sum}
\end{align*}
We distinguish the contribution coming from $f_k = \square$ and $f_k \neq \square$. \Cref{lemma:non_square_contribution} establishes that the  terms with $f_k\neq \square$ contribute at most $O(X_k(X_iX_j)^{3/4+\delta/2 + \varepsilon})$. The main term contribution comes from the $f_k = \square$. Writing $f_k=a^2$ we get 
\begin{align*}
   D_{\boldsymbol{\mu}, \boldsymbol{\varepsilon}}(X_i, X_j, X_k)   &= \frac{X_k u_{\boldsymbol{\mu}}(\varepsilon_i, \varepsilon_j, \varepsilon_k)}{8} \prod_{p>2} \left(1-\frac{2}{p(p+1)}\right) \sum_{\substack{e=1 \\ e \text{ odd} } }^{\infty}  \frac{\mu(e) }{e^2} \sum_{\substack{a = 1\\ a \text{ odd}}}^\infty \frac{1}{a^2} \prod_{p \mid ae} \frac{p}{p+2} \\
   &\qquad \qquad \qquad \sum_{\substack{ d_i\leq X_i \\ {d_i \equiv \varepsilon_i \bmod 8} \\ (d_i, ea) = 1}} \mu^2(d_i)\prod_{p\mid d_i} \frac{p}{p+2} \sum_{\substack{d_j \leq X_j \\ d_j \equiv \varepsilon_j \bmod 8 \\ (d_j, a e d_i)=1}} \mu^2(d_j) \prod_{p\mid d_j} \frac{p}{p+2}   \\
   &+ O(X_k^{3/4+\varepsilon} (X_iX_j)^{5/4+\varepsilon}+X_k (X_iX_j)^{1-\delta+\varepsilon} + X_k (X_iX_j)^{3/4+\delta/2+\varepsilon}),
\end{align*}
where we used $u_{\boldsymbol{\mu}}(\varepsilon_i, \varepsilon_j, \varepsilon_k a^2) = u_{\boldsymbol{\mu}}(\varepsilon_i, \varepsilon_j, \varepsilon_k)$. 
All these sums are now evaluated using for example \Cref{lemma:square-free-asymp}. Starting from the innermost sum, we have 
\begin{align*}
  \sum_{\substack{d_j \leq X_j \\ d_j \equiv \varepsilon_j \bmod 8\\ (d_j, a e d_i)=1}} \mu^2(d_j) \prod_{p\mid d_j} \frac{p}{p+2}=  
\frac{X_j}{8}   \prod_{p>2}\left(1 -\frac{3}{p(p+2)}\right)\prod_{p\mid a e d_i}\left(\frac{p+2}{p+3}\right)+O(X_j^{1/2+\varepsilon} \tau(d_i a e)).
\end{align*}
Summing the error term gives $O(X_kX_j^{1/2+\varepsilon} X_i^{1+\varepsilon})$. Using \Cref{lemma:square-free-asymp} to evaluate the summation over the $d_i$, we have
\begin{align*}
    \sum_{\substack{ d_i\leq X_i \\ {d_i \equiv \varepsilon_i \bmod {8} } \\ (d_i, ea) = 1}} \mu^2(d_i)\prod_{p\mid d_i} \frac{p}{p+3} \prod_{p\mid ae} \frac{p+2}{p+3} = \frac{X_i}{8} \prod_{p\mid ae} \frac{p+2}{p+4}\prod_{p>2} \left( 1-      \frac{4}{p(p+3)}\right)+O(X_i^{1/2+\varepsilon}\tau(ea)).
\end{align*}
Finally the sum over $a$ and $e$ is absolutely convergent. Writing as an Euler product, it is clear
\begin{align*}
    \sum_{\substack{e=1 \\ e \text{ odd} } }^{\infty}  \frac{\mu(e) }{e^2} \sum_{\substack{a = 1\\ a \text{ odd}}}^\infty \frac{1}{a^2} \prod_{p \mid ae} \frac{p}{p+4}  = 1.
\end{align*}
Combining, we find 
\begin{align*}
     &D_{\boldsymbol{\mu}, \boldsymbol{\varepsilon}} (X_i,X_j, X_k) = \frac{X_iX_jX_k u_{\boldsymbol{\mu}}(\varepsilon_i, \varepsilon_j, \varepsilon_k)}{512} \prod_{p>2} \left(1-\frac{2}{p(p+1)}\right) \left(1 -\frac{3}{p(p+2)}\right) \left( 1-      \frac{4}{p(p+3)}\right) \\
     &+O(X_k^{3/4+\varepsilon} (X_iX_j)^{5/4+\varepsilon}+X_k (X_iX_j)^{1-\delta+\varepsilon} + X_k (X_iX_j)^{3/4+\varepsilon+\delta/2} + X_kX_j^{1/2+\varepsilon} X_i^{1+\varepsilon} \\
     &\qquad \quad + X_kX_jX_i^{1/2+\varepsilon}).
     \end{align*}
     Choosing $\delta =\frac{1}{6}$ and using $X_i,X_j \leq X_k^{1/5}$, we find that the above error term is  
     \begin{align*}
       O\left( X_k (X_jX_i)^{5/6+\varepsilon} + X_kX_jX_i^{1/2+\varepsilon} \right).
     \end{align*}
     The result follows from the observation that
     \[
      \prod_{p>2} \left(1-\frac{2}{p(p+1)}\right) \left(1 -\frac{3}{p(p+2)}\right) \left( 1-      \frac{4}{p(p+3)}\right) = \prod_{p>2} \left(1-\frac{1}{p}\right)^3  \left(1 +\frac{3}{p+1}\right).
     \]
\end{proof}
\begin{lemma} \label{lemma:main_3}
    Let $X_i, X_j, X_k, X_{-1} \in \RR_{>0}$ with $X_i\leq X_j<X_k^{1/5}$ and $X_k \leq X_{-1}$, then \Cref{prop_main_N} holds with an error term 
    \[
    E \ll X_iX_jX_kX_{-1}\left(X_{i}^{-1/2+\varepsilon} + (X_iX_j)^{-1/6+\varepsilon} \right)
    \]
    for any $\varepsilon>0$.
\end{lemma}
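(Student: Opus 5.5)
The plan is to assemble results already proved in this subsection. First, as in \eqref{eq:N_v-1_sum_performed}, carry out the summation over $v_{-1}\le X_{-1}$ first. This gives
\[
N_{\boldsymbol{\mu},\boldsymbol{\varepsilon}}(\boldsymbol{X}) = \tfrac12 X_{-1}\prod_{p>2}\Bigl(1-\tfrac1{p^2}\Bigr)\, T_{\boldsymbol{\varepsilon},\boldsymbol{\mu}}(X_i,X_j,X_k) + O\bigl(X_{-1}^{1/2+\varepsilon}X_iX_jX_k\bigr).
\]
Because $X_{-1}\ge X_k\ge X_j\ge X_i$, the error term is at most $X_iX_jX_kX_{-1}\cdot X_i^{-1/2+\varepsilon}$.

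Next, insert the decomposition \eqref{eq:T_e_asymp}, i.e. $T=D+F+O(X_k^{3/4+\varepsilon}(X_iX_j)^{5/4+\varepsilon}+X_k)$. This rests on \Cref{lemma:boundsS_ddff}: part (i) covers the terms with both $(d_i,d_j)\ne(1,1)$ and $(f_i,f_j)\ne(1,1)$, and summing that bound over the $O((X_iX_j)^{1+\varepsilon})$ outer tuples gives the first error. Part (iv) covers the single tuple $d_id_jf_if_j=1$, and parts (ii) and (iii) produce $F$ and $D$. Then apply \Cref{lemma:asymp_D_F} to $D$ and $F$. Their sum has main term
\[
\frac{u_{\boldsymbol{\mu}}(\varepsilon_i,\varepsilon_j,\varepsilon_k)+u_{\boldsymbol{\mu}}(1,1,1)}{512}\prod_{p>2}\Bigl(1-\tfrac1p\Bigr)^3\Bigl(1+\tfrac{3}{p+1}\Bigr)X_iX_jX_k,
\]
with error $X_iX_jX_k(X_i^{-1/2+\varepsilon}+(X_iX_j)^{-1/6+\varepsilon})$.

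Multiplying by $\tfrac12 X_{-1}\prod_{p>2}(1-p^{-2})$ gives the constant $1/1024$. The Euler factors match by the identity
\[
\Bigl(1-\tfrac1{p^2}\Bigr)\Bigl(1-\tfrac1p\Bigr)^3\Bigl(1+\tfrac{3}{p+1}\Bigr) = \Bigl(1-\tfrac1p\Bigr)^4\Bigl(1+\tfrac1p\Bigr)\frac{p+4}{p+1} = \Bigl(1-\tfrac1p\Bigr)^4\Bigl(1+\tfrac4p\Bigr),
\]
which is exactly the product in \Cref{prop_main_N}.

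The remaining point, and the only one needing care, is that the leftover errors from \eqref{eq:T_e_asymp} are admissible under $X_j<X_k^{1/5}$. The term $X_k^{3/4+\varepsilon}(X_iX_j)^{5/4+\varepsilon}$ is at most $X_kX_iX_j\cdot X_k^{-1/4+\varepsilon}(X_iX_j)^{1/4}$. Since $(X_iX_j)^{1/4}\le X_k^{1/10}$, this is $\ll X_iX_jX_k\,X_k^{-3/20+\varepsilon}$, which is dominated by $X_iX_jX_k(X_iX_j)^{-1/6+\varepsilon}$ because $X_k\ge (X_iX_j)^{5/2}$. The term $X_k$ is bounded by $X_iX_jX_k\,(X_iX_j)^{-1}$, which is smaller still. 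After multiplying everything by $X_{-1}$, all error terms fit the stated bound $E\ll X_iX_jX_kX_{-1}(X_i^{-1/2+\varepsilon}+(X_iX_j)^{-1/6+\varepsilon})$.
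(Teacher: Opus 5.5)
Your proposal is correct and follows the paper's proof: you sum over $v_{-1}$ first via \eqref{eq:N_v-1_sum_performed}, insert \eqref{eq:T_e_asymp}, apply \Cref{lemma:asymp_D_F}, and match the Euler factors through $(1-p^{-2})(1-p^{-1})^3(1+\frac{3}{p+1})=(1-p^{-1})^4(1+\frac{4}{p})$. You also check explicitly that $X_{-1}^{-1/2+\varepsilon}\le X_i^{-1/2+\varepsilon}$ and that the leftover errors $X_{-1}X_k^{3/4+\varepsilon}(X_iX_j)^{5/4+\varepsilon}$ and $X_{-1}X_k$ are absorbed using $X_j<X_k^{1/5}$; this bookkeeping is correct, and the paper leaves it implicit.
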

\begin{proof}
 Starting from \eqref{eq:N_v-1_sum_performed} and  \eqref{eq:T_e_asymp}, the result follows from using the previous \Cref{lemma:asymp_D_F}, and the calculation 
 \[
 \prod_{p>2} \left(1-\frac{1}{p^2}\right)\left(1-\frac{1}{p}\right)^3  \left(1 +\frac{3}{p+1}\right) =\prod_{p>2} \left(1-\frac{1}{p}\right)^4  \left(1 +\frac{4}{p}\right).
 \]
\end{proof}

\subsubsection{Sub-case $X_{j}\leq X_{-1} \leq X_k$} \label{ssec:middle}
We sum in order $X_k, X_{-1}, X_{j}, X_i$. Otherwise, we follow the same techniques as in the previous case. We have 
\begin{align}
    N_{\boldsymbol{\mu}, \boldsymbol{\varepsilon}}(\boldsymbol{X}) = \sum_{v_{-1}\leq X_{-1}} \mu^2(2v_{-1}) T_{v_{-1}, \boldsymbol{\varepsilon}, \boldsymbol{\mu}} (X_i, X_j,X_k) ,\label{eq:N_decomp_T}
\end{align}
where 
\[
 T_{v_{-1}, \boldsymbol\mu, \boldsymbol{\varepsilon}}(X_i, X_j, X_k) = \!\!\!\!\!\!\!\sum_{\substack{d_i f_i \leq X_i , d_jf_j\leq X_j, d_k f_k \leq X_k  \\ d_xf_x =\varepsilon_x \bmod 8 , x\in \{i,j,k\} \\ (d_if_id_jf_jd_kf_k, v_{-1}) = 1\\ \eqref{eq:condition_G} \text{ holds}}} \!\!{\mu^2(d_if_id_jf_jd_kf_k)} u_{\boldsymbol{\mu}}(d_i, d_j, d_k )    \legendre{f_i }{d_jd_k} \legendre{f_j}{d_i d_k} \legendre{f_k}{d_i d_j}\,.
\]
Setting
\begin{align*}
&S_{d_i, d_j, f_i, f_j, v_{-1}}(X_i, X_j, X_k) := \sum_{\substack{d_kf_k\leq X_k\\ (d_kf_k, f_if_jv_{-1} )=1\\ d_kf_k \equiv \varepsilon_k \bmod 8 \\ \eqref{eq:condition_G} \text{ holds}}} \mu^2(d_id_jd_kf_k) u_{\boldsymbol{\mu}}(d_i,d_j,d_k) \legendre{f_k}{d_id_j} \legendre{f_if_j}{d_k},    
\end{align*}
we obtain 
\begin{align}
 T_{v_{-1}, \boldsymbol\mu, \boldsymbol{\varepsilon}}(X_i, X_j, X_k) = \sum_{\substack{d_if_i\leq X_i \\  d_jf_j\leq X_j \\ (d_if_id_jf_j, v_{-1}) = 1 \\ d_xf_x \equiv \varepsilon_x \bmod 8}} \mu^2(d_if_id_jf_j) \legendre{f_i}{d_j}\legendre{f_j}{d_i} S_{d_i, d_j, f_i, f_j, v_{-1}}(X_i, X_j, X_k)\,. \label{eq:T_{v-1}_andS}
\end{align}
As in \Cref{lemma:boundsS_ddff} the terms that contribute to the main term come from either $(f_i, f_j)=(1,1)$ or $(d_i, d_j)=(1,1)$. We show an analogue of \Cref{lemma:boundsS_ddff}

\begin{lemma}\label{lemma:boundsS_ddff_vminus}
 Let $X_i,X_j,X_k, X_{-1}\in\RR_{>0}$ satisfy \eqref{eq:size_comp} and $X_{-1}\leq X_k$. Let
$d_i,f_i, d_j, f_j, v_{-1}$ be square-free pairwise coprime integers with $d_i, f_i\leq X_i$, $d_j, f_j\leq X_j$ and $(d_i, d_j)\neq (1,1)$ and $(f_i, f_j)\neq (1,1)$. Then for any $\varepsilon>0$ we have
\begin{alignat*}{2}
 &\quad (i)\qquad &&S_{d_i,d_j,f_i,f_j,v_{-1}}
   =O\!\left(X_k^{3/4+\varepsilon}(X_iX_j)^{1/4}\right); \hspace{ 8cm}
\end{alignat*}
\begin{alignat*}{2}
&\quad (ii)\qquad
&&S_{1,1,f_i,f_j,v_{-1}} = \hspace{12cm}
\\
& && \;=\frac{X_k}{8}
 \prod_{\substack{p>2}}
 \left(1-\frac1{p^2}\right) \!\!\!
 \sum_{\substack{d_k = 1\\(d_k,2f_if_jv_{-1})=1}}^\infty
  \!\!\!\!\!\frac{\mu^2(d_k)u_{\boldsymbol{\mu}}(1,1,d_k)}{d_k}
 \legendre{f_if_j}{d_k}
 \prod_{p\mid d_kf_if_jv_{-1}}\frac{p}{p+1} \hspace{ 1cm}
\\
&&&\hspace{2cm}
+O\!\left(X_k^{3/4+\varepsilon} (X_iX_j)^{1/4}\right);
\end{alignat*}
\begin{alignat*}{2}
&\quad (iii)\qquad
&&S_{d_i,d_j,1,1,v_{-1}} = \hspace{11.9cm}\\
& && 
 =\frac{X_k}{8}
 \prod_{\substack{p>2}}
 \left(1-\frac1{p^2}\right) \!\!\!\!\!
 \sum_{\substack{f_k =1 \\(f_k,2d_id_jv_{-1})=1}}^\infty \!\!\!\!\!
 \frac{\mu^2(f_k)u_{\boldsymbol{\mu}}(\varepsilon_i,\varepsilon_j,\varepsilon_kf_k)}{f_k}
 \legendre{f_k}{d_id_j}
 \prod_{p\mid d_id_jf_kv_{-1}}\frac{p}{p+1}
\\
&&&\hspace{2cm}
+O\!\left(X_k^{3/4+\varepsilon} (X_iX_j)^{1/4}\right);
\end{alignat*}
\begin{alignat*}{2}
&\quad (iv)\qquad
&&\sum_{v_{-1}\leq X_{-1}} \mu^2(2v_{-1}) S_{1,1,1,1,v_{-1}}
 = O\!\left( X_{-1}X_k\right). \hspace{7.1cm}
\end{alignat*}
\end{lemma}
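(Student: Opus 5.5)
The plan is to mirror the proof of \Cref{lemma:boundsS_ddff}, with the weight $\prod_{p\mid\cdot}\frac{p}{p+1}$ replaced by the coprimality condition $(d_kf_k,v_{-1})=1$. The dependence on $v_{-1}$ only enters through divisor functions $\tau(v_{-1})\ll X_k^{\varepsilon}$, since $X_{-1}\le X_k$.

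First, split $S_{d_i,d_j,f_i,f_j,v_{-1}}=S^d+S^f$ according to $d_k\le X_k^{1/2}$ or $f_k\le X_k^{1/2}$, exactly as before. The congruence $d_kf_k\equiv\varepsilon_k \bmod 8$ is handled by a sum over characters modulo $8$. Condition \eqref{eq:condition_G} only removes a bounded number of tuples, so it is harmless.

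For $(i)$, suppose $(d_i,d_j)\ne(1,1)$. The inner $f_k$-sum in $S^d$ carries the non-principal character $\legendre{\cdot}{d_id_j}$ of modulus $O(X_iX_j)$. Apply \Cref{lem:square_free_character_sum_g} with $g\equiv 1$ and $m=d_if_id_jf_jd_kv_{-1}$, reducing $m$ to its primes below $X_k$ as in the proof of \Cref{lemma:M1small} so that $\tau(m)\ll X_k^{\varepsilon}$. This gives $(X_k/d_k)^{1/2+\varepsilon}(X_iX_j)^{1/4+\varepsilon}$, and summing over $d_k\le X_k^{1/2}$ gives $X_k^{3/4+\varepsilon}(X_iX_j)^{1/4}$. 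The part $S^f$ is treated symmetrically, using $\legendre{f_if_j}{\cdot}$ when $(f_i,f_j)\ne(1,1)$.

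For $(ii)$ and $(iii)$, the corresponding inner sum now has a principal character. Evaluate it with \Cref{lemma:square-free-asymp} (again with $g\equiv1$, $q=8$, $m=2f_if_jd_kv_{-1}$); this yields the main term
\[
\frac{X_k}{8d_k}\prod_{p>2}\left(1-\frac1{p^2}\right)\prod_{p\mid d_kf_if_jv_{-1}}\frac{p}{p+1}.
\]
The outer sum over $d_k\le X_k^{1/2}$ (respectively $f_k$) is then completed to infinity by partial summation together with \Cref{lem:square_free_character_sum_g} applied to the non-principal character $\legendre{f_if_j}{\cdot}$ (respectively $\legendre{\cdot}{d_id_j}$). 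The resulting tail is $O(X_k^{-1/4+\varepsilon}(X_iX_j)^{1/4})$. The lower-endpoint correction $X_k^{1/2}\sum_{f_k\le X_k^{1/2}}(\cdots)$ in $(iii)$ is bounded in the same way as in the earlier proof.

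For $(iv)$, \eqref{eq:condition_G} forces $\mu_i=1$ or $\mu_j=1$. By \Cref{lemma:properties_u}$(i)$, $u_{\boldsymbol\mu}(1,1,\cdot)$ is then a non-principal character modulo $8$, so the main terms are sums of the shape
\[
\sum_{d_k}\frac{\mu^2(d_k)\chi(d_k)G(d_k)}{d_k},\qquad G(n)=\prod_{p\mid n}\frac{p}{p+1},
\]
restricted to $(d_k,2v_{-1})=1$. These are handled by \Cref{lemma:L_func_bound}.

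The main obstacle is uniformity in $v_{-1}$. The coprimality $(d_k,v_{-1})=1$ has to be absorbed by setting $g(p)=0$ for $p\mid v_{-1}$ and $g(p)=\frac{p}{p+1}$ otherwise. This violates $|g(p)-1|\le C/p$ at $p\mid v_{-1}$, so I would instead lift the condition by Möbius inversion over $e\mid(d_k,v_{-1})$. Each term is then $O(1)$ by \Cref{lemma:L_func_bound} applied to a character of modulus $8$, with total cost $\sum_{e\mid v_{-1}}1/e\ll\prod_{p\mid v_{-1}}(1+1/p)$. Averaging this over $v_{-1}\le X_{-1}$ is $O(X_{-1})$, which gives the claimed bound $O(X_{-1}X_k)$. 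The error terms $X_k^{3/4+\varepsilon}$ summed over $v_{-1}$ contribute $O(X_{-1}X_k^{3/4+\varepsilon})$, which is acceptable.
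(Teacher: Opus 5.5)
Your plan for (i)--(iii) is the paper's argument. It uses the same split $S=S^d+S^f$ at $X_k^{1/2}$, \Cref{lem:square_free_character_sum_g} for the non-principal inner sums, \Cref{lemma:square-free-asymp} for the principal ones, and completion of the outer sum by partial summation, with $\tau(v_{-1})\ll X_k^{\varepsilon}$ absorbed because $X_{-1}\le X_k$.

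For (iv) your route differs from the paper's.
\begin{itemize}
\item The paper completes the truncated sum $\sum_{d\le X_k^{1/2}}\mu^2(2v_{-1}d)\chi(d)d^{-1}\prod_{p\mid v_{-1}d}\frac{p}{p+1}$ to infinity, at total cost $O(X_{-1}X_k^{-1/4+\varepsilon})$. It then factors the result as $\prod_{p>2}\bigl(1+\frac{\chi(p)}{p+1}\bigr)\prod_{p\mid v_{-1}}\frac{p}{p+1+\chi(p)}$, whose $v_{-1}$-part is at most $1$. So its bound holds pointwise in $v_{-1}$.
\item You keep the truncated sum and use \Cref{lemma:L_func_bound}, which is uniform in the cutoff, so no completion is needed. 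You pay a factor $\prod_{p\mid v_{-1}}(1+1/p)$, which is only $O(1)$ on average. That suffices, since (iv) is an averaged statement.
\end{itemize}

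However, the M\"obius step as you wrote it does not work. Since $d_k$ is square-free, writing $d_k=ed'$ with $e\mid v_{-1}$ forces $(d',e)=1$. Every inner sum then again carries a coprimality condition, now with $e$, and \Cref{lemma:L_func_bound} does not apply to it as stated. The obstacle has only been moved.

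The repair is short; either of the following works.
\begin{itemize}
\item Convolve with the multiplicative function $h$ supported on integers composed of primes dividing $v_{-1}$, with $h(p^r)=\bigl(-\chi(p)G(p)/p\bigr)^r$. This exactly cancels the local factors at $p\mid v_{-1}$, and $\sum_n|h(n)|=\prod_{p\mid v_{-1}}\bigl(1-\frac{1}{p+1}\bigr)^{-1}=\prod_{p\mid v_{-1}}(1+\frac1p)$.
\item Rerun the proof of \Cref{lemma:L_func_bound} with $g(p)=0$ for $p\mid v_{-1}$. The hypothesis $|g(p)-1|\le C/p$ is used there only to make $\sum_d|F(d)|/d$ converge, and in this case that series is $\ll\prod_{p\mid v_{-1}}(1+\frac1p)$.
\end{itemize}
Either fix gives exactly the cost you claimed. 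The averaging $\sum_{v\le X_{-1}}\mu^2(v)\prod_{p\mid v}(1+\frac1p)\ll X_{-1}$ then completes (iv).
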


\begin{proof}
The proof of $(i), (ii), (iii)$ is analogous to that of \Cref{lemma:boundsS_ddff}. We again split \\
$S_{d_i,d_j,f_i,f_j,v_{-1}}(X_i, X_j , X_k)$ according to whether $d_k\le X_k^{1/2}$ or $f_k\le X_k^{1/2}$
\[
S_{d_i,d_j,f_i,f_j,v_{-1}}(X_i, X_j , X_k)
=S^d(X_i, X_j , X_k)+S^f(X_i, X_j , X_k),
\]
where
 \begin{align*}
     S^{d} (X_i, X_j, X_k)& := \hspace{-.3cm}\sum_{\substack{d_k \leq X_k^{1/2}  \\  (d_k, 2d_id_jf_if_jv_{-1}) = 1}} \mu^2(d_k) u_{\boldsymbol{\mu}}(d_i, d_j, d_k) \legendre{f_if_j}{d_k}  \sum_{\substack{f_k \leq X_k/d_k \\ (f_k, d_if_id_jf_jd_k v_{-1}) =1\\ f_k \equiv d_k \varepsilon_k \bmod 8 \\
     \eqref{eq:condition_G}}} \mu^2(f_k)\legendre{f_k}{d_i d_j}
 \end{align*}
 and the complementary range 
\begin{align*}
    S^f(X_i, X_j, X_k) &:= \hspace{-.5cm}\sum_{\substack{f_k \leq X_k^{1/2}  \\  (f_k, 2d_id_jf_if_jv_{-1}) = 1}}  \hspace{-.3cm}\mu^2(f_k) u_{\boldsymbol{\mu}}(\varepsilon_i f_i, \varepsilon_jf_j,\varepsilon_k f_k)\legendre{f_k}{d_id_j} \sum_{\substack{X_k^{1/2}\leq d_k \leq X_k/f_k \\ (d_k, d_if_id_jf_jf_kv_{-1}) =1\\ f_k \equiv d_k \varepsilon_k \bmod 8 \\ \eqref{eq:condition_G} }} \!\!\!\mu^2(d_k)\legendre{f_if_j}{d_k}.
\end{align*}

If $(d_i,d_j)\neq(1,1)$, then the character
$\legendre{\cdot}{d_id_j}$ occurring in the inner sum defining $S^d$ is
non-principal. Applying \Cref{lem:square_free_character_sum_g} yields
\[
S^d
=O\!\left(\tau(v_{-1})X_k^{3/4+\varepsilon}(X_iX_j)^{1/4+\varepsilon}\right) = O\left(X_k^{3/4+\varepsilon}(X_iX_j)^{1/4}\right), \quad \text{whenever } (d_i, d_j)\neq (1,1) .
\]
Similarly, if $(f_i,f_j)\neq(1,1)$, then
\[
S^f
=O\!\left(X_k^{3/4+\varepsilon}(X_iX_j)^{1/4}\right),
\]
which proves $(i)$.

Suppose now that $(d_i,d_j)=(1,1)$. Then the character in the inner sum defining
$S^d$ is principal, so we apply \Cref{lemma:square-free-asymp}
\[
  \sum_{\substack{f_k \leq X_k/d_k \\ (f_k, d_if_id_jf_jd_k v_{-1}) =1\\ f_k \equiv d_k \varepsilon_k \bmod 8 }} \mu^2(f_k) = \frac{X_k}{8d_k} \prod_{p>2} \left(1-\frac{1}{p^2}\right) \prod_{p\mid f_if_j d_k v_{-1}} \frac{p}{p+1} + O(\tau(f_if_jv_{-1}) (X_k/d_k)^{1/2+\varepsilon}).
\]
Summing over $d_k\le X_k^{1/2}$, we have 
\begin{align*}
    S^d(X_i, X_j, X_k) = \frac{X_k}{8} \prod_{p>2} \left(1-\frac{1}{p^2}\right) \hspace{-.5cm}\sum_{\substack{d_k\leq X_k^{1/2}\\ (d_k, 2f_if_jv_{-1})=1}}\!\!\!\! \frac{u_{\boldsymbol{\mu}}(1,1,d_k)}{d_k} \legendre{f_if_j}{d_k} \prod_{p\mid f_if_jd_kv_{-1}} \frac{p}{p+1} + O\left(X_k^{3/4+\varepsilon}\right).
\end{align*}
\Cref{lem:square_free_character_sum_g} and partial summation shows 
\begin{align*}
     S^d(X_i, X_j, X_k) &= \frac{X_k}{8} \prod_{p>2} \left(1-\frac{1}{p^2}\right) \sum_{\substack{d_k = 1 \\ (d_k, 2f_if_jv_{-1}) = 1}}^\infty \frac{u_{\boldsymbol{\mu}}(1,1,d_k)}{d_k} \legendre{f_if_j}{d_k} \prod_{p\mid f_if_jd_kv_{-1}} \frac{p}{p+1} \\
     &\qquad + O\left(X_k^{3/4+\varepsilon} (X_iX_j)^{1/4}\right).
\end{align*}
whenever $(f_i, f_j)\neq (1,1)$. This establishes $(ii)$.
\noindent
The proof of $(iii)$ follows the same steps as in \Cref{lemma:boundsS_ddff}.

Finally, if $(d_i,d_j)=(1,1)$ and $(f_i,f_j)=(1,1)$, we have 
\begin{align*}
    S_{1,1,1,1, v_{-1}} &= \frac{X_k}{8} \prod_{p>2}\left(1-\frac{1}{p^2}\right) \Bigg(\sum_{\substack{ d_k \leq X_k^{1/2 } \\ (d_k, v_{-1})=1 }} \frac{\mu^2(d_k)u_{\boldsymbol{\mu}}(1,1,d_k)}{d_k}\prod_{p\mid d_k v_{-1}} \frac{p}{p+1}  \\
    &\qquad \qquad + \sum_{\substack{ f_k \leq X_k^{1/2 } \\ (f_k, v_{-1})=1 }} \frac{\mu^2(f_k)u_{\boldsymbol{\mu}}(1,1, f_k)}{f_k}\prod_{p\mid f_k v_{-1}} \frac{p}{p+1} \Bigg) + O(\tau(v_{-1})X_{k}^{3/4+\varepsilon}) \\
    &=  \Tilde{c} X_k \sum_{d\leq X_k^{1/2}}\mu^2(2v_{-1} d)  \frac{\chi(d)}{d}\prod_{p\mid v_{-1} d} \frac{p}{p+1} + O(\tau(v_{-1})X_{k}^{3/4+\varepsilon}) ,
\end{align*} for some constant $\Tilde{c}>0$ and $\chi(d) = u_{\boldsymbol{\mu}}(1,1,d)$ a non-principal character modulo 8. Indeed, $\chi(d)$ is non-principal, as the principal case is ruled out by \eqref{eq:condition_G}. We have 
\[
\sum_{v_{-1}\leq X_{-1}} \tau(v_{-1})X^{3/4+\varepsilon} \ll X_k^{3/4+\varepsilon} X_{-1}^{1+\varepsilon} = O(X_{k}X_{-1}),
\]
as $X_{-1}\leq X_k$. Thus, to show $(iv)$ it suffices to show the estimate 
   \begin{align*}
       \sum_{v_{-1} \leq X_{-1}} \sum_{d\leq X_k^{1/2}} \mu^2(2v_{-1} d)  \frac{\chi(d)}{d} \prod_{p\mid v_{-1} d} \frac{p}{p+1}\ll X_{-1}.
\end{align*}
We complete the inner sum above using partial summation and \Cref{lem:square_free_character_sum_g} 
   \begin{align*}
       &\sum_{v_{-1} \leq X_{-1}} \sum_{d\leq X_k^{1/2}} \mu^2(2v_{-1} d)  \frac{\chi(d)}{d} \prod_{p\mid v_{-1} d} \frac{p}{p+1} \\
       & \hspace{3cm}=  \sum_{v_{-1} \leq X_{-1}} \sum_{d=1}^\infty \mu^2(2v_{-1} d)  \frac{\chi(d)}{d} \prod_{p\mid v_{-1} d} \frac{p}{p+1} + O(X_{-1}X_k^{-1/4+\varepsilon}).
\end{align*}
Writing the inner sum as an Euler product, the completed sum equals 
\begin{align*}
    \prod_{p>2} \left(1 + \frac{\chi(p)}{p+1}\right)\sum_{v_{-1}\leq X_{-1}} \mu^2(2v_{-1}) \prod_{p\mid v_{-1}} \frac{p}{p+1+\chi(p)} ,
\end{align*}
which is $O(X_{-1})$ since $ |\frac{p}{p+1+\chi(p)}| \leq 1$. Hence 
\[
 \sum_{v_{-1 }\leq X_{-1}} \mu^2(2v_{-1}) S_{1,1,1,1, v_{-1}} = O(X_kX_{-1}),
\]
as claimed in $(iv)$.
\end{proof}

Analogous to \eqref{eq:def_D}, \eqref{eq:def_F}, we introduce 
 \begin{align}
    & D_{v_{-1},\boldsymbol{\varepsilon}, \boldsymbol{\mu}}(X_i, X_j, X_k)  := \sum_{\substack{d_i \leq X_i \\ d_i \equiv \varepsilon_i \bmod 8 \\ (d_i, v_{-1}) =1} } \sum_{\substack{d_j \leq X_j \\ d_j \equiv \varepsilon_j \bmod 8 \\ (d_j, v_{-1}) = 1 \\ d_id_j \neq 1} } \mu^2(d_id_j)S_{d_i, d_j, 1, 1, v_{-1}} \label{eq:def_D_v-1}
     \end{align}
     and
     \begin{align}
     &  F_{v_{-1}, \boldsymbol{\varepsilon}, \boldsymbol{\mu}}(X_i, X_j, X_k)   := \sum_{\substack{f_i \leq X_i\\  f_i \equiv \varepsilon_i \bmod 8 \\ (f_i, v_{-1})=1}}\sum_{\substack{f_j \leq X_j\\  f_j \equiv \varepsilon_j \bmod 8 \\ (f_j, v_{-1}) = 1 \\ f_if_j \neq 1} } \mu^2(f_if_j) S_{1,1,f_i, f_j ,v_{-1}} .\label{eq:def_F_v-1}
\end{align}
We now perform the summation over $v_{-1}$ in  \eqref{eq:N_decomp_T}. We have

\begin{lemma} \label{lemma:perform_v_sum_D_F}
Let $X_i, X_j,X_k, X_{-1} \in \RR_{>0}$ such that \eqref{eq:size_comp} and $X_j\leq X_{-1}\leq X_k$ holds. Let $\varepsilon>0$. In the above notation we have 
\begin{align*}
\sum_{\substack{v_{-1} \leq X_{-1} \\ v_{-1} \text{ odd }}} \mu^2(v_{-1} ) D_{v_{-1},\boldsymbol{\varepsilon}, \boldsymbol{\mu}}(X_i, X_j, X_k)&=\frac{X_{-1}}{2} \prod_{p>2} \left(1-\frac{1}{p^2}\right) D_{\boldsymbol{\varepsilon}, \boldsymbol{\mu}}(X_i, X_j, X_k) \\
&\qquad + O\left(X_{-1}^{1/2+\varepsilon} X_k X_iX_j+ X_{-1} X_k^{3/4+\varepsilon} (X_iX_j)^{5/4} \right);
\end{align*}
and
\begin{align*}
\sum_{\substack{v_{-1} \leq X_{-1} \\ v_{-1} \text{ odd }}} \mu^2(v_{-1} ) F_{v_{-1},\boldsymbol{\varepsilon}, \boldsymbol{\mu}}(X_i, X_j, X_k)&=\frac{X_{-1}}{2} \prod_{p>2} \left(1-\frac{1}{p^2}\right) F_{\boldsymbol{\varepsilon}, \boldsymbol{\mu}}(X_i, X_j, X_k) \\
&\qquad + O\left(X_{-1}^{1/2+\varepsilon} X_k X_iX_j+ X_{-1} X_k^{3/4+\varepsilon} (X_iX_j)^{5/4} \right);
\end{align*}
with $D_{\boldsymbol{\varepsilon}, \boldsymbol{\mu}}(X_i, X_j, X_k)$ and $F_{\boldsymbol{\varepsilon}, \boldsymbol{\mu}}(X_i, X_j, X_k)$ given by \eqref{eq:def_D} and \eqref{eq:def_F}.
\end{lemma}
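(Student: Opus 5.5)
We treat $D$; the argument for $F$ is identical, using part $(ii)$ of \Cref{lemma:boundsS_ddff_vminus} in place of part $(iii)$. The plan is to express each $D_{v_{-1},\boldsymbol{\varepsilon},\boldsymbol{\mu}}$ through its main term, interchange so that the sum over $v_{-1}$ is innermost, and then evaluate that sum with \Cref{lemma:square-free-asymp}.

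\textbf{Step 1 (main terms for fixed $v_{-1}$).} Fix odd square-free $v_{-1}\le X_{-1}$. Insert \Cref{lemma:boundsS_ddff_vminus} $(iii)$ into the definition \eqref{eq:def_D_v-1}. Each term carries an error of $O(X_k^{3/4+\varepsilon}(X_iX_j)^{1/4})$. Summing this over $d_i\le X_i$, $d_j\le X_j$, and then over $v_{-1}\le X_{-1}$, gives a total of $O(X_{-1}X_k^{3/4+\varepsilon}(X_iX_j)^{5/4})$, which is the second stated error term.

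\textbf{Step 2 (isolating the $v_{-1}$-dependence).} The main term of $S_{d_i,d_j,1,1,v_{-1}}$ depends on $v_{-1}$ in two places. First, there is the coprimality $(f_k,v_{-1})=1$ together with $(d_id_j,v_{-1})=1$. Second, there is the factor $\prod_{p\mid v_{-1}}\frac{p}{p+1}$. We interchange the order of summation. For fixed $(d_i,d_j,f_k)$, the resulting inner sum is
\[
\sum_{\substack{v_{-1}\le X_{-1}\\ (v_{-1},2d_id_jf_k)=1}}\mu^2(v_{-1})\prod_{p\mid v_{-1}}\frac{p}{p+1}.
\]
We evaluate it with \Cref{lemma:square-free-asymp}, taking $q=1$, $m=2d_id_jf_k$ and $g(p)=p/(p+1)$. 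This gives a main term of
\[
X_{-1}\prod_{p}\Bigl(1-\tfrac1p\Bigr)\Bigl(1+\tfrac{1}{p+1}\Bigr)\prod_{p\mid 2d_id_jf_k}\Bigl(1+\tfrac{1}{p+1}\Bigr)^{-1},
\]
with an error of $O(\tau(d_id_jf_k)X_{-1}^{1/2+\varepsilon})$.

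Next we check that this reproduces the expected constant. We have $(1-\frac1p)(1+\frac1{p+1})=(1-\frac1{p^2})\frac{p+2}{p+1}$. Hence the factor $\prod_{p\mid d_id_jf_k}\frac{p+1}{p+2}$ combines with $\prod_{p\mid d_id_jf_k}\frac{p}{p+1}$ to give $\frac{p}{p+2}$. The prime $2$ contributes $(1-\frac12)(1+\frac13)\cdot\frac34=\frac12$. The remaining odd primes contribute
\[
\prod_{p>2}\Bigl(1-\tfrac1{p^2}\Bigr)\prod_{p>2}\Bigl(1+\tfrac{1}{p(p+2)}\Bigr).
\]
Finally, $(1-\frac1{p^2})(1+\frac1{p(p+2)})=1-\frac{2}{p(p+1)}$. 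Therefore we recover exactly $\frac{X_{-1}}2\prod_{p>2}(1-\frac1{p^2})$ times the summand of $D_{\boldsymbol{\varepsilon},\boldsymbol{\mu}}$ as written via \Cref{lemma:boundsS_ddff} $(iii)$. The Euler factors $\prod_{p>2}(1-\frac{2}{p(p+1)})$ and $\prod\frac{p}{p+2}$ appearing there match those produced here.

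\textbf{Step 3 (the obstacle: uniformity in $f_k$).} The $f_k$-sum is infinite, so the error $\tau(f_k)X_{-1}^{1/2+\varepsilon}$ cannot be summed against $1/f_k$ without care. To handle this, we first truncate the $f_k$-sum in $(iii)$ at $f_k\le X_k$. The tail beyond $X_k$ is controlled by partial summation and \Cref{lem:square_free_character_sum_g}, because the character $\legendre{\cdot}{d_id_j}$ is non-principal; this costs $O(X_k^{-1/2+\varepsilon}(X_iX_j)^{1/4})$ relative to the main term. After truncation, the accumulated error is bounded by
\[
X_{-1}^{1/2+\varepsilon}\,X_k\sum_{d_i,d_j}\tau(d_id_j)\sum_{f_k\le X_k}\frac{\tau(f_k)}{f_k}\ll X_{-1}^{1/2+\varepsilon}X_kX_iX_j(X_k)^{\varepsilon},
\]
and the factor $X_k^{\varepsilon}$ is harmless at the level of $\varepsilon$-losses. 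The same truncation is applied to $D_{\boldsymbol{\varepsilon},\boldsymbol{\mu}}$, which introduces an identical error. Reassembling the terms yields the stated formula.
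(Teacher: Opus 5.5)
The gap is in Step 3. You truncate at $f_k\le X_k$ and then apply \Cref{lemma:square-free-asymp} with $m=2d_id_jf_k$ separately for each $f_k$. Your bound for the accumulated error is therefore $X_{-1}^{1/2+\varepsilon}X_kX_iX_j\sum_{f_k\le X_k}\tau(f_k)/f_k\asymp X_{-1}^{1/2+\varepsilon}X_kX_iX_j(\log X_k)^2$. The claim that this extra factor is harmless is false for the stated bound. The first error term in the lemma has $X_k$ to the exact first power and allows only an $X_{-1}^{\varepsilon}$-loss. The hypotheses give only $X_j\le X_{-1}\le X_k$ (and $X_j<X_k^{1/5}$ in the application), so $X_{-1}$ may grow arbitrarily slowly compared with $X_k$. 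Then $(\log X_k)^2$, let alone $X_k^{\varepsilon}$, is not $O(X_{-1}^{\varepsilon})$. This is not cosmetic. For $X_i=X_j=X_{-1}=(\log X_k)^3$, your error bound $X_{-1}^{1/2}X_kX_iX_j(\log X_k)^2$ exceeds the main term $X_{-1}X_kX_iX_j$. \Cref{lemma:main_4} would then give no saving in $\min\{X_i,X_j,X_k,X_{-1}\}$, which is the whole point of \Cref{prop_main_N}.

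The repair is to make every loss depend only on $X_{-1}$. Within your scheme, truncate at $f_k\le X_{-1}^3$ instead. The tail then costs $\ll X_kX_{-1}^{1+\varepsilon}(X_iX_j)^{5/4+\varepsilon}X_{-1}^{-3/2+\varepsilon}\ll X_kX_iX_jX_{-1}^{O(\varepsilon)}$, using $(X_iX_j)^{1/4}\le X_{-1}^{1/2}$. The main error becomes $X_{-1}^{1/2+\varepsilon}X_kX_iX_j(\log X_{-1})^2$.

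The paper avoids truncation altogether. It first sums the full $f_k$-series as an Euler product, rewriting the double sum as $\prod_{p\mid d_id_j}\frac{p}{p+1}\prod_{p\nmid 2d_id_j}\bigl(1+\frac{\chi(p)}{p+1}\bigr)\sum_{v_{-1}}\mu^2(v_{-1})\prod_{p\mid v_{-1}}\frac{p}{p+1+\chi(p)}$. \Cref{lemma:square-free-asymp} is then applied once per $(d_i,d_j)$, with $m=2d_id_j$ and $g(p)=p/(p+1+\chi(p))$. All losses involve only $d_i,d_j\le X_{-1}$, which is exactly where the hypothesis $X_j\le X_{-1}$ is used; the product is then re-expanded into the series appearing in $D_{\boldsymbol{\varepsilon},\boldsymbol{\mu}}$.

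Separately, the Euler-factor identities in your Step 2 are wrong as written. At $p=3$ you have $(1-\frac13)(1+\frac14)=\frac56$ but $(1-\frac19)\frac54=\frac{10}{9}$, and likewise $(1-\frac1{p^2})(1+\frac{1}{p(p+2)})\neq 1-\frac{2}{p(p+1)}$. Your conclusion survives anyway, because directly $(1-\frac1p)(1+\frac1{p+1})=\frac{(p-1)(p+2)}{p(p+1)}=1-\frac{2}{p(p+1)}$.
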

\begin{proof}
Starting from the definition of \eqref{eq:def_D_v-1}, we pull in the summation over $v_{-1}$. We have 
\begin{align*}
    &\sum_{\substack{v_{-1} \leq X_{-1} \\ v_{-1} \text{ odd }}} \mu^2(v_{-1} ) D_{v_{-1},\boldsymbol{\varepsilon}, \boldsymbol{\mu}}(X_i, X_j, X_k) \\
    &\qquad\qquad =   \sum_{\substack{d_i \leq X_i \\ d_i \equiv \varepsilon_i \bmod 8  } }\sum_{\substack{d_j \leq X_j \\ d_j \equiv \varepsilon_j \bmod 8  \\ d_id_j \neq 1} } \mu^2(d_id_j) \sum_{\substack{v_{-1} \leq X_{-1} \\ (v_{-1}, 2d_id_j)=1 }} \mu^2(v_{-1})S_{d_i,d_j,1,1, v_{-1}}.
\end{align*}
Define  
\[
\chi(f_k) := u_{\boldsymbol{\mu}}(\varepsilon_i, \varepsilon_j, \varepsilon_k  f_k) \legendre{f_k}{d_id_j},
\]
which is (up to a fixed sign) a non-principal character of modulus $O(X_iX_j)$. Moreover $\chi(p)=0$ for any $p\mid 2d_id_j$. By \Cref{lemma:boundsS_ddff_vminus} $(iii)$, the above equals
\begin{align*}
&\sum_{\substack{v_{-1} \leq X_{-1} \\ v_{-1} \text{ odd }}} \mu^2(v_{-1} ) D_{v_{-1},\boldsymbol{\varepsilon}, \boldsymbol{\mu}}(X_i, X_j, X_k)=    \frac{X_k}{8} \prod_{p>2}\left(1-\frac{1}{p^2}\right) \sum_{\substack{d_i \leq X_i \\ d_i \equiv \varepsilon_i \bmod 8 } } \sum_{\substack{d_j \leq X_j \\ d_j \equiv \varepsilon_j \bmod 8 \\ d_id_j \neq 1} } \mu^2(d_id_j) \\
 & \quad\sum_{\substack{v_{-1} \leq X_{-1} \\ (v_{-1}, 2d_id_j)=1 }} \mu^2( v_{-1})\sum_{\substack{f_k=1 \\  (f_k, 2d_id_j v_{-1})=1 }}^\infty \frac{\mu^2(f_k) \chi(f_k)}{f_k}  \prod_{p\mid d_id_jf_kv_{-1}} \frac{p}{p+1} + O\left( X_k^{3/4+\varepsilon}X_{-1} (X_iX_j)^{5/4}\right). \numberthis \label{eq:v_D_s}
\end{align*}
    Writing the inner infinite sum over $f_k$ as an Euler product, we obtain 
\begin{align*}
   &\sum_{\substack{v_{-1} \leq X_{-1} \\ (v_{-1}, 2d_id_j)=1 }} \mu^2( v_{-1})\sum_{\substack{f_k=1 \\  (f_k, 2d_id_j v_{-1})=1 }}^\infty \frac{\mu^2(f_k) \chi(f_k)}{f_k}  \prod_{p\mid d_id_jf_kv_{-1}} \frac{p}{p+1}\\
   &\hspace{2.5cm} = \prod_{p\mid d_id_j} \frac{p}{p+1} \prod_{p\nmid 2d_id_j} \left(1+\frac{\chi(p)}{p+1}\right) \sum_{\substack{v_{-1} \leq X_{-1} \\ (v_{-1}, 2d_id_j)=1 }} \mu^2( v_{-1}) \prod_{p\mid v_{-1}} \frac{p}{p+1+\chi(p)} .
\end{align*}
Applying \Cref{lemma:square-free-asymp} to the remaining sum shows that the above equals
\begin{align*}
      &=\frac{X_{-1}}{2}\prod_{p\mid d_id_j} \frac{p-1}{p+1} \prod_{p\nmid 2d_id_j} \left(1+\frac{\chi(p)}{p+1}\right) \left(1-\frac{1}{p}\right) \left(1+\frac{1}{p+1+\chi(p)}\right) + O\left(X_{-1}^{1/2+\varepsilon} \right),
\end{align*} 
where we used $\tau(d_i d_j) \ll X_{-1}^{\varepsilon}$ as $X_i, X_j\leq X_{-1}$.
Simplifying the Euler factors, using that $\chi(p)=0$ for all $p\mid 2d_id_j$, this is 
\begin{align*}
    &=  \frac{X_{-1}}{2}\prod_{p>2} \left(1-\frac{2}{p(p+1)}\right)  \prod_{p \mid d_id_j} \frac{p}{p+2} \prod_{p\nmid 2d_id_j}\left(1+\frac{\chi(p)}{p+2}\right) + O\left(X_{-1}^{1/2+\varepsilon}  \right).
\end{align*}
Finally, expanding the Euler product back into a series gives
\begin{align*}
   &\sum_{\substack{v_{-1} \leq X_{-1} \\ (v_{-1}, 2d_id_j)=1 }} \mu^2( v_{-1})\sum_{\substack{f_k = 1 \\  (f_k, 2d_id_j v_{-1})=1 }}^\infty \frac{\mu^2(f_k)\chi(f_k)}{f_k}  \prod_{p\mid d_id_jf_kv_{-1}} \frac{p}{p+1} \\
   & \qquad \qquad = \frac{1}{2} X_{-1} \prod_{p>2} \left(1-\frac{2}{p(p+1)}\right)  \sum_{\substack{f_k = 1 \\ (f_k,2d_id_j)=1 }}^\infty \frac{\mu^2(f_k) \chi(f_k)}{f_k}  \prod_{p \mid d_id_jf_k}   \frac{p}{p+2}  + O\left(X_{-1}^{1/2+\varepsilon} \right).
\end{align*}
Inserting this back into \eqref{eq:v_D_s} shows
\begin{align*}
    &\sum_{\substack{v_{-1} \leq X_{-1} \\ v_{-1} \text{ odd }}} \mu^2(v_{-1} ) D_{v_{-1},\boldsymbol{\varepsilon}, \boldsymbol{\mu}}(X_i, X_j, X_k)= \frac{X_{-1}X_k}{16} \prod_{p>2} \left(1-\frac{1}{p^2}\right)\left(1-\frac{2}{p(p+1)}\right) \\
    &\qquad\cdot \sum_{\substack{d_i \leq X_i \\ d_i \equiv \varepsilon_i \bmod 8} } \sum_{\substack{d_j \leq X_j \\ d_j \equiv \varepsilon_j \bmod 8 \\ d_id_j \neq 1} } \mu^2(d_id_j)  \sum_{\substack{f_k = 1 \\ (f_k,2d_id_j)=1 }}^\infty \frac{\mu^2(f_k) \chi(f_k)}{f_k}  \prod_{p \mid d_id_jf_k}   \frac{p}{p+2}  \\
    &\qquad + O\left(X_{-1}^{1/2+\varepsilon} X_k X_iX_j+ X_{-1} X_k^{3/4+\varepsilon} (X_iX_j)^{5/4} \right).
\end{align*}
Using the definition of $D_{\boldsymbol{\varepsilon}, \boldsymbol{\mu}}(X_i, X_j, X_k)$ in \eqref{eq:def_D} and \Cref{lemma:boundsS_ddff} $(iii)$ this is exactly 
\begin{align*}
\frac{X_{-1}}{2} \prod_{p>2} \left(1-\frac{1}{p^2}\right) D_{\boldsymbol{\varepsilon}, \boldsymbol{\mu}}(X_i, X_j, X_k)+ O\left(X_{-1}^{1/2+\varepsilon} X_k X_iX_j+ X_{-1} X_k^{3/4+\varepsilon} (X_iX_j)^{5/4} \right),
\end{align*}
as claimed. The proof for $F_{v_{-1}, \boldsymbol{\varepsilon}, \boldsymbol{\mu}}(X_i, X_j, X_k)$ is analogous. 
\end{proof}

 The remaining sums $D_{\boldsymbol{\mu}, \boldsymbol{\varepsilon}}(X_i, X_j, X_k)$ and $F_{\boldsymbol{\mu}, \boldsymbol{\varepsilon}}(X_i, X_j, X_k)$ have been studied in \Cref{lemma:asymp_D_F} in \Cref{seq:Xjsmall_Xminus_large}. In fact, we obtain the following result.
 \begin{lemma} \label{lemma:main_4}
     Let $X_i, X_j, X_k, X_{-1}\in \RR_{>0}$ such that $X_i\leq X_j \leq X_{-1} \leq X_k$ and $X_j\leq X_k^{1/5}$. Then \Cref{prop_main_N} holds with an error term 
    \[
    E \ll X_iX_jX_kX_{-1}\left(X_{i}^{-1/2+\varepsilon} + (X_iX_j)^{-1/6+\varepsilon}  \right)
    \]
     for any $\varepsilon>0$.
 \end{lemma}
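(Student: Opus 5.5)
Starting from \eqref{eq:N_decomp_T} and \eqref{eq:T_{v-1}_andS}, I will split the sum over $(d_i,f_i,d_j,f_j)$ according to whether $(d_i,d_j)\neq(1,1)$ and $(f_i,f_j)\neq(1,1)$ both hold, $(f_i,f_j)=(1,1)\neq(d_i,d_j)$, $(d_i,d_j)=(1,1)\neq(f_i,f_j)$, or all four equal $1$.

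\emph{Generic terms.} By \Cref{lemma:boundsS_ddff_vminus} $(i)$, each term with both pairs nontrivial is $O(X_k^{3/4+\varepsilon}(X_iX_j)^{1/4})$. Summing trivially over $d_if_i\le X_i$, $d_jf_j\le X_j$ and $v_{-1}\le X_{-1}$ contributes $O(X_{-1}X_k^{3/4+\varepsilon}(X_iX_j)^{5/4+\varepsilon})$.

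\emph{Degenerate terms.} By \Cref{lemma:boundsS_ddff_vminus} $(iv)$, the terms with $d_i=d_j=f_i=f_j=1$ contribute $O(X_{-1}X_k)$. The two remaining families are exactly $\sum_{v_{-1}}\mu^2(2v_{-1})D_{v_{-1},\boldsymbol{\varepsilon},\boldsymbol{\mu}}$ and $\sum_{v_{-1}}\mu^2(2v_{-1})F_{v_{-1},\boldsymbol{\varepsilon},\boldsymbol{\mu}}$. The Jacobi symbols $\legendre{f_i}{d_j}\legendre{f_j}{d_i}$ are trivial there, and the coprimality with $v_{-1}$ matches \eqref{eq:def_D_v-1}, \eqref{eq:def_F_v-1}. \Cref{lemma:perform_v_sum_D_F} turns these into
\[
\tfrac{X_{-1}}{2}\prod_{p>2}\left(1-\tfrac{1}{p^2}\right)\left(D_{\boldsymbol{\varepsilon},\boldsymbol{\mu}}+F_{\boldsymbol{\varepsilon},\boldsymbol{\mu}}\right),
\]
up to errors $O(X_{-1}^{1/2+\varepsilon}X_kX_iX_j+X_{-1}X_k^{3/4+\varepsilon}(X_iX_j)^{5/4})$. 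I then insert \Cref{lemma:asymp_D_F}; its hypothesis $X_i\le X_j<X_k^{1/5}$ holds. The Euler-product identity from the proof of \Cref{lemma:main_3} then gives the main term
\[
\frac{u_{\boldsymbol{\mu}}(\varepsilon_i,\varepsilon_j,\varepsilon_k)+u_{\boldsymbol{\mu}}(1,1,1)}{1024}\prod_{p>2}\left(1-\frac1p\right)^4\left(1+\frac4p\right)X_{-1}X_iX_jX_k,
\]
with relative error $X_i^{-1/2+\varepsilon}+(X_iX_j)^{-1/6+\varepsilon}$.

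\emph{Comparing errors.} The main obstacle is showing every error is dominated by $X_iX_jX_kX_{-1}(X_i^{-1/2+\varepsilon}+(X_iX_j)^{-1/6+\varepsilon})$.
\begin{itemize}
\item Since $X_iX_j\le X_k^{2/5}$, we have $X_k^{3/4}(X_iX_j)^{5/4}=X_k(X_iX_j)\cdot (X_iX_j)^{1/4}X_k^{-1/4}\le X_kX_iX_j\,(X_iX_j)^{-3/8}$, so these terms are dominated by $(X_iX_j)^{-1/6}$.
\item The $O(X_{-1}X_k)$ term has relative size $(X_iX_j)^{-1}$.
\item The $X_{-1}^{1/2+\varepsilon}$ term has relative size $X_{-1}^{-1/2+\varepsilon}$, which is at most $X_j^{-1/2+\varepsilon}\le X_i^{-1/2+\varepsilon}$ because $X_{-1}\ge X_j$.
\end{itemize}
I expect the only delicate point to be checking that $\tau(v_{-1})$ factors and the $\tau(d_id_j)\ll X_{-1}^\varepsilon$ use (valid since $X_i,X_j\le X_{-1}$) are absorbed by the $\varepsilon$-losses.
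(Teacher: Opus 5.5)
Your proposal is correct and follows the paper's proof: decompose via \eqref{eq:N_decomp_T} and \Cref{lemma:boundsS_ddff_vminus}, carry out the $v_{-1}$-sum with \Cref{lemma:perform_v_sum_D_F}, and conclude with \Cref{lemma:asymp_D_F} together with the Euler-product identity. You also spell out the error comparison that the paper leaves implicit, using $X_iX_j\le X_k^{2/5}$ and $X_{-1}\ge X_j\ge X_i$, and that comparison is correct.
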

 \begin{proof}
     Recalling  \eqref{eq:N_decomp_T}, \Cref{lemma:boundsS_ddff_vminus} shows 
     \begin{align*}
     N_{\boldsymbol{\mu}, \boldsymbol{\varepsilon}}(X_i, X_j, X_k, X_{-1}) &= \sum_{v_{-1}\leq X_{-1}} \mu^2(2v_{-1}) \left( D_{v_{-1}, \boldsymbol{\varepsilon}, \boldsymbol{\mu}}(X_i, X_j, X_k) + F_{v_{-1}, \boldsymbol{\varepsilon}, \boldsymbol{\mu}}(X_i, X_j, X_k)\right) \\
     & \qquad + O(X_{-1}X_k^{3/4+\varepsilon} (X_iX_j)^{5/4+\varepsilon} + X_kX_{-1}).
     \end{align*}
     By \Cref{lemma:perform_v_sum_D_F}, we obtain
     \begin{align*}
    N_{\boldsymbol{\mu}, \boldsymbol{\varepsilon}}(X_i, X_j, X_k, X_{-1})  &=\frac{X_{-1}}{2} \prod_{p>2} \left(1-\frac{1}{p^2}\right) \left( D_{\boldsymbol{\varepsilon}, \boldsymbol{\mu}}(X_i, X_j, X_k) + F_{ \boldsymbol{\varepsilon}, \boldsymbol{\mu}}(X_i, X_j, X_k)\right) \\
     & \qquad + O\left(X_{-1}X_k^{3/4+\varepsilon} (X_iX_j)^{5/4+\varepsilon} +X_{-1}^{1/2+\varepsilon} X_kX_{i}X_j+ X_kX_{-1}\right).  
     \end{align*}
     The result then follows from \Cref{lemma:asymp_D_F}.
 \end{proof}
\subsubsection{Sub-case $X_{-1} \leq X_j$} \label{ssec:last} In this last case, sum over $v_{-1}$ last, retaining the coprimality to $v_{-1}$ condition on all the variables inside. The techniques are similar to the ones developed in the previous two cases. We write $N_{\boldsymbol{\mu}, \boldsymbol{\varepsilon}}$ as in \eqref{eq:N_decomp_T}. Then \Cref{lemma:boundsS_ddff_vminus} shows 
     \begin{align*}
     N_{\boldsymbol{\mu}, \boldsymbol{\varepsilon}}(X_i, X_j, X_k, X_{-1}) &= \sum_{v_{-1\leq X_{-1}}} \mu^2(2v_{-1}) \left( D_{v_{-1}, \boldsymbol{\varepsilon}, \boldsymbol{\mu}}(X_i, X_j, X_k) + F_{v_{-1}, \boldsymbol{\varepsilon}, \boldsymbol{\mu}}(X_i, X_j, X_k)\right) \\
     & \qquad + O(X_{-1}X_k^{3/4+\varepsilon} (X_iX_j)^{5/4+\varepsilon} + X_kX_{-1}) \numberthis \label{eq:nasasfsdf},
     \end{align*}
with $D_{v_{-1}, \boldsymbol{\varepsilon}, \boldsymbol{\mu}}(X_i, X_j, X_k)$, $ F_{v_{-1}, \boldsymbol{\varepsilon}, \boldsymbol{\mu}}(X_i, X_j, X_k)$ given by \eqref{eq:def_D_v-1} and \eqref{eq:def_F_v-1}. The following result 
establishes asymptotics for the individual terms $D_{v_{-1}, \boldsymbol{\varepsilon}, \boldsymbol{\mu}}(X_i, X_j, X_k)$ and $F_{v_{-1}, \boldsymbol{\varepsilon}, \boldsymbol{\mu}}(X_i, X_j, X_k)$.

\begin{lemma} \label{lemma:asymp_D_v}
       Let $X_i, X_j, X_k, X_{-1}\in \RR_{>0}$ such that $X_i\leq X_j  \leq X_k^{1/5}$ and $X_{-1}\leq X_j$. Let $v_{-1}\leq X_{-1}$ be odd and square-free. Then for any $\varepsilon>0$ we have
\begin{align*}
      D_{v_{-1}, \boldsymbol{\mu}, \boldsymbol{\varepsilon}}(X_i, X_j, X_k)   &=  \frac{X_iX_jX_k u_{\boldsymbol{\mu}}(\varepsilon_i, \varepsilon_j, \varepsilon_k)}{512} \prod_{p>2} \left(1-\frac{1}{p}\right)^3 \left(1+\frac{3}{p}\right) \prod_{p\mid v_{-1}} \frac{p}{p+3} \\
      &\quad +O\left(X_iX_jX_k \left( X_i^{-1/2+\varepsilon} + (X_iX_j)^{-1/6+\varepsilon}\right) \right);
\end{align*}
and 
\begin{align*}
      F_{v_{-1}, \boldsymbol{\mu}, \boldsymbol{\varepsilon}}(X_i, X_j, X_k)   &=  \frac{X_iX_jX_k u_{\boldsymbol{\mu}}(1,1,1)}{512} \prod_{p>2} \left(1-\frac{1}{p}\right)^3 \left(1+\frac{3}{p}\right) \prod_{p\mid v_{-1}} \frac{p}{p+3} \\
        &\quad+O\left(X_iX_jX_k\left( X_i^{-1/2+\varepsilon} + (X_iX_j)^{-1/6+\varepsilon} \right) \right).
\end{align*}
\end{lemma}

 \begin{proof}

We show the the case for $D_{v_{-1}, \boldsymbol{\varepsilon}, \boldsymbol{\mu}}(X_i, X_j, X_k)$, the proof for $F_{v_{-1}, \boldsymbol{\varepsilon}, \boldsymbol{\mu}}(X_i, X_j, X_k)$ is analogous. \\
Using \Cref{lemma:boundsS_ddff_vminus} $(iii)$, we have 
\begin{align*}
D_{v_{-1}, \boldsymbol{\varepsilon}, \boldsymbol{\mu}}&(X_i, X_j, X_k)= \frac{X_k}{8}  \prod_{p>2} \left(1-\frac{1}{p^2}\right)\sum_{\substack{d_i\leq X_i \\ (d_i, 2v_{-1})=1 \\ d_i \equiv \varepsilon_i \bmod 8}} \sum_{\substack{d_j\leq X_j \\ (d_j, 2d_iv_{-1})=1 \\ d_j \equiv \varepsilon_j \bmod 8\\d_id_j \neq 1}}
\sum_{\substack{f_k =1  \\ (f_k, 2v_{-1})=1}}^\infty \Bigg( \\
&\mu^2(d_id_jf_k) \legendre{f_k}{d_id_j} \frac{u(\varepsilon_i, \varepsilon_j, \varepsilon_kf_k)}{f_k}\prod_{p\mid {v_{-1}d_id_jf_k}} \frac{p}{p+1} \Bigg) + O\left(X_k^{3/4+\varepsilon}(X_iX_j)^{5/4}\right) .\numberthis\label{eq:D_v_last}
\end{align*}
The proof from this point is analogous to the proof of \Cref{lemma:asymp_D_F}. We briefly go through the steps. Applying Möbius inversion on $\mu^2(f_k)$, we have
\begin{align*}
    &\sum_{\substack{f_k = 1 \\ (f_k, 2v_{-1})=1}}^\infty \frac{\mu^2(f_k)}{f_k}  \legendre{f_k}{d_id_j} u_{\boldsymbol{\mu}}(\varepsilon_i, \varepsilon_j, \varepsilon_k f_k) \prod_{p\mid f_k} \frac{p}{p+1} \\
    &= \sum_{\substack{e=1 \\ e \text{ odd} \\ (e, v_{-1} d_id_j)=1}}^\infty \frac{\mu(e)}{e^2} \sum_{\substack{f_k=1 \\ (f_k, 2 v_{-1})=1}}^\infty \frac{1}{f_k}\legendre{f_k}{ d_id_j} u_{\boldsymbol{\mu}}(\varepsilon_i, \varepsilon_j, \varepsilon_k f_k) \prod_{p\mid ef_k} \frac{p}{p+1} .\numberthis \label{eq:sum_fk3}
\end{align*}
We restrict the summation over the $f_k$ to the range $f_k \leq (X_iX_j)^{1/2+\delta}$ for some small $\delta>0$. Using partial summation and P\'olya--Vinogradov, we have
\begin{align*}
   & \sum_{\substack{f_k=1 \\ f_k \text{ odd} \\ (f_k, v_{-1})=1}}^\infty \frac{1}{f_k}\legendre{f_k}{ d_id_j} u_{\boldsymbol{\mu}}(\varepsilon_i, \varepsilon_j, \varepsilon_kf_k) \prod_{p\mid ef_k} \frac{p}{p+1} \\
     &\qquad \qquad = \sum_{\substack{f_k\leq (X_iX_j)^{1/2+\delta} \\ f_k \text{ odd} \\ (f_k, v_{-1})=1}} \frac{1}{f_k}\legendre{f_k}{ d_id_j} u_{\boldsymbol{\mu}}(\varepsilon_i, \varepsilon_j, \varepsilon_k f_k) \prod_{p\mid ef_k} \frac{p}{p+1} +O( \tau(v_{-1}) (X_iX_j)^{-\delta+\varepsilon}) .
\end{align*}
Substituting this back into the expression for $D_{v_{-1}, \boldsymbol{\varepsilon}, \boldsymbol{\mu}}(X_i, X_j, X_k)$ in \eqref{eq:D_v_last}, we obtain
\begin{align*}
    D_{v_{-1}, \boldsymbol{\mu}, \boldsymbol{\varepsilon}}(X_i, X_j, X_k) &= \frac{X_k}{8} \prod_{p>2} \left(1-\frac{1}{p^2)}\right)\sum_{\substack{e=1 \\ e \text{ odd} \\ (e, v_{-1}) }} \frac{\mu(e)}{e^2} \sum_{\substack{d_i \leq X_i \\ d_i \equiv \varepsilon_i \bmod 8 \\ (d_i, ev_{-1})=1}} \sum_{\substack{d_j \leq X_j \\ d_j \equiv \varepsilon_j \bmod 8 \\ (d_j, d_i ev_{-1})=1}}  \\
    & \mu^2(d_id_j) \prod_{p\mid d_id_j} \frac{p}{p+1}  \sum_{\substack{f_k\leq (X_iX_j)^{1/2+\delta} \\ f_k \text{ odd} \\ (f_k, v_{-1})=1}} \frac{1}{f_k}\legendre{f_k}{ d_id_j} u_{\boldsymbol{\mu}}(\varepsilon_i, \varepsilon_j, \varepsilon_k f_k) \prod_{p\mid ef_kv_{-1}} \frac{p}{p+2} \\
    & + O( \tau(v_{-1}) X_k^{3/4+\varepsilon}(X_iX_j)^{5/4+\varepsilon} + \tau(v_{-1})X_k (X_iX_j)^{1-\delta +\varepsilon})\,. 
\end{align*}
The analysis of the remaining sum now follows the same steps as the analysis following \eqref{eq:comp_sum} in the proof of \Cref{lemma:asymp_D_F} and in fact, following this approach gives 
\begin{align*}
      D_{v_{-1}, \boldsymbol{\mu}, \boldsymbol{\varepsilon}}(X_i, X_j, X_k)   &=  \frac{X_iX_jX_k u_{\boldsymbol{\mu}}(\varepsilon_i, \varepsilon_j, \varepsilon_k)}{512} \prod_{p>2} \left(1-\frac{1}{p}\right)^3 \left(1+\frac{3}{p}\right) \prod_{p\mid v_{-1}} \frac{p}{p+3} \\
      &\quad +O\left(\tau(v_{-1})X_iX_jX_k \left( X_i^{-1/2+\varepsilon} + (X_iX_j)^{-1/6+\varepsilon}\right) \right).
\end{align*}
Since $X_{-1}\leq X_j$ by assumption, the term $\tau(v_{-1})$ is absorbed in the $X_j^{\varepsilon}$, as desired.
 \end{proof}

We conclude this section by giving the proof of \Cref{prop_main_N} in the last remaining case.      
 \begin{lemma} \label{lemma:main_5}
     Let $X_i, X_j, X_k, X_{-1}\in \RR_{>0}$ such that $X_i\leq X_j  \leq X_k^{1/5}$ and $X_{-1}\leq X_j$. Then \Cref{prop_main_N} holds with an error term 
    \[
    E \ll X_iX_jX_kX_{-1}\left(X_{-1}^{-1/2+\varepsilon} + X_{i}^{-1/2+\varepsilon} + (X_iX_j)^{-1/6+\varepsilon} \right)
    \]
     for any $\varepsilon>0$.
 \end{lemma}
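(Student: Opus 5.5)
The plan is to start from the decomposition \eqref{eq:nasasfsdf}, which already isolates the sums $D_{v_{-1},\boldsymbol{\varepsilon},\boldsymbol{\mu}}$ and $F_{v_{-1},\boldsymbol{\varepsilon},\boldsymbol{\mu}}$. Into these we insert the pointwise asymptotics of \Cref{lemma:asymp_D_v} for each odd square-free $v_{-1}\le X_{-1}$, and then carry out the remaining summation over $v_{-1}$ last. Summing the error terms of \Cref{lemma:asymp_D_v} trivially over $v_{-1}\le X_{-1}$ gives a contribution
\[
O\!\left(X_{-1}X_iX_jX_k\left(X_i^{-1/2+\varepsilon}+(X_iX_j)^{-1/6+\varepsilon}\right)\right).
\]
Here the $\tau(v_{-1})$ factors in the proof of \Cref{lemma:asymp_D_v} are harmless, since $X_{-1}\le X_j$.

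Next, the error term in \eqref{eq:nasasfsdf} has to be controlled. Using $X_i\le X_j\le X_k^{1/5}$ we get $X_k^{3/4+\varepsilon}(X_iX_j)^{5/4+\varepsilon}\le X_kX_iX_j\cdot X_k^{-1/4+\varepsilon}(X_iX_j)^{1/4+\varepsilon}\le X_iX_jX_k\,X_k^{-1/4+1/10+\varepsilon}$. The term $X_kX_{-1}$ is at most $X_{-1}X_iX_jX_k\,X_i^{-1/2}X_j^{-1/2}$, which is $\ll X_{-1}X_iX_jX_k(X_iX_j)^{-1/6}$. Both are therefore absorbed into the claimed error.

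The main term then becomes
\[
\frac{u_{\boldsymbol{\mu}}(\varepsilon_i,\varepsilon_j,\varepsilon_k)+u_{\boldsymbol{\mu}}(1,1,1)}{512}\, X_iX_jX_k \prod_{p>2}\Bigl(1-\tfrac1p\Bigr)^3\Bigl(1+\tfrac3p\Bigr)\sum_{\substack{v_{-1}\le X_{-1}}}\mu^2(2v_{-1})\prod_{p\mid v_{-1}}\frac{p}{p+3}.
\]
The $v_{-1}$-sum is evaluated by \Cref{lemma:square-free-asymp} with $q=2$ (or $q=8$ summed over residues), $m=1$ and $g(p)=p/(p+3)$, which satisfies $|g(p)-1|\le 3/p$. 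This yields
\[
\frac{X_{-1}}{2}\prod_{p>2}\Bigl(1-\tfrac1p\Bigr)\Bigl(1+\tfrac{1}{p+3}\Bigr)+O\bigl(X_{-1}^{1/2+\varepsilon}\bigr),
\]
and the error produces the term $X_{-1}^{-1/2+\varepsilon}$ in the statement.

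The remaining step, which is the only place where care is needed, is to check that the constants agree. The product of $(1-\frac1p)^3(1+\frac{3}{p})$ (read in \Cref{lemma:asymp_D_v} in the normalization matching \Cref{lemma:asymp_D_F}, i.e. $(1+\frac{3}{p+1})$ as appears there) with $(1-\frac1p)(1+\frac1{p+3})$ must equal $(1-\frac1p)^4(1+\frac4p)$, and the factor $\frac1{512}\cdot\frac12=\frac1{1024}$ must come out. I would verify the Euler factor identity prime by prime, as in \Cref{lemma:main_3}, adjusting the local factor $\prod_{p\mid v_{-1}}$ if necessary so that it is consistent with the coprimality factors $p/(p+1)$ propagated through \Cref{lemma:boundsS_ddff_vminus}. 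Once the identity checks out, the main term matches \Cref{prop_main_N}.
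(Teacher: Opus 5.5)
Your route is the paper's: insert \Cref{lemma:asymp_D_v} into \eqref{eq:nasasfsdf}, sum its error trivially over $v_{-1}$ (the $\tau(v_{-1})$ is absorbed because $X_{-1}\le X_j$), absorb the error terms of \eqref{eq:nasasfsdf} using $X_i\le X_j\le X_k^{1/5}$, and evaluate $\sum_{v_{-1}\le X_{-1}}\mu^2(2v_{-1})\prod_{p\mid v_{-1}}\frac{p}{p+3}$ by \Cref{lemma:square-free-asymp}. Your error bookkeeping is correct. The gap is the step you leave open, the constant: your suggestion to reread the factor $(1+\frac3p)$ of \Cref{lemma:asymp_D_v} as $(1+\frac{3}{p+1})$, or to ``adjust'' the factor $\prod_{p\mid v_{-1}}$, is wrong. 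Adjusting is also not an available move; you must use the lemma as proved.

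The factor $(1+\frac{3}{p+1})$ in \Cref{lemma:asymp_D_F} comes from the weights $\frac{p}{p+1}$ that summing $v_{-1}$ \emph{first} puts on the remaining variables. In the present sub-case $v_{-1}$ is fixed and summed last, so those weights are absent. The only dependence on $v_{-1}$ is the coprimality condition. This gives local density $(1-\frac1p)^3(1+\frac3p)$ at $p\nmid 2v_{-1}$ and $(1-\frac1p)^3$ at $p\mid v_{-1}$, which is exactly the factor $\prod_{p\mid v_{-1}}\frac{p}{p+3}$. With the lemma as stated, the check is one line:
\[
\Bigl(1+\frac3p\Bigr)\Bigl(1+\frac{1}{p+3}\Bigr)=\frac{p+3}{p}\cdot\frac{p+4}{p+3}=1+\frac4p ,
\]
together with $\frac{1}{512}\cdot\frac12=\frac{1}{1024}$. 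Under your proposed reading you would instead get $\frac{(p+4)^2}{(p+1)(p+3)}\neq 1+\frac4p$, and the main term would not match \Cref{prop_main_N}.
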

 \begin{proof}
  Inserting the previous \Cref{lemma:asymp_D_v} into \eqref{eq:nasasfsdf}, we have 
  \begin{align*}
       N_{\boldsymbol{\mu}, \boldsymbol{\varepsilon}}(\boldsymbol{X}) &= \frac{u_{\boldsymbol{\mu}}(1,1,1)+u_{\boldsymbol{\mu}}(\varepsilon_i, \varepsilon_j, \varepsilon_k)}{512}   \prod_{p>2} \left(1-\frac{1}{p}\right)^3\left(1+\frac{3}{p}\right)X_iX_jX_k \\
       &  \cdot\sum_{v_{-1} \leq X_{-1}}\hspace{-.2cm} \mu^2(2v_{-1})\prod_{p\mid v_{-1}} \frac{p}{p+3} 
        + O\left(X_{-1}X_iX_jX_k\left( X_i^{-1/2+\varepsilon} + (X_iX_j)^{-1/6+\varepsilon} \right) \right).
  \end{align*}
  An application of \Cref{lemma:square-free-asymp} then shows 
    \begin{align*}
       N_{\boldsymbol{\mu}, \boldsymbol{\varepsilon}}(\boldsymbol{X}) &= \frac{u_{\boldsymbol{\mu}}(1,1,1)+u_{\boldsymbol{\mu}}(\varepsilon_i, \varepsilon_j, \varepsilon_k)}{1024} \prod_{p>2} \left(1-\frac{1}{p}\right)^4\left(1+\frac{4}{p}\right)X_iX_jX_kX_{-1} \\
       &\qquad +O\left(X_{-1}X_iX_jX_k\left( X_{-1}^{-1/2+\varepsilon} +X_{i}^{-1/2+\varepsilon}+(X_iX_j)^{-1/6+\varepsilon} \right) \right).
  \end{align*}
  as claimed.
 \end{proof}
\begin{proof}[\texorpdfstring{Proof of \Cref{prop_main_N}}{}]
    Combining \Cref{lemma:main_1},  \Cref{lemma:main_2} as well as \Cref{lemma:main_3}, \Cref{lemma:main_4}, \Cref{lemma:main_5} shows that \Cref{prop_main_N} holds for all the cases introduced in \Cref{list:cases}.\\
    Since $N_{\boldsymbol{\mu}, \boldsymbol{\varepsilon}}(X_i, X_j, X_k, X_{-1})$ is symmetric in $i,j,k$, this shows that \Cref{prop_main_N} holds for all ranges $X_i, X_j, X_k, X_{-1}$ sufficiently large with an error that gives a power saving in the smallest of $X_i, X_j, X_k, X_{-1}$.
\end{proof}

\section{\texorpdfstring{Proof of \Cref{thm1}, \Cref{thm2_local} and \Cref{corollary1}}{}}
\begin{proof}[Proof of \Cref{thm2_local}]
    Recall \eqref{eq:analytic_MPhi_new}.  For any $\alpha', \beta' \in (\ZZ/8\ZZ)^\times$ there are four choices  $(\varepsilon_i, \varepsilon_j, \varepsilon_k) \in ((\ZZ/8\ZZ)^\times)^3$ with 
    \[
    \varepsilon_i\varepsilon_j \equiv \alpha' \bmod 8, \qquad \varepsilon_j\varepsilon_k \equiv \beta' \bmod 8.
    \]
    Moreover we have 
    \[
     2^{\mu_i+\mu_j} \varepsilon_i\varepsilon_j \equiv \alpha , \qquad  2^{\mu_j+\mu_k }\varepsilon_j\varepsilon_k \equiv \beta \quad \text{in }\QQ_2^\times /\QQ_2^{\times 2},
    \]
    with $\alpha, \beta$ the square-classes specified by $\Phi$ via \Cref{lemma:local_comp}. By \Cref{rmk:local_hilbert2} and \Cref{lemma:properties_u} (ii) we have for any $\boldsymbol{\varepsilon}$ as above
    \[
    u_{\boldsymbol{\mu}}(1, 1, 1 )= u_{\boldsymbol{\mu}}(\varepsilon_i, \varepsilon_j, \varepsilon_k) =  1.
    \]
    By \Cref{prop_main_N}, we have for any of the four choices of $\boldsymbol{\varepsilon}$, we have 
    \begin{align*}
              N_{\boldsymbol{\mu}, \boldsymbol{\varepsilon}}(X_i, X_j, X_k, X_{-1}) &= \frac{1}{512} \prod_{p>2} \left(1-\frac{1}{p}\right)^4\left(1+\frac{4}{p}\right)X_iX_jX_kX_{-1} + E ,
    \end{align*}
    with an error term 
    \[
    E \ll X_iX_jX_kX_{-1} \min\{X_i,X_j, X_k, X_{-1}\}^{-\Delta}
    \]
    for some $\Delta>0$. Hence it follows directly from  \eqref{eq:analytic_MPhi_new}
    \[
    M_{\Phi}(X_i, X_j, X_k,X_{-1}) =  \frac{1}{256} \prod_{p>2} \left(1-\frac{1}{p}\right)^4\left(1+\frac{4}{p}\right)X_iX_jX_kX_{-1} + E 
    \]
    with the same power saving error term $E$. 
\end{proof}

\begin{lemma} \label{lemma:local2_masses}
    We have 
    \[
     \sum_{\Phi \in \operatorname{Hom}_{\operatorname{cts}}(\Gamma_{\QQ_2}, Q_8)}1 = 224.
    \]
    Let $\operatorname{H}_2 :\operatorname{Hom}_{\operatorname{cts}}(\Gamma_{\QQ_2}, Q_8) \to \RR_{>0}$ be the height function $
        \Phi \mapsto \begin{cases}
            1 &\text{ if } \Phi \text{ unramified}, \\
            2 &\text{ if } \Phi \text{ ramified}.
        \end{cases}$\\ Then 
    \[
     \sum_{\Phi \in \operatorname{Hom}_{\operatorname{cts}}(\Gamma_{\QQ_2}, Q_8)} \frac{1}{H_2(\Phi)} = 116. 
    \]
\end{lemma}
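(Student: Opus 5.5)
The plan is to count $\operatorname{Hom}_{\operatorname{cts}}(\Gamma_{\QQ_2},Q_8)$ through a group presentation rather than through the parametrization. Since $Q_8$ is a $2$-group, every continuous homomorphism $\Gamma_{\QQ_2}\to Q_8$ factors through the maximal pro-$2$ quotient $\Gamma_{\QQ_2}(2)$. By Demushkin's theorem (Labute's classification for $p=2$), this is a one-relator pro-$2$ group on three generators $x_1,x_2,x_3$. The relator is
\[
x_1^2\,x_2^4\,[x_2,x_3]=1,
\]
which matches $q=4$, the largest $2$-power root of unity in $\QQ_2$, and rank $[\QQ_2:\QQ_2]+2=3$. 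Hence $\operatorname{Hom}_{\operatorname{cts}}(\Gamma_{\QQ_2},Q_8)$ is in bijection with triples $(a,b,c)\in Q_8^3$ satisfying $a^2b^4[b,c]=1$.

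The first identity then reduces to a finite count in $Q_8$.
\begin{itemize}
\item Every $b\in Q_8$ satisfies $b^4=1$, and all squares and commutators lie in $Z(Q_8)=\{\pm1\}$, so the relation becomes $a^2=[b,c]$.
\item There are $2$ elements $a$ with $a^2=1$ (namely $\pm1$) and $6$ with $a^2=-1$.
\item The number of commuting pairs $(b,c)$ is $|Q_8|\cdot\#\{\text{conjugacy classes}\}=8\cdot5=40$, so $64-40=24$ pairs have $[b,c]=-1$.
\end{itemize}
The total is therefore $2\cdot 40+6\cdot 24=224$.

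As a cross-check I would also count via \Cref{rmk:local_hilbert2}. A local homomorphism consists of a pair $(\alpha,\beta)\in(\QQ_2^\times/\QQ_2^{\times2})^2$ solving $(\alpha,\beta)_2(-1,\alpha\beta)_2=1$, together with one of the $|\QQ_2^\times/\QQ_2^{\times2}|=8$ twists $\chi_\gamma$ of the lift. This should give $224/8=28$ solvable pairs out of $64$, which can be verified directly from the $2$-adic Hilbert symbol table.

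For the weighted sum, I would count the unramified homomorphisms. These factor through $\operatorname{Gal}(\QQ_2^{\mathrm{ur}}/\QQ_2)\simeq\widehat{\ZZ}$ and are determined by the image of Frobenius, which can be any element of $Q_8$. So there are exactly $8$ of them, and the remaining $224-8=216$ are ramified, i.e.\ have nontrivial inertia image. The weighted sum is then
\[
8\cdot 1+\tfrac{216}{2}=116.
\]

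The main obstacle is justifying the Demushkin presentation with the correct relator for $\QQ_2$, since the $p=2$ case is the exceptional one in Labute's work. If I want to avoid citing it, I would fall back on the Hilbert-symbol cross-check: enumerate the $28$ solvable pairs $(\alpha,\beta)$ and multiply by the $8$ twists. This uses only the torsor structure of lifts, by the same reasoning as in \Cref{sec:para} applied over $\QQ_2$. It also needs care at non-surjective $\Phi$, for instance where $(\alpha,\beta)$ is trivial and lifts land in the centre.
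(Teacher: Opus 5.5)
Your proof is correct, and it takes a genuinely different route from the paper. The paper argues field-theoretically. It reads off from the LMFDB that $\QQ_2$ has $7$ quadratic, $12$ cyclic quartic and $6$ quaternion extensions. It counts the homomorphisms with image exactly $H$ as $|\operatorname{Aut}(H)|$ times the number of Galois $H$-extensions, giving $1+7+3\cdot2\cdot12+24\cdot6=224$. For the weighted sum it then picks out the unramified quadratic and unramified cyclic quartic fields from these lists.

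You instead reduce the first count, via the Demushkin--Labute presentation, to the number of solutions of $a^2=[b,c]$ in $Q_8$. You also count all unramified homomorphisms at once as the $8$ possible Frobenius images, which agrees with the paper's $1+1+3\cdot 2$. Your route is independent of tables and explains the numbers structurally, but it relies on the $p=2$ case of Labute's classification. The paper's route is elementary once the database is granted, and it records the finer breakdown by image subgroup.

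One correction to your justification: for $\QQ_2$ the Demushkin invariant is $q=2$, not $4$, since $\QQ_2$ contains only $\pm1$. The exponent in $x_2^4$ is Labute's $2^f$ with $f=2$, reflecting that the cyclotomic character has image $\{\pm1\}\times(1+4\ZZ_2)$. Your relator is the correct one, and since $Q_8$ has exponent $4$, the value of $f\geq2$ does not affect the count anyway.

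Your fallback is already a complete proof, and it needs no extra care at non-surjective $\Phi$. Because $\{\pm1\}$ is central, the lifts of any $\psi\colon\Gamma_{\QQ_2}\to\FF_2^2$ form a torsor under $\operatorname{Hom}_{\operatorname{cts}}(\Gamma_{\QQ_2},\FF_2)$. The obstruction is the pullback of the class of $Q_8$, which for every $\psi$ equals $(\alpha,\beta)_2(-1,\alpha\beta)_2=(-\alpha,-\beta)_2(-1,-1)_2$. So solvability means $(-\alpha,-\beta)_2=-1$. Nondegeneracy of the Hilbert pairing on $\QQ_2^\times/\QQ_2^{\times2}\simeq\FF_2^3$ gives exactly $7\cdot4=28$ such pairs, hence $28\cdot8=224$. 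This version has the added merit of matching the paper's own parametrization.
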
 
\begin{proof}
    We enumerate all fields $L/\QQ_2$ with $\operatorname{Gal}(L/\QQ_2)$ isomorphic to a subgroup of $H\subseteq Q_8$. For each subgroup $H\subseteq Q_8$, there are $|\operatorname{Aut}(H)|$ many fields such that $\operatorname{Gal}(L/\QQ_2) \simeq H$. We find a complete list of the necessary field extensions over $\QQ_2$ on \cite{lmfdb}, 
    \begin{table}[H]
        \centering
        \begin{tabular}{c|c|c}
            $H$ & $|\operatorname{Aut}(H)|$&$\#$ fields  \\
            \hline
             1&1&1\\
             $\langle-1\rangle$ & 1&7\\
             $\langle i \rangle, \langle j \rangle, \langle k \rangle$ & 2&12\\
             $Q_8$ & 24&6\\
        \end{tabular}
    \end{table}\noindent
    This gives a total of 
    \[
     \sum_{\Phi \in \operatorname{Hom}_{\operatorname{cts}}(\Gamma_{\QQ_2}, Q_8)}1 = 1+7+ 3\cdot 2\cdot 12 + 24\cdot 6 =224.
    \] 
    Weighted by $\frac{1}{H_2}$, the trivial extension, one of the seven quadratic extensions and one of the twelve quartic extensions are unramified, whereas all others are ramified. This yields 
    \[
      \sum_{\Phi \in \operatorname{Hom}_{\operatorname{cts}}(\Gamma_{\QQ_2}, Q_8)} \frac{1}{H_2(\Phi)} = 1+\left(1+\frac{6}{2}\right) +2\cdot 3\cdot\left(1+\frac{11}{2}\right) + 24\cdot \frac{6}{2} =116.
    \]
\end{proof}

\begin{proof}[Proof of \Cref{thm1}]
Let $\boldsymbol{X} = (X_i, X_j, X_k, X_{-1})\in \RR_{>0}^4$. Let
\[
M(\boldsymbol{X})= \#\{(M, \sigma): \sigma :\operatorname{Gal}(M/\QQ) \overset{\simeq}{\to} Q_8 : h_x(M, \sigma) \leq X_x \text{ for } x\in \{i,j,k, -1\} \}.
\]
Then 
\[
M(\boldsymbol{X}) = \sum_{\Phi \in \operatorname{Hom}_{\operatorname{cts}}(\Gamma_{\QQ_2}, Q_8)} M_{\Phi}(\boldsymbol{X})
\]
with $ M_{\Phi}(\boldsymbol{X})$ the quantity in \Cref{thm2_local}, defined in \eqref{eq:defMPhi}.  By \Cref{lemma:local2_masses}, there are a total of $224$ such homomorphisms $\Phi$. It follows from \Cref{thm2_local} that
\begin{align*}
    M(\boldsymbol{X}) = \frac{7}{8} \prod_{p>2} \left(1-\frac{1}{p}\right)^4\left(1+\frac{4}{p}\right)X_iX_jX_kX_{-1} + E 
\end{align*}
    for an error term 
    \[
    E \ll X_iX_jX_kX_{-1} \min\{X_i,X_j, X_k, X_{-1}\}^{-\Delta}
    \]
    for some $\Delta>0$.
\end{proof}

\begin{proof}[Proof of \Cref{corollary1}]
The result is an application of the hyperbola method as developed by Pieropan and Schindler \cite{pieropan-schindler}. Let
\[
f_{\Phi} : \NN^4 \to \RR 
\]
with 
\[
f_{\Phi}(y_i, y_j, y_k,y_{-1}) := \#\{ \varphi \in \operatorname{Epi}_{\operatorname{cts}}(\Gamma_\QQ, Q_8) : h_x(\varphi)=y_x \text{ for }x \in \{i,j,k,-1\}, \varphi_{|_{\Gamma_{\QQ_2}}}  = \Phi\}.
\]
\Cref{thm2_local} shows that $f_{\Phi}$ satisfies the Property $I$ from \cite[Sec. 4]{pieropan-schindler} with 
\[
C_f = \frac{1}{256} \prod_{p>2}\left(1-\frac{1}{p}\right)^4\left(1+\frac{4}{p}\right).
\]
Let 
\[
\mathcal{S}^{f_{\Phi}}(X) = \sum_{\substack{ (y_i, y_j, y_k, y_{-1})\in \NN^4 \\ y_iy_jy_ky_{-1} \leq X}} f_{\Phi}(y_i, y_j, y_k, y_{-1}).
\]
Let 
\[
H^{\operatorname{odd}}(\varphi) = h_i(\varphi)h_j(\varphi)h_k( \varphi)h_{-1}( \varphi),
\]
which is product of tamely ramified primes. Then
\[
\mathcal{S}^{f_{\Phi}}(X) =  \#\{  \varphi \in \operatorname{Epi}_{\operatorname{cts}}(\Gamma_\QQ, Q_8) : H^{\operatorname{odd}}(\varphi ) \leq X, \varphi_{|_{\Gamma_{\QQ_2}}}  = \Phi \} .
\]
 We follow the notation introduced in \cite{pieropan-schindler} and apply \cite[Thm. 1.1]{pieropan-schindler} with $\mathcal K =\{1\}$, $\alpha_x = 1$, $\varpi_x =1$ and $s=4$. Then $F$ is the hyperplane of dimension 3 given by 
\[
t_i+t_j+t_k+t_{-1} = 1
\]
and we have $a=1$ is attained. A short calculation shows that the constant $c_P$ in \cite[Eq. 4.3]{pieropan-schindler} is $\frac{1}{6}$. So by \cite[Thm. 1.1]{pieropan-schindler}, we have 
\[
\mathcal{S}^{f_\Phi}(X) = \frac{1}{1536}  \prod_{p>2} \left(1-\frac{1}{p}\right)^4 \left(1+\frac{4}{p}\right) X (\log X)^3 + O(X(\log X)^2( \log\log X)^4).
\]
Let $H(\varphi)$ denote the product of ramified primes, then
\[
H(\varphi) = H_2(\varphi_{|\Gamma_{\QQ_2}})\cdot  H^{\operatorname{odd}}(\varphi)
\]
where $H_2 : \operatorname{Hom}_{\operatorname{cts}}(\Gamma_{\QQ_2}, Q_8) \to \RR_{>0}$ is given by 
\[
    \Phi \mapsto \begin{cases}
            1 &\text{ if } \Phi \text{ unramified}, \\
            2 &\text{ if } \Phi \text{ ramified}.
        \end{cases}
\]
 Hence 
\begin{align*}
 \#\{  \varphi \in \operatorname{Epi}_{\operatorname{cts}}(\Gamma_\QQ, Q_8) :  H(\varphi ) \leq X  \}  &= \sum_{\Phi \in \operatorname{Hom}_{\operatorname{cts}}(\Gamma_{\QQ_2}, Q_8)} \mathcal{S}^{f_\Phi}(X/H_2(\Phi)) \\
 &= c X (\log X)^3 
  +  O(X(\log X)^2 (\log\log X)^4)    
\end{align*}
with 
\[
c = \left(\sum_{\Phi \in \operatorname{Hom}_{\operatorname{cts} }(\Gamma_{\QQ_2}, Q_8)} \frac{1}{H_2(\Phi)} \right) \cdot   \frac{1}{1536}  \prod_{p>2} \left(1-\frac{1}{p}\right)^4 \left(1+\frac{4}{p}\right)
\]
By \Cref{lemma:local2_masses}
\[
 \sum_{\Phi \in \operatorname{Hom}_{\operatorname{cts} }(\Gamma_{\QQ_2}, Q_8)} \frac{1}{H_2(\Phi)} = 116
\]
from which the result follows. 
\end{proof}
\section{The leading constant}
In recent work, Loughran and Santens \cite{loughran_santens2025malle}, \cite{loughran2026leadingconstantmallesconjecture} give conjectures and interpretations for the leading constants that appear in the asymptotics related to Malle's conjecture. We show briefly how our constants from \Cref{thm1} and \Cref{corollary1} fit into this framework. Recall from \Cref{sec:notation} the definition of $BQ_8(\QQ)$ as the groupoid with objects continuous homomorphisms from $\Gamma_\QQ \to Q_8$, and morphisms given by the conjugacy action of $Q_8$. Moreover, recall $BQ_8[\QQ]$ the set of isomorphism classes of objects in $BQ_8(\QQ)$.   \\
Let $\mathcal{C}$ be the set of cyclic subgroups of $Q_8$ up to conjugation, i.e. $\mathcal{C}$ has five classes that are represented by $1, -1, i, j, k$. We further write $\mathcal{C}^* $ for the collection of non-trivial classes. We associate a family of weight functions $w_c : \mathcal{C}^* \to \RR_{\geq 0}$ for $c\in \{-1, i, j, k\}$ by setting 
\[
w_c(c') =\begin{cases}
    1 & \;\; \text{if } c'=c,\\
    0 &\; \; \text{if } c'\neq c. 
\end{cases}
\] To a weight function, we associate a collection of local height functions: For each finite place $p\neq 2$ of $\QQ$ we set  $H_{c, p} : BQ_8(\QQ_p) \to \RR_{>0}$ for all $c\in \mathcal{C}^*$, by setting 
\[
H_{c, p}(\varphi) = p^{w_c(\rho(\varphi))},
\]
where $\rho: BQ_8(\QQ_p)\to \mathcal{C}$ is the ramification type introduced in \cite[Sec. 4.2]{loughran2026leadingconstantmallesconjecture}, which in the case of extensions over $\QQ$ is given by taking the inertia subgroup up to conjugation as in \Cref{def:multi}.  For the place $p=2$ we associate the constant height functions $H_{2,c} (\varphi) =1$. This collection of local heights amounts to four global height functions $H_c: BQ_8(\QQ) \to \RR_{>0}$ by defining 
\[
H_c (\varphi)= \prod_{p<\infty} H_{c, p}(\varphi).
\]
Here, we view $\varphi$ under its diagonal image $BQ_8(\QQ) \to \prod_{v<\infty }BQ_8(\QQ_v)$. \\
The prediction of the leading constant for the multi-height setting is given in \cite[Conj. 9.1]{loughran2026leadingconstantmallesconjecture}. In particular, for $G=Q_8, k=\QQ$ and the height functions defined above, the conjecture reads 
\begin{align} \label{eq:conj1_ls}
\frac{1}{|Z(Q_8)|}\# \{\varphi \in BQ_8[\QQ] \setminus \Omega: h_c(\varphi) \leq X_c \text{ for all }c\in \mathcal{C}^* \} \sim \alpha^* \beta \ \tau X_iX_jX_kX_{-1}
\end{align}
for parameters $X_i, X_j, X_k, X_{-1} \to \infty$ and constants $\alpha^*, \beta, \tau$ that we unpack in the following. Here, $\Omega \subset BQ_8[\QQ]$ is a thin set which includes all classes of non-surjective homomorphisms. 
Set
\[
a = (\min_{c\in \mathcal{C}^*} w_c )^{-1} ,\qquad b= \#\{c\in \mathcal{C}^* : w_c = a^{-1}\}.
\]
In our case  
\[
a=1 , \qquad b=4.
\]
Then 
\[
\alpha^* := \frac{a^{b-1}}{\# \widehat{Q_8(\QQ)}} = \frac{1}{4}
\]
as there are four one-dimensional rational-valued characters of $Q_8$. The constant $\beta$ is the size of the partially unramified Brauer group of $BQ_8$. Following the notation introduced in \cite{loughran_santens2025malle} we see that the algebraic part of the Brauer group is trivial, see \cite[Lemma 6.32]{loughran_santens2025malle}. Further the transcendental part of the Brauer group can be identified with a certain subgroup of the dual of the Schur multiplier of $Q_8$, see \cite[Lemma 6.36]{loughran_santens2025malle}. As the Schur multiplier of $Q_8$ is trivial, the transcendental part of the Brauer group is trivial. This gives $\beta = 1$.
Finally  
\[
\tau = \tau_{\infty} \prod_{p}(1-1/p)^4 \tau_p
\]
is the global Tamagawa number, with local Tamagawa numbers 
\[
\tau_\infty = \frac{\#Q_8[2]}{\#Q_8} = \frac{2}{8} , \; \tau_2 = \frac{224}{8} = 28, \tau_{p} = (1+4/p) \text{ \; for $p>2$ }.
\]
Here we use for $p\neq 2, \infty$ the mass formula \cite[Cor 8.11]{loughran_santens2025malle}. For the infinite place we use  \cite[Eq 8.3]{loughran_santens2025malle}. Finally for $p=2$ we use \Cref{lemma:local2_masses} together with \cite[Lemma 3.18]{loughran2026leadingconstantmallesconjecture}. Hence \eqref{eq:conj1_ls} reads
\begin{align*}
    &\# \{\varphi \in BQ_8[\QQ] \setminus \Omega: h_c(\varphi) \leq X_c \text{ for all }c\in \mathcal{C}^* \} \\
    &\qquad\sim \underbrace{2\;  \cdot  \;\frac{1}{4}\;\cdot \;\frac{2}{8}\; \cdot \;\frac{28}{16}}
_{=  |Z(Q_8)|\ \cdot \  \alpha^*\ \cdot \ \tau_\infty \ \cdot \ (1-1/2)^4\tau_2} \prod_{p>2} \left(1-\frac{1}{p}\right)^4\left(1+\frac{4}{p}\right)  X_iX_jX_kX_{-1} \\
    &\qquad \sim \frac{7}{32} \prod_{p>2}\left(1-\frac{1}{p}\right)^4\left(1+\frac{4}{p}\right)  X_iX_jX_kX_{-1} .
\end{align*}
Recall from \Cref{sec:notation} that each (surjective) isomorphism class $\varphi \in BQ_8[\QQ]$, corresponds to exactly 4 homomorphism, equivalently labeled $Q_8$-extensions $(M, \sigma)$, that we count in \Cref{thm1}. Hence \Cref{thm1} confirms \cite[Conj. 9.1]{loughran2026leadingconstantmallesconjecture} for the multi-height counting of $Q_8$-extensions over $\QQ$, with $\Omega$ the thin set containing non-surjective homomorphisms. 

The same approach shows that \Cref{corollary1} verifies \cite[Conj. 7.1]{loughran2026leadingconstantmallesconjecture} for counting by the height function $H$ denoting the product of ramified primes. Indeed, in this case one defines the weight function $w_c = 1$ for all $c\in \mathcal{C}^*$. We have $b=4$ and obtain that $\alpha^*, \beta$ are the same as in the multi-height setting. The local Tamagawa calculation also stay the same for all tame places, as well as the place at infinity. For $p=2$, we use \Cref{lemma:local2_masses} and \cite[Lemma 3.18]{loughran2026leadingconstantmallesconjecture} to obtain 
\[
\tau_2 = \frac{116}{8}.
\]
Hence \cite[Conj. 7.1]{loughran2026leadingconstantmallesconjecture} reads 
\begin{align*}
    &\# \{\varphi \in BQ_8[\QQ] \setminus \Omega: H(\varphi) \leq X \} \\
    & \qquad \qquad \sim \underbrace{2\;  \cdot  \frac{1}{6}\cdot \;\frac{1}{4}\;\cdot \;\frac{2}{8}\; \cdot \;\frac{29}{32}}
_{=  |Z(Q_8)|\ \cdot \frac{1}{(b-1)!} \  \cdot \   \alpha^*\ \cdot \ \tau_\infty \ \cdot \ (1-1/2)^4\tau_2} \prod_{p>2} \left(1-\frac{1}{p}\right)^4\left(1+\frac{4}{p}\right)  X (\log X)^3 \\
    &\qquad \qquad  \sim \frac{29}{1536} \prod_{p>2}\left(1-\frac{1}{p}\right)^4\left(1+\frac{4}{p}\right)  X (\log X)^3 .
\end{align*}
Choosing $\Omega$ as the thin set containing non-surjective homomorphisms,  \Cref{corollary1} confirms this conjecture, as every surjective isomorphism class $\varphi \in BQ_8[\QQ]$ corresponds to $4$ epimorphisms $\varphi \in \operatorname{Epi}_{\operatorname{cts}}(\Gamma_{\QQ}, Q_8)$, see \Cref{sec:notation}.

\printbibliography[
title={References}
]

@article {munsch_character_sums,
    AUTHOR = {Munsch, Marc},
     TITLE = {Character sums over squarefree and squarefull numbers},
   JOURNAL = {Arch. Math. (Basel)},
  FJOURNAL = {Archiv der Mathematik},
    VOLUME = {102},
      YEAR = {2014},
    NUMBER = {6},
     PAGES = {555--563},
}

@article {friedlander-iwaniec,
    AUTHOR = {Friedlander, John and Iwaniec, Henryk},
     TITLE = {Ternary quadratic forms with rational zeros},
   JOURNAL = {J. Th\'eor. Nombres Bordeaux},
  FJOURNAL = {Journal de Th\'eorie des Nombres de Bordeaux},
    VOLUME = {22},
      YEAR = {2010},
    NUMBER = {1},
     PAGES = {97--113},
}

@misc{loughran_santens2025malle,
      title={Malle's conjecture and Brauer groups of stacks}, 
      author={Daniel Loughran and Tim Santens},
      year={2025},
      eprint={2412.04196},
      archivePrefix={arXiv},
      primaryClass={math.NT},
}

@misc{lmfdb,
  shorthand    = {LMFDB},
  author       = {The {LMFDB Collaboration}},
  title        = {The {L}-functions and modular forms database},
  howpublished = {\url{https://www.lmfdb.org}},
  year         = {2026},
  note         = {[Online; accessed 4 August 2026]},
}

@article {Koymans-Pagano-nilpotent,
    AUTHOR = {Koymans, Peter and Pagano, Carlo},
     TITLE = {On {M}alle's conjecture for nilpotent groups},
   JOURNAL = {Trans. Amer. Math. Soc. Ser. B},
  FJOURNAL = {Transactions of the American Mathematical Society. Series B},
    VOLUME = {10},
      YEAR = {2023},
     PAGES = {310--354},
}

@misc{koymans-pagano-archive,
      title={On Malle's conjecture for nilpotent groups, I}, 
      author={Peter Koymans and Carlo Pagano},
      year={2021},
      eprint={2103.17223},
      archivePrefix={arXiv},
      primaryClass={math.NT},
}

@misc{loughran2026leadingconstantmallesconjecture,
      title={The leading constant in Malle's conjecture}, 
      author={Daniel Loughran and Tim Santens},
      year={2026},
      eprint={2606.04983},
      archivePrefix={arXiv},
      primaryClass={math.NT},
}

@misc{hansen-zanoli,
      title={Counting $D_4$-field extensions by multi-invariants}, 
      author={Willem Hansen and Anna Zanoli},
      year={2025},
      eprint={2507.12342},
      archivePrefix={arXiv},
      primaryClass={math.NT},
}

@article {fouvry,
    AUTHOR = {Fouvry, \'Etienne and Luca, Florian and Pappalardi, Francesco
              and Shparlinski, Igor E.},
     TITLE = {Counting dihedral and quaternionic extensions},
   JOURNAL = {Trans. Amer. Math. Soc.},
  FJOURNAL = {Transactions of the American Mathematical Society},
    VOLUME = {363},
      YEAR = {2011},
    NUMBER = {6},
     PAGES = {3233--3253},
}

@article {pieropan-schindler,
    AUTHOR = {Pieropan, Marta and Schindler, Damaris},
     TITLE = {Hyperbola method on toric varieties},
   JOURNAL = {J. \'Ec. polytech. Math.},
  FJOURNAL = {Journal de l'\'Ecole polytechnique. Math\'ematiques},
    VOLUME = {11},
      YEAR = {2024},
     PAGES = {107--157},
}

@article {peyre95,
    AUTHOR = {Peyre, Emmanuel},
     TITLE = {Hauteurs et mesures de {T}amagawa sur les vari\'et\'es de
              {F}ano},
   JOURNAL = {Duke Math. J.},
  FJOURNAL = {Duke Mathematical Journal},
    VOLUME = {79},
      YEAR = {1995},
    NUMBER = {1},
     PAGES = {101--218},
}

@article {shankar2024asymptoticscubicfieldsordered,
    AUTHOR = {Shankar, Arul and Thorne, Frank},
     TITLE = {On the asymptotics of cubic fields ordered by general
              invariants},
   JOURNAL = {Comment. Math. Helv.},
  FJOURNAL = {Commentarii Mathematici Helvetici. A Journal of the Swiss
              Mathematical Society},
    VOLUME = {99},
      YEAR = {2024},
    NUMBER = {4},
     PAGES = {769--797},
}

@article {klueners2004counterexamplemallesconjecture,
    AUTHOR = {Kl\"uners, J\"urgen},
     TITLE = {A counterexample to {M}alle's conjecture on the asymptotics of
              discriminants},
   JOURNAL = {C. R. Math. Acad. Sci. Paris},
  FJOURNAL = {Comptes Rendus Math\'ematique. Acad\'emie des Sciences. Paris},
    VOLUME = {340},
      YEAR = {2005},
    NUMBER = {6},
     PAGES = {411--414},
}

@article {ellenberg_satriano_zureick-brown,
    AUTHOR = {Ellenberg, Jordan S. and Satriano, Matthew and Zureick-Brown,
              David},
     TITLE = {Heights on stacks and a generalized {B}atyrev-{M}anin-{M}alle
              conjecture},
   JOURNAL = {Forum Math. Sigma},
  FJOURNAL = {Forum of Mathematics. Sigma},
    VOLUME = {11},
      YEAR = {2023},
     PAGES = {Paper No. e14, 54},
}

@article{DardaYasuda2024,
  author    = {Ratko Darda and Takehiko Yasuda},
  title     = {The Batyrev–Manin conjecture for DM stacks},
   JOURNAL = {J. Eur. Math. Soc. (JEMS)},
  FJOURNAL = {Journal of the European Mathematical Society (JEMS)},
    VOLUME = {28},
      YEAR = {2026},
    NUMBER = {6},
     PAGES = {2351--2400},
}

@misc{gundlach2022mallesconjecturemultipleinvariants,
      title={Malle's conjecture with multiple invariants}, 
      author={Fabian Gundlach},
      year={2022},
      eprint={2211.16698},
      archivePrefix={arXiv},
}

@article {wright_abelian,
    AUTHOR = {Wright, David J.},
     TITLE = {Distribution of discriminants of abelian extensions},
   JOURNAL = {Proc. London Math. Soc. (3)},
  FJOURNAL = {Proceedings of the London Mathematical Society. Third Series},
    VOLUME = {58},
      YEAR = {1989},
    NUMBER = {1},
     PAGES = {17--50},
}

@article{malle2002,
title = {On the Distribution of Galois Groups},
journal = {Journal of Number Theory},
volume = {92},
number = {2},
pages = {315--329},
year = {2002},
author = {Gunter Malle},

}

@article {kiming,
    AUTHOR = {Kiming, Ian},
     TITLE = {Explicit classifications of some {$2$}-extensions of a field
              of characteristic different from {$2$}},
   JOURNAL = {Canad. J. Math.},
  FJOURNAL = {Canadian Journal of Mathematics. Journal Canadien de
              Math\'ematiques},
    VOLUME = {42},
      YEAR = {1990},
    NUMBER = {5},
     PAGES = {825--855},
}

@incollection {ellenberg_venkatesh,
    AUTHOR = {Ellenberg, Jordan S. and Venkatesh, Akshay},
     TITLE = {Counting extensions of function fields with bounded
              discriminant and specified {G}alois group},
 BOOKTITLE = {Geometric methods in algebra and number theory},
    SERIES = {Progr. Math.},
    VOLUME = {235},
     PAGES = {151--168},
 PUBLISHER = {Birkh\"auser Boston, Boston, MA},
      YEAR = {2005},
}

@article {malle2004,
    AUTHOR = {Malle, Gunter},
     TITLE = {On the distribution of {G}alois groups. {II}},
   JOURNAL = {Experiment. Math.},
  FJOURNAL = {Experimental Mathematics},
    VOLUME = {13},
      YEAR = {2004},
    NUMBER = {2},
     PAGES = {129--135},
   
}

@incollection {peyre-multi-heights,
    AUTHOR = {Peyre, Emmanuel},
     TITLE = {Chapter {V}: {B}eyond heights: slopes and distribution of
              rational points},
 BOOKTITLE = {Arakelov geometry and {D}iophantine applications},
    SERIES = {Lecture Notes in Math.},
    VOLUME = {2276},
     PAGES = {215--279},
 PUBLISHER = {Springer, Cham},
      YEAR = {2021}
}

@article {d4_conductor,
    AUTHOR = {Altu\u g, S. Ali and Shankar, Arul and Varma, Ila and Wilson,
              Kevin H.},
     TITLE = {The number of {$D_4$}-fields ordered by conductor},
   JOURNAL = {J. Eur. Math. Soc. (JEMS)},
  FJOURNAL = {Journal of the European Mathematical Society (JEMS)},
    VOLUME = {23},
      YEAR = {2021},
    NUMBER = {8},
     PAGES = {2733--2785},
}

@article {klueners_malle,
    AUTHOR = {Kl\"uners, J\"urgen and Malle, Gunter},
     TITLE = {Counting nilpotent {G}alois extensions},
   JOURNAL = {J. Reine Angew. Math.},
  FJOURNAL = {Journal f\"ur die Reine und Angewandte Mathematik. [Crelle's
              Journal]},
    VOLUME = {572},
      YEAR = {2004},
     PAGES = {1--26},
}

\end{document}